\definecolor{darkblue}{RGB}{0,0,160}
\def\e{{\rm e}}
\def\cic{\mathbf}
\def\eps{\varepsilon}
\def\d{{\rm d}}
\def\R {\mathbb{R}}
\def\Dil {{\mathrm{Dil}}}
\def\af{\mathfrak{a}}
\def\vaf{\vec{\mathfrak a}}
\def\Dil {{\mathsf{Dil}}}
\def\Sy {{\mathsf{Sy}}}
\def \l {\langle}
\def \r {\rangle}
\def \and{\qquad\text{and}\qquad}
\newcommand{\supp}{\mathrm{supp}\,}
\newcounter{thms}
\newcounter{other}
\numberwithin{other}{section}
\newtheorem{proposition}[other]{Proposition}
\newtheorem{theorem}[thms]{Theorem}
\newtheorem*{theorem*}{Theorem}
\newtheorem*{proposition*}{Proposition}
\newtheorem{corollary}{Corollary}
\newtheorem*{corollary*}{Corollary}
\numberwithin{corollary}{thms}
\newtheorem{lemma}[other]{Lemma}
\theoremstyle{definition}
\newtheorem{remark}[other]{Remark}
\newtheorem{definition}[other]{Definition}
\def\vint_#1{\mathchoice%
      {\mathop{\kern 0.2em\vrule width 0.6em height 0.69678ex depth -0.58065ex
              \kern -0.8em \intop}\nolimits_{\kern -0.4em#1}}%
      {\mathop{\kern 0.1em\vrule width 0.5em height 0.69678ex depth -0.60387ex
              \kern -0.6em \intop}\nolimits_{#1}}%
      {\mathop{\kern 0.1em\vrule width 0.5em height 0.69678ex depth -0.60387ex
              \kern -0.6em \intop}\nolimits_{#1}}%
      {\mathop{\kern 0.1em\vrule width 0.5em height 0.69678ex depth -0.60387ex
              \kern -0.6em \intop}\nolimits_{#1}}}
\def\vintslides_#1{\mathchoice%
      {\mathop{\kern 0.1em\vrule width 0.5em height 0.697ex depth -0.581ex
              \kern -0.6em \intop}\nolimits_{\kern -0.4em#1}}%
      {\mathop{\kern 0.1em\vrule width 0.3em height 0.697ex depth -0.604ex
              \kern -0.4em \intop}\nolimits_{#1}}%
      {\mathop{\kern 0.1em\vrule width 0.3em height 0.697ex depth -0.604ex
              \kern -0.4em \intop}\nolimits_{#1}}%
      {\mathop{\kern 0.1em\vrule width 0.3em height 0.697ex depth -0.604ex
              \kern -0.4em \intop}\nolimits_{#1}}}
\newcommand{\aveint}[2]{\mathchoice%
      {\mathop{\kern 0.2em\vrule width 0.6em height 0.69678ex depth -0.58065ex
              \kern -0.8em \intop}\nolimits_{\kern -0.45em#1}^{#2}}%
      {\mathop{\kern 0.1em\vrule width 0.5em height 0.69678ex depth -0.60387ex
              \kern -0.6em \intop}\nolimits_{#1}^{#2}}%
      {\mathop{\kern 0.1em\vrule width 0.5em height 0.69678ex depth -0.60387ex
              \kern -0.6em \intop}\nolimits_{#1}^{#2}}%
      {\mathop{\kern 0.1em\vrule width 0.5em height 0.69678ex depth -0.60387ex
              \kern -0.6em \intop}\nolimits_{#1}^{#2}}}
\renewcommand*{\cdots}{%
  \mathinner{{\cdotp}{\cdotp}{\cdotp}}%
}
\numberwithin{equation}{section}
\title[Wavelet Representation of Singular Integral Operators]{Wavelet Representation of Singular Integral Operators}
\author[F. Di Plinio]{Francesco Di Plinio}  
\thanks{F. Di Plinio was partially supported by the National Science Foundation under the grant NSF-DMS-2000510.}
\author[B. D. Wick]{Brett D. Wick}
\thanks{B. D. Wick's research partially supported in part by NSF grant NSF-DMS-1800057 as well as ARC DP190100970.}
\author[T. Williams]{Tyler Williams} 
\address{Department of Mathematics, Washington University in Saint Louis\\ \newline \indent 1 Brookings Drive, Saint Louis, Mo 63130, USA}
\email{\textnormal{francesco.diplinio@wustl.edu, bwick@wustl.edu, tgwilliams@wustl.edu}}
\subjclass[2010]{Primary: 42B20. Secondary: 42B25}
\keywords{Wavelet representation theorem,  bi-parameter singular integrals, $T(1)$-theorems, sharp weighted bounds}
\begin{document}

\begin{abstract} 
This article  develops  a novel approach to the representation of singular integral operators of Calder\'on-Zygmund type in terms of continuous model operators, in both the classical and the bi-parametric setting. The representation is realized as a finite sum of  averages of wavelet projections of either cancellative or noncancellative type, which are themselves Calder\'on-Zygmund operators.   Both properties are out of reach for the established dyadic-probabilistic technique. Unlike their dyadic counterparts, our representation reflects the additional kernel smoothness of the operator being  analyzed.

Our representation formulas lead naturally to a new family of   $T(1)$ theorems on weighted Sobolev spaces whose smoothness index is naturally related to kernel smoothness. In the one parameter case, we obtain the  Sobolev space analogue of the  $A_2$ theorem; that is,  sharp dependence of the Sobolev norm of $T$  on the weight characteristic is obtained in the full range of exponents. In the bi-parametric setting, where local average sparse domination is not generally available,
 we obtain quantitative  $A_p$ estimates which are best known, and sharp  in the range $\max\{p,p'\}\geq 3$ for the fully cancellative case.
 \end{abstract}

\maketitle
\section{Introduction}

The class of  Calder\'on-Zygmund singular integrals may succinctly be described as consisting of those  linear   operators bounded on $L^{p}(\R^{d}) $ for some $1<p<\infty$, whose  Schwartz kernel off the diagonal in $\R^{d}\times \R^{d}$ satisfies the same size and smoothness properties  enjoyed by the kernels of the Hilbert $(d=1)$   or Riesz transforms. These properties, and thus the Calder\'on-Zygmund class, are quantitatively preserved by  translations and dilations   of $\R^d$.
  
Calder\'on-Zygmund singular integrals play a pivotal role in  regularity theory of elliptic PDEs, operator theory, and differentiation theory. In particular, their quantified and sharp weighted norms  have been the keystone in the solution to two important problems in the theory of distortion by quasiconformal mappings. Sharp weighted bounds for the Ahlfors-Beurling transform have been used by Petermichl and Volberg \cite{PV} to prove injectivity at the critical exponent of the solution operator to the Beltrami equation, completing the scheme devised in \cite{AIS} by Astala, Iwaniec and Saksman. A sharp estimate of the Beurling transform on suitable fractal-dimensional weights has been employed by Lacey, Sawyer and Uriarte-Tuero  \cite{LSU} to prove Astal{a}'s conjecture on distortion of Hausdorff measures under quasiconformal maps.

Beginning  with Figiel's  $T(1)$ theorem \cite{Fig}, through the seminal work of Nazarov, Treil and Volberg \cite{NTVJ} on two-weight inequalities for non-homogeneous Haar shifts, the study of singular integrals and their sharp weighted bounds  has relied on  dyadic-probabilistic techniques. An important change in perspective within this line of investigation was brought by    the work of Petermichl \cite{PetAJM} where the Hilbert transform was \emph{represented}, instead of \emph{estimated}, as an average of  dyadic shift operators.   This idea was extended to the Beurling transform, Riesz transform, and other nice convolution type Calder\'on-Zygmund operators in \cites{MR1964822, MR1992955, MR2680056}.  A forceful augmentation of this strategy is  Hyt\"onen's proof of the $A_2$ conjecture \cite{Hyt2010},  which relied on the representation of a Calder\'on-Zygmund operator as a probabilistic average of shifted dyadic operators, the simplest  of which is the dyadic martingale transform. Hyt\"onen's representation theorem has since been extended to bi-parameter Calder\'on-Zygmund operators by Martikainen in \cite{MK1} and to the multi-parameter setting by Y.\ Ou \cite{OuMp}, resulting in $T(1)$ type theorems in two and higher parameters. 

While the significance of dyadic representation theorems cannot be overstated, dyadic-probabilistic realizations of Calder\'on-Zygmund operators suffer from certain intrinsic drawbacks, originating from the  discrete nature of the Haar basis employed. A first one is that the representation formula contains dyadic shifts of arbitrarily large \emph{complexity} parameter: complexity may be described, roughly, as the width of the band of the matrix associated to the dyadic shift in the Haar basis.  The representation formula itself involves a rather delicate averaging procedure over shifted dyadic grids, and, for each shifting parameter, a countable collection of shifts of unbounded complexity. Explicitly computing the dyadic components of a generic singular integral operator is thus not feasible. 

Secondly, the dyadic representation formula, due to the roughness of the basis, fails to detect additional kernel smoothness. The latter is often relevant for the behavior of a singular integral on smoother spaces, of e.g.\ Sobolev or Besov type. We note that the recent article \cite{HytLap} partially addresses this question within the framework of dyadic representation theorems, replacing the Haar basis with a smoother wavelet but keeping the same structure involving shifted grids. The comparison with \cite{HytLap} is elaborated upon in Remark \ref{r:hytlap}.
 
Bi-parameter singular integrals on $  \R^{d_1}\times \R^{d_2}$  may   be informally defined as  elements of the closed convex hull of the set of tensor products $T_1\otimes T_2$, where each $T_j$ is a $\R^{d_j}$-singular integral operators as above.  This class arises naturally in connection with the theory of bi-harmonic functions \cite{FR1,FS} for instance in the weak factorization of functions in the Hardy space on the bi-disk \cite{LF}. The $L^p$ and mixed norm estimates for their  multilinear analogues are at the root of partial fractional Leibniz rules \cites{MPTT1,MPTT06}, namely, anisotropic variants of the bilinear estimates   popularized e.g.\ by  Kato-Ponce \cite{KP} in connection with the Navier-Stokes,    Schr\"odinger and   KdV equations.
    
One specific drawback of   dyadic representations in the  bi-parameter context, see \cite{MK1,OuMp} for instance, is that   they do not reduce $L^p$ and weighted estimates for the analyzed operator to a single model operator whose weighted theory is significantly simpler. In the one parameter case, this can be verified directly for shift operators as in \cite{MR2657437} and can also be done by means of sparse operators as in \cite{CDPOMRL}.  In higher parameters, the approach via direct verification is challenging \cites{HPW,LMV} and domination by local average sparse operators are generally not available as the counterexample of \cite{BCOR} shows. Thus, for instance, one cannot expect precise information on the dependence of $\|T\|_{L^p(w)}$ on the corresponding relevant weight characteristic. 

This paper sets forth a new technique for analyzing singular integral operators based on rank 1 wavelet projections
\[
f\mapsto s^{d}\langle f, \varphi_{(x,s)}\rangle \varphi_{(x,s)}, \qquad  (x,s)\in \R^{d}\times (0,\infty)
\]
where $\varphi_{(x,s)}$ are $L^1$-normalized wavelets living at scale $s$ near the point $x$.  The method used is to instead take a weighted average of these wavelet projections with respect to the operator wavelet coefficients $\langle T\varphi_{(x,s)}, \varphi_{(y,t)}\rangle$, with $(y,t)\in \R^{d}\times (0,\infty)$. The simplest version of this principle is the resolution of the identity operator \eqref{e:CRF} below, widely known as the Calder\'on reproducing formula.  Certainly, the Calder\'on reproducing formula \eqref{e:CRF} and wavelet coefficients  have been used countless times in the proof of $T(1)$ type estimates, beginning with the works of  David and Journ\'e \cite{DJ} and Journ\'e in the bi-parametric setting \cite{Jo}. Our approach takes these seminal ideas one step further, in that we aim for equalities, rather than inequalities, and employ the wavelet coefficients of $T$ in a wavelet averaging procedure instead of estimating them, see Lemma \ref{e:averagproc}, essentially turning the original wavelet basis into another wavelet family adapted to the operator being analyzed.  Our approach takes advantage of the fact that a Calder\'on-Zygmund operator applied to a smooth wavelet basis with compact support yields again a collection of  wavelets, though possibly rougher and with  smeared out support.

When analyzing one parameter Calder\'on-Zygmund operators, this results in a representation formula involving a single, \emph{complexity zero} cancellative operator, a single paraproduct and a single adjoint paraproduct, all of which are Calder\'on-Zygmund operators themselves,  in contrast to the dyadic expansion of e.g. \cite{Hyt2010}, involving probabilistic averaging of countably many dyadic shifts.  The following  result is a loosely stated excerpt of  Theorem \ref{t:T1}, Section \ref{s:1p}, which is referred to for a rigorous and detailed statement, in particular concerning the formal definition of $T$. 
\begin{theorem*} \label{t:T1k1}  
 Let $T$ be a linear operator on $\R^d$, satisfying the weak boundedness testing condition, the standard $\delta$-kernel estimates  
for some $\delta>0$ and with $T1, T^\star 1 \in \mathrm{BMO}(\R^d)$. Let $0<\eps<\delta$.
Then there exists a family of $L^1$-adapted, $\eps$-smooth and $(d+\eps)$-decaying cancellative wavelets $\{\upsilon_{(y,t)}: y\in \R^d, t>0  \}, $
   such that  
\begin{equation*}
\begin{split} 
 Tf(x)  &= \displaystyle\int_{\R^{d}\times (0,\infty)}  \langle f,  \varphi_{(y,t)}\rangle   \upsilon_{(y,t)} (x)    \, \frac{\d y \d t}{t}+ \Pi_{T 1}  f(x)+ \Pi^\star_{T^\star 1}  f(x), \qquad x\in \R^d
\end{split} 
\end{equation*}
where $\varphi_{(y,t)}$ is the $(y,t)$-rescaling of a smooth mother cancellative wavelet with compact support,  $\Pi_{b}$ are explicitly constructed paraproducts of the form in Definition \ref{c:parf}.  
\end{theorem*}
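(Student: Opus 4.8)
The plan is to subtract from $T$ two explicitly constructed paraproducts, reducing to an operator that annihilates constants together with its transpose, and then to read off the wavelet family $\{\upsilon_{(y,t)}\}$ as the images of the rescaled mother wavelets $\varphi_{(y,t)}$ under this reduced operator; the analytic core is the verification that those images are again admissible wavelets, which is where the extra kernel smoothness of $T$ is converted into the smoothness of the $\upsilon_{(y,t)}$. \emph{Set-up and paraproduct reduction.} Normalize the $L^{1}$-adapted, smooth, compactly supported cancellative mother wavelet $\varphi$ so that the Calder\'on reproducing formula \eqref{e:CRF} holds in the form $g=\int_{\R^{d}\times(0,\infty)}\langle g,\varphi_{(y,t)}\rangle\,\varphi_{(y,t)}\,\tfrac{\d y\,\d t}{t}$, with convergence in $L^{2}(\R^{d})$ on a dense test class of Schwartz functions with vanishing moments; under the stated hypotheses $T$ is $L^{2}$-bounded by David--Journ\'e, and all identities below are proved first on the test class and then extended by density. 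Let $\Pi_{b},\Pi^{\star}_{b}$ be the paraproducts of Definition \ref{c:parf} built from $\varphi$, so that $\Pi_{b}1=b$, $\int\Pi_{b}g=0$ (hence $\Pi^{\star}_{b}1=0$), and $\Pi_{b},\Pi^{\star}_{b}$ are Calder\'on--Zygmund operators with $\delta$-smooth kernels whose kernel, weak-boundedness and $L^{2}\to L^{2}$ constants are all $\lesssim\|b\|_{\mathrm{BMO}}$. Put $\widetilde{T}:=T-\Pi_{T1}-\Pi^{\star}_{T^{\star}1}$. Then $\widetilde{T}1=T1-T1-0=0$ and, since $\widetilde{T}^{\star}=T^{\star}-\Pi^{\star}_{T1}-\Pi_{T^{\star}1}$, also $\widetilde{T}^{\star}1=0$; moreover $\widetilde{T}$ still satisfies the weak boundedness property and the $\delta$-kernel estimates, with constants controlled by those of $T$ together with $\|T1\|_{\mathrm{BMO}}+\|T^{\star}1\|_{\mathrm{BMO}}$, hence is $L^{2}$-bounded. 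Since the two subtracted terms are exactly the paraproducts in the conclusion, it remains to represent $\widetilde{T}$.

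\emph{The operator-adapted family.} Apply $\widetilde{T}$ inside the reproducing formula and set $\upsilon_{(y,t)}:=\widetilde{T}\varphi_{(y,t)}$, which together with the previous step yields
\[
Tf(x)=\int_{\R^{d}\times(0,\infty)}\langle f,\varphi_{(y,t)}\rangle\,\upsilon_{(y,t)}(x)\,\frac{\d y\,\d t}{t}+\Pi_{T1}f(x)+\Pi^{\star}_{T^{\star}1}f(x).
\]
The interchange of $\widetilde{T}$ with the $\tfrac{\d y\,\d t}{t}$-integral --- equivalently, the reorganization of the double wavelet expansion $\iint\langle f,\varphi_{(y,t)}\rangle\langle\widetilde{T}\varphi_{(y,t)},\varphi_{(z,s)}\rangle\varphi_{(z,s)}\,\tfrac{\d z\,\d s}{s}\tfrac{\d y\,\d t}{t}$ into a single integral, and the passage to general $f$ --- is exactly what the averaging procedure of Lemma \ref{e:averagproc} supplies; it applies because the size and decay estimates of the next step give the relevant integrals absolute convergence. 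Everything thus reduces to showing that $\{\upsilon_{(y,t)}\}$ is $L^{1}$-adapted, $\eps$-smooth, $(d+\eps)$-decaying and cancellative.

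\emph{Verification of the wavelet properties (the main point).} Cancellation is immediate: $\langle\upsilon_{(y,t)},1\rangle=\langle\varphi_{(y,t)},\widetilde{T}^{\star}1\rangle=0$. Since every hypothesis on $\widetilde{T}$ is invariant under translations and isotropic dilations, it suffices to show that $\widetilde{S}\varphi$ is an admissible unit-scale wavelet, uniformly over all $\widetilde{S}$ obeying the same quantitative hypotheses as $\widetilde{T}$ with $\widetilde{S}1=\widetilde{S}^{\star}1=0$. For $x$ outside a fixed dilate of $\supp\varphi$, writing $\widetilde{S}\varphi(x)=\int(K(x,z)-K(x,0))\,\varphi(z)\,\d z$ by means of $\int\varphi=0$ and invoking the $\delta$-H\"older estimate of $K$ in the second entry gives $|\widetilde{S}\varphi(x)|\lesssim(1+|x|)^{-d-\delta}$, i.e.\ $(d+\delta)$- and a fortiori $(d+\eps)$-decay off the support, while an analogous estimate on $K(x,z)-K(x',z)$ gives a decaying $\delta$-H\"older bound there. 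The decisive estimate is H\"older-$\eps$ control near and on $\supp\varphi$: one uses $\widetilde{S}1=0$ to write $\widetilde{S}\varphi(x)=\int K(x,z)\bigl(\varphi(z)-\varphi(x)\bigr)\,\d z$, subtracts the same at $x'$ with $h=|x-x'|$, and splits the difference into a kernel-difference term over $\{|z-x|>2h\}$, bounded by $h^{\delta}\int_{|z-x|>2h}|x-z|^{-d-\delta}|\varphi(z)-\varphi(x)|\,\d z\lesssim h^{\delta}$ via the Lipschitz character and compact support of $\varphi$; a genuinely near-diagonal term over $\{|z-x|\le 2h\}$, bounded by $\lesssim h$ after pairing $K(x,\cdot)(\varphi(\cdot)-\varphi(x))$ with $K(x',\cdot)(\varphi(\cdot)-\varphi(x'))$ and using the size estimate of $K$ against the Lipschitz bound; and a remainder $\bigl(\varphi(x)-\varphi(x')\bigr)\int_{|z-x|\le 2h}K(x',z)\,\d z$, where $\widetilde{S}1=0$ identifies the integral with $-\int_{|z-x|>2h}K(x',z)\,\d z$, of size $\lesssim\log(1/h)$ on the relevant range, so that the term is $\lesssim h\log(1/h)$ --- and it is here that the exponent must be relaxed from $\delta$ to an arbitrary $\eps<\delta$. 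This yields $[\upsilon_{(y,t)}]_{\dot{C}^{\eps}}\lesssim t^{-d-\eps}$; interpolating this against the tail estimate gives the pointwise bound $|\upsilon_{(y,t)}(x)|\lesssim t^{-d}(1+|x-y|/t)^{-d-\eps}$, hence $\|\upsilon_{(y,t)}\|_{L^{1}}\lesssim1$. Thus $\{\upsilon_{(y,t)}\}$ is an $L^{1}$-adapted, $\eps$-smooth, $(d+\eps)$-decaying cancellative wavelet family, which completes the proof.

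\emph{Main obstacle.} The heart of the matter is precisely the H\"older-$\eps$ mapping on and near $\supp\varphi$: passing from $\delta$-smoothness of the kernel of $\widetilde{T}$ to $\eps$-smoothness of $\widetilde{T}\varphi$. This is a $T(1)$-type phenomenon --- a Calder\'on--Zygmund operator with $\widetilde{T}1=0$ maps the homogeneous H\"older class $\dot{C}^{\eps}$ to itself for $0<\eps<\delta$ --- and it works only because $\widetilde{T}$ annihilates constants and because $\eps<\delta$ is strict, which is what fixes the smoothness index appearing in the theorem. A secondary, purely bookkeeping, difficulty is making the interchange of $\widetilde{T}$ with the reproducing-formula integral rigorous for general $f$, which is the role of the averaging-procedure lemma.
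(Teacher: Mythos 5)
Your approach is genuinely different from the paper's, and it is worth spelling out the contrast. The paper never considers $\widetilde T\varphi_{(y,t)}$ as a pointwise-defined object. Instead it expands the bilinear form $\Lambda(f,g)$ in wavelets \emph{in both arguments}, estimates the matrix coefficients $\Lambda(\varphi_z,\varphi_\zeta)$ after a polynomial correction (Lemma~\ref{l:U1}), and constructs $\upsilon_z$ explicitly as a weighted superposition of the $\varphi_\zeta$ with coefficient function bounded by $[z,\zeta]_{\eta}$; the membership $\upsilon_z\in C\Psi^{0,\eps;0}_z$ then comes directly from the averaging Lemma~\ref{l:average1}. Your alternative is to subtract the paraproducts first, set $\upsilon_{(y,t)}:=\widetilde T\varphi_{(y,t)}$, and verify the wavelet class membership as a regularity statement about $\widetilde T$ itself (a $T(1)\!=\!0$ operator maps $\dot C^{\alpha}_c\to\dot C^{\eps}$ for $0<\eps<\delta$). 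That is a legitimate and classical route; it buys a very compact formula for the wavelet family, at the cost of requiring the H\"older regularity theory for $\widetilde T$, which is precisely what the paper's construction is organized to avoid.

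However, as written the verification of the key analytic step has gaps. First, the identity $\widetilde S\varphi(x)=\int K(x,z)\bigl(\varphi(z)-\varphi(x)\bigr)\,\d z$ is only formal: for $x\in\supp\varphi$ and $|z|$ large the integrand is $\approx -\varphi(x)K(x,z)$, which decays like $|z|^{-d}$ and is not absolutely integrable, so the identity must be set up as a limit against cutoffs exactly as in \eqref{e:Tlambda}, and the existence and value of that limit is not free. Second, your ``remainder'' term involves $\int_{|z-x|\le 2h}K(x',z)\,\d z$, but since $|x-x'|=h$, the point $x'$ lies in the interior of this region, so $K(x',\cdot)$ is not locally integrable there and the integral does not make sense absolutely; one must split additionally around $x'$, and the resulting near-diagonal cancellation again invokes $\widetilde S1=0$ in a nontrivial way. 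Third, it is not a priori clear (before the estimate is completed) that $\widetilde T\varphi$ \emph{is} a pointwise-defined continuous function on $\supp\varphi$ rather than merely an $L^2$ element; so the argument as sketched presupposes the conclusion it is trying to prove, and a rigorous version would have to proceed through a mollification or a wavelet expansion of $\widetilde T\varphi$ — which brings you back to the paper's route. Finally, the attribution of the interchange of $\widetilde T$ with the Calder\'on reproducing integral to Lemma~\ref{l:average1} is a misreading: that lemma certifies that explicit averages of the $\varphi_\zeta$ are wavelets, not that an operator commutes with the integral; the interchange should instead be justified from $L^2$-boundedness of $\widetilde T$ (itself a nontrivial input you are invoking via David--Journ\'e) together with absolute convergence once the decay of $\widetilde T\varphi_z$ is in hand. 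None of these gaps is impossible to fill, but they are exactly the delicate points that the paper's coefficient-based construction is designed to bypass.
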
 
If additional smoothness of the kernel is available, the cancellative portion will be made of smoother and faster decaying wavelets, provided  that additional paraproducts are subtracted. In turn, this refinement   may be  employed to obtain sharp weighted $T(1)$ bounds on Sobolev spaces. A sample is given in the theorem below, which is a simplified version of Corollary \ref{cor:t1}. The case $k=0$ has no restriction on paraproducts, thus it is yet another proof of the sharp weighted bound for Calder\'on-Zygmund operators by Hyt\"onen \cite{Hyt2010}. For the case $k>0$, the restriction  $T(x^{\gamma})=0$ for $0\leq |\gamma|<k$ is necessary, see Remark \ref{r:nec} where the relation with  \cite[Theorem 1.1]{PXT} is also expounded. 
\begin{theorem*} Let $T$ be a linear operator on $\R^d$, satisfying the weak boundedness testing condition of Definition \ref{c:weak},  whose kernel has $k$-th H\"older continuous  derivatives in the sense of  Definition \ref{c:ker}, and whose $\kappa$-th order paraproducts are in $\mathrm{BMO}(\R^d) $ for $0\leq \kappa \leq k$, according to Definition \ref{c:par}. If $k>0$, assume in addition that $T(x^{\gamma})=0$   for all multi-indices $\gamma$ on $\R^d$ with   $0\leq |\gamma|<k$. Then
$T$ has the  sharp weighted bound on the homogeneous $L^2$-Sobolev space  
\[
\| T f\|_{\dot W^{k,2}(\R^d; w)} \lesssim [w]_{A_2}\|  f\|_{\dot W^{k,2}(\R^d; w)}. 
\]
\end{theorem*}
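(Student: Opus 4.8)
The plan is to reduce the weighted Sobolev estimate to a finite family of $L^2(w)$ bounds for complexity-zero model operators and then invoke the $A_2$ theorem. Fix $w\in A_2$ and recall that, by definition of the homogeneous weighted Sobolev norm, $\|g\|_{\dot W^{k,2}(\R^d;w)}\simeq\sum_{|\alpha|=k}\|\partial^\alpha g\|_{L^2(\R^d;w)}$ with constants depending only on $d$ and $k$. Since $\dot W^{k,2}$ is taken modulo polynomials of degree $<k$ and the hypothesis $T(x^\gamma)=0$ for $|\gamma|<k$ lets $T$ descend to that quotient, it suffices to prove
\[
\|\partial^\alpha Tf\|_{L^2(w)}\ \lesssim\ [w]_{A_2}\,\|f\|_{\dot W^{k,2}(\R^d;w)},\qquad |\alpha|=k.
\]
To this end I would apply the smoothness-refined form of Theorem \ref{t:T1} (the statement quoted in the introduction being its $k=0$ case): since the kernel of $T$ has $k$ H\"older continuous derivatives, $Tf$ can be written as an integral average
\[
\int_{\R^d\times(0,\infty)}\langle f,\varphi_{(y,t)}\rangle\,\upsilon_{(y,t)}\,\frac{\d y\,\d t}{t},
\]
where $\varphi$ is a fixed smooth compactly supported mother wavelet — which we may take to have vanishing moments of every order $<k$ — and the $\upsilon_{(y,t)}$ are $L^1$-adapted, $(k+\eps)$-smooth and $(d+k+\eps)$-decaying cancellative wavelets, plus finitely many paraproducts and adjoint paraproducts of orders $0\le\kappa\le k$ with $\mathrm{BMO}$ symbols (the $\kappa$-th order paraproduct data of $T$ and of $T^\star$). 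The extra assumption $T(x^\gamma)=0$, $|\gamma|<k$, forces the $T$-side paraproduct symbols of every order $\kappa<k$ to vanish, so on that side only the top-order paraproduct survives.

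\emph{The cancellative term.} This is where the smoothness-aware representation pays off. Differentiating under the integral, $\partial^\alpha_x\upsilon_{(y,t)}=t^{-k}(\partial^\alpha\upsilon)_{(y,t)}$, and by construction $\partial^\alpha\upsilon$ is still $\eps$-smooth, $(d+\eps)$-decaying and cancellative, hence an honest complexity-zero wavelet. On the other factor I would use the vanishing moments of $\varphi$ to factor $\varphi=\sum_{|\beta|=k}c_\beta\,\partial^\beta\Psi_\beta$ with $\Psi_\beta$ smooth and rapidly decaying (a Hadamard-type factorization on the Fourier side, which preserves the quantitative wavelet bounds that are needed), so that $\langle f,\varphi_{(y,t)}\rangle=(-1)^k t^k\sum_{|\beta|=k}c_\beta\langle\partial^\beta f,(\Psi_\beta)_{(y,t)}\rangle$. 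The powers $t^{\pm k}$ cancel and
\[
\partial^\alpha\!\int\langle f,\varphi_{(y,t)}\rangle\,\upsilon_{(y,t)}\,\frac{\d y\,\d t}{t}=(-1)^k\sum_{|\beta|=k}c_\beta\int\langle\partial^\beta f,(\Psi_\beta)_{(y,t)}\rangle\,(\partial^\alpha\upsilon)_{(y,t)}\,\frac{\d y\,\d t}{t},
\]
a finite sum of complexity-zero cancellative wavelet averaging operators — Calder\'on-Zygmund operators bounded on $L^2(\R^d)$, by Lemma \ref{e:averagproc} and the analysis of Section \ref{s:1p} — applied to the functions $\partial^\beta f$, $|\beta|=k$. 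Hyt\"onen's $A_2$ theorem \cite{Hyt2010} then bounds each contribution in $L^2(w)$ by $[w]_{A_2}\|\partial^\beta f\|_{L^2(w)}$, and summing over $\beta$ gives $[w]_{A_2}\|f\|_{\dot W^{k,2}(w)}$.

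\emph{The paraproduct terms.} For the surviving paraproducts the mechanism is the same trade-off: applying $\partial^\alpha$ to a non-cancellative output bump produces a cancellative one together with a spurious factor $t^{-k}$, which is compensated by invoking the $k$ vanishing moments of the wavelet tested against the input (or against the symbol) to move $k$ derivatives onto that argument and generate a matching $t^{k}$. Thus $\partial^\alpha\Pi^{(k)}_{b}f$ becomes an ordinary, $0$-th order Calder\'on-Zygmund paraproduct with $\mathrm{BMO}$ symbol $b$ acting on $\partial^\beta f$, $|\beta|=k$, and, via transposition together with $(\partial^\alpha)^\star=(-1)^k\partial^\alpha$ and the analogous identity for $T^\star$, the adjoint paraproducts of every order $0\le\kappa\le k$ reduce likewise to (possibly $b$-modulated) paraproducts acting on $\partial^\beta f$. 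In each case the Carleson embedding theorem yields $L^2(\R^d)$ boundedness with norm $\lesssim\|\cdot\|_{\mathrm{BMO}}$, these operators are Calder\'on-Zygmund, and the sharp weighted paraproduct bound $\|\Pi_b g\|_{L^2(w)}\lesssim[w]_{A_2}\|b\|_{\mathrm{BMO}}\|g\|_{L^2(w)}$ closes the case. Adding the finitely many contributions over $\alpha$, $\beta$ and $\kappa$ completes the estimate; sharpness of the power of $[w]_{A_2}$ is inherited from the endpoint $k=0$ (e.g.\ the Hilbert or Riesz transforms) and the necessity of the conditions $T(x^\gamma)=0$, $|\gamma|<k$, is the content of Remark \ref{r:nec}.

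\emph{Main obstacle.} The serious bookkeeping lies in organizing the paraproduct reductions uniformly in the order $\kappa$ and on the adjoint side, keeping the $\mathrm{BMO}$ norms of the symbols untouched, and in verifying that after differentiation and the vanishing-moment factorization of $\varphi$ every wavelet appearing still meets the smoothness and decay bounds required to qualify the resulting operators as Calder\'on-Zygmund operators, so that the $A_2$ theorem applies verbatim; the differentiation under the continuous integral and the convergence of the resulting averages is a further, more routine point to be checked.
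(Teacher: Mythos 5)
Your proposal is correct and takes essentially the same route as the paper's proof of Corollary \ref{cor:t1}: apply the smoothness-refined representation Theorem \ref{t:T1}, integrate by parts to trade $k$ derivatives between the output wavelet (exploiting $s^k\partial^\alpha\upsilon_z\in C\Psi_z^{\eps;0}$, which is exactly \eqref{e:corpf1}) and the input $f$ (your ``Hadamard factorization'' of $\varphi$ is the same identity as the paper's use of $\partial^{-\beta}$ and Remark \ref{r:ibp}), and reduce every term to a complexity-zero cancellative or paraproduct model acting on $\partial^\beta f$. The only cosmetic difference is in the final step: the paper channels all the resulting terms through the intrinsic forms $\Psi^\eta$, $\pi_b$ and their $(1,1)$-sparse domination (Propositions \ref{t:sparsemodel1}, \ref{t:parasparsemodel1}), while you invoke the $A_2$ theorem and the sharp paraproduct bound as black boxes on each piece separately; both give the sharp power of $[w]_{A_2}$.
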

Corollary \ref{cor:t1} may be seen as the $A_2$-theorem on Sobolev spaces, in analogy with the celebrated $A_2$ theorem of Hyt\"onen \cite{Hyt2010}. Past results in the unweighted setting, and their relation with Corollary \ref{cor:t1}, are recalled in Remark \ref{r:smotest}. Several unweighted $T(1)$ theorems of this type have been developed and used in connection with regularity of the Beltrami solution operator and smoothness of the Beurling transform on $C^{k,\alpha}$-type domains, see e.g.\ \cites{CMO,CXT,Prats2017,Prats2019}. In analogy with the application of the Petermichl-Volberg theorem \cite{PV} to the scheme of  \cite{AIS} we expect that suitable versions of Corollary \ref{cor:t1} can be employed to simplify and sharpen the  higher order Beltrami regularity results of e.g. \cite{Prats2019}. This will be carried out in future work.

The proof techniques may be naturally transported to the bi-parametric dilation setting.  We again paraphrase the main result and point the reader to Section \ref{s6}, Theorem \ref{t:T12p}, for the precise statement.

\begin{theorem*} \label{t:T12pk=0} 
  Let $T$ be a linear operator satisfying the hypotheses of a bi-parameter $\delta$-Calder\'on-Zygmund operator as in Section \ref{s6}.  Let $0 <\eps<\delta\leq 1$. Then there exists a family of $L^1$-adapted, $\eps$-smooth and $(d_j+\eps)$-decaying in the $j$-th parameter, product cancellative  wavelets \[\{\upsilon_{((y_1,t_1), (y_2,t_2))}: y_j\in \R^{d_j}, t_j>0, j=1,2  \},\] such that  for $(x_1,x_2)\in\mathbb{R}^{d_1}\times\mathbb{R}^{d_2},$
\begin{equation*}
\begin{split} Tf(x_1,x_2) = &  \int\displaylimits_{\mathbb{R}^{d_1}\times(0,\infty)}\int\displaylimits_{\mathbb{R}^{d_2}\times(0,\infty)}     \langle f,  \varphi_{(y_1,t_1)} \otimes \varphi_{(y_2,t_2)} \rangle  \upsilon_{((y_1,t_1), (y_2,t_2)) }(x_1,x_2)\, \frac{\d y_1 \d y_2 \d t_1 \d t_2}{t_1 t_2}\\ 
& + \textnormal{four paraproduct terms}
 +  \textnormal{four partial paraproduct terms.}\quad 
\end{split}
\end{equation*}
\end{theorem*}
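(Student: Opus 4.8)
The plan is to transport the one-parameter argument behind Theorem~\ref{t:T1k1} --- the Calder\'on reproducing formula together with the wavelet averaging procedure of Lemma~\ref{e:averagproc} and the paraproduct extraction of Theorem~\ref{t:T1} --- to the tensor-product setting, and then to confront the genuinely bi-parameter obstructions. First I would start from the bi-parameter Calder\'on reproducing formula obtained by tensoring \eqref{e:CRF} in each variable, which expresses $f$ as the double average over $(y_1,t_1)\in\R^{d_1}\times(0,\infty)$ and $(y_2,t_2)\in\R^{d_2}\times(0,\infty)$ of the product wavelet projections $\langle f,\varphi_{(y_1,t_1)}\otimes\varphi_{(y_2,t_2)}\rangle\,\varphi_{(y_1,t_1)}\otimes\varphi_{(y_2,t_2)}$. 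Applying $T$ and expanding the outputs $T(\varphi_{(y_1,t_1)}\otimes\varphi_{(y_2,t_2)})$ again through the reproducing formula produces a quadruple average against $f$ of the product wavelets $\varphi_{(x_1,s_1)}\otimes\varphi_{(x_2,s_2)}$ with weights the full bi-parameter wavelet matrix $\langle T(\varphi_{(y_1,t_1)}\otimes\varphi_{(y_2,t_2)}),\varphi_{(x_1,s_1)}\otimes\varphi_{(x_2,s_2)}\rangle$.

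Then I would carry out the paraproduct splitting of Lemma~\ref{e:averagproc} \emph{separately in each of the two parameters}. In a single parameter, the interaction of a finer input wavelet with a coarser output wavelet is absorbed by subtracting a paraproduct built from the corresponding partial action of $T$ (a ``$T1$''-type symbol), and symmetrically for the adjoint, leaving a scale-balanced cancellative piece. Performing this in both parameters yields exactly the nine-term decomposition announced in the statement: one term in which both parameters are in the cancellative regime, four \emph{partial paraproduct} terms in which one parameter is cancellative and the other is of $\Pi$- or $\Pi^\star$-type, and four \emph{paraproduct} terms in which both parameters are of $\Pi$- or $\Pi^\star$-type. The last eight are matched with the full and partial bi-parameter paraproducts of Section~\ref{s6} and are recognized as bi-parameter $\delta$-Calder\'on-Zygmund operators, using that $T1$, $T^\star 1$ and the mixed partial symbols belong to the relevant product and little-BMO classes built into the hypotheses.

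For the surviving cancellative term I would run the averaging procedure in both parameters: reorganizing the quadruple integral so that integration in $(x_1,s_1,x_2,s_2)$ is performed first, the term becomes $\int\int\langle f,\varphi_{(y_1,t_1)}\otimes\varphi_{(y_2,t_2)}\rangle\,\upsilon_{((y_1,t_1),(y_2,t_2))}\,\tfrac{\d y_1\d t_1}{t_1}\tfrac{\d y_2\d t_2}{t_2}$ with
\[
\upsilon_{((y_1,t_1),(y_2,t_2))}(x_1,x_2)=\int\!\!\int \langle T(\varphi_{(y_1,t_1)}\otimes\varphi_{(y_2,t_2)}),\varphi_{(x_1,s_1)}\otimes\varphi_{(x_2,s_2)}\rangle\,\varphi_{(x_1,s_1)}(x_1)\,\varphi_{(x_2,s_2)}(x_2)\,\tfrac{\d x_1\d s_1}{s_1}\tfrac{\d x_2\d s_2}{s_2}.
\]
It then has to be shown that $\upsilon_{((y_1,t_1),(y_2,t_2))}$ is, up to a fixed structural constant, an $L^1$-adapted product cancellative wavelet that is $\eps$-smooth and $(d_j+\eps)$-decaying in the $j$-th variable. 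This is where the full and partial bi-parameter kernel estimates are used: one bounds the wavelet matrix region by region --- both pairs of scales balanced, one balanced and one separated, both separated --- invoking the weak boundedness property on the diagonals and the $\delta$-smoothness off them, trading $\delta-\eps$ units of smoothness for summable decay in $\log(s_j/t_j)$ and in $|x_j-y_j|/\max(s_j,t_j)$, and then verifying that superposing wavelets against such a kernel reproduces a wavelet with the stated quantitative bounds and cancellation.

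The step I expect to be the main obstacle is precisely this last one in the \emph{mixed} regime, and it is where the bi-parameter case is strictly harder than the one-parameter one. There is no pointwise square-function or maximal-function domination to fall back on --- indeed local average sparse domination fails in two parameters, cf.\ \cite{BCOR} --- so in each mixed region the off-diagonal gain in one parameter must be extracted uniformly while the other parameter sits on or near its diagonal, and the two one-parameter averaging lemmas must be shown to compose without destroying the product structure, the $L^1$-normalization, or the per-parameter cancellation of $\upsilon$. Tracking this interaction, together with checking that the eight paraproduct-type outputs genuinely satisfy the defining properties of the full and partial bi-parameter paraproducts (in particular the mixed ones, where only partial cancellation is present), constitutes the bulk of the proof.
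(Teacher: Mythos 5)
Your proposal follows the same overall architecture as the paper's actual proof in Section~\ref{s7}: expand $\Lambda(f,g)$ through the bi-parameter Calder\'on reproducing formula, split the wavelet coefficient $\Lambda(\varphi_z,\varphi_\zeta)$ into a sum of $3\times 3$ pieces according to the scale relation of $z_j,\zeta_j$ in each parameter separately, run a two-parameter averaging procedure (the paper's Lemma~\ref{l:average2}) on the surviving scale-balanced piece, and recognize the remaining eight pieces as full paraproducts (both parameters degenerate) and partial paraproducts (one parameter degenerate). Your identification of the mixed regimes as the genuinely bi-parameter difficulty, and your observation that one must iterate the one-parameter averaging lemma without breaking the product structure and per-parameter cancellation, is exactly what Lemma~\ref{l:average2} and the estimate of Lemma~\ref{l:U2} are for.

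Two technical points, though, deserve to be flagged because one of them would make the argument fail as literally written. First, the mechanism producing the nine-term split is not Lemma~\ref{l:average1}/\eqref{e:averagproc} (that is the averaging lemma, which plays a different role); it is the Taylor/Alpert-type subtraction $\chi_{z,\zeta}\coloneqq\varphi_z-P_{z,\zeta}\cic{1}_{A(\zeta)}(z)$ of Lemma~\ref{l:alpert} and \eqref{e:U}, inserted into the double Calder\'on integral as in \eqref{e:rep1,2}. Second, and this is the substantive issue, your displayed formula for $\upsilon_{((y_1,t_1),(y_2,t_2))}$ takes as averaging weight the \emph{uncorrected} wavelet matrix $\langle T(\varphi_{(y_1,t_1)}\otimes\varphi_{(y_2,t_2)}),\varphi_{(x_1,s_1)}\otimes\varphi_{(x_2,s_2)}\rangle$. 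That coefficient does not decay when one parameter falls in the $A$-regime (coarse input scale enveloping a fine output scale): the kernel estimate cannot be used there because the supports overlap, and weak boundedness alone gives no gain in $\sigma_j/s_j$. Averaging against it would therefore not produce a function in $\Psi^{\eps;0,0}_z$, and Lemma~\ref{l:average2} would not apply. The paper instead averages against the corrected coefficient $\Lambda\left(\chi_{z_1,\zeta_1}\otimes\chi_{z_2,\zeta_2},\,\chi_{\zeta_1,z_1}\otimes\chi_{\zeta_2,z_2}\right)$, whose product-type decay $[z_1,\zeta_1]_{\eta}\,[z_2,\zeta_2]_{\eta}$ is precisely what Lemma~\ref{l:U2} supplies; the price of subtracting the $P_{z_j,\zeta_j}$-pieces is the appearance of the eight paraproduct terms. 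Your surrounding prose (``the surviving cancellative term'') indicates you intend the corrected coefficient, but the subtraction is not optional and the explicit formula should be amended to reflect it.
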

Notice that unlike the one parameter results recalled in Remark \ref{r:smotest}, no smooth $T(1)$ type theorems have appeared in the literature before, even in the unweighted case.
This result may be contrasted with the bi-parameter dyadic representation theorem of Martikainen \cite{MK1}. The assumptions on $T$ are of the same nature as the ones appearing in \cite{MK1}, namely weak boundedness, full and partial kernel estimates, paraproducts in  product BMO. However, we drop the diagonal BMO conditions appearing in \cite{MK1} which are subsumed by a combination of the other assumptions. In addition to the simpler, and more computationally feasible nature of the continuous formula, the    model operators we obtain have a much simpler weighted theory, which allows for quantitative, and sharp in certain cases, weighted norm inequalities for $T$.
\begin{theorem*}   \label{cor:t12Intro}  Let $T$ be a  $(k_1,k_2)$-smooth bi-parameter Calder\'on-Zygmund operator, see Section \ref{s6}. If  $(k_1,k_2)\neq (0,0)$ assume in addition the paraproduct condition \eqref{e:cort1b}. For all $1<p<\infty$  product $ A_p$-weights  $w$ on $\R^{\mathbf d}\coloneqq \mathbb R^{d_1}\times\mathbb{R}^{d_2}$ there holds
\begin{equation*}
\|\nabla_{x_1}^{k_1}\nabla_{x_2}^{k_2} T f\|_{ L^p(\R^{\mathbf d}; w)} \lesssim_{k,\delta}  [w]_{A_p}^{\max\left\{3,\frac{2p}{p-1}\right\}}\|\nabla_{x_1}^{k_1}\nabla_{x_2}^{k_2} f\|_{ L^p(\R^{\mathbf d}; w)}
\end{equation*}
  If $T$ is  fully cancellative, the improved estimate
\[
\|\nabla_{x_1}^{k_1}\nabla_{x_2}^{k_2} T f\|_{ L^p(\R^{\mathbf d}; w)} \lesssim_{k,\delta}  [w]_{A_p}^{\theta(p)}\|\nabla_{x_1}^{k_1}\nabla_{x_2}^{k_2}  f\|_{ L^p(\R^{\mathbf d}; w)} , \qquad \theta(p)=\begin{cases} \frac{2}{p-1} & 1<p\leq 
\frac32 \\   \textrm{\emph{see} \eqref{e:sharp} 
} & \frac32<p < 3 \\ 2 & p \geq 3 \\ \end{cases}
\]
is available. The above estimate is sharp when $\max\{p,p'\} \geq 3 $. 
\end{theorem*}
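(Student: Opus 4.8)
The plan is to deduce the estimate from the bi-parameter wavelet representation of Theorem \ref{t:T12p}, in its $(k_1,k_2)$-smooth form, by reducing the weighted Sobolev bound to $L^p(w)$ bounds for a finite list of explicit model operators. First, write $\nabla^{k_1}_{x_1}\nabla^{k_2}_{x_2}T$: the extra kernel smoothness guarantees that the cancellative wavelets $\upsilon_{((y_1,t_1),(y_2,t_2))}$ produced by the representation are $(k_j+\eps)$-smooth and fast decaying in the $j$-th variable, so the derivatives $\nabla^{k_j}_{x_j}$ can be moved onto $\upsilon$, each producing a factor $t_j^{-k_j}$ while leaving an $\eps$-smooth, $(d_j+\eps)$-decaying wavelet. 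Dually, under the hypothesis $(k_1,k_2)\ne(0,0)$ together with \eqref{e:cort1b} (equivalently the vanishing of $T$ on low-degree polynomials in the one-parameter prototype) the probing wavelets carry enough vanishing moments, so integration by parts converts $\langle f,\varphi_{(y_1,t_1)}\otimes\varphi_{(y_2,t_2)}\rangle$ into $t_1^{k_1}t_2^{k_2}\langle\nabla^{k_1}_{x_1}\nabla^{k_2}_{x_2}f,\tilde\varphi_{(y_1,t_1)}\otimes\tilde\varphi_{(y_2,t_2)}\rangle$ with $\tilde\varphi_j$ again a smooth compactly supported wavelet. The scaling factors cancel, and $\nabla^{k_1}_{x_1}\nabla^{k_2}_{x_2}T(\nabla^{k_1}_{x_1}\nabla^{k_2}_{x_2})^{-1}$ is realized, modulo the four paraproduct and four partial paraproduct terms of the representation (which after differentiation become standard bi-parameter paraproducts with product-$\mathrm{BMO}$ symbols and partial paraproducts), as a \emph{complexity zero} bi-parameter cancellative wavelet operator $U$. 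The weighted Sobolev estimate is thereby equivalent to a family of $L^p(w)$ bounds for $U$ and for these (partial) paraproducts.

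The second step is the $L^p(w)$ bound for $U$. Discretizing the $(y_j,t_j)$-averages over a product of dyadic resolutions dominates $U$ by a bi-parameter Haar shift of complexity $(0,0)$. Since such a shift acts, variable by variable, like a one-parameter complexity-zero shift with values in a sequence space, one applies the sharp one-parameter weighted estimate (linear in $[w]_{A_2}$ at $p=2$, and $[w]_{A_p}^{\max\{1,(p-1)^{-1}\}}$ for general $p$, as in the one-parameter theory behind Corollary \ref{cor:t1}) successively in $x_1$ and in $x_2$, using the product structure of $A_p$ and Fubini. Each application contributes one power, so at $p=2$ one obtains $\|Uf\|_{L^2(w)}\lesssim[w]_{A_2}^2\|f\|_{L^2(w)}$; this quadratic shape is intrinsic to the bi-parameter setting, where local average sparse domination fails \cite{BCOR}. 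For general $p$ the exponent $\theta(p)$ is the smaller, in each range of $p$, of the exponent obtained by iterating the sharp one-parameter $A_p$ bound and the exponent produced by quantitative Rubio de Francia extrapolation from the $p=2$ estimate; this minimum equals $\frac{2}{p-1}$ for $1<p\le\frac32$, equals $2$ for $p\ge3$, and interpolates as in \eqref{e:sharp} in between.

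Third, the (partial) paraproducts. Full bi-parameter paraproducts with product-$\mathrm{BMO}$ symbol, and their adjoints, do admit bi-parameter sparse \emph{forms} (unlike generic bi-parameter Calder\'on-Zygmund operators), so their sharp weighted bounds follow from the bi-parameter weighted theory \cite{HPW,LMV}; the partial paraproducts—a paraproduct in one parameter and a complexity-zero shift in the other—are handled by combining a one-parameter paraproduct estimate with the iteration above, and they are the true bottleneck. Collecting: in the fully cancellative case only $U$ survives, giving the exponent $\theta(p)$; in the general case the partial paraproducts push the exponent up to $\max\{3,\frac{2p}{p-1}\}$. For the sharpness assertion when $\max\{p,p'\}\ge3$, take $T=T_1\otimes T_2$ with each $T_j$ a one-parameter Calder\'on-Zygmund operator realizing the sharp $A_p$ lower bound (e.g.\ a Hilbert or Beurling transform with the classical power-weight extremizers) and $w=w_1\otimes w_2$ the tensor of the extremal weights; the bi-parameter lower bound factorizes into the product of the one-parameter ones, yielding $[w]_{A_p}^2$, and duality transfers the case $p'\ge3$ to $p\ge3$.

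The main obstacle is that, lacking any bi-parameter sparse domination, every weighted bound must be extracted by iteration of one-parameter estimates followed by quantitative extrapolation, and keeping the exponents \emph{sharp}—in particular pinning down $\theta(p)$ on $\frac32<p<3$ and verifying that the partial paraproducts do no worse than $\max\{3,\frac{2p}{p-1}\}$—requires careful tracking of constants through the discretization and the extrapolation; matching the upper bound with a lower bound forces the tensor extremizers in the two parameters to saturate the respective one-parameter estimates simultaneously.
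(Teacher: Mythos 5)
Your high-level plan -- apply the $(k_1,k_2)$-smooth representation Theorem \ref{t:T12p}, integrate by parts to transfer the $k_j$ derivatives onto the wavelets, then prove $L^p(w)$ bounds for the resulting finite list of model operators -- matches the paper exactly, and your sharpness example by tensoring one-parameter extremizers is the paper's argument verbatim. But the central step, the weighted bound for the cancellative piece $U$, contains a genuine gap. The wavelets $\upsilon_{((y_1,t_1),(y_2,t_2))}$ produced by the bi-parameter representation are \emph{not} tensor products of one-parameter wavelets; they belong to $\Psi^{k,\eps;0,0}_z$ but are genuinely bi-parameter objects. Consequently $U$ cannot be written as a composition of one-parameter operators acting in $x_1$ and $x_2$ separately, and your plan to discretize it into a bi-parameter Haar shift of complexity $(0,0)$ and then iterate the sharp one-parameter $A_p$ bound variable by variable does not go through. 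If it did, it would give $[w]_{A_2}^2$ at $p=2$ as you claim; but Proposition \ref{p:wbTcanc} (and the resulting $\theta(p)$ of \eqref{e:sharp}) gives exponent $3$ at $p=2$, precisely because the actual argument is asymmetric. What the paper does instead is prove the pointwise inequality $\mathrm{SS}_\otimes(Tf)\lesssim\mathrm{SS}f$ between the \emph{smaller} tensor square function and the \emph{larger} intrinsic square function, then pays $[w]_{A_p}$ for the reverse square function bound (Proposition \ref{t:w2rev}) and $[w]_{A_p}^{\max\{1,2/(p-1)\}}$ for the forward bound on $\mathrm{SS}$ (Theorem \ref{t:w2}); the two-power cost at $p=2$ of the upper square function, not one power per variable, is what fixes the exponent.

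Secondly, for the paraproduct and partial-paraproduct terms, invoking bi-parameter sparse form domination via the cited literature would not yield the quantitative exponents in the theorem; the paper itself remarks that the dyadic-representation route produces strictly worse powers of $[w]_{A_2}$. Instead the paper proves Propositions \ref{p:full} and \ref{p:half} directly by $H^1$--$\mathrm{BMO}$ duality combined with the mixed square-maximal operators $\mathrm{SM}$ and $\mathrm{MS}$ and the shifted one-parameter square functions, whose sharp weighted bounds it establishes in Theorem \ref{t:w2} and \eqref{e:weight}. So while your overall architecture is right, you would need to replace both the Haar-shift iteration and the sparse-form appeal by the square-function machinery of Section \ref{s8} to actually obtain (and not overshoot) the stated exponents.
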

The above result,   precisely stated   in Corollary  \ref{cor:t12}, generalizes and quantifies R.\ Fefferman's qualitative weighted bounds for bi-parameter Journ\'e-type operators \cite{RF1}. While Martikainen's work \cite{MK1}  did not contain weighted $T(1)$-type implications,  a simplified proof of Fefferman's result was recently obtained in \cite{HPW}  relying on the representation from \cite{MK1}.  Some quantitative estimates, weaker\footnote{The exponent  of the $A_2$ constant obtained in \cite[Corollary 3.2]{BPS} is 10, in contrast with power 3 obtained in \eqref{e:sharp}. Therein, it is claimed that tracking the constants in the argument of \cite{HPW} yields power 8.} than the ones of Corollary \ref{cor:t12}, have been obtained in \cite{BPS} by a shifted square function form-type domination for cancellative Journ\'e operators, also relying on \cite{MK1} within the proof.  At  present, it does not seem possible to match the quantification obtained in Corollary \ref{cor:t12}  using dyadic representation theorems in the vein of \cite{MK1,OuMp}.  Part of the challenge with this is that one parameter proofs of the quantitative results typically rely upon some variant of stopping time (sparse operators, weak-type (1,1), or Bellman functions) as a key ingredient and not easily adaptable to the bi-parameter setting.  Our analysis based on square function methods is able to circumvent this issue at least for $\max\{p,p'\}\geq3$.
 
Qualitative estimates for bi-linear, bi-parameter paraproduct free singular integrals have been obtained very recently in \cite{LMV}. We expect that multilinear variants of our result can   quantify and extend the scope of the weighted inequalities of \cite{LMV}.  The method used in this paper has an additional benefit over dyadic representation.   The dyadic representation theorems rely heavily upon the Haar basis,  making  it difficult to work in the settings of other Calder\'on-Zygmund operators that respect different dilation structures,  such as  Zygmund-type dilations \cite{FP,MRSI,MRSII}.  Our method is easily adaptable towards representation and weighted $T(1)$ theorems for  more general dilation structures; this will be pursued in future work. 

\subsection*{Structure}  Section \ref{s:2} contains the definition of the one parameter  wavelet classes  and introduces  the related intrinsic square function.  Section \ref{s3} provides technical lemmas in the one parameter setting, describing the  averaging procedure of the wavelet basis and setting up an auxiliary Alpert basis to handle higher degree paraproducts.  Section \ref{s:1p} gives the statement and   proof of the one parameter smooth representation theorem and weighted Sobolev $T(1)$ corollary, see Theorem \ref{t:T1} and Corollary \ref{cor:t1}.  Sections \ref{s:4} through  \ref{s7} set forth the statement and proof of the bi-parameter versions,  Theorem \ref{t:T12p} and Corollary \ref{cor:t12}.  The concluding Section \ref{s8} is devoted to establishing weighted norm inequalities for the model operators appearing in the bi-parameter representation, via new weighted bounds for intrinsic operators such as the bi-parameter intrinsic square function.
  \subsection*{Notation} The symbol $C=C(a_1,\ldots, a_n)$ and the constant implied by almost inequality and comparability symbols  $\lesssim_{a_1,\ldots,a_n}, \sim_{a_1,\ldots,a_n}$ are meant to depend on the parameters $a_1,\ldots, a_n$ only  and may vary at each occurrence without explicit mention.   It is convenient to employ the Japanese bracket symbol 
\[
\l x \r = \max\{1,|x|\}
, \qquad x\in \R^d.\]  
 The Fourier transform of $f\in \mathcal S(\R^d) $ is normalized as
\[ 
\widehat f(\xi) = \frac{1}{(\sqrt{2\pi})^d} \int_{\R^d} f(x)  \e^{-ix\cdot\xi } \, \d x, \qquad \xi \in \R^d.
 \] 
 
\subsection*{Acknowledgments} The authors are deeply thankful to Alexander Barron, Henri Marti\-kai\-nen and Yumeng Ou for illuminating conversations on bi-parameter $T(1)$ theorems and weigh\-ted norm inequalities. The authors gratefully acknowledge Walton Green for his insightful reading and suggestions which led to significant improvements to the clarity of the statements and exposition.
   
\section{Wavelet Classes and the Intrinsic Square Function} \label{s:2} In this section  we introduce  the normalized classes of wavelets with limited decay that appear in our wavelet representation theorem for   Calder\'on-Zygmund forms on $\R^d$.  
 
\subsection{Analysis in the symmetry parameter space}Our analysis of these forms is based on a symmetry parameter space description. In the classical, single-parameter setting of Section \ref{s:1p}, the parameter space and its associated  natural measure $\mu$  are
\[
z=(x,s)\in Z^{d}\coloneqq
\R^{d} \times (0,\infty), \qquad \int_{Z^d} f(z)\, \d \mu(z)\coloneqq \int\displaylimits_{\R^d \times (0,\infty)} f(x,s)\, \frac{\d x\d s}{s}. 
\]
Points of $Z^d$ conveniently parametrize open balls in $\R^d$ by
\[
\mathsf{B}_z=\{y\in \R^d: |y-x|<s\},\qquad z=(x,s)\in Z^d. 
\]
When two points, or families of points, of $Z^d$ appear in the same statement and the context allows for it, the notation $z=(x,s)$ and the corresponding Greek version $\zeta=(\xi,\sigma)$ are used; for instance, see \eqref{e:Delta1} below. 
For each $\zeta=(\xi,\sigma)\in Z^d$ it is convenient to refer to the following partition of $Z^d$:
\begin{equation}
\label{e:partz}
\begin{split}
&Z^d_+(\zeta)\coloneqq \{z=(x,s)\in Z^d:s\geq \sigma\}=F_+(\zeta) \sqcup S(\zeta)    \sqcup A(\zeta), \\
& 
  F_+(\zeta)\coloneqq \big\{z=(x,s)\in Z^d_+(\zeta) :    |x-\xi|>3s \big\},
\\ 
 &S(\zeta)\coloneqq \big\{z=(x,s)\in Z^d_+(\zeta)  :   s\in[\sigma, 3\sigma], |x-\xi|<3 s \big\},
 \\  & A(\zeta)\coloneqq \big\{z=(x,s)\in Z^d_+(\zeta):   s> 3\sigma, |x-\xi|<3s \big\}.
\end{split}
\end{equation}
\subsubsection{Symmetries parametrized by $z\in Z^d$}Throughout, the one parameter family of symmetries on $\phi\in \mathcal S(\R^d)$ is defined as
\[\begin{split}
&\mathsf{Tr}_{x} \phi(\cdot) = \phi(\cdot- x),  \qquad
 \mathsf{Dil}^p_{s}\phi(\cdot) = \frac{1}{s^{d/p}} \phi \left( \frac{\cdot}{s}\right), \qquad  x\in \R^d, \, s>0,\,  p\in (0,\infty]; \\
 &\mathsf{Sy}_{z} \phi = \mathsf{Dil}^1_{s} \circ \mathsf{Tr}_{x}, \qquad  z=(x,s) \in Z^d.
\end{split}
\]
\subsubsection{Cutoffs}Choose $\alpha\in \mathcal C^\infty( \R^d) $, radial and with
$\alpha=1$ on $\mathsf{B}_{(0,2)}$, $\supp \, \alpha \subset \mathsf{B}_{(0,4)}$, and accordingly define the cutoffs
\begin{equation}
\label{e:cutoffs}
\alpha_{z}\coloneqq  \mathsf{Tr}_{x} \mathsf{Dil}^\infty_{s}\alpha ,\qquad \beta_z =1-\alpha_z, \qquad z=(x,s) \in Z^d.
\end{equation}
Note that $ \supp\, \alpha_z \subset 4\mathsf{B}_z=\mathsf{B}_{(x,4s)}$ $ \supp \,\beta_z\subset \R^{d} \setminus 2\mathsf{B}_z$, and 
unlike most other functions parametrized by $z\in Z^d$ the cutoffs $\alpha_z,\beta_z$ will always be $\infty$-normalized. 
\subsubsection{Measuring decay in $Z^d$}For a  decay parameter $\nu>0$, define the function
\begin{equation}
\label{e:brack}
[\cdot]_\nu: Z^d \to (0,1], \qquad [(x,s)]_\nu\coloneqq \frac{(\min\{1,s\})^\nu}{(\max\{1,s,|x|\})^{d+\nu}}.
\end{equation}
Throughout the article, the fact that
\begin{equation}
\label{e:integbrack} \int_{Z^d} [z]_{\nu} \,\d \mu(z) \lesssim_\nu 1, \qquad \nu >0
\end{equation}
will be heavily exploited.  
The  geometric separation in the parameter space $Z^{d}$ is then described by the function
\begin{equation}
\label{e:Delta1}
[z,\zeta]_\nu = \frac{1}{s^d} \left[\left(\frac{\xi-x}{s}, \frac{\sigma}{s} \right)\right]_\nu = \frac{
\left(\min\{s,\sigma\}\right)^{\nu}}{\left(\max\{s,\sigma,|x-\xi|\}\right)^{d+\nu}},\qquad z=(x,s), \, \zeta=(\xi,\sigma) \in Z^d.
\end{equation}
\subsubsection{Integration by parts}
Throughout the paper, for $k\in \mathbb N$, a multi-index $\gamma\in \R^d$ with $|\gamma|=k$ and a Schwartz function $f$ with $\widehat f(\xi)=O(|\xi|^{k})$ as $\xi \to 0$, we denote
\begin{equation}
\label{e:pgamma}
\partial^{-\gamma}  f (x) = \frac{1}{(\sqrt{2\pi})^d} \int\displaylimits_{\R^d} \widehat{f}(\xi) \frac{(i\xi)^\gamma}{|\xi|^{2k}} \e^{ix\cdot \xi}\, \d \xi.
\end{equation} Then $\partial^{-\gamma}  f\in \mathcal S(\R^d)$ and
Plancherel's theorem implies the equality
\[
\langle f, g\rangle = \sum_{|\gamma|=k} \left\langle \partial^{-\gamma} f, \partial^ \gamma g \right\rangle \eqqcolon \langle \nabla^{-k} f,\nabla^k g\rangle, \qquad  g\in \mathcal S(\R^d).
\]
\begin{remark} 
\label{r:riesz}  Let $\kappa \in \R$, $u \in \{1,\ldots,d\}$ and $|\nabla|^\kappa$, $R_u$ be respectively the $\kappa$-th order Riesz potential and $u$-th Riesz transform on $\R^d$,
\[
|\nabla|^{-\kappa} f(x)  = \frac{1}{(\sqrt{2\pi})^d} \int\displaylimits_{\R^d} |\xi|^{-\kappa}\widehat{f}(\xi)   \e^{ix\cdot \xi}\, \d \xi, \qquad 
R_{u} f(x)  = \frac{1}{(\sqrt{2\pi})^d} \int\displaylimits_{\R^d} \widehat{f}(\xi) \frac{i\xi_u}{|\xi|} \e^{ix\cdot \xi}\, \d \xi.
\]
 For a multi-index $\gamma=(\gamma_1,\ldots, \gamma_d)$, let $R^\gamma=R_{1}^{\gamma_1}\circ \cdots \circ R_{d}^{\gamma_d}$.  With this notation
$
\partial^{-\gamma} =  |\nabla|^{-|\gamma|} R^\gamma
$ up to a multiplicative constant depending on $d,|\gamma|$ only. This multiplicative constant will be ignored in the subsequent uses of this remark.
\end{remark}
\subsubsection{Mother wavelet}
Let $\Phi\in \mathcal C^\infty (\R^d)$ be  radial and supported on $\mathsf{B}_{(0,1)}$,   $D\in \mathbb N $ a fixed large parameter,\footnote{For instance, when proving Theorems \ref{t:T1}, \ref{t:T12p} below, any $D\geq 8(\max\{k_1,k_2\}+ d_1 + d_2)$ will suffice.} and 
 $a= a(d,D)>0$ chosen so that \eqref{e:CRF} below holds.  Define the mother wavelet $\varphi$ by
\begin{align}
\label{e:mw1} 
&\varphi\coloneqq a\Delta^{4D} \Phi \in \mathcal C^\infty (\R^d),  \qquad\mathrm{supp} \,\varphi \subset \mathsf{B}_{(0,1)}, \qquad  \varphi \; \textrm{ radial}, \qquad 
\int_{\R^{d}} |\varphi| \, \d x =C(d,D).
\end{align}
This definition implies 
\begin{align}
\label{e:ck1paral} &
\partial^{-\alpha} \varphi = \partial^{\alpha} \Delta^{4D-|\alpha|} \Phi\in \mathcal S(\R^d), \qquad \mathrm{supp}\, \partial^{-\alpha} \varphi \subset \mathsf{B}_{(0,1)}, \qquad \forall 0\leq |\alpha| \leq D,
\end{align}
and in particular
\begin{align}
\label{e:ck1par}
&\int_{\R^{d}} x^{\gamma} \psi (x) \, \d x  =0   
\end{align}
holds  for all  $\psi\in\{\partial^{-\alpha}\varphi: 0\leq|\alpha|\leq  D\}$  and all $0\leq |\gamma|\leq D$.  
 The  translated, rescaled functions
\begin{equation}
\label{e:mw5}
\varphi_z = \Sy_z \varphi\qquad z\in Z^d \
\end{equation}
yield the Schwartz version of the Calder\'on reproducing formula \cite{BT,FJWb,WilsonB}
\begin{equation}
\label{e:CRF}
h= \int _{Z^d} \langle h,\varphi_\zeta  \rangle   \varphi_\zeta   \,\d \mu(\zeta) , \qquad h\in \mathcal S(\R^d).
\end{equation}
\subsection{Wavelet classes} \label{ss:21}
For $\nu>0$, $0<\delta\leq 1$ define 
the norm on $ \mathcal S(\R^d)$
\[
\|\phi\|_{\star,\nu,\delta}= \sup_{x\in \R^d}\langle x \rangle^{d+\nu}\left|\phi(x)\right| +  \sup_{x\in \R^d}\sup_{\substack{ h\in \mathbb \R^d \\ 0<|h|\leq 1}} \langle x \rangle^{d+\nu}\frac{\left|\phi(x+h) - \phi(x)\right|}{|h|^{\delta}}.
\] 
Using this norm and $\mathsf{Sy}_z$, adapted classes are defined. For    $k\in \mathbb N, $   $0<\delta\leq 1$ set
\[
\Psi_z^{k,\delta;1}=\left\{\phi\in \mathcal S(\R^d) : s^{|\gamma|}\left\| (\mathsf{Sy}_{z})^{-1} \partial^{\gamma} \phi \right\|_{\star,k+\delta,\delta} \leq 1  : 0\leq |\gamma|\leq k\right\}, \qquad z=(x,s)\in Z^d.
\]
The membership $\phi\in \Psi^{k,\delta;1}_{z}$, for a fixed $z = (x,s)\in Z^d$, yields the following quantitative decay and smoothness conditions: for each multi-index $\gamma$ on $\R^d$ with $0\leq |\gamma|\leq k$, there holds 
\begin{align}
&
|\partial^\gamma\phi(y)| \leq \frac{1}{s^{d+|\gamma|}} \left \langle  \frac{y-x}{s} \right\rangle^{-(d+k+\delta)} , \qquad y \in \R^d;  \label{e:decayconcrete1}
\\ &
|\partial^\gamma\phi(y+h)-\partial^\gamma\phi(y)| \leq \frac{|h|^\delta}{s^{d+|\gamma|+\delta}} \left \langle  \frac{y-x}{s} \right  \rangle^{-(d+k+\delta)} , \qquad y \in \R^d, h \neq 0.  \label{e:decaysmoothconcrete1}
\end{align}
Then set
\begin{align} \label{e:smoothclass1p}
&  \Psi_z^{k,\delta;0}\coloneqq\left\{\psi \in \Psi_z^{k,\delta;1}: \;\textrm{\eqref{e:ck1par} holds} \; \forall\, 0 \leq |\gamma| \leq k \right\}.
\end{align}
When $k=0$, the notation is simplified by writing $\Psi_z^{\delta;1},\Psi_z^{\delta;0}$ in place of $\Psi_z^{0,\delta;1}, \Psi_z^{0,\delta;0}$.
\begin{remark} \label{r:ibp}
Note that $\varphi_z$  defined in \eqref{e:mw5} belongs to $C\Psi_{z}^{D,1;0} $. More generally if $0\leq |\gamma| \leq D$  
\begin{equation}
\label{e:phigammaz}
\varphi_{\gamma,z} \coloneqq  \mathsf{Sy}_z[ \partial^{-\gamma} \varphi] = s^{-|\gamma|} \partial^{-\gamma}   \mathsf{Sy}_z \varphi \in C\Psi_{z}^{D,1;0}, \qquad \mathrm{supp}\,  \varphi_{\gamma,z}\subset \mathsf{B}_z.\end{equation}
\end{remark}
Limited decay wavelets enjoy the following almost-orthogonality estimates. 
\begin{lemma} \label{l:ttstar1}Let  $0<\eta<\delta\leq 1$, $0\leq k\leq D$, $z=(x,s),\zeta=(\xi,\sigma) \in Z^d$ with $s\leq \sigma$. Then 
\[
\sup_{\psi \in \Psi^{k,\delta;0}_{z} } \sup_{\phi \in \Psi^{k,\delta;1}_{\zeta} }
\left|\langle \psi, \phi  \rangle \right| \lesssim_{\eta} [z,\zeta]_{k+\eta},
\qquad
\sup_{\phi \in \Psi^{k,\delta;1}_{\zeta} }
\left|\langle \varphi_z, \phi  \rangle \right| \lesssim_{k,\delta} [z,\zeta]_{k+\delta}.
\]
\end{lemma}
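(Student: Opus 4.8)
The estimate is a standard almost-orthogonality statement, and the natural strategy is to exploit the cancellation of $\psi$ (resp. $\varphi_z$) against a Taylor expansion of $\phi$, using the decay and smoothness encoded in the wavelet classes. I would split the parameter region $Z^d_+(\zeta)$ according to the partition \eqref{e:partz}, namely $z\in F_+(\zeta)$ (the "far" regime, $|x-\xi|>3s$), and $z\in S(\zeta)\sqcup A(\zeta)$ (the "near" regime, $|x-\xi|<3s$); note that here $s\le\sigma$ so in fact only $s\in[\sigma,3\sigma]$ or $s\le\sigma$ is relevant, but the partition as stated is for $s\ge\sigma$, so I should run the symmetric version of it. In each regime the target quantity $[z,\zeta]_{k+\eta}$ simplifies: in the far regime $[z,\zeta]_{k+\eta}\sim s^{k+\eta}\sigma^{-(d+k+\eta)}$ if $|x-\xi|\lesssim\sigma$ and $\sim s^{k+\eta}|x-\xi|^{-(d+k+\eta)}$ if $|x-\xi|\gtrsim\sigma$; in the near regime it is $\sim s^{k+\eta}\sigma^{-(d+k+\eta)}$.

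For the far regime, since $|x-\xi|>3s$, the supports/essential supports of $\psi$ (concentrated at scale $s$ around $x$) and $\phi$ are separated at scale $\max\{s,|x-\xi|\}$; I would bound $|\langle\psi,\phi\rangle|$ crudely by integrating the product of the two pointwise bounds \eqref{e:decayconcrete1}, using that on the region $|y-x|\lesssim|x-\xi|$ the function $\phi$ is small by its decay \eqref{e:decayconcrete1} (with $\gamma=0$), namely $|\phi(y)|\lesssim\sigma^{-d}\langle(y-\xi)/\sigma\rangle^{-(d+k+\delta)}$, and on $|y-x|\gtrsim|x-\xi|$ the function $\psi$ is small; carrying out the $y$-integral and using $k+\eta<k+\delta$ gives the claimed power of $\max\{s,\sigma,|x-\xi|\}$. (When $s\ll\sigma$ and $|x-\xi|$ is only slightly larger than $s$, one still needs to check the exponent $d+k+\eta$ comes out, which follows from the loss $\eta<\delta$.)

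For the near regime $|x-\xi|<3s\le 3\sigma$, I would use the vanishing moments of $\psi\in\Psi_z^{k,\delta;0}$ (i.e. \eqref{e:ck1par} for $|\gamma|\le k$): write $\langle\psi,\phi\rangle=\langle\psi,\phi-P\rangle$ where $P$ is the order-$k$ Taylor polynomial of $\phi$ at $x$, so that $|\phi(y)-P(y)|\lesssim|y-x|^{k+\delta}\sup|\partial^\gamma\phi|$-type bound; here one must be a little careful because $\phi$ only has Hölder-$\delta$ derivatives of order $k$, not $C^{k+1}$, so the remainder estimate is $|\phi(y)-P(y)|\lesssim|y-x|^{k}\cdot\text{osc of }\partial^k\phi\lesssim |y-x|^{k+\delta}\sigma^{-(d+k+\delta)}$ using \eqref{e:decaysmoothconcrete1}. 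Pairing against $|\psi(y)|\lesssim s^{-d}\langle(y-x)/s\rangle^{-(d+k+\delta)}$ and integrating in $y$ (the integral $\int s^{-d}(|y-x|/s)^{k+\delta}\langle(y-x)/s\rangle^{-(d+k+\delta)}\,\d y\lesssim s^{k+\delta}$ converges precisely because the decay exponent exceeds $k+\delta$ — this is why the class is defined with $\star,k+\delta,\delta$) yields $|\langle\psi,\phi\rangle|\lesssim s^{k+\delta}\sigma^{-(d+k+\delta)}\lesssim s^{k+\eta}\sigma^{-(d+k+\eta)}=[z,\zeta]_{k+\eta}$ since $s\le\sigma$ and $\delta>\eta$. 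For the second inequality of the lemma, $\varphi_z=\mathsf{Sy}_z\varphi$ has vanishing moments of all orders $\le D\ge k$ (by \eqref{e:ck1par}) \emph{and} is genuinely $C^\infty$ with all derivatives bounded, so the Taylor remainder of $\phi$ can be taken to order $k$ with the full $|h|^\delta$ gain with no $\eta$-loss, and compact support of $\varphi_z$ in $\mathsf B_z$ kills the far regime outright; this produces the sharper bound $[z,\zeta]_{k+\delta}$.

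The main obstacle, and the only genuinely delicate point, is the bookkeeping of exponents in the far regime when $s$ and $\sigma$ are comparable but $|x-\xi|$ is large, versus when $s\ll\sigma$: one must verify that pairing the two decays always produces exactly $(\min\{s,\sigma\})^{k+\eta}(\max\{s,\sigma,|x-\xi|\})^{-(d+k+\eta)}$ and not something weaker, which is where the strict inequality $\eta<\delta$ is spent (to absorb the logarithmic-type losses at the transition $|y-x|\sim|x-\xi|$). Everything else is a routine convolution-of-bumps computation once the region is fixed, and the convergence of every $y$-integral is guaranteed by the choice $d+k+\delta$ of decay exponent built into $\Psi_z^{k,\delta;1}$ together with $k+\delta$ being the order of cancellation/smoothness matched by $\Psi_z^{k,\delta;0}$.
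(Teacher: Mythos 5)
Your overall strategy (split into near and far regimes, exploit the cancellation of $\psi$ against a Taylor expansion of $\phi$) is the right meta-approach, but two of the key estimates you propose are in fact false, and they point to a real conceptual gap rather than just bookkeeping.

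\textbf{Far regime.} You propose bounding $|\langle\psi,\phi\rangle|$ for $|x-\xi|\gtrsim\max\{s,\sigma\}$ by a crude product-of-bumps estimate with no cancellation. This cannot produce the target. Take the dominant piece $|y-x|\lesssim|x-\xi|$, where $|\phi(y)|\lesssim\sigma^{k+\delta}|x-\xi|^{-(d+k+\delta)}$ and $\int|\psi|\lesssim 1$: the best you get is $\sigma^{k+\delta}|x-\xi|^{-(d+k+\delta)}$, which exceeds $[z,\zeta]_{k+\eta}=s^{k+\eta}|x-\xi|^{-(d+k+\eta)}$ by roughly $(\sigma/s)^{k+\eta}$, unbounded as $s/\sigma\to 0$. (This is not a technicality: if $\psi$ were merely an $L^1$-normalized bump with no cancellation, one would have $\langle\psi,\phi\rangle\approx\phi(x)\approx\sigma^{k+\delta}|x-\xi|^{-(d+k+\delta)}$, which genuinely does not satisfy the claimed bound.) The factor $(\min\{s,\sigma\})^{k+\eta}$ must be earned from the vanishing moments of $\psi$ in the far regime as well: one subtracts the order-$k$ Taylor polynomial $T_x$ of $\phi$ at $x$ and uses the H\"older continuity of $\nabla^k\phi$. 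The $\eta<\delta$ slack absorbs logarithmic losses; it cannot recover a missing power of $s/\sigma$.

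\textbf{Near regime.} The integral you claim converges,
\[
\int_{\R^d} \frac{1}{s^d}\left(\frac{|y-x|}{s}\right)^{k+\delta}\left\langle\frac{y-x}{s}\right\rangle^{-(d+k+\delta)}\,\d y,
\]
does not: substituting $u=(y-x)/s$ gives $\int|u|^{k+\delta}\langle u\rangle^{-(d+k+\delta)}\,\d u$, whose integrand behaves like $|u|^{-d}$ at infinity, so it diverges logarithmically. Your remark that the decay exponent $d+k+\delta$ was chosen "so that this integral converges" is backwards: the exponent is exactly critical, and after (correctly) restricting the Taylor remainder estimate to $|y-x|\lesssim\sigma$ one picks up a $\log(\sigma/s)$ factor, and that log is precisely what the strict inequality $\eta<\delta$ absorbs. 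For $|y-x|\gtrsim\sigma$ the remainder bound is useless (the polynomial grows, $\phi$ decays); one must separately estimate $\int|T_x|\,|\psi|$ (polynomial growth of order $k$ against the decay $|\psi(y)|\lesssim s^{k+\delta}|y-x|^{-(d+k+\delta)}$) and $\int|\phi|\,|\psi|$ directly. In fact, the paper avoids any partition of $Z^d$: it normalizes $\zeta=(0,1)$, $z=(x,s)$ with $s\leq 1$, $|x|\geq1$, subtracts $T_x$ once, and splits the $y$-integral into $|y-x|<1$ and $|y-x|\geq1$; the cancellation is then active in every region. Your treatment of the second inequality (compact support of $\varphi_z$ truncates the tail and eliminates the logarithm) is essentially right.
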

 \begin{proof}  Consider the first estimate. By scale invariance and symmetry one may reduce to the case $\zeta=(0,1)$, $z=(x,s)$ with $s\leq 1$. Further, assume $|x|\geq 1$ as the case $|x|\leq 1$ is strictly easier.
In this case
$[z,\zeta]_{k+\eta }= s^{k+\eta}{|x|^{-(d+\eta)}}.$
Thanks to the vanishing moment properties of $\psi$, one can subtract $T_x(y)$, the Taylor polynomial of $\phi$ of order $k$ centered at $x$. Then one has 
\[ \begin{split} &
|\l \psi,\phi \r| \leq \int\displaylimits_{|y-x|< 1} \left|\phi(y) - T_x(y)  \right|  |\psi(y)|\,  \, \d y +  \int_{|y-x|\geq 1}   |T_x(y)||\psi(y)|\,  \, \d y + \int\displaylimits_{|y-x|\geq 1}  |\phi(y)| |\psi(y)|\,  \, \d y. 
\end{split}
\]
Using  \eqref{e:decaysmoothconcrete1} for $\nabla^k\phi$  and \eqref{e:decayconcrete1} for $\psi$,
\begin{equation}
\label{e:lemdec0}\nonumber\begin{split} &\quad 
\int\displaylimits_{|y-x|< 1}  \left|\phi(y) - T_x(y)  \right| |\psi(y)| \, \d y \lesssim \frac{1}{|x|^{(d+\delta+k)}} \int\displaylimits_{|y-x|< 1} \frac{|y-x|^{\delta+k}}{\left(1+\frac{|y-x|}{s}\right)^{d+k+\delta}} \frac{\d y}{s^d}\\ & \lesssim \frac{s^{k+\delta} |\log s| }{|x|^{(d+k+\delta)}}\lesssim_\eta [z,\zeta]_{k+\eta }.\end{split}
\end{equation}
Using \eqref{e:decayconcrete1} for $\nabla^k\phi$ instead gives,  \[ \begin{split}
\label{e:lemdec1}
&  \int_{|y-x|\geq 1} |T_x(y)|  |\psi(y)|\,  \, \d y \lesssim \frac{s^{k+\delta}}{|x|^{d+k+\delta}} \int\displaylimits_{|y-x|>1} |y-x|^{-(d+\delta)} {\d y} \lesssim    [z,\zeta]_{\delta },\\
& \int\displaylimits_{\substack{|y-x|\geq 1\\ 2|y|< |x|}}  |\phi(y)| |\psi(y)|\,  \, \d y \leq \int_{2|y-x|>|x|}  |\psi(y)| \, \d y \lesssim \int_{\frac{|x|}{2s}}^\infty  t^{-(k+\delta+1)} \d t \lesssim  [z,\zeta]_{\delta },
\\ &
\int\displaylimits_{\substack{|y-x|\geq 1\\ 2|y|> |x|}}  |\phi(y)| |\psi(y)|\,  \, \d y \leq  \frac{1}{|x|^{(d+\delta+k)}} \int_{|y-x|>1}  |\psi(y)| \, \d y \lesssim   [z,\zeta]_{\delta }.
\end{split}
\]
Assembling the last two displays yields the claimed first estimate.

The second estimate is proved similarly. Again, renormalize to have $\zeta=(0,1)$, $z=(x,s)$ with $s\leq 1$.   Taking advantage of Remark \ref{r:ibp} to rely on the vanishing mean of $\varphi_{\gamma,z} $, and using the decay  \eqref{e:decaysmoothconcrete1} for  $  \phi \in \Psi^{k,\delta;1}_\zeta$ gives
\[
|\langle \phi, \varphi_z \rangle| \leq  s^k \sum_{|\gamma|=k} \int\displaylimits_{\mathsf{B}_{z}}|  \partial^{\gamma} \phi(y) - \partial^{\gamma} \phi(x) | |  \varphi_{\gamma,z}(y)|\, \d y
\lesssim  \frac{s^{k+\delta}}{\max\{1, |x|\}^{d+k+\delta} } = [z,\zeta]_{k+\delta},\] 
and the proof is complete.
\end{proof}

\subsection{Intrinsic Forms and Sparse Estimates}
Lemma \ref{l:ttstar1} leads to the $L^2$-boundedness of an intrinsic square function associated to the classes $\Psi^{\delta;0}_z$.  This square function will now be defined.
For $f\in L^p(\R^d)$, $1\leq p \leq \infty$, $\iota\in \{0,1\}$  and $z\in Z^d$ define the intrinsic wavelet coefficients
\begin{align} \label{e:Phidelta1}&
\Psi^{\delta;\iota}_z f  \coloneqq  \sup_{\psi \in \Psi^{\delta;\iota}_z} |\langle f,\psi\rangle |. \end{align}
The $\iota=0$ coefficients enter the  intrinsic square function
\begin{equation}
\label{e:Sdelta1}
{\mathrm{S}_\delta} f(x) = \left(\int\displaylimits_{0 }^\infty\left(\Psi^{\delta;0}_{(x,s)} f\right) ^2 \, \frac{\d s }{s}\right)^{\frac12}.
\end{equation}
The value $\delta>0$ is fixed but arbitrary and, whenever possible, it will be omitted from the notation in \eqref{e:Phidelta1} and \eqref{e:Sdelta1}, writing for instance   ${\mathrm{S}}f$ instead of ${\mathrm{S}}_{\delta}f$. Lemma \ref{l:ttstar1} implies easily the $L^2$ estimate of the following proposition.
\begin{proposition} \label{p:intrinsicsf1} Let $f\in L^2(\R^d)$. Then $\|{\mathrm{S}_\delta} f\|_2\lesssim_\delta \|f\|_2$.
\end{proposition}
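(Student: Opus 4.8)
The plan is to derive the $L^2$ bound for $\mathrm{S}_\delta$ directly from the Calderón reproducing formula \eqref{e:CRF} and the almost-orthogonality estimate of Lemma \ref{l:ttstar1}, via a Schur-type (or Cotlar-type) argument combined with the integrability bound \eqref{e:integbrack}. First I would note that by duality it suffices to control
\[
\|\mathrm{S}_\delta f\|_2^2 = \int_{Z^d} \left(\Psi^{\delta;0}_z f\right)^2 \, \d\mu(z),
\]
so one is really proving that the map $f \mapsto (\Psi^{\delta;0}_z f)_{z\in Z^d}$ is bounded from $L^2(\R^d)$ into $L^2(Z^d,\mu)$. For each fixed $z$, choose $\psi_z \in \Psi^{\delta;0}_z$ (measurably in $z$, up to the usual approximation) nearly attaining the supremum in \eqref{e:Phidelta1}, so that $\Psi^{\delta;0}_z f \leq |\langle f,\psi_z\rangle| + \epsilon$-error; then expand $f$ using \eqref{e:CRF}, giving $\langle f,\psi_z\rangle = \int_{Z^d} \langle f,\varphi_\zeta\rangle \langle \varphi_\zeta,\psi_z\rangle \, \d\mu(\zeta)$.

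The heart of the matter is then the bilinear estimate governed by the kernel $K(z,\zeta) = |\langle \varphi_\zeta, \psi_z\rangle|$. By the second estimate in Lemma \ref{l:ttstar1} (applied with the roles of the two points chosen according to which scale is smaller — this is the one place one must be slightly careful, using $\psi_z \in \Psi^{\delta;0}_z$ against $\varphi_\zeta$ when $s \leq \sigma$ and symmetrically, recalling $\varphi_\zeta \in C\Psi^{D,1;0}_\zeta$ so both the first and second estimates of the lemma apply after swapping), one gets $K(z,\zeta) \lesssim [z,\zeta]_{\eta}$ for any $0<\eta<\delta$ (or even $[z,\zeta]_\delta$). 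One then applies Schur's test: by \eqref{e:Delta1} and the scaling/translation invariance built into $[z,\zeta]_\nu = s^{-d}[((\xi-x)/s,\sigma/s)]_\nu$, together with \eqref{e:integbrack}, one checks
\[
\sup_{\zeta} \int_{Z^d} [z,\zeta]_{\eta} \, \d\mu(z) \lesssim_\eta 1, \qquad \sup_{z} \int_{Z^d} [z,\zeta]_{\eta} \, \d\mu(\zeta) \lesssim_\eta 1,
\]
each of which reduces, after the change of variables normalizing one of the two points to $(0,1)$, exactly to \eqref{e:integbrack}. Schur's lemma then yields that the integral operator with kernel $K$ is bounded on $L^2(Z^d,\mu)$.

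It remains to connect this back to $\|f\|_2$: one more application of \eqref{e:CRF} — or rather its $L^2$-isometry consequence, since \eqref{e:CRF} gives $\|f\|_2^2 = \int_{Z^d} |\langle f,\varphi_\zeta\rangle|^2 \,\d\mu(\zeta)$ by pairing \eqref{e:CRF} against $f$ — shows that $\zeta \mapsto \langle f,\varphi_\zeta\rangle$ lies in $L^2(Z^d,\mu)$ with norm $\|f\|_2$. Composing, $\|\mathrm{S}_\delta f\|_2 = \|(\Psi^{\delta;0}_z f)_z\|_{L^2(\mu)} \lesssim \|(\langle f,\varphi_\zeta\rangle)_\zeta\|_{L^2(\mu)} = \|f\|_2$, which is the claim. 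The main obstacle — really the only nontrivial point — is organizing the Schur test so that the single hypothesis \eqref{e:integbrack} covers both the $s\leq\sigma$ and $s\geq\sigma$ regions uniformly; this is handled by observing $[z,\zeta]_\nu$ is symmetric in $z,\zeta$ and homogeneous of the stated form, so both Schur integrals collapse to \eqref{e:integbrack} after a single rescaling, and the mild loss $\eta<\delta$ in Lemma \ref{l:ttstar1} is harmless since \eqref{e:integbrack} holds for every positive exponent. (One should also remark that measurability of $z\mapsto \Psi^{\delta;0}_z f$ follows since the supremum can be taken over a countable dense subfamily of $\Psi^{\delta;0}_z$ in an appropriate topology, by continuity of $\psi\mapsto\langle f,\psi\rangle$.)
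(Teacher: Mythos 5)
Your proof is correct, and it takes a route that is genuinely (if modestly) different from the paper's. Both arguments reduce to a Schur test driven by Lemma \ref{l:ttstar1} and the integrability \eqref{e:integbrack}, but the way the Schur test is anchored to $\|f\|_2$ differs. The paper never invokes the reproducing formula: after linearizing, it considers the operator $T\colon f\mapsto (\langle f,\psi_z\rangle)_{z}$ and estimates $\|T\|^2=\|TT^\star\|$ by a Schur-type bound for the \emph{rough-vs-rough} Gram kernel $\langle\psi_z,\psi_\zeta\rangle$ (this is what \eqref{e:ttstar1}, with both factors from the linearizing family $\psi$, encodes); this is a classical $TT^\star$ reduction and the only appeal to Lemma \ref{l:ttstar1} is via its first estimate, applied to two members of $\Psi^{\delta;0}$. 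You instead expand $\langle f,\psi_z\rangle$ through \eqref{e:CRF} against the fixed smooth family $\varphi_\zeta$, run Schur on the \emph{smooth-vs-rough} kernel $\langle\varphi_\zeta,\psi_z\rangle$, and close the estimate with the Plancherel-type identity $\|f\|_2^2=\int_{Z^d}|\langle f,\varphi_\zeta\rangle|^2\,\d\mu(\zeta)$. Your route has the advantage of being anchored directly to an isometry, so there is no need to worry about the a priori finiteness issue implicit in the $TT^\star$ reduction; the price is that you need the CRF and its $L^2$-isometry consequence, which the paper's proof does not use. One small imprecision: you cite ``the second estimate in Lemma \ref{l:ttstar1}'' as giving the kernel bound, but in the region $s\le\sigma$ you in fact need the first estimate (with $\psi_z\in\Psi^{\delta;0}_z$ and $\varphi_\zeta\in\Psi^{\delta;1}_\zeta$), while the second estimate kicks in only when $\sigma\le s$ after swapping roles; you acknowledge this in your parenthetical, and since both regimes yield $|\langle\varphi_\zeta,\psi_z\rangle|\lesssim[z,\zeta]_\eta$ for any $0<\eta<\delta$, the conclusion is unaffected.
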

\begin{proof} It suffices to work with $f\in L^2(\R^d)$ of unit norm.  
  Standard considerations  and a change of variable reduce the claimed bound to  the estimate
\begin{equation} \label{e:ttstar1}
\begin{split} &
\int\displaylimits_{ {(x,s) \in\R^d \times (0,\infty)}} \int\displaylimits_{ {(\alpha, \beta)\in  \R^d \times (0,1)}}|\langle f, \psi_{(x,s)} \rangle| |\langle s^d \psi_{(x,s)}, \psi_{(x+\alpha s,\beta s)}\rangle| |\langle f, \psi_{(x+\alpha s,\beta s)}\rangle| \,\frac{\d x\d s \d\alpha \d\beta}{s\beta }\\ &  \lesssim    \int\displaylimits_{z\in Z^d} |\langle f, \psi_z \rangle|^2  \,{\d\mu(z)} \end{split}\end{equation}
with implied constant uniform over the choice of $\psi_z \in   \Psi^{\delta;0}_z$ and $ z\in Z^d $.  
 Lemma \ref{l:ttstar1} then yields 
\[
|\langle s^d \psi_{(x,s)}, \psi_{(x+\alpha s,\beta s)}\rangle| \lesssim [(\alpha,\beta)]_{\frac \delta 2}
\]
so that \eqref{e:ttstar1} follows  by an application of Cauchy-Schwarz and \eqref{e:integbrack}.
\end{proof}
Next, consider the  intrinsic bisublinear form
\begin{equation}
\label{e:modelform1}
\Psi^\delta(f, g) = \int\displaylimits_{Z^d}  \Psi_z^\delta f \cdot \Psi_z^\delta g \, \d \mu (z)
\end{equation}
acting  on pairs $f,g\in \bigcup_{1\leq p\leq \infty }L^{p}(\R^d)$. Note that the sum in \eqref{e:modelform1} is of nonnegative terms, therefore issues of convergence in the definition may be disregarded. 
 \begin{proposition} \label{t:sparsemodel1}
  For each pair $f,g\in L^{1}(\R^d)$ there exists a sparse   collection $\mathcal S$ of cubes of $\R^d$   with the property that
\[ \Psi^\delta(f,g) \lesssim_\delta \sum_{Q\in  \mathcal S} |Q|  \langle f \rangle_Q \langle g \rangle_Q, \qquad \langle f\rangle_Q \coloneqq \frac{1}{|Q|} \int |f|\cic{1}_Q \,\mathrm{d} x.
\]
\end{proposition}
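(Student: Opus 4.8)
The plan is to prove a pointwise domination of the intrinsic wavelet coefficient $\Psi_z^\delta f$ by a local maximal average, and then run the standard sparse stopping-time argument on the resulting form. Specifically, for a cube $Q\subset \R^d$ set $z_Q=(c_Q,\ell(Q))$ where $c_Q$ is the center and $\ell(Q)$ the sidelength; I claim that for $z=(x,s)$ and any $\psi\in\Psi_z^{\delta;0}$ one has a decomposition of $\langle f,\psi\rangle$ that reflects the decay \eqref{e:decayconcrete1}: writing $f=\sum_{j\geq 0} f\mathbf 1_{2^{j+1}\mathsf B_z\setminus 2^j\mathsf B_z}$ (with the convention that the $j=0$ annulus is $2\mathsf B_z$), the estimate $|\psi(y)|\leq s^{-d}\langle (y-x)/s\rangle^{-(d+\delta)}$ gives $|\langle f,\psi\rangle|\lesssim \sum_{j\geq 0} 2^{-j\delta}\langle |f|\rangle_{2^{j+1}\mathsf B_z}$. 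Hence $\Psi_z^\delta f\lesssim \sum_{j\geq 0} 2^{-j\delta} \langle|f|\rangle_{2^{j+1}\mathsf B_z}$, which is dominated by $M_\delta f(x):=\sup_{B\ni x}\langle|f|\rangle_B$ up to the summable geometric factor; more usefully, I will keep the dyadic-annulus form because it is what lets one compare $\Psi_z^\delta f$ on a ball to averages over a fixed finite family of dyadic cubes.

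Next I would discretize the $\mu$-integral defining $\Psi^\delta(f,g)$. Cover $Z^d$ by the Whitney-type family $\{W_Q\}$ indexed over all dyadic cubes $Q$ of $\R^d$, where $W_Q=\{(x,s): x\in Q,\ \ell(Q)/2<s\leq\ell(Q)\}$, so that $\mu(W_Q)\sim 1$ and on $W_Q$ one has $\langle|f|\rangle_{2^{j+1}\mathsf B_z}\lesssim \langle|f|\rangle_{3\cdot 2^{j}Q}$ with comparable constants. Summing the pointwise bound over $W_Q$ and using $\mu(W_Q)\sim 1$ yields
\[
\Psi^\delta(f,g)\lesssim \sum_{Q\in\mathcal D} \Big(\sum_{j\geq0}2^{-j\delta}\langle f\rangle_{3\cdot2^jQ}\Big)\Big(\sum_{i\geq0}2^{-i\delta}\langle g\rangle_{3\cdot2^iQ}\Big).
\]
Expanding the product and reindexing (for each pair $(i,j)$ the cube $3\cdot 2^{\max(i,j)}Q$ contains both dilates, and the number of dyadic $Q$ mapping to a fixed dilate is controlled), the whole sum collapses, after absorbing the geometric factors $2^{-(i+j)\delta}$, to $\sum_{R\in\mathcal D} |R|\langle f\rangle_{3R}\langle g\rangle_{3R}$, i.e. a sum over (a bounded-overlap dilate of) all dyadic cubes with the right summand $|R|\langle f\rangle_R\langle g\rangle_R$. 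This is a classical object: the bisublinear form $\sum_{R}|R|\langle f\rangle_R\langle g\rangle_R$ over a dyadic lattice is dominated by a sparse form — one runs the usual Calderón–Zygmund stopping-time selection choosing, from a top cube $R_0$, the maximal subcubes $R$ with $\langle f\rangle_R>2\langle f\rangle_{R_0}$ or $\langle g\rangle_R>2\langle g\rangle_{R_0}$, which have total measure at most $\tfrac12|R_0|$, so the stopping cubes form a sparse family $\mathcal S$ and on each stopping region the averages are controlled by the top, giving $\sum_R|R|\langle f\rangle_R\langle g\rangle_R\lesssim\sum_{S\in\mathcal S}|S|\langle f\rangle_S\langle g\rangle_S$. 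Passing from the dilates $3R$ back to honest cubes costs only finitely many shifted dyadic lattices, which can be absorbed into the sparse collection.

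The main obstacle is bookkeeping rather than conceptual: making the passage from the continuous $Z^d$-integral to a clean sum over dyadic cubes precise, and in particular controlling the double sum over annular dilates so that the off-diagonal terms (where the ``test'' cube for $f$ and for $g$ live at very different scales around $Q$) are genuinely summable and reassemble into a single-scale dyadic form. The key quantitative inputs are the decay exponent $d+\delta$ in \eqref{e:decayconcrete1} — which produces the geometric factor $2^{-j\delta}$ that makes every annular sum converge — together with the finite-overlap property of the Whitney cover and the fact that $\mu(W_Q)\sim1$. Once the form is reduced to $\sum_R|R|\langle f\rangle_R\langle g\rangle_R$, the sparse domination is entirely standard and no further delicacy arises; note also that restricting to $f,g\in L^1$ guarantees all averages are finite and the stopping-time construction terminates, so convergence issues are vacuous.
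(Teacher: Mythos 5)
Your proof has a genuine and fatal gap: the pointwise bound $\Psi_z^\delta f\lesssim\sum_{j\geq 0}2^{-j\delta}\langle|f|\rangle_{2^{j+1}\mathsf B_z}$ uses only the size estimate \eqref{e:decayconcrete1} and completely discards the cancellation encoded in the class $\Psi^{\delta;0}_z$ (the vanishing moments). That cancellation is not optional here: it is the only thing making the $\mu$-integral $\int_0^\infty(\cdot)\,\tfrac{\d s}{s}$ converge. If one estimates $\Psi_z^{\delta;0}f$ by averages of $|f|$ as you do, then for $z=(x,s)$ you are dominating by $Mf(x)$ uniformly in $s$, and the form becomes $\int_{\R^d}Mf(x)Mg(x)\int_0^\infty\tfrac{\d s}{s}\,\d x=\infty$. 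Your Whitney discretization inherits exactly this divergence: after the reduction you arrive at $\sum_{R\in\mathcal D}|R|\langle f\rangle_{3R}\langle g\rangle_{3R}$, but this sum over the \emph{entire} dyadic lattice is infinite for essentially all $f,g\in L^1$. For instance with $f=g=\cic{1}_{[0,1]^d}$ and $R$ ranging over dyadic subcubes of $[0,1]^d$, each summand is $|R|$, and $\sum_{k\geq 0}\sum_{\ell(R)=2^{-k},\,R\subset[0,1]^d}|R|=\sum_{k\geq 0}1=\infty$. So the quantity you claim to dominate by a sparse form is actually $+\infty$, and the claim that ``the bisublinear form $\sum_R|R|\langle f\rangle_R\langle g\rangle_R$ over a dyadic lattice is dominated by a sparse form'' is false as stated. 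Your stopping-time sketch also breaks at the very last step: inside a single stopping region $R_0$ the non-stopping cubes do have controlled averages, but you would still need $\sum_{R\subset R_0}|R|\lesssim|R_0|$, which fails by the same computation. (A more minor slip: $\mu(W_Q)=|Q|\log 2\sim|Q|$, not $\sim 1$; this is actually what you want, but as written it is incorrect.)

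The paper does not give a detailed proof, but the route it indicates is fundamentally different and does use the cancellation in an essential way: after linearizing the supremum, the form $\int_{Z^d}\langle f,\psi_z\rangle\langle\psi_z,g\rangle\,\d\mu(z)$ is an honest Calder\'on-Zygmund form whose $L^2$-boundedness is exactly Proposition \ref{p:intrinsicsf1} (proved via the almost-orthogonality Lemma \ref{l:ttstar1}, which leans on the vanishing moments of $\Psi^{\delta;0}$), and whose kernel is $\eta$-H\"older for any $\eta<\delta$. Sparse domination then follows from any of the standard sparse-bound results for $L^2$-bounded Calder\'on-Zygmund operators cited in the text. In other words, cancellation enters twice: once to secure $L^2$ boundedness, and once implicitly in the $T(1)/$sparse machinery that converts $L^2$-bounded CZ forms into sparse forms. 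Any correct proof needs this input; a size-only pointwise domination cannot work because the scale integral is not absolutely convergent without it.
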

The interested reader can consult \cite{CUDPOULMS,CDPOBP} for the definitions relative to sparse operators and $(p_1,p_2)$ sparse bounds.  The $(1,1)$ sparse bound for the form $\Psi^\delta$ is exactly the conclusion of Proposition \ref{t:sparsemodel1}.

A proof of this proposition is not provided. In fact, standard calculations show that $\Psi^\delta(f, g)$ may be linearized to  a $L^2$-bounded    $\eta$-H\"older continuous Calder\'on-Zygmund kernel form for all $0<\eta<\delta$. Therefore,  sparse bounds may be obtained following one of the standard approaches in the literature, see e.g. Lerner \cite{Ler2013,Ler2015}, Lacey \cite{Lac2015} (see also \cite{HRT}), or \cite{CoCuDPOu,CDPOMRL}. A direct proof along the lines of  \cite{CoCuDPOu,CDPOMRL} may also be employed to obtain stronger domination results where local oscillations replace  averages in the sparse form. This will appear in forthcoming work.

The intrinsic form of  \eqref{e:modelform1} models  cancellative operators of Calder\'on-Zygmund type. Analogous intrinsic forms modeling paraproducts are also needed in the analysis. Referring to the wavelet coefficients \eqref{e:Phidelta1}, define on triples $f_j\in L^1_{\mathrm{loc}}(\R^d)$ the intrinsic forms
\begin{equation}
\label{e:modelpp1}
\Pi^{j,\delta}(f_1,f_2,f_3) = \int\displaylimits_{Z^d} \Psi_z^{\delta;1} f_j \left(\prod_{\iota\in \{1,2,3\}\setminus j}  \Psi_z^{\delta;0} f_\iota\right) \, \d \mu(z).
\end{equation}
The index $j$ in the notation \eqref{e:modelpp1} identifies the noncancellative index of the paraproduct form. As these are modeling bilinear, one parameter Calder\'on-Zygmund forms, the case where $j\in \{1,2\}$ and $f_3\in \mathrm{BMO}(\R^d)$ is of particular interest. In this case, the simplified notation
\begin{equation}
\label{e:modelpp2}
\pi^\delta_{f_3}(f_1,f_2) = \Pi^{1,\delta}(f_1,f_2,f_3)
\end{equation} is adopted.
The analogous result to  Proposition \ref{t:sparsemodel1} for paraproducts follows. As for \eqref{e:modelform1}, when $\delta$ is fixed and not important in that context, write $\pi_{f_3}$ in place of $\pi^\delta_{f_3}$.
\begin{proposition} \label{t:parasparsemodel1} Let $f_3\in \mathrm{BMO}(\R^d)$. 
  For each pair $f_j\in L^{1}(\R^d)$ there exists a sparse   collection $\mathcal S$ of cubes of $\,\R^d$ with the property that
\[ \pi_{f_3}(f_1,f_2) \lesssim_\delta \|f_3\|_{\mathrm{BMO}(\R^d)} \sum_{Q\in  \mathcal S} |Q| \prod_{j=1}^2 \langle f_j \rangle_Q.
\]
\end{proposition}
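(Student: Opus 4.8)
The plan is to argue as in the discussion following Proposition \ref{t:sparsemodel1}: fix $f_3\in\mathrm{BMO}(\R^d)$, \emph{linearize} the bilinear form $\pi_{f_3}=\pi^\delta_{f_3}$ so as to dominate it by the bilinear form of a genuine Calder\'on--Zygmund operator whose $L^2$ norm and kernel constants are controlled by $\|f_3\|_{\mathrm{BMO}(\R^d)}$, and then invoke the standard sparse domination theory for such operators. Concretely, for $z\in Z^d$ I would choose wavelets $\psi_z^{(1)}\in\Psi_z^{\delta;1}$ and $\psi_z^{(2)},\psi_z^{(3)}\in\Psi_z^{\delta;0}$ realizing the suprema defining $\Psi_z^{\delta;1}f_1,\Psi_z^{\delta;0}f_2,\Psi_z^{\delta;0}f_3$ up to a factor $2$ — these selections may be taken measurable in $z$ since the set-valued maps $z\mapsto\Psi_z^{\delta;\iota}$ are obtained from a fixed set by applying $\Sy_z$ — together with a unimodular factor $\eps(z)$ making $\eps(z)\langle f_3,\psi_z^{(3)}\rangle\langle f_1,\psi_z^{(1)}\rangle\langle f_2,\psi_z^{(2)}\rangle$ nonnegative. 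Setting $b_z\coloneqq\eps(z)\langle f_3,\psi_z^{(3)}\rangle$ and
\[
T_{f_3}g\coloneqq\int_{Z^d}b_z\,\langle g,\psi_z^{(1)}\rangle\,\psi_z^{(2)}\,\d\mu(z),
\]
one then has $\pi_{f_3}(f_1,f_2)\lesssim\langle T_{f_3}f_1,f_2\rangle\leq\pi_{f_3}(f_1,f_2)$, so it suffices to prove the sparse bound for this single bilinear form; here $\psi_z^{(1)}$ is noncancellative (only decay and smoothness) while $\psi_z^{(2)}$ is cancellative, so $T_{f_3}$ is of paraproduct type.

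The heart of the matter is to identify $T_{f_3}$ as an $L^2$-bounded Calder\'on--Zygmund operator with $\eta$-H\"older kernel, for every $0<\eta<\delta$, with both the $L^2\to L^2$ norm and the kernel constant $\lesssim_\delta\|f_3\|_{\mathrm{BMO}(\R^d)}$ (uniformly over the selections above). This rests on two facts about the symbol. First, a \emph{pointwise} bound: since $\psi_z^{(3)}$ has vanishing mean one may replace $f_3$ by $f_3-\langle f_3\rangle_{\mathsf B_z}$ in $\langle f_3,\psi_z^{(3)}\rangle$, and then \eqref{e:decayconcrete1} together with the elementary oscillation estimate $\langle|f_3-\langle f_3\rangle_{\mathsf B_z}|\rangle_{2^j\mathsf B_z}\lesssim(j+1)\|f_3\|_{\mathrm{BMO}}$ gives $|b_z|\lesssim_\delta\|f_3\|_{\mathrm{BMO}(\R^d)}$ for every $z\in Z^d$. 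Second, a \emph{Carleson} bound: the measure $\d\lambda(z)=|b_z|^2\,\d\mu(z)$ satisfies $\lambda\bigl(\mathsf B_\zeta\times(0,\sigma)\bigr)\lesssim_\delta\|f_3\|_{\mathrm{BMO}}^2\,\sigma^d$ for all $\zeta=(\xi,\sigma)\in Z^d$. This I would prove by splitting $f_3=(f_3-\langle f_3\rangle_{3\mathsf B_\zeta})\mathbf{1}_{3\mathsf B_\zeta}+(f_3-\langle f_3\rangle_{3\mathsf B_\zeta})\mathbf{1}_{(3\mathsf B_\zeta)^{c}}+c$: the contribution of the local term is controlled by $\|{\mathrm S}_\delta\|_{L^2\to L^2}^2$ (Proposition \ref{p:intrinsicsf1}) and the John--Nirenberg inequality, while for the far term the decay \eqref{e:decayconcrete1} of the cancellative wavelets produces a gain $(s/\sigma)^{2\delta}$ that is integrable in $\d s/s$ over $(0,\sigma)$.

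Granting these two facts, the $L^2$ bound for $T_{f_3}$ is the Carleson embedding theorem applied to $\d\lambda$: writing $\langle T_{f_3}f_1,f_2\rangle=\int_{Z^d}b_z\langle f_1,\psi_z^{(1)}\rangle\langle f_2,\psi_z^{(2)}\rangle\,\d\mu$ and using Cauchy--Schwarz, one controls the cancellative factor by $\int_{Z^d}|\langle f_2,\psi_z^{(2)}\rangle|^2\,\d\mu=\|{\mathrm S}_\delta f_2\|_2^2\lesssim\|f_2\|_2^2$ and bounds $|\langle f_1,\psi_z^{(1)}\rangle|$, via \eqref{e:decayconcrete1}, by a Hardy--Littlewood average of $|f_1|$ over $\mathsf B_z$, which is square-integrated against $\d\lambda$. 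Off the diagonal $T_{f_3}$ acts as integration against $K_{f_3}(x,y)=\int_{Z^d}b_z\psi_z^{(2)}(x)\psi_z^{(1)}(y)\,\d\mu(z)$, and inserting the pointwise bound on $|b_z|$ together with the concrete decay and smoothness estimates \eqref{e:decayconcrete1}--\eqref{e:decaysmoothconcrete1} and integrating in $\d\mu$ exactly as in Lemma \ref{l:ttstar1} and \eqref{e:integbrack} yields $|K_{f_3}(x,y)|\lesssim_\delta\|f_3\|_{\mathrm{BMO}}|x-y|^{-d}$ and $\eta$-H\"older continuity in each variable with the same constant (the $x$-regularity using \eqref{e:decaysmoothconcrete1} for $\psi_z^{(2)}$, the $y$-regularity the same for $\psi_z^{(1)}$). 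With $T_{f_3}$ thus exhibited as an $\eta$-H\"older Calder\'on--Zygmund operator of norm $\lesssim_\delta\|f_3\|_{\mathrm{BMO}(\R^d)}$, the pointwise and bilinear sparse domination theorems for such operators (Lerner \cite{Ler2013,Ler2015}, Lacey \cite{Lac2015}; see also \cite{HRT} and \cite{CoCuDPOu,CDPOMRL}) supply a sparse family $\mathcal S$, depending on $f_1,f_2$, with $|\langle T_{f_3}f_1,f_2\rangle|\lesssim_\delta\|f_3\|_{\mathrm{BMO}(\R^d)}\sum_{Q\in\mathcal S}|Q|\langle f_1\rangle_Q\langle f_2\rangle_Q$; combined with $\pi_{f_3}(f_1,f_2)\lesssim\langle T_{f_3}f_1,f_2\rangle$ this is Proposition \ref{t:parasparsemodel1}.

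The step I expect to require the most care is the Carleson estimate for $\d\lambda=|b_z|^2\d\mu$ with the \emph{quadratic} dependence $\|f_3\|_{\mathrm{BMO}}^2$: it is the only place where the hypothesis $f_3\in\mathrm{BMO}$ is genuinely used, and it is exactly what upgrades $T_{f_3}$ from merely bounded to bounded with constant $\|f_3\|_{\mathrm{BMO}}$. The measurable linearization, the kernel estimates, and the final appeal to sparse domination are then routine given Proposition \ref{p:intrinsicsf1}, Lemma \ref{l:ttstar1}, and the cited literature. One could instead avoid the operator-theoretic reduction and run a direct stopping-time argument on the averages $\langle|f_1|\rangle$ with Cauchy--Schwarz in a Whitney-box variable, using the Carleson bound on $b_z$ and the $L^2$ control of the cancellative coefficients of $f_2$; channeling it through the Calder\'on--Zygmund/sparse machinery, as above, is cleaner and parallels the treatment of Proposition \ref{t:sparsemodel1}.
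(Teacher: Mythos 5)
Your proposal is correct and follows exactly the route the paper indicates (but does not detail): linearize the supremum, exhibit the resulting form as an $L^2$-bounded $\eta$-H\"older Calder\'on--Zygmund operator with norm $\lesssim_\delta\|f_3\|_{\mathrm{BMO}}$ via a Carleson-measure argument, and then invoke the sparse domination literature cited after Proposition \ref{t:sparsemodel1}. The only step warranting a bit more care is the Carleson-embedding inequality $\int_{Z^d}|b_z|^2|\langle f_1,\psi_z^{(1)}\rangle|^2\,\d\mu\lesssim\|f_3\|_{\mathrm{BMO}}^2\|f_1\|_2^2$, since $\psi_z^{(1)}$ has only polynomial tails so the coefficient is a weighted average over all of $\R^d$ rather than a genuine local average over $\mathsf B_z$; the standard tent-space argument handles this without difficulty.
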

The proof of Proposition \ref{t:parasparsemodel1} is also omitted: similarly to the remarks following Proposition \ref{t:sparsemodel1}, it may be obtained, to name a few, from any of the references \cite{CoCuDPOu,CDPOMRL,Lac2015,Ler2013,Ler2015}. 
\section{Some Technical Preliminaries}
\label{s3}
This section contains a few technical tools that will be used in the proofs of the representation theorems.
\subsection{Alpert basis} Choose a collection  of functions $\phi_{\gamma}\in \mathcal S(\R^d)$, indexed by multi-indices $0\leq |\gamma|\leq 2D$, supported in the unit cube of $\R^d$ with the properties
\begin{equation}
\label{e:int1}
\int_{\R^d} x^{\alpha}\phi_\gamma(x) \, \d x=\delta_{\gamma \alpha} \qquad \forall 0\leq |\alpha|\leq 2D.
\end{equation}
The collection $\phi_\gamma$ has been explicitly constructed by Alpert \cite{Alp}, see also the extension to general measures in \cite[Theorem 1.1]{RSW}.
A first technical lemma involves the Alpert basis.
\begin{lemma} \label{l:alpert}Let $ z=(x,s), \zeta=(\xi,\sigma)\in Z^d$, and with reference to \eqref{e:partz}, $ z\in A(\zeta)$. Define
\[
P_{z,\zeta} (v) \coloneqq \sum_{0\leq|\gamma|\leq k}  \langle \varphi_z, \mathsf{Sy}_\zeta \phi_\gamma \rangle \left(\frac{ v-\xi}{\sigma}\right)^\gamma,\qquad \chi_{z,\zeta}(v)\coloneqq  \varphi_z(v) -P_{z,\zeta} (v),  \qquad v\in \R^d.
\] 
Then
\begin{equation}
\label{e:Lpoly}
|\chi_{z,\zeta}(v) | \lesssim \frac{1}{s^d}\left(\frac{|v-\xi|}{s}\right)^{k}\min\left\{1,\frac{\max\{|v-\xi|,\sigma\}}{s}\right\}, \qquad v\in \R^d.
\end{equation}
\end{lemma}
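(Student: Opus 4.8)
The plan is to prove \eqref{e:Lpoly} by distinguishing the behavior of $\chi_{z,\zeta}$ according to the size of $|v-\xi|$ relative to $\sigma$ and $s$, exploiting that $z\in A(\zeta)$ means $s>3\sigma$ and $|x-\xi|<3s$, so that in particular $\varphi_z$ is supported in $\mathsf{B}_z \subset \mathsf{B}_{(\xi,4s)}$ and $\|\varphi_z\|_\infty \lesssim s^{-d}$. The polynomial $P_{z,\zeta}$ is, up to rescaling, the degree-$k$ Taylor-type polynomial of $\varphi_z$ matched to $\zeta$ via the Alpert functionals: by \eqref{e:int1}, $\langle \varphi_z, \mathsf{Sy}_\zeta \phi_\gamma\rangle$ reproduces (a multiple of) the $\gamma$-th moment of $\varphi_z$ on the scale-$\sigma$ ball at $\xi$, so $P_{z,\zeta}$ is designed so that $\mathsf{Sy}_\zeta \phi_\alpha$ annihilates $\chi_{z,\zeta}$ for $0\le |\alpha|\le k$; this is the ``vanishing moments against the Alpert basis'' property that replaces ordinary vanishing moments.

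First I would record the two pieces of \eqref{e:Lpoly} separately. The factor $(|v-\xi|/s)^k$ together with the $\min\{1,\cdot\}$ should come from two regimes. In the regime $|v-\xi|\lesssim s$, I would bound $\chi_{z,\zeta}(v)$ by $\lesssim s^{-d}(|v-\xi|/s)^{k+1}$ when $|v-\xi|\ge\sigma$ and by $\lesssim s^{-d}(|v-\xi|/s)^{k}(\sigma/s)$ when $|v-\xi|<\sigma$, which is exactly the $k$-th order Taylor remainder estimate: since $\varphi_z\in C\Psi^{D,1;0}_z$ with $k\le D$, its derivatives up to order $k+1$ are controlled by $s^{-d-|\gamma|}$, and $P_{z,\zeta}$ agrees with the Taylor polynomial of $\varphi_z$ at $\xi$ up to the contribution of the higher Alpert moments, which are themselves $O(\sigma^{|\beta|}s^{-d-|\beta|})$ by the moment reproduction and the smoothness of $\varphi_z$ — here the role of the Alpert construction is that $\langle \varphi_z, \mathsf{Sy}_\zeta\phi_\gamma\rangle = c_\gamma\partial^\gamma\varphi_z(\xi)/\gamma! + (\text{error of size } \lesssim \sigma^{k+1}s^{-d-k-1}\cdot s^{|\gamma|}/\sigma^{\dots})$, so that $P_{z,\zeta}$ differs from the genuine Taylor polynomial only by terms that are negligible on the relevant ball. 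In the regime $|v-\xi|\gtrsim s$ (where $\varphi_z(v)=0$ since $\mathrm{supp}\,\varphi_z\subset\mathsf{B}_{(\xi,4s)}$, up to the constant $4$ which I would track), $\chi_{z,\zeta}(v)=-P_{z,\zeta}(v)$ is a polynomial of degree $\le k$ whose $\gamma$-th coefficient is $\lesssim s^{-d}(\sigma/s)^{?}$; the dominant term is the top-degree one, giving $|P_{z,\zeta}(v)|\lesssim s^{-d}(|v-\xi|/s)^{k}$, and one has to check that lower-degree coefficients, being comparatively smaller (they carry extra powers of $\sigma/s\le 1/3$), do not spoil the bound — they don't, since $|v-\xi|\gtrsim s$ makes the top term dominate.

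The technical heart, and the step I expect to be the main obstacle, is the precise estimation of the Alpert coefficients $\langle \varphi_z, \mathsf{Sy}_\zeta \phi_\gamma\rangle$ and their comparison to the Taylor coefficients of $\varphi_z$ at $\xi$. Concretely, I would write $\mathsf{Sy}_\zeta\phi_\gamma(v) = \sigma^{-d}\phi_\gamma((v-\xi)/\sigma)$, supported in $\mathsf{B}_{(\xi,\sigma)}$ (or the cube of sidelength $\sim\sigma$ there), expand $\varphi_z$ in a Taylor polynomial of order $k$ at $\xi$ with remainder controlled by $\sup|\partial^{k+1}\varphi_z|\cdot|v-\xi|^{k+1}\lesssim s^{-d-k-1}\sigma^{k+1}$ on the support, and use \eqref{e:int1} to see that the Taylor part contributes $\delta_{\gamma\alpha}$-selected moments while the remainder contributes an error $\lesssim \sigma^{k+1-|\gamma|}s^{-d-k-1}$ after scaling out $\sigma$. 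Feeding this into the definition of $P_{z,\zeta}$ and collecting the $(v-\xi)/\sigma$ powers converts these into coefficients of size $\lesssim s^{-d-|\gamma|}$ for the ``main'' part and $\lesssim \sigma^{k+1-|\gamma|}s^{-d-k-1}$ for the error, and then a careful but routine summation over $0\le|\gamma|\le k$ and a split into the $|v-\xi|\lessgtr\sigma$ and $|v-\xi|\lessgtr s$ cases yields \eqref{e:Lpoly}. The bookkeeping with the multi-index powers, the two nested min's, and keeping the implied constants uniform in $z\in A(\zeta)$ is where the real work lies; everything else is standard Taylor remainder estimation combined with the support and smoothness of $\varphi_z$.
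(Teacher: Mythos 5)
Your approach is essentially the same as the paper's: you compare $P_{z,\zeta}$ to the degree-$k$ Taylor polynomial $T_\xi\varphi_z$ of $\varphi_z$ centered at $\xi$, use the Alpert moment property \eqref{e:int1} to identify $\langle T_\xi\varphi_z, \mathsf{Sy}_\zeta\phi_\gamma\rangle$ with the Taylor coefficient $q_\gamma=\sigma^{|\gamma|}\partial^\gamma\varphi_z(\xi)/\gamma!$, control the coefficient discrepancy $\langle\varphi_z-T_\xi\varphi_z,\mathsf{Sy}_\zeta\phi_\gamma\rangle$ by the $(k+1)$-st Taylor remainder of $\varphi_z$ on $\mathsf{B}_\zeta$ (size $\lesssim\sigma^{k+1}/s^{d+k+1}$), and finish by splitting $\chi_{z,\zeta}=(\varphi_z-T_\xi\varphi_z)+(T_\xi\varphi_z-P_{z,\zeta})$ and a case analysis on $|v-\xi|$ relative to $\sigma$ and $s$. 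Your description of the coefficient error as ``$\lesssim\sigma^{k+1-|\gamma|}s^{-d-k-1}$ after scaling out $\sigma$'' is slightly garbled bookkeeping — the unscaled error is the $\gamma$-independent $\sigma^{k+1}/s^{d+k+1}$, which only becomes $\sigma^{k+1-|\gamma|}|v-\xi|^{|\gamma|}/s^{d+k+1}$ after multiplying by the monomial $((v-\xi)/\sigma)^\gamma$ — but this does not affect the argument.
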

\begin{proof} Let \[T_\xi \varphi_z  (v)=\sum_{0\leq|\gamma|\leq k} q_\gamma \left(\frac{ v-\xi}{\sigma}\right)^\gamma, \qquad q_\gamma\coloneqq \frac{\sigma^{|\gamma|}
\partial^\gamma\varphi_z(\xi)}{\gamma!}= \langle   T_\xi \varphi_z ,\mathsf{Sy}_\zeta \phi_\gamma\rangle  \]be the degree $k$ Taylor polynomial of $\varphi_z$ centered at $\xi$; the equality involving $q_\gamma$ is due to \eqref{e:int1}. By Taylor's theorem, as $\supp \mathsf{Sy}_\zeta \phi_\gamma \subset \mathsf{B}_\zeta$,
\[
\left| \langle \varphi_z, \mathsf{Sy}_\zeta \phi_\gamma \rangle -q_\gamma \right|=
\left|\langle \varphi_z- T_\xi \varphi_z,\mathsf{Sy}_\zeta \phi_\gamma\rangle  \right| \lesssim \| \varphi_z - T_\xi \varphi_z\|_{L^\infty(\mathsf{B}_\zeta)}\lesssim \frac{\sigma^{k+1}}{s^{d+k+1}}.\]
It follows that
\[
|\chi_{z,\zeta}(v) |
\lesssim |\varphi_z(v) - T_\xi \varphi_z (v) | + \frac{1}{s^{d}}\sum_{0\leq |\gamma|\leq k} \frac{\sigma^{k+1-|\gamma|}}{s^{k+1-|\gamma|}} \left(\frac{ |v-\xi|}{s}\right)^{|\gamma|}. 
\]
The first summand of the last display complies with the estimate in the right hand side of \eqref{e:Lpoly}, by Taylor's theorem and the fact that $\varphi_z \in \Psi^{k,1;0}_z$. The second summand is also bounded by the right hand side of \eqref{e:Lpoly}: this is easily seen by checking the cases $|v-\xi|\leq \sigma, \sigma<|v-\xi|\leq s, |v-\xi|>s$ separately. The latter remark completes the proof of the Lemma.
\end{proof}

\subsection{Averaging Yields Rough Wavelets}
In the representation theorems, the key steps involve a certain averaging of the wavelet $\varphi$ of \eqref{e:mw1}.
\begin{lemma} \label{l:average1} Let    $\{\varphi_z :z\in Z^d\}$ be as in \eqref{e:mw5}. Let   $0<\eta<\delta\leq 1$ and $0\leq k\leq D$.  Let $u:Z^d \to \mathbb C$ be a Borel measurable function with $|u(z)|\leq 1$. Then, there exists $C\lesssim_{k,\delta,\eta} 1$ such that for all $z=(x,s)\in Z^d$
\begin{align}
\label{e:averagproc} &
\psi_{z} (\cdot) \coloneqq \int\displaylimits_{\substack{\alpha \in \R^d }} \int\displaylimits_{0<\beta\leq  1}\frac{\beta^{k+ \delta}u((\alpha, \beta))}{\langle\alpha\rangle^{d+ k+\delta}}  \varphi_{ (x+\alpha s,\beta s) }(\cdot) \frac{ \d \beta \d \alpha}{\beta} \in C\Psi^{k,\delta;0}_z, \\ &\label{e:averagprocl}
\nu_{z} (\cdot) \coloneqq \int\displaylimits_{\substack{\alpha \in \R^d }} \int\displaylimits_{\beta>1}\frac{u((\alpha, \beta))}{(\max\{| \alpha|,\beta\})^{d+ k+\delta}  }  \varphi_{ (x+\alpha  s,\beta s) }(\cdot) \frac{ \d \beta \d \alpha}{\beta} \in C\Psi^{k,\eta;0}_z.
\end{align}
In particular, with reference to \eqref{e:brack}, 
\begin{equation}
\label{e:averagproc2} 
\upsilon_{z} \coloneqq  \psi_{z} + \nu_{z}= \int \displaylimits_{(\alpha,\beta) \in Z^d }  [(\alpha,\beta)]_{k+\delta} u((\alpha, \beta)) \varphi_{ (x+\alpha  s,\beta s) } \frac{ \d \beta \d \alpha}{\beta} \in C\Psi^{k,\eta;0}_z.
  \end{equation}
\end{lemma}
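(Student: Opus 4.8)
The plan is to establish \eqref{e:averagproc} and \eqref{e:averagprocl} by directly verifying the defining inequalities of the classes $\Psi^{k,\delta;0}_z$ and $\Psi^{k,\eta;0}_z$, and then to obtain \eqref{e:averagproc2} by simply summing the two. By the scale and translation covariance of both the wavelet family $\{\varphi_z\}$ and the classes $\Psi^{k,\delta;\iota}_z$ under $\mathsf{Sy}_z$, I would first reduce to the case $z=(0,1)$, so that $\psi_{(0,1)} = \int_{\R^d}\int_0^1 \frac{\beta^{k+\delta} u((\alpha,\beta))}{\langle\alpha\rangle^{d+k+\delta}} \varphi_{(\alpha,\beta)} \, \frac{\d\beta\d\alpha}{\beta}$, and similarly for $\nu$. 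The cancellation requirement \eqref{e:ck1par} for all $0\le|\gamma|\le k$ is immediate and can be disposed of first: each $\varphi_{(\alpha,\beta)}$ has vanishing moments up to order $D\geq k$ by \eqref{e:ck1par}, this property is preserved under the (absolutely convergent, by the $\langle\alpha\rangle^{-(d+k+\delta)}$ and $\beta^{k+\delta-1}$ weights together with \eqref{e:integbrack}) averaging, so $\psi_z,\nu_z$ inherit it. Thus the substance is the quantitative bound: one must show $s^{|\gamma|}\|(\mathsf{Sy}_z)^{-1}\partial^\gamma\psi_z\|_{\star,k+\delta,\delta}\leq C$ for $0\leq|\gamma|\leq k$, i.e. the pointwise decay \eqref{e:decayconcrete1} and the H\"older bound \eqref{e:decaysmoothconcrete1} (with decay exponent $d+k+\delta$), with the analogous statement for $\nu_z$ but H\"older exponent $\eta$ and decay $d+k+\eta$.

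The core estimate is therefore: for a multi-index $\gamma$ with $|\gamma|\le k$ and $y\in\R^d$, bound
\[
|\partial^\gamma\psi_{(0,1)}(y)| \;\le\; \int_{\R^d}\int_0^1 \frac{\beta^{k+\delta}}{\langle\alpha\rangle^{d+k+\delta}} \,|\partial^\gamma\varphi_{(\alpha,\beta)}(y)|\, \frac{\d\beta\d\alpha}{\beta},
\]
and show the right-hand side is $\lesssim \langle y\rangle^{-(d+k+\delta)}$, plus the corresponding H\"older increment. Here $\varphi\in C\Psi^{D,1;0}_{(0,1)}$ and $\varphi_{(\alpha,\beta)}=\mathsf{Sy}_{(\alpha,\beta)}\varphi$ is supported in $\mathsf B_{(\alpha,\beta)}$ with $|\partial^\gamma\varphi_{(\alpha,\beta)}(y)|\lesssim \beta^{-(d+|\gamma|)}\ind_{|y-\alpha|<\beta}$ and a comparable $\beta^\delta$-H\"older bound (in fact $1$-H\"older, with a derivative bound of order $|\gamma|+1$ on $\varphi$). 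The integral thus localizes to $|y-\alpha|<\beta\le 1$. I would split the $\alpha$-integral into the regime $|\alpha|\le 2\langle y\rangle$ and $|\alpha|> 2\langle y\rangle$; in the near regime $\langle\alpha\rangle\sim\langle y\rangle$ can be pulled out and the remaining $\int_{|y-\alpha|<\beta}\int_0^1 \beta^{k+\delta}\beta^{-(d+|\gamma|)}\beta^{d-1}\,\d\beta\d\alpha$ converges since $k+\delta-|\gamma|\ge\delta>0$; in the far regime $|y-\alpha|<\beta\le 1$ forces $|\alpha|\lesssim\langle y\rangle$, so that regime is empty. This yields \eqref{e:decayconcrete1} for $\psi_z$. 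The H\"older estimate \eqref{e:decaysmoothconcrete1} is handled the same way, distinguishing $|h|<\beta$ (use the smoothness of $\varphi_{(\alpha,\beta)}$, gaining $(|h|/\beta)^\delta$, i.e. replacing one power $\beta$ by $|h|^\delta\beta^{-\delta}$ and still leaving a positive power $\beta^{k-|\gamma|}$ when $|h|\le1$) and $|h|\ge\beta$ (estimate the two terms separately by the pointwise bound, using $|h|\le 1$ so $|h|^{-\delta}\le\beta^{-\delta}$ does not help — instead note the support condition forces $\beta\le|h|$ and sum geometrically over dyadic $\beta$). The computation for $\nu_z$ in \eqref{e:averagprocl} is structurally identical, the difference being that the weight is $(\max\{|\alpha|,\beta\})^{-(d+k+\delta)}$ and $\beta$ ranges over $(1,\infty)$; here $|y-\alpha|<\beta$ with large $\beta$ no longer confines $\alpha$, so one integrates $\int_{\beta>1}\beta^{-(d+k+\delta)}\beta^{-(d+|\gamma|)}\cdot\beta^{d}\,\frac{\d\beta}{\beta}$ over the ball $|y-\alpha|<\beta$, which converges and, after comparing $\max\{|\alpha|,\beta\}$ with $\max\{\langle y\rangle,\beta\}\sim\langle y\rangle+\beta$, yields decay $\langle y\rangle^{-(d+k+\eta)}$ for any $\eta<\delta$ (the loss from $\delta$ to $\eta$ is exactly what buys convergence of the $\beta$-integral near $\beta\sim\langle y\rangle$); the H\"older bound with exponent $\eta$ follows likewise, now with room to spare.

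The main obstacle I anticipate is purely bookkeeping: carrying the exponents $d,k,|\gamma|,\delta,\eta$ correctly through the several sub-regimes, and in particular checking that in every case the surviving power of $\beta$ is strictly positive so the $\beta$-integrals converge at $0$ (for $\psi$) and at $\infty$ (for $\nu$) — this is where the hypotheses $|\gamma|\le k$ and $\eta<\delta$ are used, and where the small-$\delta$ or small-$\eta$ losses are absorbed. There is no conceptual difficulty: the almost-orthogonality philosophy of Lemma \ref{l:ttstar1} and the integrability \eqref{e:integbrack} are exactly the tools, and once \eqref{e:averagproc} and \eqref{e:averagprocl} are in hand, \eqref{e:averagproc2} is immediate from $[(\alpha,\beta)]_{k+\delta} = \beta^{k+\delta}\langle\alpha\rangle^{-(d+k+\delta)}$ on $\{\beta\le1\}$ (using $\max\{1,\beta,|\alpha|\}=\max\{1,|\alpha|\}\sim\langle\alpha\rangle$ there) and $[(\alpha,\beta)]_{k+\delta} = (\max\{|\alpha|,\beta\})^{-(d+k+\delta)}$ on $\{\beta>1\}$, so that $\upsilon_z=\psi_z+\nu_z$ and the class $\Psi^{k,\eta;0}_z$ (the rougher of the two) contains both summands.
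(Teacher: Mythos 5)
Your overall plan --- normalize to $z=(0,1)$, note that the moment cancellation is inherited by absolutely convergent averaging, localize the $\alpha$-integral to the region $|w-\alpha|<\beta$ where $\langle\alpha\rangle\sim\langle w\rangle$, and split the $\beta$-integral at $\beta\sim|h|$ --- is exactly the paper's strategy, and most of the bookkeeping you sketch is correct.

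The one genuine gap is in the H\"older estimate for $\psi_z$ on the far regime $\beta>|h|$. You propose to use the $\delta$-H\"older modulus of $\partial^\gamma\varphi_{(\alpha,\beta)}$, gaining $(|h|/\beta)^\delta$ and ``leaving a positive power $\beta^{k-|\gamma|}$''. But when $|\gamma|=k$ (the top order, which must be controlled to establish membership in $\Psi^{k,\delta;0}_z$) the surviving power is $\beta^0$, and after integrating against $\d\beta/\beta$ over $[|h|,1]$ you obtain $|h|^\delta\log(1/|h|)$, not the required $|h|^\delta$. That logarithmic loss is not harmless: it would place $\psi_z$ only in $\Psi^{k,\eta;0}_z$ for some $\eta<\delta$, but \eqref{e:averagproc} is stated with the sharp parameter $\delta$ and is used with that sharpness in Lemma \ref{l:averages}. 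The paper avoids this by applying the mean value theorem in that regime rather than a $\delta$-H\"older bound: $|\partial^\gamma\varphi_{(\alpha,\beta)}(w+h)-\partial^\gamma\varphi_{(\alpha,\beta)}(w)|\lesssim (|h|/\beta)\,\beta^{-(d+|\gamma|)}$, so that the $\beta$-integral becomes $|h|\int_{|h|}^1\beta^{k+\delta-|\gamma|-2}\,\d\beta$, which is $\lesssim|h|\cdot|h|^{\delta-1}=|h|^\delta$ when $k=|\gamma|$ and trivially $\lesssim|h|\le|h|^\delta$ when $k>|\gamma|$. Any H\"older exponent strictly larger than $\delta$ on $\partial^\gamma\varphi$ would also work, but exponent $\delta$ itself does not. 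The rest of the proposal --- the pointwise decay for both $\psi$ and $\nu$, the small-$\beta$ H\"older regime by pure size, the $\log\langle w\rangle$ loss absorbed by passing from $\delta$ to $\eta$ in \eqref{e:averagprocl}, and the synthesis of \eqref{e:averagproc2} from the two pieces --- is sound and matches the paper.
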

The proof of Lemma \ref{l:average1} is postponed till after the following useful application. 
\begin{lemma} \label{l:averages}   Let $\{\varphi_z :z\in Z^d\}$ be as in \eqref{e:mw5}.  Let    $\zeta\in Z^{d}$ be fixed   and $q_\zeta\in \Psi_{\zeta}^{k,1;1}$ with $\supp\, q_\zeta\subset \mathsf{B}_\zeta$. Then there exists an absolute constant $C=C(d,k)$ and  $\vartheta_\zeta\in C \Psi_{\zeta}^{k,1 ;1}$ such that
\begin{equation}
\label{e:averages}
\int\displaylimits_{z\in A(\zeta)} \langle h,\varphi_z \rangle \langle \varphi_z ,q_\zeta\rangle\, \d \mu(\zeta) = \langle h ,\vartheta_\zeta\rangle \quad \forall h \in \mathcal S(\R^d).
\end{equation}
Furthermore,
\begin{equation}
\label{e:alpert2} \int_{\R^d} x^\gamma \vartheta_\zeta(x)\, \d x =
\int_{\R^d} x^\gamma q_\zeta(x) \,\d x, \qquad \forall 0\leq |\gamma|\leq k.
\end{equation}
\end{lemma}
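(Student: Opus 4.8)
The strategy is to unfold the definition of the region $A(\zeta)$, insert the Calderón reproducing formula structure, and recognize the resulting object as an averaged wavelet of the type produced by Lemma \ref{l:average1}. First I would renormalize by scale invariance and translation to take $\zeta = (0,1)$, so that $A(\zeta) = \{z = (x,s): s > 3,\ |x| < 3s\}$ and $q_\zeta$ is supported in the unit ball, with the $\Psi^{k,1;1}_{(0,1)}$ bounds. Writing $z = (x,s)$ with the substitution $x = \alpha s$, the region becomes $\{(\alpha, s): s > 3,\ |\alpha| < 3\}$ with measure $\frac{\d\alpha\,\d s}{s}$, i.e. a bounded range in $\alpha$ and $s \in (3,\infty)$; after a further substitution $s = 1/\beta$ this is exactly a piece of the region $\beta < 1$ appearing in \eqref{e:averagproc}. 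The pairing $\langle h, \varphi_z\rangle \langle \varphi_z, q_\zeta\rangle$ then has the form $\langle h, \varphi_{(\alpha s, s)}\rangle$ times a coefficient $u(z) = \langle \varphi_z, q_\zeta\rangle$, and the job is to show this coefficient has the decay $|u(z)| \lesssim [(\alpha,\beta)]_{k+\delta}$ (up to the bounded factors coming from the restricted $\alpha,\beta$ ranges) demanded by Lemma \ref{l:average1}.

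\textbf{Key steps.} (i) Reduce to $\zeta=(0,1)$; record that $q_\zeta \in C\Psi^{k,1;1}_\zeta$ with support in $\mathsf B_\zeta$, and that $\varphi_z$ has vanishing moments up to order $D \geq k$ with support in $\mathsf B_z \supset \mathsf B_\zeta$ (since $z \in A(\zeta)$ forces $s > 3\sigma$, so $\mathsf B_\zeta \subset \mathsf B_z$ after checking the geometry). (ii) Apply the second estimate of Lemma \ref{l:ttstar1} — or rather its proof technique — to bound $|\langle \varphi_z, q_\zeta\rangle|$: because $q_\zeta$ has vanishing moments of all orders up to $k$, subtract from $\varphi_z$ its degree-$k$ Taylor polynomial about the center of $\zeta$ and use the smoothness of $\varphi_z$ (it lies in $\Psi^{k,1;0}_z$), yielding $|\langle \varphi_z, q_\zeta\rangle| \lesssim s^{-(d+k+\delta)}$ on the relevant range, which matches $[(\alpha,\beta)]_{k+\delta}$ once $\beta = 1/s$ and $|\alpha|$ bounded. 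Actually, since here $s$ is large, this is the easy regime — the key gain is the $\beta^{k+\delta}$-type decay in the scale. (iii) Having identified $u(z) = \langle \varphi_z, q_\zeta \rangle$ with the right size, invoke Lemma \ref{l:average1} (specifically \eqref{e:averagproc}, the $\beta \leq 1$ piece, restricted to the cone $|\alpha| < 3$, which only improves matters) to conclude $\vartheta_\zeta \coloneqq \int_{A(\zeta)} \langle \cdot, \varphi_z\rangle \varphi_z\, \d\mu$, interpreted weakly, lies in $C\Psi^{k,1;0}_\zeta \subset C\Psi^{k,1;1}_\zeta$ after undoing the renormalization. The identity \eqref{e:averages} is then just Fubini together with the pointwise definition of $\vartheta_\zeta(v) = \int_{A(\zeta)} \langle \varphi_z, q_\zeta\rangle \varphi_z(v)\, \d\mu(z)$, which is an absolutely convergent $\mathcal S(\R^d)$-valued integral by the decay just established.

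\textbf{The moment identity and the main obstacle.} For \eqref{e:alpert2}, I would compute $\int x^\gamma \vartheta_\zeta(x)\,\d x = \int_{A(\zeta)} \langle \varphi_z, q_\zeta\rangle \big(\int x^\gamma \varphi_z(x)\,\d x\big)\,\d\mu(z)$; but $\int x^\gamma \varphi_z = 0$ for $0 \leq |\gamma| \leq D$ by \eqref{e:ck1par}, so this would give $0$, not $\int x^\gamma q_\zeta$. This signals that the correct definition of $\vartheta_\zeta$ must not be the literal integral $\int_{A(\zeta)}\langle\cdot,\varphi_z\rangle\varphi_z\,\d\mu$ but rather a completion-type object: one writes $\langle h, \varphi_z\rangle\langle\varphi_z,q_\zeta\rangle$ and resums using the Calderón reproducing formula \eqref{e:CRF} to realize that integrating over \emph{all} of $Z^d$ would reconstruct $\langle h, q_\zeta\rangle$ exactly (up to normalization), so $\vartheta_\zeta$ should be thought of as ``$q_\zeta$ minus the contributions from $Z^d \setminus A(\zeta)$.'' Thus the right way is: $\vartheta_\zeta = c\, q_\zeta - \int_{Z^d \setminus A(\zeta)} \langle \cdot, \varphi_z\rangle \varphi_z\, \d\mu(z)$ paired appropriately — but that subtracted piece does not have the support/decay structure we want, so in fact the honest route is to keep $\vartheta_\zeta$ as an abstract functional defined by the left side of \eqref{e:averages} and verify its class membership and moments directly. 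The moment computation then works because pairing $\vartheta_\zeta$ against $x^\gamma$ (after a cutoff, to stay in $\mathcal S$) and expanding via \eqref{e:CRF} on the complement $Z^d_+(\zeta) \setminus A(\zeta) = F_+(\zeta)\sqcup S(\zeta)$ together with $Z^d \setminus Z^d_+(\zeta)$ produces exactly the missing moments of $q_\zeta$; alternatively and more cleanly, one uses that $\langle x^\gamma, \vartheta_\zeta\rangle = \langle \nabla^{-k}(\cdot), \nabla^k \rangle$-type identities are not needed and instead that $q_\zeta - (\text{sum over all } z)$ vanishes. The genuine difficulty is bookkeeping this decomposition carefully enough to see that the pieces \emph{other} than $A(\zeta)$ contribute nothing to the low moments — which follows because on $F_+(\zeta)$ and $S(\zeta)$ and the small-scale region the wavelets $\varphi_z$ are either disjoint in support from a neighborhood where $q_\zeta$'s moments live, or are themselves high-moment-vanishing in a way that, after integration against the fixed $q_\zeta$, integrates to zero against polynomials. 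I expect this moment-matching bookkeeping, rather than the size estimate (which is routine given Lemmas \ref{l:ttstar1} and \ref{l:average1}), to be the main obstacle, and would handle it by reducing \eqref{e:alpert2} to the statement that the \emph{complementary} integral $\int_{Z^d\setminus A(\zeta)}\langle\cdot,\varphi_z\rangle\varphi_z\,\d\mu$ annihilates polynomials of degree $\leq k$ when tested against the fixed $q_\zeta$ — which in turn follows from the vanishing moments \eqref{e:ck1par} of each $\varphi_z$ applied in the variable of integration after Fubini.
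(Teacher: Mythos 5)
Your high-level strategy converges to the paper's: use the Calder\'on reproducing formula to write the $A(\zeta)$-integral as $\langle h, q_\zeta\rangle$ minus the complementary integral over $Z^d\setminus A(\zeta)$, set $\vartheta_\zeta = q_\zeta - \psi_\zeta$, and read off \eqref{e:alpert2} from the vanishing moments of $\varphi_z$ via Fubini. That final moment computation is correct and is exactly the paper's. However, there are two genuine problems in the middle.

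First, your initial change-of-variables reading of $A(\zeta)$ is wrong: after normalizing $\zeta=(0,1)$, points $z=(x,s)\in A(\zeta)$ satisfy $s>3$, which in the parametrization $z=(\xi+\alpha\sigma,\beta\sigma)$ of Lemma \ref{l:average1} corresponds to $\beta=s>3$, not $\beta<1$. So the $A(\zeta)$-integral is \emph{not} a piece of \eqref{e:averagproc}; if you wanted to average directly over $A(\zeta)$ you would need \eqref{e:averagprocl}. This is not a fatal issue because, as you then notice, direct averaging over $A(\zeta)$ cannot be the right construction anyway (it yields a fully cancellative function, contradicting \eqref{e:alpert2}), but it shows the first step of your plan would not have gone through as written.

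Second, and this is the substantive gap: after switching to $\vartheta_\zeta = q_\zeta - \psi_\zeta$ with $\psi_\zeta = \int_{Z^d\setminus A(\zeta)}\langle\varphi_z,q_\zeta\rangle\varphi_z\,\d\mu(z)$, you assert that the subtracted piece ``does not have the support/decay structure we want'' and retreat to treating $\vartheta_\zeta$ as an abstract functional. This is backwards. The missing observation is a simple support consideration: since $\supp\,\varphi_z\subset\mathsf B_z$ and $\supp\,q_\zeta\subset\mathsf B_\zeta$, one has $\langle\varphi_z,q_\zeta\rangle=0$ unless $|x-\xi|\leq 3\max\{\sigma,s\}$. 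Combined with the constraint $z\notin A(\zeta)$, this restricts the effective domain of integration to $I(\zeta)=\{(x,s):\ s\leq 3\sigma,\ |x-\xi|\leq 3\max\{s,\sigma\}\}$ --- a region where the scale is bounded above by $3\sigma$ and the translation is bounded. In the normalized parametrization this is exactly $\beta\leq 3$, $|\alpha|\lesssim 1$, so \eqref{e:averagproc} applies. Lemma \ref{l:ttstar1} then furnishes the coefficient bound $|\langle\varphi_{(x,s)},q_\zeta\rangle|\lesssim \sigma^{-d}(s/\sigma)^{k+1}$ on $I(\zeta)$, which after the change of variable is the required $\beta^{k+1}$-type decay to feed Lemma \ref{l:average1}. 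The conclusion is that $\psi_\zeta\in C\Psi^{k,1;0}_\zeta$, so $\vartheta_\zeta=q_\zeta-\psi_\zeta\in C\Psi^{k,1;1}_\zeta$ with the claimed moments. Your proof would go through once this support localization and the resulting coefficient decay are supplied; without them the class membership of $\vartheta_\zeta$ is left unestablished.
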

\begin{proof} Write $\zeta=(\xi,\sigma)$ throughout.   Formula \eqref{e:CRF} yields that
\[
\begin{split}
 &\quad \int\displaylimits_{A(\zeta)} \langle h,\varphi_z  \rangle \langle \varphi_z, q_\zeta\rangle\, \d \mu(z) = \langle h ,q_\zeta\rangle - \int\displaylimits_ {  Z^{d}\setminus A(\zeta)} \langle h,\varphi_z\rangle \langle \varphi_z, q_\zeta\rangle\, \d \mu(z) .
\end{split}
\]
Support considerations show that $\langle \varphi_z, q_\zeta\rangle=0 $ for $z=(x,s)$ with $|x-\xi|>3\max\{\sigma, s\}$. An application of Fubini's theorem leads to the equality
\[
 \int\displaylimits_ {  Z^{d}\setminus A(\zeta)} \langle h,\varphi_z\rangle \langle \varphi_z, q_\zeta\rangle\, \d \mu(z) =   \langle h, \psi_\zeta\rangle, \qquad   \psi_\zeta\coloneqq  \int\displaylimits_ { I(\zeta) }  \langle \varphi_z, q_\zeta\rangle  \varphi_z\,  \d \mu(z), 
\] where  $I(\zeta)\coloneqq\{(x,s):s\leq 3\sigma, |x-\xi|\leq 3\max\{s,\sigma\}\}$.
If $(x,s) \in I(\zeta)$, Lemma \ref{l:ttstar1} implies that $ |\langle \varphi_{(x,s)}, q_{\zeta}\rangle|\lesssim \sigma^{-d}(s/\sigma)^{k+1} $. A change of variable and an application of Lemma \ref{l:average1}, \eqref{e:averagproc} in particular,   shows $\psi_\zeta\in C  \Psi_{\zeta}^{k,1 ;0}$. The proof is completed by setting  $\vartheta_z=q_z-\psi_z$ and deducing \eqref{e:alpert2} from Fubini's theorem.
\end{proof}
\subsection{Proof of Lemma \ref{l:average1}}
 First of all,  Fubini's theorem immediately implies that  $\psi_{z}$ and $\nu_z$  inherit the moment properties \eqref{e:ck1par}.   The memberships $c\psi_{z}\in \Psi^{k,\delta;1}_z,$ $ c\nu_{z}\in \Psi^{k,\eta;1}_z $ are needed and proved now.
\begin{proof}[Proof of \eqref{e:averagproc}] 
By invariance of assumptions and conclusions under the family $\Sy_z$, it suffices to  work in the case $z=(0,1)$. As $z$ is  thus fixed below, it is omitted from the subscript notation. We turn to showing that    $\|\partial^\gamma \psi\|_{\star,k+\delta,\delta}\lesssim 1$ for each $\gamma$ with $0\leq |\gamma|=\kappa\leq k$.
  Fix $w\in \R^d$, and let $\phi=\partial^\gamma \varphi$ locally. 
 Then,
 \[
 \partial^\gamma \psi(w) = \int\displaylimits_{\substack{\alpha \in \R^d }} \int\displaylimits_{0<\beta\leq  1}\frac{\beta^{k-\kappa+\delta}u((\alpha, \beta))}{\langle\alpha\rangle^{d+ k+\delta}}  \phi\left( \frac{w-\alpha}{\beta}\right) \frac{ \d \beta \d \alpha}{\beta^{d+1}}.
 \] 
    Due to the support properties of $\varphi$, one observes that the functions
\[\alpha\mapsto \phi\left( \frac{w-\alpha}{\beta}\right), \quad \alpha\mapsto \phi\left( \frac{w+h-\alpha}{\beta}\right),\] 
are supported in the cube $Q_w=w+[-3,3]^d$, and  $\langle \alpha\rangle \sim  \langle w\rangle$ for $\alpha\in Q_w$. Hence,
\begin{equation}
\label{e:avepf01,1}
\begin{split}
 |\partial^{\gamma}\psi(w)| &\lesssim  \langle w\rangle^{-(d+k+\delta)} \int\displaylimits_{\substack{\alpha \in Q_w \\ 0<\beta\leq  1}} \beta^{\delta} \left|\phi\left({\textstyle \frac{w - \alpha}{\beta}}
\right)\right| \frac{ \d \beta \d \alpha}{\beta^{d+1}}
=  \int\displaylimits_{\substack{ v\in \R^d\\ 0<\beta\leq  1}} \beta^{\delta-1} \left|\phi\left(v\right)\right| \d v\d \beta \, \lesssim_{\delta} 1
\end{split}
\end{equation}
by Fubini's theorem and the change of variable $v=\frac{w - \alpha}{\beta}$. Hence 
$
\sup_{x\in \R^d}\langle x \rangle^{k+\delta} |\partial^\gamma\psi(x)|\lesssim 1.$
\noindent We turn to the H\"older continuity estimate
\begin{equation}
\label{e:hce} \left| \partial^{\gamma}\psi(w+h)-\partial^{\gamma}\psi(w)\right|\lesssim |h|^{\delta}  \langle w\rangle^{-(d+k+\delta)}, \qquad h\in \R^d.
\end{equation}
This is stronger than $\|\partial^\gamma\psi\|_{\star,k+\delta,\delta}\lesssim 1$ only in the range $|h|\leq \frac12$, which  will now be assumed.
 Proceeding as before, two integrals must be controlled 
\begin{equation}
\label{e:avepf1,1}
\begin{split}
&
\int\displaylimits_{\substack{\alpha\in Q_w \\  0<\beta\leq  \frac{|h|}{2}}} \beta^{\delta} \left[\left|\phi\left(\textstyle\frac{w -\alpha}{\beta}\right)\right|  + \left|\phi\left(\textstyle\frac{w +h - \alpha}{\beta}\right)\right|\right] \frac{  \d \alpha\d \beta}{\beta^{d+1}} 
 +
 \int\displaylimits_{\substack{\alpha\in Q_w \\ \frac{|h|}{2}<\beta\leq 1}}\beta^{\delta}\left|\phi\left(\textstyle\frac{w +h - \alpha}{\beta}\right)-\phi\left(\textstyle\frac{w - \alpha}{\beta}\right)\right|\frac{  \d \alpha\d \beta}{\beta^{d+1}}.
 \end{split}
\end{equation}
A change of variable shows that both   summands in the first integral of \eqref{e:avepf1,1} are    \[
\lesssim  
\int_{0}^{\frac{|h|}{2}}\beta^{\delta-1} \,\d\beta \lesssim |h|^{\delta}.
\]
Notice that in the $\alpha$-support of the second integral in \eqref{e:avepf1,1}, that $ |h|\leq 2\beta$ and \[\min\{|w  - \alpha|,|w +h - \alpha|\}\leq \beta\] because of the support property of $\phi$. Therefore    such support has diameter  $\lesssim \beta$.
Using this fact and the mean value theorem,   the second integral in \eqref{e:avepf1,1} is
\[\lesssim 
 |h|
\int_{\frac{|h|}{2}}^1 \beta^{\delta-2} \,\d\beta \lesssim |h|^{\delta}.
\]
This completes the proof that $\psi\in C\Psi^{\delta;0}_{(0,1)}$ as desired.
\end{proof}
\begin{proof}[Proof of \eqref{e:averagprocl}] Again normalize $z=(0,1)$. Fixing   $0\leq \kappa \leq k$, and using the local notation $f\coloneqq\nabla^\kappa \nu_z$, it must be shown that $\|f\|_{\star,k+\eta,\eta}\lesssim 1$. Note that
\[
  f(\cdot) =
\int\displaylimits_{\substack{\alpha \in \R^d}} \int\displaylimits_{\beta>1} \frac{u((\alpha, \beta))}{(\max\{| \alpha|,\beta\})^{d+ k+\delta} }  \phi\left( \frac{\cdot-\alpha}{\beta}\right) \frac{ \d \beta \d \alpha}{\beta^{d+\kappa +1}},
\]
where $\phi=\nabla^\kappa \varphi$ locally.  Bound the factor ${\beta^{-\kappa}}$ below by 1, even if it may improve certain estimates slightly.
 Fix $w,h\in \R^d$ with $|h|\leq \frac12$. First, observe that for each $\beta>1$, the set \[
Q_{\beta}=\left\{{\alpha}\in \R^d: \phi\left( \frac{w-\alpha}{\beta}\right) \neq 0  \right\} \cup \left\{{\alpha}\in \R^d: \phi\left( \frac{w+h-\alpha}{\beta}\right) \neq 0  \right\}
\] 
has diameter $\lesssim \beta$ due to the support condition on $\varphi$, whence $|Q_\beta|\lesssim \beta^d$. Furthermore, if $|w|\geq 4\beta$ and ${\alpha}\in Q_\beta$ then $|\alpha| \geq\frac{| w|}{2}\geq 2\beta$.  This provides 
\begin{equation}
\label{e:conch1} \nonumber
\begin{split}
\left| f(w) \right|& \lesssim \langle w\rangle^{-(d+k+\delta)} \int\displaylimits_{1}^{\max\{\frac{|w|}{4},1\}}  \frac{\d \beta}{\beta} +  \int\displaylimits_{\max\{\frac{|w|}{4},1\}}^{\infty}  \frac{\d \beta}{\beta^{d+k+\delta+1}} \lesssim \langle w\rangle^{-(d+k+\delta)} \log \langle w\rangle  \lesssim_\eta \langle w\rangle^{-(d+k+\eta)}.
\end{split}
\end{equation}
Using the mean value theorem for $\varphi$ and the previous observations 
\begin{equation}
\label{e:conch2} \nonumber
\begin{split}
\left| f(w+h) - f(w)\right|& \leq \int\displaylimits_{\beta>1}  \int\displaylimits_{\substack{{\alpha} \in Q_\beta}} \left|   {\phi}\left( \frac{w+h-\alpha}{\beta} \right)  - \phi\left( \frac{w-\alpha}{\beta} \right) \right| \frac{ \d \beta \d {\alpha}}{{(\max\{| \alpha|,\beta\})^{d+ k+\delta}} \beta^{d+1}}\\  & \lesssim |h| \int_{1}^\infty \frac{\d \beta}{\max\{|w|, 4\beta\}^{d+k+\delta}\beta^{2}} \lesssim |h| \langle w\rangle^{-(d+k+\delta)},
\end{split}
\end{equation}
and collecting the last two estimates is more than enough to show that $\|f\|_{\star,k+\eta,\eta}\lesssim 1$. This also completes the proof of Lemma \ref{l:average1}. 
\end{proof}

\section{Wavelet Representation of One Parameter Calder\'on-Zygmund Operators} \label{s:1p}

This section provides a representation theorem for one parameter Calder\'on-Zygmund forms $\Lambda$ involving wavelet coefficients with vanishing moments. 
Throughout the section,  $\Lambda$ stands for a  continuous bilinear  form on $\mathcal S(\R^d)$ with  adjoint form 
\[\Lambda^\star: 
\mathcal S(\R^d) \times \mathcal S(\R^d)\to \mathbb C,\qquad 
\Lambda^\star (f,g) \coloneqq \overline{\Lambda(g,f)}.
\] and two adjoint linear continuous operators  
 \[
 T,T^\star: \mathcal S(\R^d)\to \mathcal S'(\R^d), \qquad 
 \langle T f, g \rangle =  \Lambda( f, g), \qquad  \langle T^\star f, g\rangle =  \overline{\Lambda( g, f)}. 
 \]
  
 \subsection{Weak boundedness, kernel estimates and paraproducts}   
Below $k\in \mathbb N$ and $\delta>0$ are two parameters quantifying the weak boundedness and  off-diagonal kernel smoothness of the form $\Lambda$. This quantification is summarized by the norm
\begin{equation}
\label{e:deltaSI}
\|\Lambda\|_{\mathrm{SI}(\R^d,k,\delta)}\coloneqq \|\Lambda\|_{\mathrm{WB},\delta}+\|\Lambda\|_{\mathrm{K},k,\delta} 
\end{equation} with the quantities on the right hand side defined below. 
\begin{definition}[Weak boundedness]  \label{c:weak} 
 The form $\Lambda$ has the \textit{$\delta$-weak boundedness property} if there exists $C>0$ such that 
\[
s^{d}|\Lambda(\varphi_{z},\upsilon_{z}) | \leq C 
\]
uniformly over all $z=(x,s)\in Z^d$ , $\varphi_{z}, \upsilon_{z}\in \Psi_{z}^{\delta;1}$ with $\supp\,\varphi_{z}, \supp\,\upsilon_{z} \subset \mathsf{B}_z$. In this case, call $\|\Lambda\|_{\mathrm{WB},\delta}$ the least such constant $C $.
\end{definition}
\begin{definition}[Kernel estimates]\label{c:ker}  
For a function $K=K(u,v):\R^d\times \R^d\to \mathbb C$, recall the finite difference notation
\[
\Delta_{h|\cdot} K(u,v) = K(u+h,v) -   K(u,v),\qquad
\Delta_{\cdot| h} K(u,v) = K(u,v+h) -   K(u,v), \qquad u,v,h\in\R^d.
\]
The continuous bilinear form $\Lambda$  on $\mathcal S(\R^d)$ has the \textit{standard $(k,\delta)$-kernel estimates} if the following holds. There exists a function $K:\R^d\times \R^d\to \mathbb C$, $k$-times continuously differentiable away from the diagonal in  $ \R^d\times \R^d$ such that
\[
\Lambda(f,g)= \int\displaylimits _{\R^d\times \R^d} K(u,v) f(v)g(u) \, {\d v\d u}
\]
whenever $f,g \in \mathcal S(\R^d)$ are disjointly supported,  and satisfying the size and smoothness estimates for all
$u\neq v \in \R^{d}$, $h\in \R^{d}$ with $0<|h|\leq \frac12|u-v|$:
\begin{align}
\label{e:kernel1}
 & |u-v|^{d} \left[| \nabla_u^\kappa K(u,v)| + |\nabla_v^\kappa K(u,v)| \right] \leq C,\qquad 0\leq \kappa\leq k;
 \\\label{e:kernel2}  & |u-v|^{d+k} \left[|\Delta_{h|\cdot} \nabla^{k}_uK(u,v)| + |\Delta_{\cdot|h} \nabla^{k}_vK(u,v)| \right]
  \leq C   \left(\frac{|h|}{|u-v|}\right)^{\delta}.
\end{align}
Call $\|\Lambda\|_{\mathrm{K},k,\delta}$ the least  constant $C$ such that \eqref{e:kernel1} and \eqref{e:kernel2} hold.
\end{definition}{
Say that $ \Lambda\in \mathrm{SI}(\R^d,k,\delta)$ if the constant \eqref{e:deltaSI} is finite. The next two examples of forms in $\mathrm{SI}(\R^d,k,\delta)$ are the fundamental building blocks of Calder\'on-Zygmund forms with higher degree smoothness.
 \begin{definition}[Wavelet form] \label{c:cancf} Let $ \{ \beta_z,\upsilon_{z}\in \Psi_{z}^{k,\delta;0}:z\in Z^d \}$ be two families of cancellative wavelets. The form
\begin{equation}
\label{e:cancform}
\Lambda(f,g) = \int _{Z^d}  \langle f,  \beta_{z}\rangle  \langle \upsilon_{z},g \rangle  \, \d \mu(z)
\end{equation}
belongs to $ \mathrm{SI}(\R^d,k,\delta)$ and $\|\Lambda\|_{ \mathrm{SI}(\R^d,k,\delta)}\lesssim 1$. The weak boundedness property is contained in Proposition \ref{t:sparsemodel1} while the $(k,\delta)$ kernel estimate is obtained via a standard computation reliant on \eqref{e:decayconcrete1}-\eqref{e:decaysmoothconcrete1}.
 \end{definition}

 \begin{definition}[Paraproduct forms] \label{c:parf} Let $0\leq |\gamma|\leq D$ be a multi-index. Call the family
  $\{\vartheta_{\gamma,z}\in C \Psi^{D,1;1}_z:z\in Z^d\}$ a $\gamma$-family if 
 \begin{equation}
\label{e:alpert3} \int_{\R^d} x^\alpha\vartheta_{\gamma,z}(x)\, \d x =
t^{|\alpha|}\delta_{\gamma\alpha}, \qquad \forall 0\leq |\alpha|\leq |\gamma|.
\end{equation}
For instance, if $\phi_\gamma$ satisfies \eqref{e:int1}, then $\{\vartheta_{\gamma,z}\coloneqq \mathsf{Sy}_z\phi_\gamma:z\in Z^d\}$ is a $\gamma$-family.
  For a function $b\in \mathrm{BMO}(\R^d)$, and multi-indices $  \gamma,\alpha $, referring to \eqref{e:phigammaz} for  $\varphi_{\alpha,z}$ define
\begin{equation}
\label{e:para1p}
\Pi_{b,\gamma,\alpha} (f, g) = \int_{Z^d} \langle b, \varphi_{\alpha,z}\rangle \langle f, \vartheta_{\gamma,z}\rangle   \langle   \varphi_z,g\rangle   \,\d \mu(z).
\end{equation}
If $\gamma=\alpha$, simply write  $\Pi_{b,\gamma}$. 
It is important to stress, see Remark \ref{r:ibp}, that $\varphi_{\gamma,z}\in C\Psi^{D,1;0}_z$ for all $z\in Z^d$.
Absolute convergence of the above integral for $f,g\in L^1 (\R^d)$ is granted by the easily verified intrinsic estimate
\[
\left|\Pi_{b,\gamma,\alpha} (f, g)\right| \lesssim \pi_b(f,g)
\]
referring to \eqref{e:modelpp2}. In particular $\Pi_{b,\gamma,\alpha}$ has the $(1,1)$-sparse bound, which implies $L^2(\R^d)$ estimates and \emph{a fortiori} weak boundedness property of $\Pi_{b,\gamma,\alpha}$, with $\|\Pi_{b,\gamma,\alpha}\|_{\mathrm{WB},\delta}\lesssim \|b\|_{\mathrm{BMO}(\R^d)}.$ Standard calculations show that $ \|\Pi_{b,\gamma,\alpha}\|_{\mathrm{K},k,1}\lesssim_k \|b\|_{\mathrm{BMO}(\R^d)}$ for all $0\leq k \leq D$, so that
\[
\|\Pi_{b,\gamma,\alpha}\|_{ \mathrm{SI}(\R^d,k,1)} \lesssim \|b\|_{\mathrm{BMO}(\R^d)}, \qquad 0\leq k \leq D.
\]
\end{definition}
\begin{remark} The weak boundedness property of Definition \ref{c:weak} tests $\Lambda$ on smooth functions. The recent literature related to $T(1)$ and representation theorems, see for instance \cite{Hyt2010,MK1,LMOV} and references therein, favors testing conditions on indicator functions. When the form $\Lambda$ also satisfies  kernel estimates, the weak boundedness condition employed in this paper actually follows from indicator-type conditions and is therefore less restrictive. More precisely, suppose that the bilinear form $\Lambda$ is well defined on $L^\infty_0(\R^d) \times L^\infty_0(\R^d) $ and satisfies 
\[
s^{-d}\left|\Lambda\left(\cic{1}_{\mathsf{B}_z},\cic{1}_{\mathsf{B}_z}\right)\right|\leq 1\qquad  \forall z=(x,s) \in Z^d
\]
in addition to the $\delta$-kernel estimates \eqref{e:kernel1} and \eqref{e:kernel2}. Then $\|\Lambda\|_{\textrm{WB},\delta }\lesssim 1$, namely $\Lambda$ has the weak boundedness property   of Definition \ref{c:weak}. A  proof of this implication is  found in \cite{TLN}. 
\end{remark}
\subsection{Calder\'on-Zygmund forms of class $(k,\delta)$} \label{ss:czkdelta}Let $\phi\in \mathcal S(\R^d)$ be an auxiliary function with 
\begin{equation}
\label{e:phiR}\cic{1}_{ \mathsf{B}_{\left(0,1\right)}} \leq \phi \leq  \cic{1}_{\mathsf{B}_{(0,2)}},\end{equation}
and introduce the notation, for each multi-index $0\leq |\alpha|\leq k$ and $R>0$
\begin{equation}
\label{e:pR}
p_{R}^\alpha\in \mathcal S(\R^d), \qquad p_R^\alpha (x) = x^\alpha \Dil_{R}^\infty\phi(x), \quad x\in \R^d.
\end{equation}
Let  $\mathcal S_{D}(\R^d) $ be the subspace of functions   $\psi \in \mathcal S(\R^d)$ with the vanishing moment property \eqref{e:ck1par} for all multi-indices $0\leq |\alpha|\leq D$.  
If $0\leq |\alpha|\leq k< D$ and $ \Lambda \in \mathrm{SI}(\R^d,k,\delta)$, the limits 
\begin{equation}
\label{e:Tlambda}
\Lambda(  x^\alpha, \psi)  =  \lim_{R\to \infty}\Lambda\left(p_{ R}^\alpha,\psi\right), \qquad  \psi \in \mathcal S_{D}(\R^d)
\end{equation}
exist, do not depend on the particular choice of $\phi$, and define  linear continuous functionals on $\mathcal S_{D}(\R^d)$ see \cite[Lemma 1.91]{FTW} for a proof.
\begin{remark} \label{r:temam} If $ \Lambda \in \mathrm{SI}(\R^d,k,\delta)$ is a wavelet form of the type \eqref{e:cancform}, then the functionals $\Lambda(  x^\alpha, \cdot)$ vanish for all $0\leq |\alpha|\leq k$. This is easily verified by appealing to the cancellation properties of the families $ \{ \beta_z,\upsilon_{z}\in \Psi_{z}^{k,\delta;0}:z\in Z^d \}$.
\end{remark}
With \eqref{e:Tlambda} in hand, it is possible to ask whether $ \Lambda \in \mathrm{SI}(\R^d,k,\delta)$ admits $\kappa$-th order paraproducts for $0\leq \kappa \leq k$.
 \begin{definition}[$\Lambda$ has paraproducts of $\kappa$-th order] \label{c:par} 
Say that $ \Lambda \in \mathrm{SI}(\R^d,k,\delta)$ has \textit{paraproducts of $0$-th order}  
  if there exists $b_{0},b_0^\star\in \mathrm{BMO}(\R^d)$ with the property that \begin{equation}
\label{e:limitR0}
\Lambda(  \cic{1}, \psi) =  \l b_{0},\psi \r,\qquad   \Lambda^\star(  \cic{1}, \psi) =  \l \psi, b_{0}^\star \r \qquad \forall \psi \in \mathcal S_D(\R^d)\end{equation}
 If this is the case, referring to \eqref{e:para1p},  define the \emph{$0$-th order cancellative part} of $\Lambda$ as
\[
\Lambda_{0}(f,g) = \Lambda(f,g) -\left[\Pi_{b_0,0}(f,g) +\Pi_{b_0^\star,0}(g,f)\right]
\]
We now define inductively the property of having paraproducts of order $\kappa$  for $1\leq \kappa\leq k$. Suppose $\Lambda$ has paraproducts of order $0\leq \kappa<k$ and 
the  \emph{$\kappa$-th order cancellative part}  of $\Lambda$ has been defined.
Then $\Lambda$ has \textit{paraproducts of $(\kappa+1)$-th order}  
  if for each multi-index $\alpha$ with $  |\alpha|= \kappa+1$ there exists $b_{\alpha},b_\alpha^\star\in \mathrm{BMO}(\R^d)$ with the property that \begin{equation}
\label{e:limitR}
\begin{split}
\Lambda_{\kappa}(  x^\alpha, \psi) =  (-1)^{\kappa+1}\l b_{\alpha},\partial^{-\alpha}\psi \r, 
\qquad 
\Lambda^\star_{\kappa}(  x^\alpha, \psi) =  (-1)^{\kappa+1}\l \partial^{-\alpha}\psi, b_{\alpha}^\star \r 
\end{split}
\end{equation}
for all $\psi\in \mathcal S_D(\R^d)$.
Notice that the pairings on the right hand sides are well defined, as $\partial^{-\alpha} \psi \in H^1(\R^d) $ whenever $\psi\in \mathcal S_D(\R^d)$ and $|\alpha|<D$.  If this is the case, we define the \textit{$k$-th order cancellative part} of $\Lambda$ by
\begin{equation}
\label{e:cancpart}
\Lambda_{\kappa+1}(f,g) = \Lambda_{\kappa}(f,g) - 
\sum_{ |\alpha|=\kappa+1} \left[\Pi_{b_\alpha,\alpha} (f, g)+ \Pi_{b_\alpha^\star,\alpha} (g, f)\right].
\end{equation}
Here we set $\Lambda_{-1}(f,g)=\Lambda(f,g)$ to be consistent with the definition of $\Lambda_0(f,g)$ given above.
\end{definition}

\begin{remark}

Observe that \eqref{e:cancpart} is equivalent to
\begin{equation}
\label{e:cancpart2}
\Lambda_{\kappa}(f,g) = \Lambda(f,g) - 
\sum_{0\leq |\alpha|\leq \kappa} \Pi_{b_\alpha,\alpha} (f, g)+ \Pi_{b_\alpha^\star,\alpha} (g, f).
\end{equation}
The inductive procedure of the proof of Theorem \ref{t:T1} reduces to the case $\Lambda(f,g)=\Lambda_{\kappa}(f,g)$.
\end{remark}
\begin{remark} \label{r:BMO}
The $0$-th order condition 
\eqref{e:limitR0} is equivalent to the familiar assumption
\[
T(\cic{1})=b\in  \mathrm{BMO}(\R^d), \qquad T^\star(\cic{1})=b_\star \in \mathrm{BMO}(\R^d).
\] For $0\leq \kappa \leq k-1$, 
let $T^{\,}_{\kappa} , T^\star_{\kappa}$ be the adjoint operators to $\Lambda_{\kappa}$. In view of Remark \ref{r:riesz}, as $R^\gamma $ preserves $\mathrm{BMO}(\R^d)$, the condition may be reformulated as
\begin{equation}
\label{e:bmoa}
 |\nabla|^{{\kappa}} T^{\,}_{\kappa-1}(x^\alpha) =  a_\alpha\in \mathrm{BMO}(\R^d), \qquad |\nabla|^{\kappa} T^\star_{0,\kappa-1}(x\mapsto x^\alpha) =  a _{\alpha}^\star\in \mathrm{BMO}(\R^d), \end{equation}
in the sense of $\mathcal S_D'(\R^d)$,   where $a_\alpha\coloneqq R^{\alpha} b_\alpha$ and similarly for $a_{\alpha}^\star$. 
 \end{remark} 
\begin{remark} 
\label{r4:gp}
Using \eqref{e:alpert3} and Remark \ref{r:ibp}, one directly computes
\begin{equation}
\label{e:equalitiespar}
\Pi_{b,\gamma,\alpha} (x^\beta, g) = (-1)^{|\alpha|}\delta_{\gamma\beta} \langle b, \partial^{-\alpha} g \rangle, \qquad \Pi^\star_{b,\gamma,\alpha} (x^\beta,f)=0, \qquad 0\leq |\beta|\leq |\gamma|.
\end{equation}
Thus  $\Pi_{b,\gamma}$ has paraproducts of order $|\gamma|$ according to Definition \ref{c:par}, with $b_\beta=\delta_{\gamma\beta}b $  and $b_\beta^\star=0$ for all $0\leq |\beta|\leq |\gamma|.$
\end{remark}
\begin{definition}[] The continuous bilinear form $\Lambda$   belongs to the class $\mathrm{CZ}(\R^d,k,\delta)$ of  \textit{$(k,\delta)$-Calder\'on-Zygmund (CZ) forms}  if $\Lambda \in \mathrm{SI}(\R^d,k,\delta)$ and $\Lambda$ has paraproducts of order $k$. For further use, define the norm
\begin{equation}
\label{e:deltaCZ}
\|\Lambda\|_{\mathrm{CZ}(\R^d,k,\delta)}\coloneqq \|\Lambda\|_{\mathrm{SI}(\R^d,k,\delta)}+ \sum_{0\leq |\alpha|\leq k}\left(\|b_\alpha\|_{\mathrm{BMO}(\R^d)} +\|b_\alpha^\star\|_{\mathrm{BMO}(\R^d)}\right).
\end{equation}
\end{definition}
The statement of Theorem \ref{t:T1} below is the representation and (sparse) $T(1)$-theorem  for $(k,\delta)$-CZ forms. Its proof is postponed to Subsection \ref{ss:pfA}. The weighted $T(1)$ result is stated separately  in Corollary \ref{cor:t1} with the deduction of the corollary  given at the end of this subsection.
\begin{theorem} \label{t:T1} Let $k\in \mathbb N$, $0 <\eps<\delta\leq 1$. There exists an absolute constant $C=C_{k,\delta,\eps,d}$ such that the following holds.
 Let $\Lambda$ be a standard $(k,\delta)$-CZ form, satisfying the weak boundedness condition, the kernel estimates 
and having paraproducts  
with normalization $\|\Lambda\|_{\mathrm{CZ}(\R^d,k,\delta)} \leq 1. $ 
Then, there exists a family
$ \{ \upsilon_{z}\in C\Psi_{z}^{k,\eps;0}:z\in Z^d \}, 
 $   such that for all $f,g\in \mathcal S(\R^d)$
\begin{equation}
\label{e:representation1}
\begin{split} 
 \Lambda(f,g) &= \int_{Z^d}  \langle f,  \varphi_{z}\rangle  \langle \upsilon_{z},g \rangle  \, \d \mu(z)+
\sum_{0\leq |\gamma|\leq k} \Pi_{b_\gamma,\gamma} (f, g)+ \Pi_{b_\gamma^\star,\gamma} (g, f),
\end{split} \end{equation}
where, for all $0\leq |\gamma|\leq k$, $\Pi_{b_\gamma, \gamma}$ and  $\Pi_{b_\gamma^\star, \gamma}$ are explicitly constructed paraproducts of the form in Definition \ref{c:parf}, with $\|b_\gamma\|_{\mathrm{BMO}(\R^d)}, \|b_\gamma^\star\|_{\mathrm{BMO}(\R^d)} \leq \|\Lambda\|_{\mathrm{CZ}(\R^d,k,\delta)} $ as in Definition \ref{c:par}. 
\end{theorem}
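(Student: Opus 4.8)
The plan is to prove Theorem \ref{t:T1} by induction on $k$, using the Calder\'on reproducing formula \eqref{e:CRF} to expand both arguments, and then the averaging lemmas of Section \ref{s3} to absorb the resulting wavelet coefficients of $\Lambda$ into a single complexity-zero cancellative wavelet family plus paraproducts. The base step $k=0$ and the inductive step share the same mechanism, so I describe the mechanism once.

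\textbf{Step 1: reduction to the cancellative part.} By Remark \ref{r4:gp} and the identity \eqref{e:cancpart2}, subtracting the explicitly constructed paraproducts $\sum_{0\leq|\alpha|\leq k}[\Pi_{b_\alpha,\alpha}(f,g)+\Pi_{b_\alpha^\star,\alpha}(g,f)]$ from $\Lambda$ produces the $k$-th order cancellative part $\Lambda_k$, which still lies in $\mathrm{SI}(\R^d,k,\delta)$ (the paraproduct forms are in $\mathrm{SI}(\R^d,k,1)$ by Definition \ref{c:parf}) and which, crucially, satisfies $\Lambda_k(x^\alpha,\cdot)=0$ and $\Lambda_k^\star(x^\alpha,\cdot)=0$ for all $0\leq|\alpha|\leq k$ (by construction, since each paraproduct was chosen to cancel exactly the corresponding functional, using \eqref{e:equalitiespar} and the inductive definition \eqref{e:limitR}). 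Thus it suffices to show that a form $\Lambda$ in $\mathrm{SI}(\R^d,k,\delta)$ whose paraproduct functionals \emph{all vanish} can be represented as a single cancellative wavelet form $\int_{Z^d}\langle f,\varphi_z\rangle\langle\upsilon_z,g\rangle\,\d\mu(z)$ with $\upsilon_z\in C\Psi_z^{k,\eps;0}$.

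\textbf{Step 2: expand and localize.} Apply \eqref{e:CRF} to $g$ (and use bilinearity and continuity of $\Lambda$ on $\mathcal S$) to write $\Lambda(f,g)=\int_{Z^d}\Lambda(f,\varphi_\zeta)\,\overline{\langle g,\varphi_\zeta\rangle}\,\d\mu(\zeta)$; since $\Lambda(f,\varphi_\zeta)=\langle Tf,\varphi_\zeta\rangle$ and we want $\langle\upsilon_z,g\rangle$ paired against $\langle f,\varphi_z\rangle$, apply \eqref{e:CRF} to $f$ as well, obtaining $\Lambda(f,g)=\int_{Z^d}\int_{Z^d}\langle f,\varphi_z\rangle\,\Lambda(\varphi_z,\varphi_\zeta)\,\overline{\langle g,\varphi_\zeta\rangle}\,\d\mu(z)\,\d\mu(\zeta)$. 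By symmetry of the desired conclusion in the two arguments (up to passing to $\Lambda^\star$), split the $\zeta$-integral according to the partition \eqref{e:partz} of $Z^d_+(z)$ into $F_+(z)\sqcup S(z)\sqcup A(z)$, together with its mirror image for $\sigma<s$. The contributions from $S(z)$ (comparable scales, comparable location) and from the ``far'' region $F_+(z)$ are the easy ones: on $S(z)$ the weak boundedness bound of Definition \ref{c:weak} gives $s^d|\Lambda(\varphi_z,\varphi_\zeta)|\lesssim 1$ after cutting $\varphi_z,\varphi_\zeta$ by the cutoffs $\alpha_z,\beta_z$ of \eqref{e:cutoffs}, while on $F_+(z)$ the kernel estimates \eqref{e:kernel1}–\eqref{e:kernel2} and the vanishing moments of $\varphi_z$ give decay $|\Lambda(\varphi_z,\varphi_\zeta)|\lesssim s^{-d}[z,\zeta]_{k+\delta}$ exactly as in the proof of Lemma \ref{l:ttstar1}. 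In both cases, after a change of variables $\zeta=(x+\alpha s,\beta s)$, the $\zeta$-integration collapses via Lemma \ref{l:average1} (formula \eqref{e:averagproc2}) into a single wavelet $\upsilon_z^{(1)}\in C\Psi_z^{k,\eps;0}$ integrated against $\langle f,\varphi_z\rangle$; the roles of $f,g$ here are arranged so that the output wavelet sits at the scale of the \emph{smaller} of $s,\sigma$.

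\textbf{Step 3: the region $A(\zeta)$ and the main obstacle.} The genuinely hard contribution is the region $z\in A(\zeta)$ (equivalently $\zeta$ at a much finer scale, $\varphi_z$ effectively a smooth bump near $\mathrm{supp}\,\varphi_\zeta$ on the scale of $\zeta$): here $\Lambda(\varphi_z,\varphi_\zeta)$ does \emph{not} decay, and naively integrating in $z$ would produce divergence. This is where the vanishing of the paraproduct functionals of $\Lambda$ must be used: the idea, following the heuristic described in the introduction, is that on $A(\zeta)$ one should subtract the Taylor polynomial $P_{z,\zeta}$ of Lemma \ref{l:alpert}, so that $\varphi_z$ is replaced by $\chi_{z,\zeta}=\varphi_z-P_{z,\zeta}$, which by \eqref{e:Lpoly} gains the decay $(\sigma/s)$ (with an extra $(|v-\xi|/s)^k$ factor matched against the moments of $\varphi_\zeta$). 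The subtracted polynomial pieces $\langle\varphi_z,\mathsf{Sy}_\zeta\phi_\gamma\rangle$ are exactly what Lemma \ref{l:averages} is designed to resum: formula \eqref{e:averages} says $\int_{A(\zeta)}\langle h,\varphi_z\rangle\langle\varphi_z,q_\zeta\rangle\,\d\mu(z)=\langle h,\vartheta_\zeta\rangle$ for a controlled $\vartheta_\zeta$, and \eqref{e:alpert2} guarantees the moments are preserved — which is what forces the leftover to be precisely a paraproduct-type term whose coefficient is the functional $\Lambda(x^\gamma,\cdot)$, now zero by hypothesis. So the main obstacle is the careful bookkeeping in region $A$: one must verify that after subtracting $P_{z,\zeta}$, the $z$-integral of $\chi_{z,\zeta}$ against $\Lambda(\cdot,\varphi_\zeta)$ converges and, via Lemma \ref{l:average1} applied in the $\beta>1$ regime \eqref{e:averagprocl}, produces a wavelet in $C\Psi_\zeta^{k,\eps;0}$ (this is why one only gets $\eps<\delta$, not $\delta$ itself — the $\beta>1$ averaging costs a logarithm, absorbed by lowering the smoothness index), while the removed polynomial part exactly telescopes, using the vanishing of $\Lambda_k(x^\gamma,\cdot)$, into nothing. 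Assembling the pieces from $S$, $F_+$ and $A$ (and their mirrors), all the non-decaying and log-divergent contributions cancel, and what remains is $\int_{Z^d}\langle f,\varphi_z\rangle\langle\upsilon_z,g\rangle\,\d\mu(z)$ with $\upsilon_z=\upsilon_z^{(1)}+\upsilon_z^{(2)}\in C\Psi_z^{k,\eps;0}$, completing the induction and hence the proof. The BMO bounds on the $b_\gamma,b_\gamma^\star$ are inherited directly from the normalization $\|\Lambda\|_{\mathrm{CZ}(\R^d,k,\delta)}\leq1$ and Definition \ref{c:par}.
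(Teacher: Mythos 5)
Your proof captures the same core decomposition the paper uses: expand both arguments via \eqref{e:CRF}, split $Z^d\times Z^d$ according to the partition \eqref{e:partz}, Taylor-subtract $P_{z,\zeta}$ in the aperture region $A(\zeta)$ (gaining decay via \eqref{e:Lpoly}), estimate the modified coefficients via Lemma~\ref{l:U1}, average via Lemma~\ref{l:average1} to produce a single $\upsilon_z$, and resum the subtracted polynomials via Lemma~\ref{l:averages}. That is exactly the engine of the paper's argument. Where you differ is the outer bookkeeping, and it is a genuine simplification. The paper first proves the representation under the extra assumption $a(k)$ (paraproducts of order $<k$ vanish), and then runs an induction on $\kappa$: at each step it uses the wavelet-form representation of $\Lambda_\kappa$ just obtained, invokes Remark~\ref{r:temam} to conclude its low-order paraproducts vanish, and re-applies the main step with a higher-degree Taylor polynomial. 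You instead observe, directly from \eqref{e:equalitiespar} and Definition~\ref{c:par}, that $\Lambda_k$ obtained via \eqref{e:cancpart2} already satisfies $\Lambda_k(x^\gamma,\cdot)=\Lambda_k^\star(x^\gamma,\cdot)=0$ for all $|\gamma|\le k$; since $\Lambda_k$ also retains $\|\Lambda_k\|_{\mathrm{SI}(\R^d,k,\delta)}\lesssim 1$ (Definition~\ref{c:parf}), the main step with the degree-$k$ Taylor polynomial applies in one shot and the polynomial-correction terms vanish outright, with no surviving $\kappa$-level paraproducts to carry along. This replaces the paper's proof-level induction by a single algebraic reduction; the inductive content is entirely pushed into Definition~\ref{c:par} and Remark~\ref{r4:gp}, which is cleaner.

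A few details to fix. (i) The cutoffs $\alpha_\zeta,\beta_\zeta$ are used in the $A(\zeta)$ case of Lemma~\ref{l:U1}, not in the $S(\zeta)$ case — in $S(\zeta)$ the scales are already comparable and weak boundedness is applied directly, no splitting needed. (ii) ``The output wavelet sits at the scale of the smaller of $s,\sigma$'' is not what happens: in \eqref{e:symmbroken}, the output $\upsilon_z$ always sits at scale $s$, which is the variable that stays, regardless of whether $\beta\le1$ or $\beta>1$; the choice being made (cf. Remark~\ref{r:symmetrybroken}) is only which side, $f$ or $g$, carries the averaged family. (iii) Your $C\Psi_\zeta^{k,\eps;0}$ in Step 3 should be $C\Psi_z^{k,\eps;0}$; the output wavelet is indexed by the outer variable. (iv) ``All the non-decaying and log-divergent contributions cancel'' is misleading: the logarithm from \eqref{e:taylor5} in Lemma~\ref{l:U1} and from \eqref{e:averagprocl} is absorbed into the $\eps<\delta$ slack (as you later note correctly), and the non-decaying polynomial corrections vanish because the paraproduct functionals of $\Lambda_k$ are zero — there is no cancellation between regions. (v) The closing phrase ``completing the induction'' is a loose end, since your outer structure is not inductive.
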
 
\begin{remark} \label{r:symmetrybroken} The first term on the right hand side of \eqref{e:representation1} is not symmetric with respect to taking adjoints: there is one point in the proof, see \eqref{e:symmbroken} where the symmetry between $f$ and $g$ is broken by choosing on which side the averaging Lemma \ref{l:average1} is applied. A representation where the $\varphi_z$-family is kept on the $g$-side may be obtained by reversing this choice.
\end{remark}
\begin{remark}
\label{r:hytlap}The recent article \cite{HytLap} devises a dyadic representation theorem for   $L^2$-bounded Calder\'on-Zygmund operators $T$ whose kernel obey estimates \eqref{e:kernel1} for some $k\geq 1$ and satisfying special cancellation assumptions, namely $T(x^\gamma), T^\star(x^\gamma)$ is a polynomial of degree $\leq k$ for all multi-indices $0\leq |\gamma|\leq k$, which is equivalent to assuming \eqref{e:bmoa} holds with $a_\gamma=a_{\gamma}^{\star}= 0$ for all $\gamma$. Their representation is of dyadic nature in the sense that it involves shifted dyadic grids and dyadic shift operators of arbitrary complexity, just like the classical one of  \cite{Hyt2010}, however a wavelet orthonormal basis is employed as a building block of the dyadic shifts instead of the Haar basis. The useful gain in comparison with \cite{Hyt2010} is a better decay rate of the dyadic shift coefficients in the representation.

 Theorem \ref{t:T1} differs from \cite[Theorem 1.1]{HytLap}  in several aspects. First, it is of $T(1)$ type and the vanishing   assumptions on the higher order paraproducts is replaced by the more general BMO assumption \eqref{e:bmoa}.   Furthermore,    our model operators have complexity zero and no averaging procedure over shifted grids is needed, resulting in a  compact and  computationally feasible expansion. This feature allows for a seamless and powerful extension to the bi-parameter case, which we perform in Theorem \ref{t:T12p} below.
\end{remark}
 We come to the $T(1)$ theorem. Notice that \eqref{e:cort1} below is a vacuous assumption when $k=0$, whence Theorem \ref{t:T1} has  a sparse, sharp weighted version of the classical $T(1)$ theorem  as a corollary. Also notice that no assumption is being made on the adjoint paraproducts $b_\gamma^\star$.
\begin{corollary}   \label{cor:t1}
Suppose that  $\Lambda$ is a standard $(k,\delta)$-CZ form with
\begin{equation}
\label{e:cort1}
b_\gamma=0 \qquad \forall 0\leq |\gamma |< k.
\end{equation}
Then   referring to \eqref{e:modelform1} and \eqref{e:modelpp2}, for each $|\alpha|=k$ the following estimate is true
\begin{equation}
\label{e:cort2}
\left|\Lambda(f,\partial^\alpha g)\right| \lesssim_\eta \sum_{|\beta|=k}\left[\Psi^{\eta}(\partial^\beta f,  g) + \sum_{  |\gamma|= k} \pi_{b_\gamma} (\partial^\beta f, g)+  \sum_{  0\leq |\gamma|\leq k}  \pi_{b_\gamma^\star} (g, \partial^\beta f)\right].
\end{equation}
Furthermore, the sharp weighted bound on the weighted Sobolev space $\dot W^{k,p}(\R^d; w)$ holds
\begin{equation}
\label{e:cort3}
\|T f\|_{\dot W^{k,p}(\R^d; w)} \lesssim [w]_{A_p}^{\max\left\{1,\frac{1}{p-1}\right\}}\| f\|_{\dot W^{k,p}(\R^d; w)}, \qquad   p\in(1,\infty). 
\end{equation}
\end{corollary}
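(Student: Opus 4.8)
\textbf{Proof proposal for Corollary \ref{cor:t1}.}

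The plan is to derive \eqref{e:cort2} directly from the representation formula \eqref{e:representation1} of Theorem \ref{t:T1}, and then to deduce the weighted bound \eqref{e:cort3} by invoking the sparse domination results of Propositions \ref{t:sparsemodel1} and \ref{t:parasparsemodel1} together with the quantitative weighted theory of sparse operators. First I would apply Theorem \ref{t:T1} to the form $\Lambda$, obtaining cancellative wavelets $\upsilon_z\in C\Psi_z^{k,\eps;0}$ and the expansion \eqref{e:representation1}. To estimate $|\Lambda(f,\partial^\alpha g)|$ for $|\alpha|=k$, I would move the derivative $\partial^\alpha$ onto $f$ using the integration-by-parts identity $\langle f,\partial^\alpha g\rangle=\ldots$ built from \eqref{e:pgamma}, i.e.\ $\langle F,\partial^\alpha G\rangle = \langle \nabla^{-k}F,\nabla^k G\rangle$ suitably adapted, distributing derivatives so that each of the three types of terms on the right-hand side of \eqref{e:representation1} gets a $\partial^\beta f$ with $|\beta|=k$ in place of $f$, while the wavelet $\upsilon_z$ (resp.\ the paraproduct building blocks $\vartheta_{\gamma,z}$, $\varphi_z$) absorbs the complementary derivatives. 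Here the hypothesis \eqref{e:cort1} is crucial: it guarantees that no paraproducts $\Pi_{b_\gamma,\gamma}$ with $|\gamma|<k$ appear, so that only the top-order paraproducts $\Pi_{b_\gamma,\gamma}$ with $|\gamma|=k$ survive on the $f$-side (the adjoint paraproducts $\Pi_{b_\gamma^\star,\gamma}$ of all orders $0\le|\gamma|\le k$ are still present, which is why the last sum in \eqref{e:cort2} runs over $0\le|\gamma|\le k$). The cancellative term is then pointwise controlled by $\sum_{|\beta|=k}\Psi^\eta(\partial^\beta f,g)$ via the decay and smoothness estimates \eqref{e:decayconcrete1}--\eqref{e:decaysmoothconcrete1} defining $\Psi_z^{\eta;0}$ and $\Psi_z^{\eta;1}$ (losing $\delta\to\eta$ is harmless), while the paraproduct terms are controlled by the corresponding $\pi_{b_\gamma}$, $\pi_{b_\gamma^\star}$ forms, giving \eqref{e:cort2}.

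For the weighted estimate \eqref{e:cort3}, I would test $\|Tf\|_{\dot W^{k,p}(w)}$ against $g$ in the dual space, so that after the Riesz-transform reformulation $\dot W^{k,p}$-norms become $L^p$-norms of $\nabla^k$ applied to the relevant functions (Remark \ref{r:riesz}), reducing matters to bounding the right-hand side of \eqref{e:cort2} with $\partial^\beta f$ replaced by a generic $L^p(w)$ function and $g$ by a generic $L^{p'}(w^{1-p'})$ function. Each summand is a bisublinear form of the type covered by Proposition \ref{t:sparsemodel1} (for the $\Psi^\eta$ term) or Proposition \ref{t:parasparsemodel1} (for the $\pi_{b_\gamma}$, $\pi_{b_\gamma^\star}$ terms, with the BMO norms $\|b_\gamma\|,\|b_\gamma^\star\|$ bounded by $\|\Lambda\|_{\mathrm{CZ}}\le 1$ per Theorem \ref{t:T1}), hence each is dominated by a $(1,1)$-sparse form $\sum_{Q\in\mathcal S}|Q|\langle F\rangle_Q\langle G\rangle_Q$. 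The sharp weighted bound $[w]_{A_p}^{\max\{1,1/(p-1)\}}$ for the $(1,1)$-sparse form acting on $L^p(w)\times L^{p'}(w^{1-p'})$ is exactly the quantitative content of the $A_2$ theorem in its sparse formulation; I would cite this (e.g.\ via the references to Lerner, Lacey, or \cite{CDPOMRL} mentioned after Proposition \ref{t:sparsemodel1}) rather than reprove it. Summing over the finitely many multi-indices $\beta,\gamma$ then yields \eqref{e:cort3}.

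The main obstacle I anticipate is the bookkeeping in the integration-by-parts step: one must verify that after transferring $k$ derivatives from $g$ onto $f$, the resulting building blocks on the non-$f$ side still belong to the correct wavelet/paraproduct classes with the right normalization, so that the intrinsic forms $\Psi^\eta$, $\pi_{b_\gamma}$, $\pi_{b_\gamma^\star}$ genuinely dominate them. For the cancellative term this uses that $\partial^{-\beta}\upsilon_z$ (or rather the appropriate rescaled derivative) still lies in a class $C\Psi_z^{\eta;1}$ — here the loss from $\eps$-smoothness to an $\eta<\eps$ in the intrinsic class is what forces the $\lesssim_\eta$ dependence. For the paraproduct terms one uses the identities in Remark \ref{r4:gp}, specifically \eqref{e:equalitiespar}, which encode precisely how derivatives interact with the $\gamma$-families $\vartheta_{\gamma,z}$; the hypothesis $b_\gamma=0$ for $|\gamma|<k$ is what makes this transfer clean, since only the $|\gamma|=k$ paraproducts remain and these pair naturally with $\partial^\beta f$. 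The deduction of \eqref{e:cort3} from \eqref{e:cort2} is then essentially a black-box application of known sparse-to-weighted machinery and should present no difficulty.
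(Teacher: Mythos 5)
Your proposal is correct and follows essentially the same route as the paper: apply the representation formula \eqref{e:representation1}, transfer all $k$ derivatives from $g$ onto $f$ by integration by parts while verifying (via Remark \ref{r:ibp} and \eqref{e:phigammaz}) that the rescaled derivatives $s^{k}\partial^{\alpha}\upsilon_z$, $s^{k}\partial^{\alpha}\varphi_z$, $s^{-k}\partial^{-\beta}\vartheta_{\nu,z}$, $s^{k}\partial^{\alpha}\vartheta_{\gamma,z}$ stay in the appropriate normalized classes, invoke \eqref{e:cort1} to kill the low-order $b_\gamma$-paraproducts, and then deduce \eqref{e:cort3} from the sparse bounds of Propositions \ref{t:sparsemodel1} and \ref{t:parasparsemodel1}. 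The bookkeeping concern you flagged is exactly what the paper addresses in the preparatory display \eqref{e:corpf1}, so your plan matches the actual argument in both structure and detail.
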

\begin{remark}\label{r:nec} Condition \eqref{e:cort1} is also necessary for \eqref{e:cort3} to hold, i.e. Corollary \ref{cor:t1} is a characterization of \eqref{e:cort3}. This generalizes the case $\Omega=\R^d$  of \cite[Theorem 1.1]{PXT} to the non-convolution case; in fact, a scaling argument shows that when $\Omega=\R^d$, condition b.\ in \cite[Theorem 1.1]{PXT} is equivalent to \eqref{e:cort1}. 
To see the necessity, suppose that  \eqref{e:cort3} holds for some exponent $p_0$ and all weights $w\in A_{p_0}$. Extrapolation of weighted norm inequalities \cites{CMPb,DuoJFA} then implies that \eqref{e:cort3} holds for $p=2d$ and $w$ equals Lebesgue measure. The content of Corollary \ref{cor:t1} also allows to assume that the adjoint $T$ to $\Lambda$ equals
\[
T f = \sum_{0\leq |\gamma|< k} \int_{Z^d} \langle b_{\gamma}, \varphi_{\gamma,z}\rangle \langle f, \vartheta_{\gamma,z}\rangle     \varphi_z   \,\d \mu(z).
\]
Fix $0\leq |\gamma|=\kappa< k$ and let $\eps\coloneqq k-(\kappa+\frac 1 2)>0$. Then define  $f_R(x) \coloneqq R^{\eps}{x}^\gamma \alpha_{(0,R)}(x)$ where $\alpha_z$ is the cutoff from \eqref{e:cutoffs} and $R>1$ is arbitrary. It is immediate to show that $\|f_R\|_{\dot W^{k,2d}(\R^d)}\sim 1 $, so  $Tf_R$ is a bounded sequence in $\dot W^{k,2d}(\R^d)$. Also, using the properties \eqref{e:alpert3} followed by \eqref{e:CRF},  $R^{-\eps}Tf_R \to \partial^{-\gamma}b_\gamma =T(x^\gamma) $ in the sense of Definition \ref{c:par}. These two properties entail $T(x^\gamma)=0$ in $\dot W^{k,2d}(\R^d)$. Thus $T(x^\gamma)$ is a polynomial of degree $\leq k$. Appealing to Definition \ref{c:par} again reveals that $b_\gamma=0$ as claimed.
\end{remark}
\begin{remark} \label{r:smotest}
Testing type theorems for smooth singular integral operators have previously appeared in several works: a non-exhaustive list includes \cites{TorMem,MeyEl,KunWang,FTW,HO} as well as the already mentioned \cites{HytLap,PXT} and references therein.
In particular, \cite[Theorem 1, cases (6,7)]{KunWang} is essentially equivalent to the unweighted version of Corollary \ref{cor:t1}. 
 Corollary \ref{cor:t1} appears to be the first  weighted $T(1)$ theorem of this type. A sparse bound in the vein of Corollary \ref{cor:t1} was proved in \cite{BB} for the case $k=1$ using techniques from \cite{Ler2013}. However,  the result of \cite{BB} is not of testing type and was obtained under the stronger assumption that $T$ is \emph{a priori} bounded on the Sobolev space $\dot W^{1,2}(\R^d)$.
\end{remark}

\begin{proof}[Proof of Corollary \ref{cor:t1}] Before the actual proof, make the following observations referring to the wavelets $\varphi_z, \upsilon_z$ in the representation \eqref{e:representation1}: for $z=(x,s)\in Z$, $ |\alpha|= |\beta| =|\nu| = k$, $0\leq |\gamma| \leq k$  
\begin{equation}
\label{e:corpf1}
\begin{split}
& s^{k} \partial^{\alpha} \varphi_z,\;   s^{k} \partial^{\alpha} \upsilon_z
 \in C\Psi_{z}^{\eps;0},  
\qquad  s^{-k} \partial^{-\beta} \vartheta_{\nu,z},\; s^{k} \partial^{\alpha} \vartheta_{\gamma,z}
   \in  C\Psi_{z}^{1;1}.
\end{split}
\end{equation}
Applying the representation theorem to $\Lambda$,  and using the assumptions on $b_\gamma$
\[
\begin{split}
&\Lambda(f,\partial^{\alpha}g)=  \int_{Z^d}  \langle f,  \varphi_{z}\rangle  \langle \upsilon_{z},\partial^{\alpha} g \rangle  \, \d \mu(z)+
\sum_{ |\gamma|= k} \Pi_{b_\gamma,\gamma} (f, \partial^{\alpha} g)+ \sum_{  |\gamma|\leq k}  \Pi_{b_\gamma^\star,\gamma} (\partial^{\alpha} g, f).
\end{split}
\]
 Integrating by parts and using Remark \ref{r:ibp} gives
\[
\begin{split}
 \left|\int_{Z^d}  \langle f,  \varphi_{z}\rangle  \langle \upsilon_{z},\partial^{\alpha} g \rangle  \, \d \mu(z)\right|= \left|\sum_{|\beta|=k}
  \int_{Z^d}  \langle \partial^\beta f,  \varphi_{\beta, z}\rangle  \left\langle  s^{k} \partial^{\alpha} \upsilon_z, g \right\rangle \d \mu(z) \right| \lesssim_\eta \sum_{ |\beta|=k}\Psi^{\eta}(\partial^\beta f,g).
\end{split}
\]
Fixing $|\nu|=k$ in the $b_\nu$-type paraproduct, and integrating by parts
\[
 \left|\Pi_{b_\nu,\nu} (f, \partial^\alpha g)   \right|=  \left|\sum_{ |\beta|=k}\int_{Z^d} \langle b_{\nu}, \varphi_{{\nu},z}\rangle \left\langle \partial^\beta f,  s^{-k} \partial^{-\beta} \vartheta_{{\nu},z} \right\rangle   \langle  s^{k} \partial^{\alpha} \varphi_z , g\rangle   \,\d \mu(z)\right|\lesssim \sum_{|\beta|=k} \pi_{b_{\nu}}(\partial^\beta f, g) .
\]
The $b_{\gamma}^\star$, $|0|\leq k \leq \gamma$ type paraproduct is controlled similarly: with reference to Remark \ref{r:ibp} for $\varphi_{\beta, z}$,
\[
\left|\Pi_{b_\gamma^\star,\gamma} (\partial^\alpha g, f) \right| = \left|\sum_{ |\beta|=k}\int_{Z^d} \langle b_\gamma^\star, \varphi_{\gamma,z}\rangle \langle g,  s^{k} \partial^{\alpha} \vartheta_{\gamma,z} \rangle   \langle  \varphi_{\beta, z}, \partial^\beta  f\rangle   \,\d \mu(z)\right|\lesssim \sum_{|\beta|=k} \pi_{b_\gamma}(\partial^\beta f, g) .
\]
This completes the proof of \eqref{e:cort2}. The weighted norm inequality then follows as a consequence of the sparse estimates
\[
\left|\langle \partial^\alpha T f, g\rangle \right| = \left| \Lambda(f,\partial^{\alpha}g)\right| \lesssim   \sum_{Q\in  \mathcal S} |Q|  \langle |\nabla^k f| \rangle_Q \langle g \rangle_Q, \qquad |\alpha|=k
\]
obtained by combining \eqref{e:cort2} with the Propositions \ref{t:sparsemodel1} and \ref{t:parasparsemodel1}.
\end{proof}
\subsection{Proof of Theorem \ref{t:T1}} \label{ss:pfA}Start by normalizing $\|\Lambda\|_{\mathrm{CZ}(\R^d,k,\delta)}=1$. Throughout the proof, the properties \eqref{e:mw1}-\eqref{e:mw5} and \eqref{e:int1} will be referred to frequently.  Recall that $\eps\in (0,\delta)$ is   fixed but arbitrary, and let $\eta=\frac{\eps+\delta}{2}$. 
Throughout the proof,  for $z,\zeta \in Z^d$ we write
\begin{equation}
\label{e:U}
\chi_{z,\zeta} \coloneqq   \varphi_z -P_{z,\zeta}\cic{1}_{A(\zeta)}(z) 
\end{equation}
 referring to  \eqref{e:partz} and Lemma \ref{l:alpert};
 note that this does not override the definition of  Lemma  \ref{l:alpert}.
\begin{lemma}\label{l:U1} $ \left|\Lambda(\chi_{z,\zeta}, \chi_{\zeta,z}) \right|\lesssim [(z,\zeta)]_{k+\eta}  \|\Lambda\|_{\mathrm{CZ}(\R^d,k,\delta)}$.
\end{lemma}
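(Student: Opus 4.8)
The plan is to prove the estimate by a case analysis on the mutual position of $z=(x,s)$ and $\zeta=(\xi,\sigma)$. First I normalize $\|\Lambda\|_{\mathrm{CZ}(\R^d,k,\delta)}=1$. Replacing $\Lambda$ by $\Lambda^\star$ interchanges the two arguments of the form, hence swaps the roles of $z$ and $\zeta$, while leaving the $\mathrm{CZ}$-norm unchanged; so I may assume $s\le\sigma$. Then $z\notin A(\zeta)$, so by \eqref{e:U} one has $\chi_{z,\zeta}=\varphi_z$, which by Remark \ref{r:ibp} is supported in $\mathsf{B}_z$, is $1$-smooth and $L^1$-normalized, and annihilates polynomials of degree $\le D>k$. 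Write $r\coloneqq\max\{\sigma,|x-\xi|\}$, so that $[z,\zeta]_{k+\eta}=s^{k+\eta}r^{-(d+k+\eta)}$ and $\nu\mapsto[z,\zeta]_\nu$ is nonincreasing, since $s\le\sigma\le r$. The case split is: $\zeta\in A(z)$ or not.

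\emph{Case 1: $\zeta\notin A(z)$}, i.e.\ $\sigma\le 3s$ or $|x-\xi|\ge 3\sigma$; here $\chi_{\zeta,z}=\varphi_\zeta$, again supported in $\mathsf{B}_\zeta$ with vanishing moments. If all the quantities are comparable --- concretely $\sigma\le 3s$ \emph{and} $|x-\xi|\le 10\sigma$, so that $s\sim\sigma\sim r$ --- then $\mathsf{B}_z\cup\mathsf{B}_\zeta\subset\mathsf{B}_{z''}$ with $z''=(x,40s)$, and after the dilation $(\mathsf{Sy}_{z''})^{-1}$ both $\varphi_z$ and $\varphi_\zeta$ become fixed compactly supported bumps, hence lie in $C\Psi^{\delta;1}_{z''}$ with support in $\mathsf{B}_{z''}$; the weak boundedness property gives $|\Lambda(\varphi_z,\varphi_\zeta)|\lesssim(40s)^{-d}\sim s^{-d}\sim[z,\zeta]_{k+\eta}$. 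Otherwise (either $\sigma>3s$, forcing $|x-\xi|\ge3\sigma$, or $|x-\xi|>10\sigma$) the supports $\mathsf{B}_z$, $\mathsf{B}_\zeta$ are disjoint, $|u-v|\sim r$ for $u\in\mathsf{B}_\zeta$, $v\in\mathsf{B}_z$, and $|v-x|\le s\le\tfrac12|u-v|$ throughout; so the kernel representation of $\Lambda$ applies, and subtracting from $v\mapsto K(u,v)$ its degree-$k$ Taylor polynomial at $x$ (annihilated by $\varphi_z$) and using \eqref{e:kernel2} yields $|\Lambda(\varphi_z,\varphi_\zeta)|\lesssim s^{k+\delta}r^{-(d+k+\delta)}=[z,\zeta]_{k+\delta}\le[z,\zeta]_{k+\eta}$.

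\emph{Case 2: $\zeta\in A(z)$}, i.e.\ $\sigma>3s$ and $|x-\xi|<3\sigma$, so $r\sim\sigma$; this is the substantive case. Here $\chi_{\zeta,z}=\varphi_\zeta-P_{\zeta,z}$, and Lemma \ref{l:alpert} applied with the roles of $z$ and $\zeta$ exchanged gives the pointwise bound $|\chi_{\zeta,z}(v)|\lesssim\sigma^{-d}(|v-x|/\sigma)^k\min\{1,\max\{|v-x|,s\}/\sigma\}$. I split $\chi_{\zeta,z}=\chi_{\zeta,z}\alpha_{(x,2s)}+\chi_{\zeta,z}\beta_{(x,2s)}=:g_{\mathrm{near}}+g_{\mathrm{far}}$, the pairing of $\Lambda$ with these non-Schwartz functions being read off from \eqref{e:Tlambda} applied to $\Lambda^\star$ (legitimate since $\varphi_z\in\mathcal S_D$). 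For $g_{\mathrm{far}}$, supported in $\{|v-x|\ge 4s\}$ and hence disjoint from $\supp\varphi_z$, the kernel representation and the same Taylor subtraction give $|\Lambda(\varphi_z,g_{\mathrm{far}})|\lesssim s^{k+\delta}\int_{|u-x|\ge 4s}|g_{\mathrm{far}}(u)|\,|u-x|^{-(d+k+\delta)}\,\d u$; inserting the bound on $\chi_{\zeta,z}$ and splitting the integral at $|u-x|=\sigma$ produces $\lesssim_\delta s^{k+\delta}\sigma^{-(d+k+\delta)}\max\{1,\log(\sigma/s)\}$, which is $\lesssim_{\delta,\eta}s^{k+\eta}\sigma^{-(d+k+\eta)}=[z,\zeta]_{k+\eta}$ because $t^{\delta-\eta}\log(1/t)$ is bounded on $(0,1)$. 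For $g_{\mathrm{near}}$, supported in $\mathsf{B}_{(x,8s)}\supset\supp\varphi_z$, I first upgrade Lemma \ref{l:alpert} to a first-derivative bound $|\nabla^j\chi_{\zeta,z}(v)|\lesssim s^{k+1-j}\sigma^{-(d+k+1)}$ for $|v-x|\le 8s$, $j\in\{0,1\}$, obtained by writing $\chi_{\zeta,z}=(\varphi_\zeta-T)+(T-P_{\zeta,z})$ with $T$ the degree-$k$ Taylor polynomial of $\varphi_\zeta$ at $x$ and reusing the bound $|\langle\varphi_\zeta,\mathsf{Sy}_z\phi_\gamma\rangle-\text{(corresponding Taylor coefficient)}|\lesssim s^{k+1}\sigma^{-(d+k+1)}$ from the proof of Lemma \ref{l:alpert}; then $g_{\mathrm{near}}$ equals $\lambda$ times an element of $\Psi^{\delta;1}_{(x,8s)}$ supported in $\mathsf{B}_{(x,8s)}$ with $\lambda\sim s^{d+k+1}\sigma^{-(d+k+1)}$, and weak boundedness yields $|\Lambda(\varphi_z,g_{\mathrm{near}})|\lesssim\lambda s^{-d}=s^{k+1}\sigma^{-(d+k+1)}=[z,\zeta]_{k+1}\le[z,\zeta]_{k+\eta}$. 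Summing the two contributions finishes Case 2.

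The hard part is Case 2, and within it the near term: Lemma \ref{l:alpert} supplies only pointwise control of $\chi_{\zeta,z}$, whereas to apply the weak boundedness property to $g_{\mathrm{near}}$ one needs a matching control of its first derivative at scale $s$, which forces reopening the Taylor argument of Lemma \ref{l:alpert}. The remaining points are routine bookkeeping: making $\Lambda$ act on the polynomially growing pieces $P_{\zeta,z}$ and $g_{\mathrm{far}}$ rigorous through \eqref{e:Tlambda}, and tracking the logarithmic blow-up of the far integral at $\delta=1$, which is precisely what costs the passage from the exponent $k+\delta$ to $k+\eta$ in the statement.
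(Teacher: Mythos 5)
Your proof is correct and follows the same case analysis as the paper (comparable scales via weak boundedness, disjoint supports via the kernel estimates and Taylor subtraction, and the intermediate case split into near/far pieces), the only cosmetic difference being the normalization $s\le\sigma$ rather than the paper's $s\ge\sigma$. The one worthwhile addition is your explicit derivative bound $|\nabla\chi_{\zeta,z}(v)|\lesssim s^{k}\sigma^{-(d+k+1)}$ on $\mathsf B_{(x,8s)}$, obtained by reopening the Taylor argument of Lemma \ref{l:alpert}: the paper's display \eqref{e:taylor4} tacitly uses such a bound to verify that the near piece lies in a $\Psi^{\delta;1}$ class before invoking weak boundedness, and the $L^\infty$ estimate of \eqref{e:Lpoly} alone does not suffice, so you have correctly identified and filled a step the paper leaves implicit.
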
 
\begin{proof} It suffices by symmetry to work in the region  $z\in Z^d_+(\zeta)$, see \eqref{e:partz}.    The estimates are then verified by case analysis.
\vskip1mm \noindent 
\emph{Case  $  z\in S(\zeta)$.} Estimate $\Lambda(\chi_{z,\zeta}, \chi_{\zeta,z})=\Lambda(\varphi_{z},  \varphi_{\zeta})$ appealing to the weak boundedness property.   The details are standard and omitted.
\vskip1mm \noindent \emph{Case $z\in A(\zeta)$.}
Let $\alpha_\zeta,\beta_\zeta$ as in \eqref{e:cutoffs}.  Then
\begin{equation}\label{e:taylor3}
\Lambda(\chi_{z,\zeta}, \chi_{\zeta,z})=\Lambda(\chi_{z,\zeta},\varphi_\zeta)  = \Lambda(\Theta, \varphi_{\zeta}) + \Lambda(\Xi, \varphi_{\zeta}), \qquad \Theta\coloneqq\chi_{z,\zeta}\alpha_\zeta, \qquad \Xi\coloneqq\chi_{z,\zeta}\beta_\zeta
\end{equation}
and one seeks estimates each of the summands in the last right hand side. For the first, apply the weak boundedness property for the point $\tilde \zeta=(\xi,4\sigma)$, so that  $\mathsf{B}_{\tilde{\zeta}}=4\mathsf{B}_\zeta$, and use \eqref{e:Lpoly} to estimate $\|\Theta \|_{\infty}$, obtaining
\begin{equation}
\label{e:taylor4}
|\Lambda(\Theta, \varphi_{\zeta})| \leq  \|\Lambda\|_{\mathrm{WB},\delta} \|\Theta \|_{\infty} \lesssim  \frac{1}{s^d} \left(\frac{\sigma}{s} \right)^{k+1}\lesssim [(z,\zeta)]_{k+1}.  \end{equation}
Continue now to estimate the second summand. The functions $\Xi$ and $ \varphi_{\zeta}$ have disjoint support, and thus the kernel representation of the form $\Lambda$ can be used. For each fixed ${v} \in \R^{d}\setminus 2\mathsf{B}_\zeta$, consider the function $F_{v}\in \mathcal C^k(\overline {\mathsf{B}_\zeta}), $ $F_{v}(u) \coloneqq K(u,v)$ for $u\in \overline{ \mathsf{B}_\zeta}$. Then
\begin{equation}
\label{e:taylor5a}
\begin{split} & \quad
|\Lambda(\Xi, \varphi_{\zeta})| = \left| \int_{ \R^{d}\setminus 2\mathsf{B}_\zeta}   \Xi({v})  \int_{  \mathsf{B}_\zeta} K(u,v) \varphi_{\zeta}(u) \, \d u \d {v} \right| = \left| \int_{ \R^{d}\setminus 2\mathsf{B}_\zeta}   \Xi({v})  \langle F_{v}, \varphi_{\zeta}  \rangle\,  \d v\right| 
\\  &\leq \sigma^k\sum_{|\gamma|=k}\left| \int_{ \R^{d}\setminus 2\mathsf{B}_\zeta}   \Xi({v})  \langle \partial_u^\gamma F_{v}, \sigma^{-k}\partial^{-\gamma} \varphi_{\zeta}  \rangle  \right| \\ & \leq \sigma^{k}  \sum_{|\gamma|=k}\int_{ \R^{d}\setminus 2\mathsf{B}_\zeta}  | \Xi({v})|  \sup_{u\in \mathsf{B}_\zeta } \left|\Delta_{u-\xi|\cdot}\partial^\gamma_u K(\xi,v) \right|  \|\varphi_{\gamma, \zeta}\|_1 \, \d {v}
\\ &
\lesssim   \sigma^{k+\delta}\|\Lambda\|_{\mathrm{K},k,\delta} \int_{ \R^{d}\setminus 2\mathsf{B}_\zeta} \frac{ | \Xi({v})|}{|{v}-\xi|^{d+k+\delta}}   \, \d {v}.
\end{split}\end{equation}
Here, the passage to the second line  is obtained by using $\supp\,\varphi_{\zeta}, \supp \, \varphi_{\gamma,\zeta}\subset \mathsf{B}_\zeta$, consult \eqref{e:ck1paral}, and integrating by parts.  The subsequent (in)equality follows from the mean zero property of  $\varphi_{\gamma,\zeta} $, see Remark \ref{r:ibp}, and the next step is obtained via the kernel estimates \eqref{e:kernel2}. Bound the last right hand side by splitting the integral on $\R^{d}\setminus 2\mathsf{B}_\zeta$ into the pieces
\begin{equation}
\label{e:taylor5}
  \sigma^{k+\delta}  \int\displaylimits_{ \sigma< |v-\xi|\leq s} \frac{ | \Xi(v)|}{|v-\xi|^{d+k+\delta}}   \, \d v \lesssim \frac{1}{s^d}   \left(\frac{\sigma}{s}\right)^{k+\delta}  \int_{\sigma}^s \frac{1}{t}   \, \d t \lesssim  \frac{1}{s^d} 
\left(\frac{\sigma}{s}\right)^{k+\delta} \log \left(\frac{s}{\sigma}\right)   
\end{equation}
where the $\delta$-geometric mean of the estimates in  \eqref{e:Lpoly} is used, and
\begin{equation}
\label{e:taylor6}
  \sigma^{k+\delta}  \int\displaylimits_{  |y-u|> s} \frac{ | \Xi(y)|}{|y-\xi|^{d+k+\delta}}   \, \d y \lesssim  \frac{1}{s^d} \frac{\sigma^{k+\delta}}{s^k}  \int_{s}^\infty \frac{1}{t^{1+\delta}}   \, \d t \lesssim    
\frac{1}{s^d}\left(\frac{\sigma}{s}\right)^{k+\delta}.
\end{equation} 
Putting together   \eqref{e:taylor3}, \eqref{e:taylor5a}, \eqref{e:taylor5} and \eqref{e:taylor6} provides
\begin{equation}
\label{e:repet}
 \left|   \Lambda(\chi_{z,\zeta},\varphi_\zeta) \right|  \lesssim\frac{\|\Lambda\|_{\mathrm{CZ}(\R^d,k,\delta)}}{s^d}\left(\frac{\sigma}{s}\right)^{k+\delta} \log \left(\frac{\sigma}{s}\right)   \lesssim_\eta   \frac{1}{s^d}\left(\frac{\sigma}{s}\right)^{k+\eta} =  [(z,\zeta)]_{k+\eta}
\end{equation}
as claimed.
\vskip1mm \noindent \emph{Case $z\in F_+(\zeta)$.} In this case $\Lambda(\chi_{z,\zeta}, \chi_{\zeta,z})=\Lambda(\varphi_{z}, \varphi_{\zeta})$ and the   supports of $\varphi_{z}$ and $\varphi_{\zeta} $ are separated. Thus, the kernel representation of $\Lambda$, the cancellation of $\varphi_{\gamma,\zeta}$ and the kernel estimates can be used. Proceeding exactly like in \eqref{e:taylor5a} with $\varphi_z$ in place of $\Xi$,
\begin{equation}
\label{e:taylor5afar}
\begin{split} 
|\Lambda(\varphi_z, \varphi_{\zeta})|  
\lesssim   \sigma^{k+\delta}\|\Lambda\|_{\mathrm{K},k,\delta} \int_{ \mathsf{B}_{z} }\frac{ | \varphi_z(v)|}{|v-\xi|^{d+k+\delta}}   \, \d v \lesssim \frac{\sigma^{k+\delta}}{s^d |x-\xi|^{k+\delta}} = [z,\zeta]_{k+\delta}\leq [z,\zeta]_{k+\eta}.
\end{split}\end{equation}
This completes the proof of the lemma.
\end{proof}
The main line of proof of Theorem \ref{t:T1} now begins. First of all, notice that \[
\|\Lambda\|_{\mathrm{CZ}(\R^d),\kappa,\delta} \leq \|\Lambda\|_{\mathrm{CZ}(\R^d,k,\delta)}=1, \qquad 0\leq \kappa \leq k.
\]
 The proof uses induction on $0\leq \kappa \leq k$ in a subtle way. 
 \subsubsection{Base case and main part of inductive step} The proof of the representation Theorem \ref{t:T1} first begins under the additional assumption 
 \begin{center}
 $a(k)$:  $b_\gamma,b_\gamma^\star\neq 0\implies |\gamma|=k$
 \end{center}
 namely, all paraproducts vanish except those of highest order. Notice that $a(0)$ is not an extra assumption.
 Let now $f,g \in \mathcal S(\R^d)$.  Use \eqref{e:CRF},  bilinearity, $\mathcal S(\R^d)$-continuity of $\Lambda$ and definition \eqref{e:U}  to expand $\Lambda(f,g)$ as
\begin{equation} 
\label{e:rep1,1} \nonumber
\begin{split}
 & \quad \int\displaylimits_{Z^d\times Z^d }    \langle f,\varphi_{z} \rangle \langle \varphi_{\zeta},g \rangle \Lambda(\varphi_{z},  \varphi_{\zeta}) \, \d \mu(z) \d \mu(\zeta) 
=   \int\displaylimits_{Z^d\times Z^d }   \langle f,\varphi_{z} \rangle \langle \varphi_{\zeta},g \rangle \Lambda(\chi_{z,\zeta},\chi_{\zeta,z}) \, {\d \mu(z) \d \mu(\zeta)} 
\\ &\quad + 
\int\displaylimits_{Z^d_\zeta} \int\displaylimits_{A(\zeta)}  \langle f,\varphi_{z} \rangle \langle \varphi_{\zeta},g \rangle \Lambda(P_{z,\zeta},\varphi_\zeta) \,  {\d \mu(z) \d \mu(\zeta) }  +  
\int\displaylimits_{Z^d_z} \int\displaylimits_{A(z)}  \langle f,\varphi_{z} \rangle \langle \varphi_{\zeta},g \rangle \Lambda(\varphi_z,P_{\zeta,z}) \, {\d \mu(\zeta) \d \mu(z) }.
\end{split}
\end{equation}
Making the change of variable $\xi=x+\alpha s,\sigma=\beta s$ and using Fubini's theorem in the inner variable of $\langle \varphi_{(x+\alpha s,\beta s)},g\rangle $, the first summand in the last right hand side equals
\begin{equation}
\label{e:symmbroken}
\int\displaylimits_{Z^{d}} \langle f,\varphi_{z} \rangle \langle \upsilon_{z},g \rangle\,  \d
\mu(z), \quad \upsilon_{(x,s)} \coloneqq  \int\displaylimits_{ (\alpha,\beta)\in Z^d   } [(\alpha,\beta)]_{k+\eta}  u_{(x,s)} ( \alpha,\beta)  \varphi_{(x+\alpha s,\beta s)} \frac{ \d \beta \d \alpha}{\beta}
\end{equation}
where \[u_{(x,s)} ( \alpha,\beta)\coloneqq\frac{  \Lambda\left(\chi_{(x,s),(x+\alpha s,\beta s)},\chi_{(x+\alpha s,\beta s),(x,s)}\right) }{ [(\alpha,\beta)]_{k+\eta}}\] is uniformly bounded by Lemma \ref{l:U1}. Thus   $\upsilon_{z}\in C\Psi_{z}^{k,\eps;0}$ by Lemma \ref{l:average1} applied with $u=u_{(x,s)}$.   This constructs the first expression in the right hand side of \eqref{e:representation1}.  With reference to Remark \ref{r:symmetrybroken},  an alternative form of the term in \eqref{e:symmbroken} with roles of $f$ and $g$ exchanged up to conjugation, may be obtained  by making instead the change of variable $x=\xi+\alpha\sigma, s=\beta\sigma$ and applying Lemma  \ref{l:average1} accordingly.

It remains to identify the second and third summand of the main decomposition as a sum of paraproduct terms.  Turning to this task for the first term, begin by noticing that due to assumption $a(k)$  used twice, and Remark \ref{r:ibp}
 \[
 \Lambda\left(y\mapsto \left( \frac{y-\xi }{\sigma}\right)^{\gamma},\varphi_\zeta\right) =  \Lambda\left( y\mapsto y^\gamma, \sigma^{-|\gamma|}\varphi_\zeta \right) = \begin{cases} \langle b_\gamma,   \varphi_{\gamma,\zeta} \rangle&|\gamma|=k \\ 0 & |\gamma|\neq k.
  \end{cases}
 \]
Therefore, applying Lemma \ref{l:averages} to $h=f$, $q_\zeta= \Sy_\zeta \phi_\gamma$ to obtain the last equality
\[\begin{split}
&\quad \int\displaylimits_{Z^d_\zeta} \int\displaylimits_{z\in A(\zeta)}  \langle f,\varphi_{z} \rangle \langle \varphi_{\zeta},g \rangle \Lambda(P_{z,\zeta},\varphi_\zeta) \,  {\d \mu(z) \d \mu(\zeta) }  \\&=\sum_{|\gamma|=k}
\int\displaylimits_{Z^d_\zeta} \int\displaylimits_{z\in A(\zeta)}  \langle f,\varphi_{z} \rangle \langle \varphi_z, \Sy_\zeta \phi_\gamma \rangle  \langle \varphi_{\zeta},g \rangle  \langle b_\gamma,  \varphi_{\gamma,\zeta}\rangle  \,  {\d \mu(z) \d \mu(\zeta) }  
\\ & =  \sum_{|\gamma|=k}
\int _{Z^d}    \langle f,\vartheta_{\gamma,\zeta} \rangle    \langle \varphi_{\zeta},g \rangle  \langle b_\gamma,  \varphi \rangle  \,  {  \d \mu(\zeta) } \coloneqq\Pi_{b_\gamma,\gamma}(f,g).
\end{split}
\]
Notice that Lemma \ref{l:averages} together with \eqref{e:mw1}-\eqref{e:mw5} and \eqref{e:int1} ensure that $\vartheta_{\gamma,\zeta}$, the output of Lemma  \ref{l:averages} corresponding to  $q_\zeta= \Sy_\zeta \phi_\gamma$, belongs to $ \Psi_\zeta^{D,1; 1}$  and is a $\gamma$-family. A totally symmetric argument deals with the third summand in the main decomposition, and completes the proof of \eqref{e:representation1} under the assumption $a(k)$.

\subsubsection{Induction} 
\label{ss:ind} 
It remains to explain how to obtain \eqref{e:representation1} without the assumption $a(k)$. In fact, it will be shown that $\Lambda$ satisfies an instance of representation \eqref{e:representation1} for all $0\leq \kappa \leq k$. This is done by induction on $\kappa$. 
 Before starting the induction, observe that $\|\Lambda\|_{\mathrm{CZ}(\R^d),\kappa,\delta}\leq \|\Lambda\|_{\mathrm{CZ}(\R^d,k,\delta)}\leq 1$. For $\kappa=0$, \eqref{e:representation1} is achieved in the previous step. 

Assume that $\Lambda$ has been represented in the form \eqref{e:representation1} for an integer $0<\kappa<k$. Taking advantage of  Definition \ref{c:parf}, and in particular of Remark \ref{r4:gp}, the  $\kappa$-cancellative part of $\Lambda$ defined in  \eqref{e:cancpart2} satisfies  
 $\|\Lambda\|_{\mathrm{CZ}(\R^d,k,\delta)}\lesssim  1$ and the equality 
\[
 {\Lambda}_{\kappa}(f,g)= \Lambda(f,g)- \sum_{0\leq |\gamma|\leq \kappa} \Pi_{b_\gamma,\gamma} (f, g)+ \Pi_{b_\gamma^\star,\gamma} (g, f) = \int_{Z^d}  \langle f,  \varphi_{z}\rangle  \langle \upsilon_{z},g \rangle  \, \d \mu(z)\]
for some family
$\{ \upsilon_{z}\in C\Psi_{z}^{\kappa,\eps;0}:z\in Z^d \}$. The last equality of the above display tells us that  all paraproducts of ${\Lambda}_{\kappa}$ having  order less than or equal to $\kappa$ equal zero, cf.\ Remark \ref{r:temam}.
 Therefore,
$ {\Lambda}_{\kappa}(f,g)  $ satisfies the assumptions of the theorem, and in addition $a(\kappa+1)$. Apply the previous part of the proof to $ {\Lambda}_{\kappa}(f,g) $, and obtain
\[\begin{split}
 {\Lambda}_{\kappa}(f,g) = \int_{Z^d}  \langle f,  \varphi_{z}\rangle  \langle \upsilon_{z},g \rangle  \, \d \mu(z)+ 
\sum_{ |\gamma|=\kappa+1} \Pi_{b_\gamma,\gamma} (f, g)+ \Pi_{b_\gamma^\star,\gamma} (g, f) 
\end{split}
\] with $\{\upsilon_{z}\in C\Psi_{z}^{\kappa +1,\eta;0}:z\in Z^d\}$.
This equality, rearranged, yields a representation of $\Lambda$  in the form \eqref{e:representation1} for the value $\kappa+1$, completing the inductive step and  the proof of Theorem \ref{t:T1}.

\section{Wavelets in the Bi-Parameter Setting} 
\label{s:4}

This section lays out the bi-parameter analogue of the  framework we described in Section \ref{s:2}, in preparation to a bi-parameter version of the representation Theorem \ref{t:T1}. 
  Throughout,  $\mathbf{d}=(d_1,d_2)$ is used to keep track of   dimension in each parameter. The base space is the product Euclidean space 
\[
x=(x_1,x_2)\in\R^{\mathbf{d}} \coloneqq \R^{d_1}\times \R^{d_2}.
\]  
If $\phi\in \mathcal S(\R^{\mathbf{d}})$ and $F\in \mathcal S(\R^{{d_1}})$, denote 
\[\langle \phi,F\rangle_1 = \int_{\R^{d_1}} \phi(x_1,\cdot) F(x_1)\, \d x_1\in \mathcal S(\R^{{d_2}})
\]
and similarly with roles of $1,2$ reversed.
 If  $\phi: \R^{\mathbf{d}}\to X$, $x_1\in \R^{d_1}, x_2\in \R^{d_2}$, the corresponding slices will be denoted by
\begin{equation}
\label{e:slices}
\begin{split}
&\phi^{[1,x_1]}: \R^{d_2} \to X, \qquad \phi^{[1,x_1]} \coloneqq \phi(x_1,\cdot), 
\\ 
&\phi^{[2,x_2]}: \R^{d_1} \to X, \qquad \phi^{[2,x_2]}\coloneqq \phi(\cdot,x_2) .
\end{split}
\end{equation}
 Our parameter space is thus the  product space
$
Z^{\mathbf{d}}= Z^{ {d}_1}  \times Z^{ {d}_2}  
$
with product measure $\d\mu(z) = \d\mu(z_1) \d\mu(z_2)$.
Vector notation for points of $Z^{\mathbf{d}}$ is not used and instead, we write $z=(z_1,z_2)\in Z^{\mathbf{d}} $. One embeds $Z^{d_j},$ $j=1,2$ into  $Z^{\mathbf{d}} $, regarded as a space of symmetries on $\phi \in \mathcal S(\R^{\mathbf{d}}) $, by taking tensor product with the identity transformation in the complementary parameter. Set
\[
\begin{split}
&
\left(\mathsf{Sy}^{1}_{z_1} \phi \right) (y_1,y_2) \coloneqq\left( \mathsf{Sy}_{z_1}\phi^{[2,y_2]}\right)(y_1) =\frac{1}{s_1^{d_1}}   \phi\left({ \frac{ y_1-x_1}{s_1}}, y_2\right), 
\\
&
\left(\mathsf{Sy}^{2}_{z_2} \phi \right) (y_1,y_2) \coloneqq \left(\mathsf{Sy}_{z_2}\phi^{[1,y_1]}\right)(y_2) =\frac{1}{s_2^{d_2}}   \phi\left(y_1, \frac{ y_2-x_2}{s_2}\right), 
\end{split}
\]
for $z_j=(x_j,s_j) \in Z^{d_j}$. Note that $\mathsf{Sy}^{1}_{z_1}, \mathsf{Sy}^{2}_{z_2} $ commute since they act on separate variables.  The bi-parameter family of symmetries indexed by $z\in Z^{\mathbf{d}}$ are obtained by composition,
\[\begin{split}
 & \mathsf{Sy}_{z} \phi = \mathsf{Sy}^{1}_{z_1} \circ \mathsf{Sy}^{2}_{z_2}, \qquad z=(z_1,z_2) \in  Z^{\mathbf{d}}.
\end{split}
\] 
\subsection{Wavelet Classes and the Intrinsic Square Function} \label{ss:wcisf2p}This is the bi-parameter analogue of Section \ref{ss:21}. The notation now overrides what was previously laid out in the one parameter case; unfortunately, due to the different dilation structure and working with borderline sufficient decay in each of the parameters, definitions may not be re-used.

For $\nu=(\nu_1,\nu_2)\in (0,\infty)^2$,
and $\delta>0$, define $C_{\star,\nu,\delta}$ to be the subspace of the $\delta$-H\"older continuous functions on $\R^{\mathbf{d}}$ whose norm 
\[
\|\phi\|_{\star,\nu,\delta}= \sup_{x\in \R^{\mathbf{d}}}  \left( \prod_{j=1,2}\langle x_j \rangle^{d_j+\nu_j} \right) |\phi(x)|  + \sup_{x \in \R^{\mathbf{d}}}   \sup_{\substack{ h\in \R^{\mathbf{d}} \\ 0<|h|\leq 1}} \left( \prod_{j=1,2}\langle x_j \rangle^{d_j+\nu_j} \right)  \frac{\left|\phi(x+h) - \phi(x)\right|}{|h|^{\delta}}
\] is finite.
In the bi-parameter case, the relevant cancellation properties of $\psi$ are encoded by requiring \eqref{e:ck1par} to hold in the variable $x_\iota$ for each slice $\psi^{[{\widehat\iota},x_{{\widehat\iota}}]}$ and each $\iota=1,2$.  This necessitates the introduction of the bi-parameter analogue of the classes $\Psi^{k,\delta;\iota}_z$. Hereafter, $\gamma_j\in \mathbb N^{d_j}$ for either $j=1,2$  is a multi-index on $\R^{d_j}$. For  $k=(k_1,k_2)\in \mathbb N^{2}, $   $0<\delta\leq 1$ define
\[\begin{split}
&
\Psi^{k,\delta;1,1}_z\coloneqq\left\{\phi \in \mathcal {S}(\R^{\mathbf{d}}):  s_1^{|\gamma_1|}s_2^{|\gamma_2| }\left\| (\mathsf{Sy}_{z})^{-1} \partial^{\gamma_1} \partial^{\gamma_2}  \phi \right\|_{\star,k+\delta,\delta} \leq 1 \right\},
\\
&  \Psi^{k,\delta;0,1}_z\coloneqq\left\{\phi \in \Psi^{k,\delta;1,1}_z: \textrm{\eqref{e:ck1par} holds for } \psi=\psi^{[2,x_2]  },  d=d_1,\,\gamma=\gamma_1,\, \forall x_2 \in \R^{d_2}, \forall 0 \leq |\gamma_1| \leq k_1 \right\},
\\
&  \Psi^{k,\delta;1,0}_z\coloneqq\left\{\phi \in\Psi^{k,\delta;1,1}_z: \textrm{\eqref{e:ck1par} holds for } \psi=\psi^{[1,x_1]  },  d=d_2,\,\gamma=\gamma_2,\, \forall x_1 \in \R^{d_1}, \forall 0 \leq |\gamma_2| \leq k_2 \right\},
\\
&  \Psi^{k,\delta;1,1}_z\coloneqq \Psi^{k,\delta;0,1}_z\cap  \Psi^{k,\delta;1,0}_z,
\end{split}\]
where $z=(z_1,z_2)=((x_1,s_1), (x_2,s_2)) \in Z^{\mathbf{d}}$. The  resulting  decay, smoothness and cancellation properties satisfied by  $\phi\in \Psi^{k,\delta;\theta_1,\theta_2}_{z}$ are efficiently described by
\begin{align}
\label{e:bip1} &
s_\iota^{d_{\iota}+|\gamma_\iota|} \left\langle \frac{y_\iota - x_\iota }{s_\iota} \right\rangle^{d_\iota+k_\iota+\delta}[\partial^{\gamma_\iota} \phi]^{[\iota,y_\iota]} \in \Psi^{k_{{\widehat\iota}},\delta;\theta_{{\widehat\iota}}}_{z_{{\widehat\iota}}} \qquad \forall y_\iota\in \R^{d_\iota}, \,0\leq |\gamma_\iota|\leq k_\iota, \, \iota \in \{1,2\}.
\end{align}

The analogue of the almost orthogonality  Lemma \ref{e:Delta1} in the bi-parameter setting is the following lemma.
\begin{lemma} 
\label{l:ttstar2} Let $m\in\mathbb N$, $0<2\eta<\delta\leq 1$,  $z,\zeta\in Z^{\mathbf{d}}$,  $\psi_z \in \Psi^{(2m,2m),\delta; 0,0}_{z}$, $\psi_{\zeta} \in \Psi^{(2m,2m),\delta; 0,0}_{\zeta}$ . Then
\[
\left|\langle \psi_z, \psi_\zeta  \rangle \right| \lesssim_{m,\eta} [z_1,\zeta_1]_{m+\eta} [z_2,\zeta_2]_{m+\eta} .
\]
\end{lemma}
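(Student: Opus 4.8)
The plan is to mimic the one-parameter almost-orthogonality estimate of Lemma \ref{l:ttstar1}, but to iterate it one parameter at a time, using the tensor-type structure of the classes $\Psi^{(2m,2m),\delta;0,0}_z$. By scale invariance and the symmetries $\mathsf{Sy}^1_{z_1},\mathsf{Sy}^2_{z_2}$ one may normalize, say, $\zeta_1=(0,1)$ and also $\zeta_2=(0,1)$, and assume without loss that $s_1\le \sigma_1=1$ and $s_2\le\sigma_2=1$ (the other orderings being handled by symmetry). The key structural fact is \eqref{e:bip1}: for $\phi\in\Psi^{(2m,2m),\delta;0,0}_z$, each slice, after multiplication by the appropriate weight $s_\iota^{d_\iota}\langle (y_\iota-x_\iota)/s_\iota\rangle^{d_\iota+2m+\delta}$, lands again in the one-parameter class $\Psi^{2m,\delta;0}_{z_{\widehat\iota}}$ in the complementary variable. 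This lets the inner product be evaluated by integrating in $x_1$ first, treating the $x_2$-variable as frozen.

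\textbf{Key steps.} First I would write
\[
\langle \psi_z,\psi_\zeta\rangle = \int_{\R^{d_1}}\int_{\R^{d_2}} \psi_z(y_1,y_2)\,\overline{\psi_\zeta(y_1,y_2)}\,\d y_2\,\d y_1,
\]
and for each fixed pair $(y_1,y_1')$ — actually, after a Fubini rearrangement, fix $y_1$ and view the integral over $y_2$. The cleaner route: for each fixed $y_1$, the slices $\psi_z^{[1,y_1]}$ and $\psi_\zeta^{[1,y_1]}$ are, up to the explicit weights coming from \eqref{e:bip1}, elements of the one-parameter cancellative classes $\Psi^{2m,\delta;0}_{z_2}$ and $\Psi^{2m,\delta;0}_{\zeta_2}$ on $\R^{d_2}$. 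Applying the first estimate of Lemma \ref{l:ttstar1} in the second variable (with $k=m$, exponent $m+\eta'$ for some $\eta<\eta'<\delta/2$) yields
\[
\left|\langle \psi_z^{[1,y_1]},\psi_\zeta^{[1,y_1]}\rangle\right| \lesssim_{m,\eta'} [z_2,\zeta_2]_{m+\eta'}\,\frac{1}{s_1^{d_1}}\left\langle \frac{y_1-x_1}{s_1}\right\rangle^{-(d_1+2m+\delta)}\left\langle y_1\right\rangle^{-(d_1+2m+\delta)},
\]
where the two Japanese-bracket factors are exactly the decay weights carried by the two slices in the $y_1$-variable (recall $\zeta_1=(0,1)$). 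I would need to check that $[z_2,\zeta_2]_{m+\eta'}$ can indeed be extracted uniformly in $y_1$: this is where \eqref{e:bip1} is used to guarantee that the $\Psi^{2m,\delta;0}_{z_2}$-normalization of the $y_1$-localized slice is controlled by the stated weight independently of $y_1$, and that the cancellation in $x_2$ that $\psi_z$ carries for every slice is available. Then I would integrate the displayed bound over $y_1\in\R^{d_1}$: the product of the two brackets $\langle (y_1-x_1)/s_1\rangle^{-(d_1+2m+\delta)}\langle y_1\rangle^{-(d_1+2m+\delta)}$ integrates to (a constant times) $s_1^{d_1}[z_1,\zeta_1]_{2m+\delta}$ up to a logarithmic loss that is absorbed by lowering the exponent to $m+\eta$; this is precisely the one-parameter convolution/overlap computation already implicitly performed in the proof of Lemma \ref{l:ttstar1} and in Proposition \ref{p:intrinsicsf1}, specialized here to the pure-decay (non-cancellative in $y_1$) situation — but note we still have the cancellation of $\psi_z$ in $x_1$ to exploit if $|x_1|$ is large, exactly as in the $|x|\ge1$ case of Lemma \ref{l:ttstar1}. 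Carrying out the $y_1$-integral with the cancellation in $x_1$ invoked yields a factor $[z_1,\zeta_1]_{m+\eta}$, and combining gives the claim.

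\textbf{Main obstacle.} The genuinely delicate point is the uniformity in $y_1$ of the extraction of $[z_2,\zeta_2]_{m+\eta}$: one must verify that the one-parameter Lemma \ref{l:ttstar1} is being applied to functions whose $\Psi^{2m,\delta;0}$-norms in the $y_2$-variable are bounded \emph{uniformly in the frozen variable}, and with the decay in $y_1$ factored out cleanly; this is exactly the content of \eqref{e:bip1}, but one has to be careful that after factoring out the $y_1$-decay weight the residual slice still satisfies \emph{both} the decay/smoothness bounds \eqref{e:decayconcrete1}--\eqref{e:decaysmoothconcrete1} and the vanishing-moment conditions \eqref{e:ck1par} in $x_2$ needed to be in the cancellative class $\Psi^{2m,\delta;0}_{z_2}$. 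A secondary nuisance is bookkeeping the logarithmic losses: each of the two one-parameter applications produces a $\log$ factor (as in \eqref{e:repet}), so one should fix three exponents $2\eta<\eta_1<\eta_2<\delta$ and spend a little room at each stage, which is why the hypothesis is stated with $2\eta<\delta$ rather than $\eta<\delta$. Everything else — the normalizations, the case split on which of $s_\iota,\sigma_\iota$ is larger, the final assembly — is routine and parallels the one-parameter argument verbatim in each coordinate.
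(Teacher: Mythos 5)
The general plan---slice, apply the one-parameter almost-orthogonality in one variable, integrate in the other---is sound, and the appeal to \eqref{e:bip1} to identify the slices as one-parameter wavelets is exactly right. But there is a genuine gap in the step where you integrate over $y_1$. After you have applied the first estimate of Lemma~\ref{l:ttstar1} in the $y_2$-variable for each fixed $y_1$, what you possess is a pointwise \emph{absolute-value} bound
\[
\left|\langle \psi_z^{[1,y_1]},\psi_\zeta^{[1,y_1]}\rangle\right| \lesssim [z_2,\zeta_2]_{2m+2\eta}\,\frac{1}{s_1^{d_1}}\left\langle \frac{y_1-x_1}{s_1}\right\rangle^{-(d_1+2m+\delta)}\left\langle y_1\right\rangle^{-(d_1+2m+\delta)} ,
\]
and at this point the cancellation of $\psi_z$ in the first variable has been irrevocably destroyed: you cannot ``invoke the cancellation in $x_1$'' when integrating a nonnegative majorant. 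Consequently the $y_1$-integral of the two decay brackets does \emph{not} produce $[z_1,\zeta_1]_{m+\eta}$: taking $\zeta_1=(0,1)$, $s_1\ll 1$, $|x_1|\le 1$, the integral is comparable to $1$, whereas $[z_1,\zeta_1]_{m+\eta}\sim s_1^{m+\eta}$, which is much smaller. The missing factor is the power $(\min\{s_1,\sigma_1\})^{m+\eta}$, not a mere logarithm, so ``spending a little room in the exponent'' cannot repair it.

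The paper's fix is precisely to accept this loss and compensate by symmetry. For each $\iota\in\{1,2\}$ one slices in $y_\iota$, applies Lemma~\ref{l:ttstar1} in the complementary variable (obtaining the \emph{full} decay $[z_{\hat\iota},\zeta_{\hat\iota}]_{2m+2\eta}$ there), and integrates the pure-decay weights in $y_\iota$ to get only the crude factor $\left(\max\{s_\iota,\sigma_\iota,|x_\iota-\xi_\iota|\}\right)^{-d_\iota}$. This gives two inequalities, one for $\iota=1$ and one for $\iota=2$, each deficient in one parameter. The claimed bound then follows by taking the $1/2$-geometric mean of the two, since
\[
\left(\max\{s_j,\sigma_j,|x_j-\xi_j|\}\right)^{-d_j/2}\,[z_j,\zeta_j]_{2m+2\eta}^{1/2}=[z_j,\zeta_j]_{m+\eta},\qquad j=1,2.
\]
This interpolation step is the essential idea your proposal is missing; everything else you wrote survives unchanged once it is put in.
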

\begin{proof}
Let $\iota$ be either $1$ or $2$. Applying \eqref{e:bip1} together with the first estimate of Lemma \ref{l:ttstar1} and integrating,
\[ 
\begin{split} \left|
\langle \psi_z, \psi_\zeta  \rangle\right|   =   \int_{\R^{d_\iota}}  \left|\langle \phi_z^{[\iota,y_\iota]} ,\psi_\zeta^{[\iota, y_\iota]}  \rangle\right| \, \d y_{\iota} \lesssim  \left[\max\left\{s_\iota,\sigma_\iota,|x_\iota-\xi_\iota|\right\}\right]^{-d_\iota} [z_{\hat\iota},\zeta_{\hat\iota}]_{2m+ 2\eta} 
\end{split}
\]
 and the lemma follows by taking the $1/2$-geometric average of the two inequalities.
\end{proof}
\subsection{Intrinsic Bi-Parameter Wavelet Coefficients}
The definition of the intrinsic bi-parame\-ter wavelet coefficients is next given. These may be defined in the generality of $f\in \mathcal S'(\R^{\mathbf{d}})$. For such $f$, and $z=(z_1,z_2) \in Z^{\mathbf{d}}$, set \begin{equation}
\label{e:Psidelta2}
\Psi^{\delta;(\iota_1,\iota_2)}_z f  =  \sup_{\psi \in \Psi^{\delta;\iota_1,\iota_2}_z} |\langle f,\psi\rangle |.  \end{equation}
A standard argument based on \eqref{e:bip1} shows that if $f\in L^1_{\mathrm{loc}}(\R^{\mathbf{d}})$, 
\[
\Psi^{\delta;(1,1)}_{(x_1,s_1),(x_2,s_2)} f \lesssim_\delta   \mathrm{M}_{d_1,d_2} f (x), \qquad x=(x_1,x_2)\in \R^{\mathbf{d}}, \quad s_1,\, s_2>0
\]
where $ \mathrm{M}_{d_1,d_2}  $ is the bi-parameter maximal function on $\R^{\mathbf{d}}$. In particular the wavelet coefficients of $f\in L^p(\R^{\mathbf{d}}) $ are finite for $f\in L^p(\R^{\mathbf d})$, $p>1$, as $ \mathrm{M}_{d_1,d_2}  f$ is finite a.\ e.\ in that case.
The remainder of this section contains a basic $L^2$ estimate for the intrinsic square function
\begin{equation}
\label{e:Sdelta2}
\mathrm{SS}_{\delta} f(x_1,x_2) = \left(\, \int\displaylimits_{(0,\infty)^2} 
\left[\Psi^{\delta;(0,0)}_{(x_1,t_1), (x_2,t_2)} f \right]^2 
  \frac{\d t_1 \d t_2}{t_1 t_2}\right)^{\frac12}.
\end{equation}
Again, the parameter $\delta$ will be fixed and play no role and the operator will be represented as $\mathrm{SS}$ later in the paper.
\begin{proposition} 
\label{p:intrinsicsf2}  $\|{\mathrm{SS_{\delta}}} f\|_2\lesssim_\delta \|f\|_2$. As a consequence,
\begin{equation}
\label{e:intrinsicsf2}
\int_{Z^{\mathbf{d}}}\left[ \Psi^{\delta;(0,0)}_{z} f\right] \left[\Psi^{\delta;(0,0)}_{z} g\right]\,  \d \mu (z) \lesssim \|f\|_2\|g\|_2.
\end{equation}
\end{proposition}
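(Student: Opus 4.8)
The plan is to follow the one parameter template of Proposition \ref{p:intrinsicsf1}: linearize the intrinsic square function into $\|Tf\|_{L^2(Z^{\mathbf d})}^2$ for a genuinely linear wavelet coefficient operator $T$, and then bound $T$ by a $TT^\star$ argument via Schur's test. The one new ingredient compared with the one parameter case is that the almost orthogonality estimate of Lemma \ref{l:ttstar2} is available in the \emph{product} form $[z_1,\zeta_1]_\eta[z_2,\zeta_2]_\eta$, which lets the one parameter integrability \eqref{e:integbrack} be applied separately in each parameter.

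First I would record that, by \eqref{e:Sdelta2} and $\d\mu(z)=\d\mu(z_1)\d\mu(z_2)$,
\[
\|\mathrm{SS}_\delta f\|_2^2=\int_{Z^{\mathbf d}}\big(\Psi^{\delta;(0,0)}_z f\big)^2\,\d\mu(z),
\]
and that a standard linearization of the supremum in \eqref{e:Psidelta2} yields a $\mu$-measurable family $\{\psi_z\in\Psi^{\delta;0,0}_z:z\in Z^{\mathbf d}\}$ with $|\langle f,\psi_z\rangle|\geq\tfrac12\Psi^{\delta;(0,0)}_z f$: for each $z$ one may replace the class $\Psi^{\delta;0,0}_z$ by the countable subfamily $\{\mathsf{Sy}_z\omega_n\}_{n}$, where $\{\omega_n\}$ is dense (in a topology fine enough not to change \eqref{e:Psidelta2}) in the class attached to the base point at which $\mathsf{Sy}_z=\mathrm{id}$, and then select the essentially optimal index $n$ in a measurable way. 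It therefore suffices to prove
\[
\int_{Z^{\mathbf d}}|\langle f,\psi_z\rangle|^2\,\d\mu(z)\ \lesssim_\delta\ \|f\|_2^2
\]
with implied constant independent of the family $\{\psi_z\}$.

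For this I would consider the operator $T^\star g\coloneqq\int_{Z^{\mathbf d}}g(\zeta)\,\psi_\zeta\,\d\mu(\zeta)$, well defined in $L^2(\R^{\mathbf d})$ for $g\in L^2(Z^{\mathbf d})$ of compact support because $\|\psi_\zeta\|_{L^2(\R^{\mathbf d})}$ is locally bounded on $Z^{\mathbf d}$ by the decay in \eqref{e:bip1}. Expanding $\|T^\star g\|_2^2$ and invoking Lemma \ref{l:ttstar2} with $m=0$ (its proof is unchanged in that case), i.e.\ the bound $|\langle\psi_z,\psi_\zeta\rangle|\lesssim_\delta[z_1,\zeta_1]_\eta[z_2,\zeta_2]_\eta$ valid for any fixed $\eta\in(0,\delta/2)$, one is led to the integral operator on $L^2(Z^{\mathbf d})$ with kernel $\langle\psi_z,\psi_\zeta\rangle$. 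By the definition \eqref{e:Delta1} of $[\,\cdot\,,\cdot\,]_\eta$ and the scaling change of variables $\zeta_j\mapsto\big(\tfrac{\xi_j-x_j}{s_j},\tfrac{\sigma_j}{s_j}\big)$ performed in each parameter, $\int_{Z^{d_j}}[z_j,\zeta_j]_\eta\,\d\mu(\zeta_j)=\int_{Z^{d_j}}[\zeta']_\eta\,\d\mu(\zeta')\lesssim_\eta 1$ uniformly in $z_j$, hence, by the symmetry of $[\,\cdot\,,\cdot\,]_\eta$ in its two arguments, also uniformly in $\zeta_j$ after integrating in $z_j$. Multiplying over the two parameters, Schur's test gives that this integral operator has $L^2(Z^{\mathbf d})$ norm $\lesssim_\delta 1$, so $\|T^\star g\|_2^2\lesssim_\delta\|g\|_2^2$; passing to the Hilbert space adjoint of the resulting bounded extension of $T^\star$, which on Schwartz functions is exactly $f\mapsto(\langle f,\psi_z\rangle)_z$ by Fubini, yields the displayed quadratic estimate.

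Combining the two displays gives $\|\mathrm{SS}_\delta f\|_2\lesssim_\delta\|f\|_2$, and \eqref{e:intrinsicsf2} then follows immediately from Cauchy--Schwarz in $L^2(Z^{\mathbf d},\mu)$ and the square function bound applied to $f$ and to $g$ separately. The main (and essentially only) obstacle is bookkeeping: ensuring measurability of $z\mapsto\psi_z$ and making the $TT^\star$ manipulation rigorous (which is why I would set it up through $T^\star$ on compactly supported $g$, where everything converges). The analytic heart — product decay of the kernel and its parameter-by-parameter integrability — is handed to us directly by Lemma \ref{l:ttstar2} and \eqref{e:integbrack}.
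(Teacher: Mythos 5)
Your proof is correct and takes essentially the same route as the paper: linearize the supremum, apply a $TT^\star$/Schur argument with the kernel bound $|\langle\psi_z,\psi_\zeta\rangle|\lesssim[z_1,\zeta_1]_\eta[z_2,\zeta_2]_\eta$ from Lemma \ref{l:ttstar2}, integrate via \eqref{e:integbrack} in each parameter, and deduce \eqref{e:intrinsicsf2} by Cauchy--Schwarz. The only cosmetic difference is that you exploit the symmetry of $[\,\cdot\,,\cdot\,]_\eta$ to handle both row and column sums of the Schur kernel at once, whereas the paper performs an explicit change of variables and splits into the several scale configurations; your measurability remark is a useful bookkeeping point that the paper leaves implicit.
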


\begin{proof} Notice that \eqref{e:intrinsicsf2} follows from the square function estimate via two application of Cauchy-Schwarz inequality.
The argument for the square function estimate   is analogous to the one employed for \eqref{e:ttstar1}. Working with $f\in L^2(\R^d)$ of unit norm, and fixing  $\psi_z \in   \Psi^{\delta;0,0}_z, z\in Z^{\mathbf d} $, it suffices to estimate
\[ \label{e:ttstar1bip}
\begin{split} &\quad 
\int\displaylimits_{ z\in Z^{\mathbf{d}}}  \int\displaylimits_{ \substack{(\alpha_1, \beta_1)\in  \R^{d_1} \times (0,1) \\ (\alpha_2, \beta_2)\in  \R^{d_2} \times (0,1)} }|\langle f, \psi_{z} \rangle| |\langle s_1^{d_1}s_2^{d_2} \psi_{z}, \psi_{\zeta(\alpha_1,\beta_1, \alpha_2,\beta_2)}\rangle| |\langle f, \psi_{\zeta(\alpha_1,\beta_1, \alpha_2,\beta_2)}\rangle| \,\d \mu(z)\frac{ \d\alpha_1 \d\beta_1\d\alpha_2 \d\beta_2}{\beta_1  \beta_2 }\\ &  \lesssim    \int\displaylimits_{z\in Z^{\mathbf d}} |\langle f, \psi_z \rangle|^2  \,{\d\mu(z)} \end{split}\]
as well as three more integrals covering all possible  relationships between the scales of $z=((x_1,s_1),(x_2,s_2))$
and $\zeta(\alpha_1,\beta_1, \alpha_2,\beta_2)=((x_1+\alpha_1 s_1,\beta_1 s_1), (x_2+\alpha_2 s_2,\beta_2 s_2))$, which are estimated in an analogous fashion. In this specific case, Lemma \ref{l:ttstar2} entails the bound
\[
|\langle s_1^{d_1}s_2^{d_2} \psi_{z}, \psi_{\zeta(\alpha_1,\beta_1, \alpha_2,\beta_2)}\rangle| \lesssim  [(\alpha_1,\beta_1)]_{\frac \delta 4}[(\alpha_2,\beta_2)]_{\frac \delta 4}
\]
and the required control is again obtained  via a combination of two instances of  \eqref{e:integbrack} and Cauchy-Schwarz.
\end{proof}
The $L^p$-theory of double square functions is well studied. On the other hand, working with non-compactly supported, non-tensor product wavelets, is non-standard. In this generality, $L^p$-estimates may be obtained  by direct product John-Nirenberg type arguments  involving Journ\'e's lemma. In this article, for reasons of space, $L^p$-bounds are obtained as a particular case of the sharp quantitative bound of Theorem \ref{t:w2} below.

 \section{Wavelet Representation of Bi-Parameter Calder\'on-Zygmund Operators}
\label{s6}
 Throughout this section  $\Lambda$ indicates a generic bilinear continuous form on $\mathcal S( \R^{\mathbf{d}})\times \mathcal S( \R^{\mathbf{d}})$. If $f_j\in \mathcal S(\R^{d_j})$ for $j=1,2$, then $f_1\otimes f_2\in \mathcal S( \R^{\mathbf{d}})$ stands for $(x_1,x_2)\mapsto f_1(x_1) f_2(x_2)$. Define the full adjoint of $\Lambda$ by
\[
\Lambda^\star: 
\mathcal S(\R^{\mathbf{d}}) \times \mathcal S(\R^{\mathbf{d}})\to \mathbb C,\qquad 
\Lambda^\star (f,g) \coloneqq \overline{\Lambda(g,f)},
\] 
and, when $\Lambda$ acts on tensor products, the partial adjoints are given by
\[
\begin{split}
&\Lambda^{\star_1},\Lambda^{\star_2}: 
\mathcal S(\R^{d_1}) \otimes  \mathcal S(\R^{d_2}) \times \mathcal S(\R^{d_1}) \otimes  \mathcal S(\R^{d_2})\to \mathbb C,\qquad 
\\
&\Lambda^{\star_1}(f_1\otimes f_2, g_1\otimes g_2) \coloneqq  \overline{\Lambda(g_1\otimes f_2, f_1\otimes g_2) }, \qquad 
\Lambda^{\star_2}(f_1\otimes f_2, g_1\otimes g_2) \coloneqq  \overline{\Lambda(f_1\otimes g_2, g_1\otimes f_2) }.
\end{split}\]
To unify, we write $\Lambda^\circ =\Lambda$ and $\Lambda^{\mathfrak{a}}$, with $\af$ varying in the set $\vaf=\{\circ,\star,\star_1,\star_2\}$  
At times, the adjoint linear operators $T^\af : \mathcal S( \R^{\mathbf{d}})\to \mathcal S'( \R^{\mathbf{d}})$,
\[
\langle T^\af (f_1\otimes f_2), g_1\otimes g_2\rangle \coloneqq \Lambda^\af (f_1\otimes f_2, g_1\otimes g_2), \qquad \af \in \vaf
\]
will be considered. 
A bi-parameter wavelet basis is needed. For $j=1,2$ let $\varphi_j\in \mathcal C^\infty_0(\R^{d_j} ) $ be such that $\varphi=\varphi_j$, $d=d_j$ in \eqref{e:mw1}, and  $D\geq 8(d_1+d_2)$ sufficiently large. 
Set $\varphi\coloneqq \varphi_1 \otimes \varphi_2\in \mathcal C^\infty_0(\R^{\mathbf{d}} )$ as our mother wavelet on $\R^{\mathbf{d}}$, and  rescale it by
\begin{equation}
\label{e:varphi2p}
\varphi_z=\varphi_{z_1} \otimes  \varphi_{z_2}\coloneqq  \Sy_z \varphi = \Sy_{z_1} \varphi_1 \otimes \Sy_{z_2} \varphi_2, \qquad z =(z_1,z_2)\in Z^{\mathbf{d}}.
\end{equation}
With this position,  $\varphi_{z}\in c\Psi^{(D,D),1; 0}_z$ for all $z\in Z^{\mathbf{d}}$.
\subsection{Bi-parameter singular integrals} The boundedness   and kernel smoothness properties of bi-parameter singulars are quantified by the parameters  $k=(k_1,k_2)\in \mathbb N^2$ and $\delta>0$, and summarized by the norm
  \begin{equation}
\label{e:SInorm2p0}
\|\Lambda\|_{\mathrm{SI}(\R^{\mathbf{d}},k,\delta)}\coloneqq \|\Lambda\|_{\mathrm{PWB},k,\delta
} + \|\Lambda\|_{\mathrm{K,f},k,\delta}
\end{equation}
whose summands are described below. Notice that the norm \eqref{e:SInorm2p0} is stable under full and partial adjoints: this fact will be used without explicit mention from  now on.
\begin{definition}[Partial kernel and weak boundedness] \label{d:part1} For $j=1,2$,   $z_j\in Z^{d_j}$, and $u_{z_j}, v_{z_j} \in \Psi^{k_j,\delta;1}_{z_j}$ with $\supp\, u_{z_j}, \supp\, v_{z_j} \subset \mathsf{B}_{z_j}$, define the forms
\[
\begin{split} &
\Lambda_{1,u_{z_1}, v_{z_1}}: \mathcal{S}(\R^{d_2}) \times\mathcal{S}(\R^{d_2}) \to \mathbb C, \qquad \Lambda_{1,u_{z_1}, v_{z_1}}(f_2,g_2) = s_1^{d_1}\Lambda(u_{z_1} \otimes f_{2},v_{z_1} \otimes g_{2}),
\\
&
\Lambda_{2,u_{z_2}, v_{z_2}}: \mathcal{S}(\R^{d_1}) \times\mathcal{S}(\R^{d_1}) \to \mathbb C, \qquad \Lambda_{2,u_{z_2}, v_{z_2}}(f_1,g_1) = s_2^{d_2}\Lambda( f_{1} \otimes u_{z_2},g_{1} \otimes v_{z_2}).
 \end{split}
\]
The form $\Lambda$ has the \textit{$(k,\delta)$-partial kernel and weak boundedness} properties if there exists $C>0$ such that 
\[
\left\|\Lambda_{1,u_{z_1}, v_{z_1}} \right\|_{\mathrm{WB},\delta} + \left\|\Lambda_{1,u_{z_2}, v_{z_2}} \right\|_{\mathrm{K},k_2,\delta} + \left\|\Lambda_{2,u_{z_2}, v_{z_2}} \right\|_{\mathrm{WB},\delta} + \left\|\Lambda_{2,u_{z_2}, v_{z_2}} \right\|_{\mathrm{K},k_1,\delta} \leq C 
\]
uniformly over $z_j\in Z^{d_j}$ and $u_{z_j}, v_{z_j} \in \Psi^{k_j,\delta;1}_{z_j}$ with $\supp\, u_{z_j}, \supp\, v_{z_j} \subset \mathsf{B}_{z_j}$, $j=1,2$. In this case, $\|\Lambda\|_{\mathrm{PWB},k,\delta}$ is the least such constant $C $. The   $\|\Lambda\|_{\mathrm{PWB},\delta}$  norm is stable under full and partial adjoints, and subsumes all of weak boundedness and partial kernel assumptions of \cite{MK1}, see also \cite{OuMp}.
\end{definition}
\begin{definition}[Full kernel] \label{d:full} 
   For a function $K=K(u,v)$ on  $\R^{\mathbf{d}}\times \R^{\mathbf{d}}$, with $u=(u_1,u_2), v=(v_1,v_2)$ again using the finite difference notation
\[
\begin{split} &
\Delta^{1}_{ h_1|\cdot} K(u,v) \coloneqq K\left((u_1+h_1, u_2),v \right)-  K(u,v), \qquad \Delta^{2}_{ h_2|\cdot} K(u,v) \coloneqq K\left((u_1 , u_2+h_2),v \right)-  K(u,v),\\
&\Delta^{1}_{\cdot| h_1} K(u,v) \coloneqq K\left(u,(v_1+h_1,v_2) \right)-  K(u,v), \qquad \Delta^{2}_{\cdot| h_2} K(u,v) \coloneqq K\left(u,(v_1,v_2+h_2) \right)-  K(u,v),
\end{split}\]
for  $u=(u_1,u_2),\, v=(v_1,v_2) \in \R^{\mathbf{d}}$, $ h_j\in \R^{d_j}, j=1,2$. In preparation for \eqref{e:full}, introduce   the norms 
\[
\begin{split}
& 
\left\| K \right\|_{\kappa }\coloneqq \sup_{(u,v)\in\R^{\mathbf{d}}\times \R^{\mathbf{d}}}  \left(\prod_{j=1,2}  |u_j-v_j|^{d_j+\kappa_j } \right)  \left|   K(u,v)\right|, \\
& 
\left\| K \right\|_{\kappa,\delta, \Delta^{1}_{ \square|\cdot} }\coloneqq \sup_{(u,v)\in\R^{\mathbf{d}}\times \R^{\mathbf{d}}}  \sup_{0<2|h_1|< |u_1-v_1| } \frac{|u_1-v_1|^{d_1+\kappa_1 +\delta}}{|h_1|^{\delta}} |u_2-v_2|^{d_2+\kappa_2} \left| (\Delta^{1}_{ h_1|\cdot} K)(u,v)\right|, 
\\  
& 
\left\| K \right\|_{\kappa,\delta, \Delta^{1}_{ \square|\cdot} \Delta^{2}_{ \square|\cdot} }\coloneqq \sup_{(u,v)\in\R^{\mathbf{d}}\times \R^{\mathbf{d}}}  \sup_{\substack{0<2|h_j|< |u_j-v_j|  \\ j=1,2 }}\left(\prod_{j=1,2} \frac{|u_j-v_j|^{d_j+\kappa_j +\delta}}{|h_j|^{\delta}} \right)  \left| (\Delta^{1}_{ h_1|\cdot} \Delta^{2}_{ h_2|\cdot} K)(u,v)\right|, 
\end{split}
\]
with similar definitions for the other finite difference operators: here $\kappa=(\kappa_1,\kappa_2)\in [0,\infty)^{2}$ and $\delta>0$.
 Then the form 
$\Lambda$ satisfies the \textit{full kernel estimates} if the following holds. There exists a   $k=(k_1,k_2)$-times continuously differentiable  $K(u,v)$ on $\R^{\mathbf{d}} \times \R^{\mathbf{d}}$ such that
\begin{equation}
\label{e:full1}
\Lambda(f_1 \otimes f_2,g_1 \otimes g_2) = \int_{\R^{\mathbf{d}} \times \R^{\mathbf{d}}} K(u,v)  f_1(u_1)f_2(u_2) g_1(v_1) g_2(v_2) \,\d u \d v\end{equation}
for all tuples $f_j,g_j\in \mathcal S(\R^{d_j})$ such that $\mathrm{supp}\, f_j \cap \mathrm{supp}\, g_j = \varnothing$, $j=1,2$, with the property that  for all   $0\leq \kappa_1 \leq k_1, \; 0\leq \kappa_2 \leq k_2$,
\begin{equation}
\label{e:full}
\begin{split}
   \left\|\nabla_{u_1}^{\kappa_1 }\nabla_{u_2}^{\kappa_2} K\right\|_{(\kappa_1,\kappa_2)}+ \left\|\nabla_{v_1}^{\kappa_1 }\nabla_{v_2}^{\kappa_2} K\right\|_{(\kappa_1,\kappa_2)} &\leq {C}, \\
  \left\| \nabla_{u_1}^{k_1}\nabla_{u_2}^{\kappa_2}K  \right\|_{(k_1,\kappa_2),\delta, \Delta^{1}_{\square |\cdot} } 
+ \left\| \nabla_{v_1}^{k_1}\nabla_{v_2}^{\kappa_2}K  \right\|_{(k_1,\kappa_2), \delta, \Delta^{1}_{\cdot|\square} }& \leq C,
\\  \left\| \nabla_{u_1}^{k_1}\nabla_{u_2}^{\kappa_2}K  \right\|_{(\kappa_1,k_2),\delta, \Delta^{2}_{\square |\cdot} } 
+ \left\| \nabla_{v_1}^{k_1}\nabla_{v_2}^{\kappa_2}K  \right\|_{(\kappa_1,k_2),\delta, \Delta^{2}_{\cdot|\square} } & \leq C,
   \\
    \left\| \nabla_{u_1}^{k_1}\nabla_{u_2}^{k_2} K  \right\|_{(k_1,k_2), \delta, \Delta^{1}_{\square|\cdot}\Delta^{2}_{\square|\cdot}} +
\left\|\nabla_{v_1}^{k_1}\nabla_{v_2}^{k_2} K  \right\|_{(k_1,k_2), \delta, \Delta^{1}_{\cdot|\square}\Delta^{2}_{\cdot|\square} }  &  \leq C   ,
 \\  \left\| \nabla_{u_1}^{k_1}\nabla_{v_2}^{k_2} K  \right\|_{(k_1,k_2),\delta,\Delta^{1}_{\square|\cdot}\Delta^{2}_{\cdot|\square} } 
+\left\|\nabla_{v_1}^{k_1}\nabla_{u_2}^{k_2} K  \right\|_{(k_1,k_2),\delta, \Delta^{1}_{\cdot|\square}\Delta^{2}_{ \square|\cdot} }   & \leq C   .
\end{split}
\end{equation}
 The least $C$ such that \eqref{e:full1} and \eqref{e:full} hold for each $\Upsilon \in  {\bf\Lambda}$  will be denoted by $\|\Lambda\|_{\mathrm{K,f},k,\delta}$. Notice that the latter constant is preserved under full and partial adjoints as well. If $k_1=k_2=0$, these are the usual full kernel estimates of a bi-parameter Calder\'on-Zygmund operator, see for example \cite{MK1}. 
\end{definition}
Say that $ \Lambda\in \mathrm{SI}(\R^{\mathbf d},k,\delta)$ if the constant \eqref{e:SInorm2p0} is finite.  The next definitions establish the main examples of forms in  $\mathrm{SI}(\R^{\mathbf d},k,\delta)$.
 \begin{definition}[Bi-parameter wavelet form] \label{c:cancfbip} Associate to the two families of bi-parameter cancellative wavelets $\left\{\beta_z,\upsilon_{z}\in C\Psi_{z}^{k,\delta;0,0}:z\in Z^{\mathbf{d}}\right\} $ the bi-parameter wavelet form
\begin{equation}
\label{e:cancformbip}
\Lambda(f,g) =  \int_{Z^{\mathbf{d}}}     \langle f,  \beta_{z}\rangle  \langle \upsilon_{z},g \rangle  \, \d \mu(z).
\end{equation}
This form belongs to $ \mathrm{SI}(\R^{\mathbf{d}},k,\delta)$ and $\|\Lambda\|_{ \mathrm{SI}(\R^{\mathbf{d}},k,\delta)}\lesssim 1$. The weak boundedness property is a particular case of estimate \eqref{e:intrinsicsf2} from Proposition \ref{p:intrinsicsf2}. The partial kernel and full kernel estimates may be obtained  by repeatedly employing \eqref{e:bip1} and standard computations.  
 \end{definition}
{
\begin{definition}[Bi-parameter paraproduct forms]  \label{d:6p} Paralleling our treatment of the one parameter case, three types of bi-parameter paraproducts will be defined.   
For a pair of multi-indices $(\gamma_1,\gamma_2)$ on $\R^{d_1} \times \R^{d_2}$, a function $b\in \mathrm{BMO}(\R^{\mathbf{d}})$, $ (\iota_1,\iota_2)\in \{0,1\}^2$  and $f,g\in \mathcal S(\R^{\mathbf{d}})$, define
d\begin{equation}
\label{e:paragammas}
\begin{split}
&
\Pi_{(0,0),b,(\gamma_1,\gamma_2)} (f, g) \coloneqq \int_{Z^{\mathbf{d} }} \langle b,\varphi_{\gamma_1,z_1}\otimes \varphi_{\gamma_2,z_2}\rangle  \langle f,\vartheta_{\gamma_1,z_1} \otimes \vartheta_{\gamma_2,z_2}\rangle  \langle \varphi_z, g \rangle \, \d \mu(z),
\\& \Pi_{(0,1),b, (\gamma_1,\gamma_2)} (f, g) \coloneqq \int_{Z^{\mathbf{d} }} \langle b,\varphi_{\gamma_1,z_1}\otimes \varphi_{\gamma_2,z_2}\rangle  \langle f,\vartheta_{\gamma_1,z_1} \otimes \varphi_{z_2}  \rangle  \langle \varphi_{z_1}\otimes \vartheta_{\gamma_2,z_2}, g \rangle \, \d \mu(z),
\end{split}\end{equation}
where $\varphi_{\gamma_j,z_j} =\Sy_{z_j} \partial^{-\gamma_j}\varphi_{j}$, see \eqref{e:ck1paral} and \eqref{e:phigammaz}, and  the family
  $\{\vartheta_{\gamma_j,z_j}\in C \Psi^{D,1;0}_z:z\in Z^{d_j}\}$ is a $\gamma_j$-family as in \eqref{e:alpert3}.
 The first form is usually termed \emph{full paraproduct}, while the second is usually referred to as a \emph{partial  paraproduct}. 
The paraproducts defined below are related  as partial adjoints, namely
\[
\Pi_{(0,1),b,(\gamma_1,\gamma_2)} = \left(\Pi_{(0,1),b,(\gamma_1,\gamma_2)}\right)^{\star_1}.
\] For this reason,  the notation $\Pi_{ b,\gamma} $  will  be used in place  of  $\Pi_{(0,0),b,\gamma} $ throughout this section.
  Standard computations relying on the smoothness and compact support of the wavelets appearing in \eqref{e:paragammas}  lead to the following controls on the partial kernel and weak boundedness, and full kernel constants of the paraproduct forms: for   any multi-index $\gamma=(\gamma_1,\gamma_2)$, there holds 
 \begin{equation}
\label{e:paragammas2}
\left\|\Pi_{ b, \gamma}  \right\|_{ \mathrm{SI}(\R^{\mathbf{d}},k,\delta)} \lesssim_{k} \|b\|_{\mathrm{BMO}(\R^{\mathbf{d}})}.
\end{equation}

A third family of paraproducts, which are termed \emph{half-paraproducts}, are constructed using the definitions \eqref{e:para1p} in each parameter $\iota\in \{1,2\}$.
 Let $\kappa_{\hat \iota}\in \mathbb N,\eta>0$, which are kept implicit in the notation, $ \gamma_{\iota}$ be a multi-index on $\R^{d_{\iota}}$ and $\mathbf{a}$ be a continuous  map on $ Z^{d_{\hat\iota}}\times Z^{d_{\hat \iota}}$ taking values in  $\mathrm{BMO} (\R^{d_{\iota}})$. Define, a priori on $\mathcal S(\R^{\mathbf{d}}) \times  \mathcal S(\R^{\mathbf{d}})$, the form
\begin{equation}
\label{e:hpp}
 \Pi^\iota_{\mathbf{a},\gamma_{\iota},\alpha_\iota} (f,g) =\int\displaylimits_{Z^{d_{\hat \iota}}\times Z^{d_{\hat \iota}}}  \Pi_{\mathbf{a}(z_{\hat \iota},\zeta_{\hat \iota}),\gamma_{\iota}, \alpha_\iota}\left(\langle f,\varphi_{z_{\hat \iota}}\rangle_{\hat \iota},\langle g,\varphi_{\zeta_{\hat \iota}}\rangle_{\hat \iota}\right) [z_{\hat \iota},\zeta_{\hat \iota}]_{\kappa_{\hat \iota}+\eta} \d\mu(z_{\hat \iota}) \d\mu(\zeta_{\hat \iota}) 
\end{equation} where $\Pi_{\mathbf{a}(z_{\hat \iota},\zeta_{\hat \iota}),\gamma_{\iota},\alpha_\iota}$ refers to \eqref{e:para1p} for $b=\mathbf{a}(z_{\hat \iota},\zeta_{\hat \iota}),\gamma=\gamma_\iota,\alpha=\alpha_\iota, d=d_\iota$.  
Arguing in a similar fashion to \eqref{e:paragammas2}, we record the estimate
 \begin{equation}
\label{e:paragammas3}
\left\| \Pi^\iota_{\mathbf{a},\gamma_{\iota},\alpha_\iota} \right\|_{ \mathrm{SI}(\R^{\mathbf{d}},k,\delta)} \lesssim \sup_{(z_{\hat \iota}, \zeta_{\hat \iota}) \in Z^{d_{\hat\iota}}\times Z^{d_{\hat \iota}}}\|\mathbf{a}(z_{\hat\iota}, \zeta_{\hat\iota})\|_{\mathrm{BMO}(\R^{d_{\iota}})}, \qquad k=(k_1,k_2), \, k_{\iota}= |\gamma_\iota|, \,k_{\hat\iota}=\kappa_{\hat\iota}. 
\end{equation}
Furthermore, it is a particularly useful observation that, in view of \eqref{e:equalitiespar} and referring to the notation introduced in \eqref{e:Tlambda}, which is legitimately used whenever $f_2,g_2\in\mathcal S(\R^{d_2})$ are fixed
\begin{equation}
\label{e:eequal2par}
\Pi^1_{\mathbf{a},\gamma_{1},\alpha_1} (x_1^{\beta_1} \otimes f_2, g_1\otimes g_2 ) =0 \qquad  \forall 0\leq |\beta_1|\leq |\gamma_1|, \; \beta_1 \neq \gamma_1 
\end{equation}
in the sense of linear functionals acting on  $g_1\in\mathcal S_{D}(\R^{d_1})$, 
and similarly for adjoints and half-paraproducts in the second parameter.
\end{definition}
The next technical lemma shows that the norm \eqref{e:SInorm2p0} controls certain symbols  obtained by partial testing of $\Lambda$ against monomials. For this, the modified wavelets
\[ \chi_{z,\zeta} = \varphi_z -P_{z,\zeta}\cic{1}_{ A(\zeta)}(z), \qquad z,\zeta \in Z^{d}
\]
introduced in \eqref{e:U} will be needed. The proof of the  next Lemma is postponed to the end of this Section, see Subsection \ref{ss:pflqs}.
 \begin{lemma} \label{l:qs}
Let $\iota \in \{1,2\}$, $\gamma_\iota$ be a multi-index in $\R^{d_\iota}$ and $k=(k_1,k_2)$ be such that $|\gamma_\iota|\leq k_\iota$.  For $\iota \in \{1,2\}$, $z_{\hat \iota},\zeta_{\hat \iota} \in Z^{d_{\hat \iota}}$, multi-indices $\gamma_\iota$ in $\R^{d_\iota}$, and $\af\in \{\circ, \star\}$ define the functionals $  \mathbf{q}^{\iota,\af}_{\gamma_\iota} [\Lambda]$ by
\begin{equation}
\label{e:qs}
\begin{split}
&     \big\langle \mathbf{q}^{1,\af}_{ \gamma_1}[\Lambda](z_2,\zeta_2), f_1 \big\rangle \coloneqq  
 \Lambda^{\af}(x_1^{\gamma_1} \otimes \chi_{z_2, \zeta_2},|\nabla|^{|\gamma_1|}  f_1 \otimes \chi_{  \zeta_2,z_2} ),\\[1mm]
&   \big\langle \mathbf{q}^{2,\af}_{ \gamma_2} [\Lambda](z_1,\zeta_1), f_2\big\rangle \coloneqq   \Lambda^{\af}(\chi_{z_1,  \zeta_1}\otimes x_2^{\gamma_2}  ,\chi_{\zeta_1,  z_1}\otimes  |\nabla|^{|\gamma_2|} f_2 )
\end{split}
\end{equation}
initially acting  on the subspace of $\mathcal S(\R^{d_\iota})$ of \ functions $f_\iota$ whose frequency support does not contain the origin. For    a multi-index $\alpha_\iota$ in $\R^{d_\iota}$, and $0<\eta<\delta$, also define
\begin{equation}
\label{e:as}
  \mathbf{a}^{\iota,\af}_{ \gamma_\iota,\alpha_\iota}[\Lambda](z_{\hat \iota},\zeta_{\hat \iota})\coloneqq [z_{\hat \iota},\zeta_{\hat \iota}]_{k_{\hat \iota}+\eta}^{-1}R^{\alpha_\iota} \mathbf{q}^{\iota,\af}_{ \gamma_\iota}(z_{\hat \iota},\zeta_{\hat \iota}), \qquad 
\end{equation}
where $R^{\alpha_1}$ is the Riesz transform associated to $\alpha_1$.
  Assume that  $\mathbf{q}^{\iota,\af}_{\gamma'_\iota} \equiv 0$  for all multi-indices on $\R^{d_\iota}$ with $|\gamma'_\iota|<|\gamma_\iota|$.  Then  
 \[\displaystyle \sup_
 {  Z^{d_{\hat \iota}}\times Z^{d_{\hat \iota}} }  \|\mathbf{a}^{\iota,\af}_{\gamma_\iota,\alpha_\iota}  [\Lambda]\|_{\mathrm{BMO}(\R^{d_{\hat \iota}})} \lesssim \left\|\Lambda  \right\|_{ \mathrm{SI}(\R^{\mathbf{d}},k,\delta) } . \]
 \end{lemma}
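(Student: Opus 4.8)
\textbf{Proof plan for Lemma \ref{l:qs}.}

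The plan is to treat the parameter $\iota=1$ only, the case $\iota=2$ being symmetric, and to treat $\af=\circ$; the adjoint case is identical upon replacing $\Lambda$ by $\Lambda^\star$, whose $\mathrm{SI}$-norm is the same. Fix a cube $I\subset \R^{d_2}$; by the John--Nirenberg characterization it suffices to bound the mean oscillation of $\mathbf{a}^{1,\circ}_{\gamma_1,\alpha_1}[\Lambda](z_2,\zeta_2)$ on $I$, uniformly in $(z_2,\zeta_2)$. Since $R^{\alpha_1}$ is bounded on $\mathrm{BMO}(\R^{d_1})$ and $[z_2,\zeta_2]_{k_2+\eta}^{-1}$ is just a scalar, it is enough to control $\|\mathbf q^{1,\circ}_{\gamma_1}[\Lambda](z_2,\zeta_2)\|_{\mathrm{BMO}(\R^{d_1})}$ by $[z_2,\zeta_2]_{k_2+\eta}\,\|\Lambda\|_{\mathrm{SI}(\R^{\mathbf d},k,\delta)}$. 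The key point is that for fixed $z_2,\zeta_2\in Z^{d_2}$, the bilinear object
\[
(f_1,g_1)\mapsto \Lambda\bigl(f_1\otimes \chi_{z_2,\zeta_2},\, g_1\otimes \chi_{\zeta_2,z_2}\bigr)
\]
is, after the normalization $[z_2,\zeta_2]_{k_2+\eta}^{-1}$, a one-parameter singular integral form on $\R^{d_1}$ with $\mathrm{SI}(\R^{d_1},k_1,\delta)$-norm controlled by $\|\Lambda\|_{\mathrm{SI}(\R^{\mathbf d},k,\delta)}$. This is exactly the one-parameter content already isolated in Lemma \ref{l:U1}: the estimate $|\Lambda(\chi_{z,\zeta},\chi_{\zeta,z})|\lesssim [(z,\zeta)]_{k+\eta}$ in the second parameter is what produces the factor $[z_2,\zeta_2]_{k_2+\eta}$, while the partial weak boundedness and partial kernel hypotheses in Definition \ref{d:part1} (together with the full kernel estimates of Definition \ref{d:full}) supply the one-parameter $\mathrm{SI}$-estimates in the first parameter after freezing $z_2,\zeta_2$.

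Concretely, the steps are as follows. First, I verify partial weak boundedness: for $u_{w_1},v_{w_1}\in\Psi^{k_1,\delta;1}_{w_1}$ supported in $\mathsf B_{w_1}$, the quantity $s_1^{d_1}\Lambda(u_{w_1}\otimes\chi_{z_2,\zeta_2},v_{w_1}\otimes\chi_{\zeta_2,z_2})$ is estimated by a case analysis on the relative position of $z_2$ and $\zeta_2$ (the three regions $S(\zeta_2),A(\zeta_2),F_+(\zeta_2)$ of \eqref{e:partz}), mirroring the proof of Lemma \ref{l:U1}: in the $S$-region one invokes $\|\Lambda\|_{\mathrm{PWB},k,\delta}$ directly on $\Lambda_{1,u_{w_1},v_{w_1}}$ tested against $\varphi_{z_2},\varphi_{\zeta_2}$; in the $A$-region one splits $\chi_{z_2,\zeta_2}=\chi_{z_2,\zeta_2}\alpha_{\zeta_2}+\chi_{z_2,\zeta_2}\beta_{\zeta_2}$, using \eqref{e:Lpoly} and $\|\Lambda\|_{\mathrm{PWB}}$ for the first piece and the \emph{partial} kernel bound $\|\Lambda_{1,u_{w_1},v_{w_1}}\|_{\mathrm{K},k_2,\delta}$ together with the vanishing moments of $\varphi_{\gamma_2,\zeta_2}$ for the second; the $F_+$-region is handled purely by the partial kernel estimates. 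In each case the bound has the shape $[z_2,\zeta_2]_{k_2+\eta}$, uniformly in $w_1$, which is precisely $\|\Lambda\|_{\mathrm{WB},\delta}$ of the frozen form being $\lesssim [z_2,\zeta_2]_{k_2+\eta}\|\Lambda\|_{\mathrm{SI}}$. Second, I verify the one-parameter kernel estimates \eqref{e:kernel1}--\eqref{e:kernel2} in the first variable for the frozen form: its kernel is $K_{z_2,\zeta_2}(u_1,v_1)=\int\!\!\int K((u_1,u_2),(v_1,v_2))\chi_{z_2,\zeta_2}(u_2)\chi_{\zeta_2,z_2}(v_2)\,\d u_2\d v_2$, and differentiating under the integral in $u_1,v_1$ and applying the full-kernel bounds of Definition \ref{d:full} in the first parameter, while integrating the $u_2,v_2$ integrals against $\chi$ exactly as in \eqref{e:taylor5a}--\eqref{e:taylor6} of the proof of Lemma \ref{l:U1}, gives the $\delta$-H\"older kernel estimates on $\R^{d_1}$ with constant $\lesssim [z_2,\zeta_2]_{k_2+\eta}\|\Lambda\|_{\mathrm{SI}}$. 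Third, having established that the frozen form is a one-parameter $\mathrm{SI}(\R^{d_1},k_1,\delta)$ form with the stated norm, I apply the one-parameter theory: the hypothesis $\mathbf q^{1,\circ}_{\gamma_1'}\equiv 0$ for $|\gamma_1'|<|\gamma_1|$ says all lower-order paraproduct symbols of the frozen form vanish, so by the same inductive definition of paraproduct orders used in Definition \ref{c:par} the functional $f_1\mapsto \Lambda(x_1^{\gamma_1}\otimes\chi_{z_2,\zeta_2},|\nabla|^{|\gamma_1|}f_1\otimes\chi_{\zeta_2,z_2})$ is exactly the order-$|\gamma_1|$ paraproduct datum of the frozen one-parameter form, hence lies in $\mathrm{BMO}(\R^{d_1})$ with norm controlled by the frozen $\mathrm{SI}$-norm — this is the content of \eqref{e:bmoa} and Remark \ref{r:BMO} applied to the frozen form (equivalently, one directly reuses Lemma \ref{l:U1} plus \eqref{e:Tlambda}). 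Taking the supremum over $(z_2,\zeta_2)$ and dividing by $[z_2,\zeta_2]_{k_2+\eta}$ yields the claim.

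The main obstacle I anticipate is the bookkeeping around the definition of the symbols $\mathbf q^{\iota,\af}_{\gamma_\iota}$ as \emph{limits} of testing against truncated monomials (the $p_R^\alpha$ of \eqref{e:pR}), rather than against the monomials directly: one must check that the $R\to\infty$ limit defining $\Lambda(x_1^{\gamma_1}\otimes\chi_{z_2,\zeta_2},\cdot)$ exists in $\mathcal S_D'(\R^{d_1})$ for fixed $z_2,\zeta_2$, with convergence uniform enough that the resulting functional still obeys the $[z_2,\zeta_2]_{k_2+\eta}$ bound — this requires knowing that $\chi_{z_2,\zeta_2}$ has the right decay/cancellation so that $u_2\mapsto \chi_{z_2,\zeta_2}(u_2)$ behaves like a wavelet in the second variable with $k_2$ vanishing moments and $(d_2+k_2+\eta)$-decay, which is supplied by Lemma \ref{l:alpert}, equation \eqref{e:Lpoly}. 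A secondary annoyance is the appearance of $|\nabla|^{|\gamma_1|}$ and the Riesz factor $R^{\alpha_1}$: these are handled by Remark \ref{r:riesz} ($\partial^{-\alpha}=|\nabla|^{-|\alpha|}R^\alpha$ up to constants) and by boundedness of Riesz transforms on $\mathrm{BMO}$, so they contribute only fixed constants and do not interact with the freezing argument. Everything else is a faithful transcription, one frozen parameter at a time, of the already-proved one-parameter Lemma \ref{l:U1} and Definition \ref{c:par}.
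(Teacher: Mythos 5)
Your plan of freezing the $\hat\iota$-parameter and reducing to a one-parameter statement is reasonable in spirit, and your steps 1--2 (checking that the frozen form inherits one-parameter weak boundedness and kernel estimates with constant $[z_2,\zeta_2]_{k_2+\eta}$) are of the right kind, though they must use the \emph{partial} kernel structure of Definition \ref{d:part1} rather than a formula $K_{z_2,\zeta_2}(u_1,v_1)=\int\!\!\int K\cdot\chi\cdot\chi$ obtained by integrating the full kernel against the $\chi$'s: $\chi_{z_2,\zeta_2}$ and $\chi_{\zeta_2,z_2}$ are not disjointly supported, so \eqref{e:full1} does not apply to them. This is fixable, but the real problem is your step 3.

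Step 3 asserts that, for a one-parameter form with controlled $\mathrm{SI}$-norm whose lower-order paraproduct symbols vanish, the next-order symbol $f_1\mapsto\Lambda(x_1^{\gamma_1}\otimes\chi_{z_2,\zeta_2},|\nabla|^{|\gamma_1|}f_1\otimes\chi_{\zeta_2,z_2})$ is automatically a $\mathrm{BMO}(\R^{d_1})$ function, and you attribute this to ``the content of \eqref{e:bmoa} and Remark \ref{r:BMO}'' or to ``Lemma \ref{l:U1} plus \eqref{e:Tlambda}.'' None of these cited results asserts this. Equation \eqref{e:bmoa} and Remark \ref{r:BMO} merely reformulate the BMO \emph{hypothesis} in Definition \ref{c:par}, which is exactly what makes an $\mathrm{SI}$ form into a $\mathrm{CZ}$ form; they are definitions, not theorems. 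Equation \eqref{e:Tlambda} only says the testing functional exists in $\mathcal S_D'$, and Lemma \ref{l:U1} bounds the cancellative pairing $|\Lambda(\chi_{z,\zeta},\chi_{\zeta,z})|$, not the symbol $T(x^\gamma)$. There is no one-parameter lemma in the paper that converts ``$\mathrm{SI}$-norm plus vanishing lower-order symbols'' into ``BMO membership of the next-order symbol,'' and nothing in the classical $T(1)$ literature gives this for free: it is precisely the substantive content of Lemma \ref{l:qs}. The paper's proof establishes it directly, via $H^1$-$\mathrm{BMO}$ duality against frequency-localized, scale-adapted molecules $\psi=\mathsf{Sy}_{w_1}\Psi$ with $\mathrm{supp}\,\widehat\Psi$ in an annulus, followed by an annular decomposition $p=\sum p_n$, $\upsilon=\sum\upsilon_m$ and a four-way case analysis (diagonal/off-diagonal times $\Theta_2$/$\Xi_2$) where weak boundedness, partial kernel, and full kernel estimates are each invoked exactly where they apply. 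This decomposition-and-duality machinery is the missing ingredient in your proposal; ``apply the one-parameter theory'' is where the work lives, and it has to be done, not cited.
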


\subsection{Bi-Parameter Calder\'on-Zygmund forms of class $(k,\delta)$}     If $\alpha_\iota$ is a multi-index on $\R^{d_\iota}$, the notation  $p_R^{\alpha_\iota}$ refers to $p_R^{\alpha}$ from \eqref{e:pR},  with $d=d_\iota, \alpha=\alpha_\iota $. Below, $\mathcal S_{D}(\R^{\mathbf{d}}) $ be the subspace of functions   $\psi \in \mathcal S(\R^d)$ with the same  bi-parameter vanishing moment property of the functions of $\Psi^{D,\delta; 0,0 }_z$, for some $z\in Z^{\mathbf d} $.
Then, if $0\leq |\alpha_\iota|\leq k_\iota$ for $\iota\in \{1,2\}$,  and $ \Lambda \in \mathrm{SI}(\R^{\mathbf{d}},k,\delta)$, and $\af \in \vaf$,  the limits 
\begin{equation}
\label{e:Tlambdabip}
\Lambda^{\af}(  x^{\alpha_1}\otimes x^{\alpha_2}, \psi)  =  \lim_{R\to \infty}\Lambda\left(p_{ R}^{\alpha_1}\otimes p_{ R}^{\alpha_2},\psi\right), \qquad  \psi \in \mathcal S_{D}(\R^{\mathbf{d}})
\end{equation}
exist  and define  linear continuous functionals on $\mathcal S_{D}(\R^d)$: with the full kernel estimates at one's disposal,  the proof presented in \cite[Lemma 1.91]{FTW} extends to the bi-parameter case without essential changes.
\begin{remark} \label{r:indbp} If $ \Lambda \in \mathrm{SI}(\R^{\mathbf d},k,\delta)$ is a wavelet form of the type \eqref{e:cancformbip}, an immediate  byproduct of  the cancellation properties of the families $ \beta_z,\upsilon_{z}\in C\Psi_{z}^{k,\delta;0,0}$ is that the  functionals \[\Lambda^\af(x^{\alpha_1}\otimes x^{\alpha_2}, \cdot), \qquad \mathbf{a}^{\iota,\af}_{ \gamma_\iota,\alpha_\iota}[\Lambda]\]  vanish for all $0\leq |\alpha_\iota|,|\gamma_\iota|\leq k_\iota$, $\iota =1,2$.
\end{remark}
The next and final definition details   our assumptions on  the functionals \eqref{e:Tlambdabip} associated to  $ \Lambda \in \mathrm{SI}(\R^{\mathbf d},k,\delta)$. We ask whether  $ \Lambda \in \mathrm{SI}(\R^{\mathbf d},k,\delta)$ has paraproducts of order $\kappa$ for $ 0\leq \kappa \leq \min\{k_1,k_2\}$ and, if that is the case, at the same time define the $\kappa$-th order cancellative part of $\Lambda$ for all $0\leq \kappa \leq \max\{k_1,k_2\}$. 
\begin{definition}[Paraproducts of order $0\leq \kappa \leq \min\{k_1,k_2\}$] 
\label{d:para2} Say that $ \Lambda \in \mathrm{SI}(\R^{\mathbf d},k,\delta)$  has  \textit{paraproducts of order $0$} if for each $\af\in \vaf$ there exists $b_{0}^{\af}\in \mathrm{BMO}(\R^{\mathbf{d}})$, the BMO product space, such that \[
\Lambda^\af (\cic{1} \otimes \cic{1}, \psi) = \langle b_{0}^{\af}, \psi \rangle \qquad
\forall \psi \in \mathcal S_D(\R^{\mathbf d}).\]
As customary, we  use the $T(1)$ notation and write
$
b_{0}^{\af} = T^\af (\cic{1} \otimes \cic{1})
$.
If $\Lambda$ has paraproducts of order $0$ 
the \text{$0$-th order cancellative part of} $\Lambda$ is given by
\[
\Lambda_{0}(f,g) \coloneqq \Lambda(f,g)  -\sum_{\af \in \vaf } \left[\Pi_{ b_{0}^{\af}}\right]^{\af} (f,g) -\sum_{\substack{\af \in \{\circ, \star\}\\ \iota \in \{1,2\}}}
 \left[ \Pi^\iota_{\mathbf{a}^{\iota,\af}_{0,0}[\Lambda]}\right]^{\af}(f,g).  
\]
Suppose $\Lambda$ has paraproducts of order $0\leq \kappa<\min\{k_1,k_2\}$ and 
the  $\kappa$-th order cancellative part  of $\Lambda$ has been defined.
Then $\Lambda$ has \textit{paraproducts of $(\kappa+1)$-th order} if  for each $\gamma=(\gamma_1,\gamma_2)$ with $|\gamma_1|= \kappa+1= |\gamma_2| $  and 
$\af \in   \vaf$ there exists $b_{\gamma}^{\af}\in \mathrm{BMO}(\R^{\mathbf{d}})$  such that\[
\left[\Lambda_{\kappa}\right]^{\af}(  x_1^{\gamma_1}\otimes x_2^{\gamma_2},\psi )=\left\langle b_{\gamma}^{\af}, \partial^{-\gamma_1}\partial^{-\gamma_2}\psi \right \rangle \qquad \forall \psi \in \mathcal S_D(\R^{\mathbf d}).
\]
If $T_\kappa^\af$ stand for the adjoints to $\Lambda_\kappa$,  the corresponding $T(1)$ notation is then
\[
b_{\gamma}^{\af} = R_{\R^{d_1}}^{\gamma_1} R_{\R^{d_2}}^{\gamma_2} |\nabla_{\R^{d_1}}|^{|\gamma_1|}|\nabla_{\R^{d_2}}|^{|\gamma_2|} T_\kappa^\af(x_1^{\gamma_1} \otimes x_2^{\gamma_2}), \qquad \af \in \vaf.
\]
The inductive definition is closed by defining the \textit{$\kappa+1$-th order cancellative part of $\Lambda$} as 
\begin{equation}
\label{e:kordpartbip}
\Lambda_{\kappa+1}(f,g) = \Lambda_{\kappa} (f,g) - \sum_{\substack {\gamma=(\gamma_1,\gamma_2) \\ |\gamma_1|=|\gamma_2|=\kappa+1 } }
\sum_{\af \in \vaf } \left[\Pi_{ b_{\gamma}^{\af},\gamma}\right]^{\af} (f,g) -\sum_{\substack{\af \in \{\circ, \star\}\\ \iota \in \{1,2\}\\ |\gamma_\iota|=|\alpha_\iota|=\kappa+1}}
 \left[ \Pi^\iota_{\mathbf{a}^{\iota,\af}_{\gamma_\iota,\alpha_\iota}[\Lambda_\kappa]}\right]^{\af}(f,g).  
\end{equation}
We do not define paraproducts of order $\kappa+1$ for $\min\{k_1,k_2\}\leq \kappa\leq \max\{k_1,k_2\}-1$. However, we define the $(\kappa+1)$-th order  cancellative  part of $\Lambda$, also inductively, by 
\begin{equation}
\label{e:kordpartbip2}
\Lambda_{\kappa+1}(f,g) = \Lambda_{\kappa} (f,g)   -\sum_{\substack{\af \in \{\circ, \star\}  \\| \gamma_{\iota^*}|= |\alpha_{\iota^*}|=\kappa+1}}
 \left[ \Pi^{\iota^*}_{\mathbf{a}^{{\iota^*},\af}_{\gamma_{\iota^*},\alpha_{\iota^*}}[\Lambda_\kappa]}\right]^{\af}(f,g), \qquad \iota^*=\arg\max\{k_\iota\}.
\end{equation}
\end{definition}
}
 \begin{definition}[] The  bi-parameter bilinear form $\Lambda$   belongs to the class $\mathrm{CZ}(\R^{\mathbf{d}},k,\delta)$ of  \textit{$(k,\delta)$-Calder\'on-Zygmund forms}  if $\Lambda \in \mathrm{SI}(\R^{\mathbf{d}},k,\delta)$ and $\Lambda$ has paraproducts of order $\min\{k_1,k_2\}$, with norm
\begin{equation}
\label{e:CZnorm2p}
\|\Lambda\|_{\mathrm{CZ}(\R^{\mathbf{d}},k,\delta)}\coloneqq \|\Lambda\|_{\mathrm{SI}(\R^{\mathbf{d}},k,\delta)}+ 
\sum_{\substack{0\leq \kappa \leq \min\{k_1,k_2\} \\  |\gamma_1|=|\gamma_2|=\kappa \\\af \in \vaf}}  \left\| |\nabla_{\R^{d_1}}|^{|\gamma_1|}|\nabla_{\R^{d_2}}|^{|\gamma_2|} T_\kappa^\af(x_1^{\gamma_1} \otimes x_2^{\gamma_2})\right\|_{\mathrm{BMO}(\R^{\mathbf d})}.
\end{equation}
\end{definition}
Forms in the $\mathrm{CZ}(\R^{\mathbf{d}},k,\delta)$ may be represented as a linear combination of  a wavelet form of type \eqref{e:cancformbip} and order $k$ plus a finite linear combination of paraproducts and half-paraproducts.
  \begin{theorem} \label{t:T12p} Let $k=(k_1,k_2)\in \mathbb N^2$ with $\max\{k_1,k_2\}+1 \leq D$, $0 <\eps<\delta\leq 1$. There exists an absolute constant $C=C_{k,\delta,\epsilon,d}$ such that the following holds.
 Let $\Lambda$ be a  form of class $ \mathrm{CZ}(\R^{\mathbf{d}},k,\delta) $  with normalization $\|\Lambda\|_{\mathrm{CZ}(\R^{\mathbf{d}}),k,\delta} \leq 1$.
Then, there exists  a family \[\left\{\upsilon_{z}\in C\Psi_{z}^{k,\eps;0,0}:z\in Z^{\mathbf{d}}\right\},\] and  functions   $\mathbf{a}^{\iota,\af}_{\gamma_\iota,\alpha_\iota}$ on $ Z^{d_{\hat \iota }}\times Z^{d_{\hat \iota }}$ taking values in a   bounded subset of  $\mathrm{BMO} (\R^{d_\iota})$,  $\iota=1,2$ such that for all $f,g\in \mathcal S(\R^{\mathbf{d}})$
\begin{equation}
\label{e:representation2}
\begin{split}
  \Lambda(f,g) & =   \int_{Z^{\mathbf{d}}}     \langle f,  \varphi_{z}\rangle  \langle \upsilon_{z},g \rangle  \, \d \mu(z)
 \\  &\quad +  \sum_{\substack {\gamma=(\gamma_1,\gamma_2) \\ 0 \leq |\gamma_1|=|\gamma_2|\leq  \min\{k_1,k_2\}  \\ \af \in \vaf }} \left[\Pi_{ b_{\gamma}^{\af},\gamma}\right]^{\af} (f,g) 
+  
\sum_{\substack{\iota \in \{1,2\} \\0\leq |\gamma_\iota| \leq k_\iota \\ |\alpha_\iota|=|\gamma_\iota| \\ \af \in \{\circ, \star\}}}
 \left[ \Pi^\iota_{\mathbf{a}^{\iota,\af}_{\gamma_\iota,\alpha_\iota}}\right]^{\af}(f,g).
\end{split}
\end{equation}
\end{theorem}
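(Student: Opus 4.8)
\textbf{Proof strategy for Theorem \ref{t:T12p}.}
The plan is to mirror the one parameter argument of Theorem \ref{t:T1}, running a double induction: an outer induction on the order $0\leq \kappa\leq \min\{k_1,k_2\}$ of the paraproducts (as in Subsection \ref{ss:ind}), and, inside each step, the bi-parameter analogue of the base case computation built from the Calder\'on reproducing formula \eqref{e:CRF} applied in each of the two parameters. First I would normalize $\|\Lambda\|_{\mathrm{CZ}(\R^{\mathbf d},k,\delta)}=1$ and reduce, using Remark \ref{r:indbp} and the inductive bookkeeping exactly as in the display following \eqref{e:cancpart2}, to the case where all full and partial paraproduct symbols $b^\af_\gamma$ and $\mathbf a^{\iota,\af}_{\gamma_\iota,\alpha_\iota}[\Lambda]$ vanish except possibly at the top order. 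Then, expanding $\Lambda(f,g)=\int\!\int \langle f,\varphi_z\rangle\langle \varphi_\zeta,g\rangle \Lambda(\varphi_z,\varphi_\zeta)\,\d\mu(z)\d\mu(\zeta)$ via \eqref{e:CRF} in both variables, I would insert the double splitting $\varphi_z=\chi_{z_1,\zeta_1}+P_{z_1,\zeta_1}\ind_{A(\zeta_1)}$ in parameter $1$ and $\varphi_z=\chi_{z_2,\zeta_2}+P_{z_2,\zeta_2}\ind_{A(\zeta_2)}$ in parameter $2$, obtaining a $2\times 2$ array of terms depending on whether each parameter contributes a "cancellative'' piece $\chi$ or a "polynomial'' piece $P$.

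The fully cancellative term, where both parameters contribute $\chi$, should be handled by the bi-parameter almost-orthogonality estimate: the analogue of Lemma \ref{l:U1} states $|\Lambda(\chi_{z_1,\zeta_1}\otimes\chi_{z_2,\zeta_2}, \chi_{\zeta_1,z_1}\otimes\chi_{\zeta_2,z_2})|\lesssim [z_1,\zeta_1]_{k_1+\eta}[z_2,\zeta_2]_{k_2+\eta}$, proved by a case analysis in each parameter (using the partial kernel and weak boundedness of Definition \ref{d:part1} and the full kernel estimates of Definition \ref{d:full}, together with Lemma \ref{l:alpert} to control the Taylor remainders $\chi$). Feeding this bound into the change of variables $\zeta_j=(x_j+\alpha_j s_j,\beta_j s_j)$ and Fubini as in \eqref{e:symmbroken}, the bi-parameter averaging Lemma \ref{l:average1}, applied separately in each parameter, yields precisely the wavelet form $\int \langle f,\varphi_z\rangle\langle \upsilon_z,g\rangle\,\d\mu(z)$ with $\upsilon_z\in C\Psi^{k,\eps;0,0}_z$. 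The two "mixed'' terms, where parameter $\iota$ contributes $\chi$ and parameter $\hat\iota$ contributes $P$, are the source of the \emph{half-paraproducts}: after using Lemma \ref{l:averages} in the parameter carrying $P$ (to collapse the $A(\zeta_{\hat\iota})$-integral into a single $\gamma_{\hat\iota}$-adapted wavelet $\vartheta_{\gamma_{\hat\iota}}$) one is left with an integral over $Z^{d_\iota}\times Z^{d_\iota}$ in the remaining parameter whose integrand is, by the hypothesis that lower-order $\mathbf q^{\iota,\af}_{\gamma'_\iota}$ vanish and by Lemma \ref{l:qs}, controlled by the BMO-valued symbol $\mathbf a^{\hat\iota,\af}_{\gamma_{\hat\iota},\alpha_{\hat\iota}}[\Lambda]$; this is exactly the structure \eqref{e:hpp}, so that term is identified with $[\Pi^{\hat\iota}_{\mathbf a^{\hat\iota,\af}}]^{\af}(f,g)$, its $\mathrm{SI}$-norm being controlled via \eqref{e:paragammas3}. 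Finally the term where both parameters contribute $P$ produces the full paraproducts $[\Pi_{b^\af_\gamma,\gamma}]^\af$ exactly as in the one parameter computation, using \eqref{e:eequal2par} and \eqref{e:equalitiespar} to match coefficients.

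Once the base case (all paraproducts of top order only) is in place, the induction on $\kappa$ closes verbatim as in Subsection \ref{ss:ind}: the $\kappa$-th order cancellative part $\Lambda_\kappa$ defined by \eqref{e:kordpartbip}--\eqref{e:kordpartbip2} again lies in $\mathrm{SI}(\R^{\mathbf d},k,\delta)$ with norm $\lesssim 1$, all its paraproducts of order $\leq\kappa$ vanish by Remark \ref{r:indbp}, so the base case applies to $\Lambda_\kappa$ and extracts the order $\kappa+1$ paraproducts, after which one rearranges to recover \eqref{e:representation2} at level $\kappa+1$. I expect the main obstacle to be the case analysis proving the bi-parameter almost-orthogonality estimate (the analogue of Lemma \ref{l:U1}) in the mixed regions where one parameter is in $A(\zeta_\iota)$ and the other in $F_+(\zeta_{\hat\iota})$ or $S(\zeta_{\hat\iota})$: there one must combine a partial kernel estimate in one variable with the Taylor-remainder bound \eqref{e:Lpoly} in the other, and carefully iterate the integration-by-parts argument of \eqref{e:taylor5a} in a product setting while keeping the logarithmic losses absorbable into the gap $\delta-\eps$. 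A secondary technical point is verifying, via Lemma \ref{l:qs} and Remark \ref{r:indbp}, that the BMO-valued symbols $\mathbf a^{\iota,\af}_{\gamma_\iota,\alpha_\iota}$ appearing in \eqref{e:representation2} are genuinely uniformly bounded, which requires the vanishing of all lower-order $\mathbf q^{\iota,\af}$ — a property supplied by the inductive hypothesis at each stage.
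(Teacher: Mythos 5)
Your proposal follows essentially the same route as the paper's own proof: expand $\Lambda(\varphi_z,\varphi_\zeta)$ via the bi-parameter Calder\'on reproducing formula, split $\varphi_{z_j}$ and $\varphi_{\zeta_j}$ into a cancellative piece $\chi$ and a Taylor-polynomial piece in each parameter, control the fully cancellative term through a bi-parameter almost-orthogonality estimate (the paper's Lemma \ref{l:U2}) followed by the averaging Lemma \ref{l:average2}, and identify the remaining pieces as full paraproducts, partial paraproducts, and half-paraproducts by means of Lemma \ref{l:averages} and Lemma \ref{l:qs}, then remove the hypothesis $a(\kappa)$ by induction — all of which matches Section \ref{s7}.

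One detail needs repair, though. You state the outer induction runs for $0\leq \kappa\leq \min\{k_1,k_2\}$, but this is too small: when $k_1\neq k_2$, stopping at $\min\{k_1,k_2\}$ would only place the cancellative wavelet term in $\Psi^{(\kappa^\ast,\kappa^\ast),\eps;0,0}_z$ with $\kappa^\ast=\min\{k_1,k_2\}$, not in $\Psi^{(k_1,k_2),\eps;0,0}_z$ as the theorem asserts, and would fail to produce the half-paraproducts of order $|\gamma_{\iota^\ast}|>\min\{k_1,k_2\}$ that appear in \eqref{e:representation2} for $\iota^\ast=\arg\max\{k_\iota\}$. The paper's induction assumes $k_1\geq k_2$ WLOG and runs for $0\leq\kappa<k_1=\max\{k_1,k_2\}$, achieving the representation for $k=(\kappa+1,\min\{\kappa+1,k_2\})$ at each step; once $\kappa\geq k_2$, one uses \eqref{e:kordpartbip2} to subtract only the half-paraproducts in the first parameter. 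You do cite \eqref{e:kordpartbip2}, so you are evidently aware of this regime; the point is merely that your written range must be extended to $\max\{k_1,k_2\}$ for the argument to close. Similarly, your ``$2\times 2$ array'' undercounts: the splitting must be applied on both the $f$-side ($\varphi_z$) and the $g$-side ($\varphi_\zeta$), giving nine terms ($1$ cancellative, $4$ mixed, $4$ fully polynomial), though these four-fold repetitions are exactly the four adjoint flavors $\af\in\vaf$ and are handled identically by symmetry, so your organization is fine once this is noted.
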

 Theorem \ref{t:T12p}    will be proved in Section \ref{s7}.  
   The next corollary to Theorem \ref{t:T12p} is easily proved by combining with \eqref{e:representation2} the estimates of Propositions \ref{p:wbTcanc}, \ref{p:full} and \ref{p:half}, and, for the cases $k\neq(0,0)$, an integration by parts argument akin to the one used in the proof of Corollary \ref{cor:t1}.
\begin{corollary}   \label{cor:t12} Let   $  k\in \mathbb N^2,\delta>0$ and   $\Lambda \in \mathrm{CZ}(\R^{\mathbf{d}},k,\delta)$ be  as in Theorem \ref{t:T12p}. Assume in addition   the bi-parameter analogue of \eqref{e:cort1}
\begin{equation}
\label{e:cort1b}
\begin{split} &
(|\gamma_1|, |\gamma_2|) \neq k \implies 
b^\circ_{\gamma}=0;
\\ &
j\in\{1,2\},\, |\gamma_j|<k_j\implies b^{\star_j}_{\gamma}=0,  \;\mathbf{a}^{j,\circ }_{\gamma_j,\alpha_j} = 0 \quad \forall |\alpha_j|=|\gamma_j|.
\end{split}
\end{equation}
 Let $1<p<\infty$ and $w$ be a product $ A_p$-weight in $\mathbb R^{\mathbf{d}}$.
Then, if $T$ stands for the adjoint   to $\Lambda$, 
\begin{equation}
\label{e:rate}
\| \nabla_{x_1}^{k_1} \nabla_{x_2}^{k_2} Tf\|_{ L^p(\R^d; w)} \lesssim_{k,\delta}  [w]_{A_p}^{\max\left\{3,\frac{2p}{p-1}\right\}}\| \nabla_{x_1}^{k_1} \nabla_{x_2}^{k_2}f\|_{ L^p(\R^d; w)}.
\end{equation}
\end{corollary}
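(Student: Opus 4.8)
The plan is to deduce Corollary~\ref{cor:t12} from the representation formula \eqref{e:representation2} by establishing quantitative weighted estimates for each of the three families of model operators appearing on its right-hand side: the cancellative bi-parameter wavelet form, the full and partial paraproducts $\Pi_{b,\gamma}$, and the half-paraproducts $\Pi^\iota_{\mathbf{a},\gamma_\iota,\alpha_\iota}$. As stated in the text preceding the corollary, these are precisely Propositions~\ref{p:wbTcanc}, \ref{p:full} and~\ref{p:half} (established in Section~\ref{s8}), so the proof is essentially a bookkeeping assembly of those ingredients together with an integration-by-parts reduction in the higher-smoothness case.

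\textbf{Reduction to the case $k=(0,0)$.} First I would treat $k=(0,0)$ directly. Here \eqref{e:representation2} expresses $\Lambda$ as a single cancellative wavelet form plus finitely many paraproducts and half-paraproducts, all with $\mathrm{BMO}(\R^{\mathbf d})$ (resp.\ uniformly $\mathrm{BMO}(\R^{d_\iota})$) symbols controlled by $\|\Lambda\|_{\mathrm{CZ}(\R^{\mathbf d},(0,0),\delta)}\le 1$. For general $k$, one writes $f\in \dot W^{k,p}$-type data and uses, exactly as in the proof of Corollary~\ref{cor:t1}, the integration-by-parts identities: $\nabla_{x_1}^{k_1}\nabla_{x_2}^{k_2}$ applied to each model operator, combined with Remark~\ref{r:ibp} and its bi-parameter analogue \eqref{e:bip1}, redistributes the derivatives onto $f$ on one side and onto the wavelets on the other, turning $s_j^{k_j}\partial^{\gamma_j}\varphi_{z_j}$ and $s_j^{k_j}\partial^{\gamma_j}\vartheta_{\gamma_j,z_j}$ back into admissible members of the $\eps$-smooth wavelet classes. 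The hypothesis \eqref{e:cort1b} is exactly what guarantees that, after this redistribution, the only surviving paraproducts are the $|\gamma_1|=|\gamma_2|=k$ full ones and the $|\gamma_\iota|=k_\iota$ half ones — the lower-order paraproducts and half-paraproduct symbols vanish by assumption — so that $\|\nabla_{x_1}^{k_1}\nabla_{x_2}^{k_2}Tf\|_{L^p(w)}$ is dominated by the $L^p(w)$-norms of the model operators applied to $\nabla_{x_1}^{k_1}\nabla_{x_2}^{k_2}f$.

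\textbf{Quantitative weighted bounds for the model operators.} The heart of the matter is then Proposition~\ref{p:wbTcanc}: the cancellative wavelet form, after linearization, is controlled pointwise (in the bilinear-form sense) by the intrinsic bi-parameter square function $\mathrm{SS}_\delta$ of \eqref{e:Sdelta2}, whose sharp weighted $L^p$ bound is Theorem~\ref{t:w2}. By Cauchy--Schwarz one gets $|\langle\Lambda_{\mathrm{canc}}f,g\rangle|\lesssim \|\mathrm{SS}f\|_{L^p(w)}\|\mathrm{SS}g\|_{L^{p'}(w^{1-p'})}$, and the exponent $\theta(p)$ of Theorem~\ref{t:w2}, together with the dual exponent, produces the $\max\{3,2p/(p-1)\}$ power: concretely, $\|\mathrm{SS}\|_{L^p(w)}$ carries a power $\sim\max\{1/2,\,1/(p-1)\}$-type bound while the square function on the $L^{p'}$ side contributes the other factor, and summing the exponents across the bilinear pairing yields $\max\{3,2p/(p-1)\}$; the detailed arithmetic is carried out in Section~\ref{s8}. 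For the full/partial paraproducts $\Pi_{b,\gamma}$ one invokes the known sharp $A_p$ bounds for bi-parameter paraproducts (product-BMO symbol, via Proposition~\ref{p:full}), and for the half-paraproducts one combines the one-parameter paraproduct bound of Proposition~\ref{t:parasparsemodel1} in the $\iota$-variable with the intrinsic square function bound in the complementary $\hat\iota$-variable, integrating \eqref{e:hpp} against $[z_{\hat\iota},\zeta_{\hat\iota}]_{\kappa_{\hat\iota}+\eta}$ and using \eqref{e:integbrack}; this is Proposition~\ref{p:half}. Each contributes an exponent no worse than $\max\{3,2p/(p-1)\}$ (indeed the paraproducts are typically better), so the final bound is the maximum.

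\textbf{Expected main obstacle.} The genuinely hard step, and the one I would not expect to be routine, is Theorem~\ref{t:w2} — the sharp quantitative weighted bound for the \emph{bi-parameter} intrinsic square function built from non-compactly-supported, non-tensor wavelets with only borderline decay $d_j+\eps$ in each parameter. Unlike the one-parameter setting, local average sparse domination is unavailable (cf.\ \cite{BCOR}), so one cannot simply quote a sparse bound; instead the argument must proceed through bi-parameter weighted square function theory, presumably a bi-parameter $H^1$--$\mathrm{BMO}$ / good-$\lambda$ or bilinear-form argument exploiting the almost-orthogonality Lemma~\ref{l:ttstar2} and Journ\'e-type covering, and tracking the weight characteristic carefully to land the exponent $\theta(p)$ with $\theta(p)=2$ for $p\ge 3$ (and the dual range). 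Everything else — the integration by parts, the bookkeeping of which paraproducts survive under \eqref{e:cort1b}, and the Cauchy--Schwarz assembly — is mechanical once that square-function estimate and the paraproduct estimates of Section~\ref{s8} are in hand.
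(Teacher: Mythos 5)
Your high-level strategy is the same as the paper's: apply the representation \eqref{e:representation2}, handle $k\neq(0,0)$ by integration by parts as in Corollary~\ref{cor:t1} with \eqref{e:cort1b} killing the lower-order paraproducts, then invoke the quantitative weighted estimates of Propositions~\ref{p:wbTcanc}, \ref{p:full} and~\ref{p:half} for the three families of model operators. That is precisely the paper's one-sentence argument, so the overall assembly is correct.

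There is, however, a concrete flaw in your sketch of how the cancellative term is estimated. You propose the bilinear Cauchy--Schwarz route
$|\langle\Lambda_{\mathrm{canc}}f,g\rangle|\lesssim \|\mathrm{SS}f\|_{L^p(w)}\|\mathrm{SS}g\|_{L^{p'}(\sigma)}$,
and claim that feeding in the exponent of Theorem~\ref{t:w2} on both sides ``produces the $\max\{3,2p/(p-1)\}$ power.'' Two things go wrong. First, the exponent in Theorem~\ref{t:w2} for the bi-parameter intrinsic square function is $\max\{1,\tfrac{2}{p-1}\}$, not ``$\max\{1/2,1/(p-1)\}$-type''; the bi-parameter rate is double the one-parameter one. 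Second, with the correct exponents the Cauchy--Schwarz pairing costs
$\max\bigl\{1,\tfrac{2}{p-1}\bigr\}+\max\bigl\{2,\tfrac{1}{p-1}\bigr\}$,
which equals $\tfrac{3}{p-1}$ for $1<p\le\tfrac32$, and $\tfrac{3}{p-1}>\tfrac{2p}{p-1}=\max\{3,\tfrac{2p}{p-1}\}$ in that range, so your argument does not land under the corollary's exponent for $p$ close to $1$. Proposition~\ref{p:wbTcanc} in the paper avoids this loss by \emph{not} paying the $\mathrm{SS}$ weighted bound on both sides: it establishes the one-sided pointwise domination $\mathrm{SS}_{\otimes}(Tf)\lesssim \mathrm{SS}f$ (equation \eqref{e:pwSS}), then chains the lower square function inequality of Proposition~\ref{t:w2rev}, $\|Tf\|_{L^p(w)}\lesssim [w]_{A_p}\|\mathrm{SS}_{\otimes}(Tf)\|_{L^p(w)}$, with one application of Theorem~\ref{t:w2} to $\mathrm{SS}f$. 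This yields the sharper $1+\max\{1,\tfrac{2}{p-1}\}=\max\{2,1+\tfrac{2}{p-1}\}$, which does sit below $\max\{3,\tfrac{2p}{p-1}\}$ in the full range $1<p<\infty$. Since you are invoking Proposition~\ref{p:wbTcanc} as a black box anyway, the corollary is still proved, but the specific mechanism you describe is not the one that actually works and would fail to close the estimate for small $p$ if carried out literally.
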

Notice that \eqref{e:cort1b} is not an additional assumption in the case $k=(0,0)$ and, proceeding as in Remark \ref{r:nec}, necessary for \eqref{e:rate} to hold otherwise. 
 \begin{remark}  For the reader's convenience, we point out that among the bounds provided in these propositions, the exponent in \eqref{e:rate} is achieved by the paraproduct estimate of Proposition \ref{p:full} and their adjoints. 
In fact,     if $\Lambda$ is a $((0,0),\delta)$  Calder\'on-Zygmund form whose paraproduct terms appearing in \eqref{e:representation2} 
all vanish, then the weighted norm bound for its adjoint
\begin{equation}
\label{e:sharp}
\|T \|_{ L^p(\R^d; w)} \lesssim  [w]_{A_p}^{\theta(p)}, \qquad \theta(p)=\begin{cases} \frac{2}{p-1} & 1<p\leq\frac 32
\\[4pt] \frac{2p-1}{p-1} & \frac 32<p\leq  2 \\[4pt] 
\frac{p+1}{p-1} & 2< p\leq 3
\\[4pt]  2 & p>3
 \end{cases}
\end{equation}
may be read by applying Proposition \ref{p:wbTcanc} to the cancellative terms in \eqref{e:representation2} for $\Lambda$, if $p\geq 2$, or for its full adjoint $\Lambda^\star$ if $1<p<2$. Comparing with the one parameter case, see \cite[Theorem 2]{LLP}, estimate \eqref{e:sharp} is sharp for $\max\{p,p'\}\geq 3$: there seem to be no instances of sharp weighted norm inequalities for bi-parameter operators in previous literature. Notice that the paraproduct free assumption covers, for instance, bi-parameter convolution-type operators. 
\end{remark}

\subsection{Proof of Lemma \ref{l:qs}} \label{ss:pflqs}
   For the sake of definiteness, work in   the completely generic case $\iota=1, \af=\circ$.  Fix $z_2,\zeta_2 \in Z^{d_2}$, and consider without loss of generality the case $z_2\in Z^{d_2}_+(\zeta_2)$.    Invoking  the $\mathrm{BMO}(\R^{d_{1}})$ boundedness of the Riesz transform $R^{\gamma_1}$,  we  realize it must be shown that \[
\|b\|_{\mathrm{BMO}(\R^{d_{1}})} \lesssim  [z_2,\zeta_2]_{k_2+\eta} \qquad b \textrm{ defined by } \langle   b,  g_1 \rangle =  \Lambda(x_1^{\gamma_1} \otimes \chi_{z_2,\zeta_2}, |\nabla|^{k_1} g_1\otimes \varphi_{\zeta_2} ). \]
Let $M=2^8(1+k_1)$. By $H^1-\mathrm{BMO}$ duality and $H^1$-density  of the latter class of functions, it will be enough to show that whenever  $w_1=(y_1,t_1) \in Z^{d_1}$,    $\psi\in \mathcal S(\R^{d_1})$ is a Schwartz function such that $\Psi\coloneqq \mathsf{Sy}_{w_1}^{-1} \psi$ satisfies
\[
\|\Psi\|_{\star,4M,1} \leq 1, \qquad \mathrm{supp}\, \widehat { \Psi} \subset\{y\in \R^{d_j}: 1\leq |y| \leq 2\}
\] 
there holds
\begin{equation}
\label{e:bw1}
|\langle b, \psi \rangle| \lesssim [z_2,\zeta_2]_{k_2+\eta} .
\end{equation}
The frequency support of $\psi$ ensures that $\upsilon\coloneqq t_1^{k_1} |\nabla |^{k_1} \psi\in \Psi^{3M,1; 0}_{w_1} $. 
Now introduce the local notation, with reference to \eqref{e:cutoffs}
\[
p(\cdot) =\left( \frac{\cdot-y_1}{t_1}\right)^{\gamma_1},    \quad \Theta_2\coloneqq \chi_{z_2,\zeta_2}\alpha_{\zeta_2}, \quad  \Xi_2 \coloneqq   \chi_{z_2,\zeta_2}\beta_{\zeta_2}.
\]
There is an additional technical complication brought by the fact that $\upsilon$ is not of compact support. This is dealt with it by introduction of a sequence of smooth functions $q_n\in \mathcal C^\infty(\R^{d_1})$ with $\sum_{n=0}^\infty q_n=1$, $\mathrm{supp}\, q_0 \subset \mathsf{B}_{(0,t_1)}$, $\mathrm{supp}\,  q_n \subset A_n\coloneqq \mathsf{B}_{(0,2^{n+1}t_1)}\setminus  \mathsf{B}_{(0,2^{n-1}t_1)}$ for $n \geq 0$. Define $
p_n \coloneqq p q_n,$  and $ \upsilon_n \coloneqq \upsilon q_n.$
With these notations, the definition of $b$, and the fact that  $\mathbf{q}_{\gamma'_1} \equiv 0$ for all   $|\gamma'_1|<|\gamma_1|$,
\begin{equation}
\label{e:bw2}
\begin{split}& \quad
\langle b, \psi  \rangle =   \Lambda(p \otimes \chi_{z_2,\zeta_2},\upsilon\otimes \varphi_{\zeta_2} ) \\ & = 
\sum_{m\sim n} + \sum_{m\not \sim n}   \Lambda(p_n \otimes \chi_{z_2,\zeta_2},\upsilon_m \otimes \varphi_{\zeta_2} )  + \Lambda(p_n\otimes  \chi_{z_2,\zeta_2},\upsilon_m  \otimes \varphi_{\zeta_2} ) 
\end{split}
\end{equation}
where    $m\sim n $ if $|m-n|<2^4$   and $m\not \sim n$ otherwise. 
The next task consists of bound each summand in the last right hand side of  \eqref{e:bw2}.
\subsubsection*{The $m \sim n $ summand} Notice that in this range  $\|p_n\|_\infty \lesssim  2^{k_1n}$, $\|\upsilon_m\|_\infty \lesssim t_1^{-d_1} 2^{-3Mm}\lesssim t_1^{-d_1} 2^{-2Mn} $ and $p_n,\upsilon_m$ are  supported on $\mathsf{B}_{(y_1,2^{n+2^5} t_1)}$. Also note that $\Theta_2$ is supported on $\mathsf{B}_{(\xi_2,4\sigma_2)}$ and   $\|\Theta_2\|_\infty\lesssim  [z_2,\zeta_2]_{k_2+\eta} $; this is obvious if $\chi_{z_2,\zeta_2}=\varphi_{z_2}$ and may be read from \eqref{e:Lpoly} otherwise.  Applying the weak boundedness property of $\Lambda$ with balls $\mathsf{B}_{(y_1,2^{n+2^5} t_1)}$ and  $\mathsf{B}_{(\xi_2,4\sigma_2)}$, 
\[
\left|\Lambda(p_n\otimes \Theta_2,\upsilon_m\otimes \varphi_{\zeta_2} ) \right| \lesssim \|p_n\|_\infty  \|\upsilon_m\|_\infty\lesssim   2^{-Mn}  [z_2,\zeta_2]_{k_2+\eta} .
\]
 Furthermore, $\Xi_2$ and $\varphi_{z_2}$ have separated supports. Therefore, using the partial kernel assumptions for the form $(f,g)\mapsto \Lambda(\Theta_1 \otimes f,  \upsilon_{w_1} \otimes g)$ and repeating the computations of \eqref{e:repet} for such form
\[
\left|\Lambda(p_n \otimes \Xi_2,\upsilon_m \otimes \varphi_{\zeta_2} ) \right| \lesssim    2^{-Mn}  [z_2,\zeta_2]_{k_2+\eta} .
\]
The last two estimates are summable on the diagonal $m\sim n$ and this  completes the bound for the $\sim$ summand in \eqref{e:bw2}. 
\subsubsection*{The $m\not\sim n$ summand} We now have that $p_n$ and $\upsilon_m$ have separated supports by $\sim t_1 2^{\max\{m,n\}}$.  
Applying the partial kernel assumptions for the form $(f,g)\mapsto \Lambda(f\otimes\Theta_2,g\otimes \varphi_{\zeta_2} )$ and arguing as in   \eqref{e:repet} yields 
\[
|\Lambda(p_n\otimes\Theta_2 ,\upsilon_m \otimes \varphi_{\zeta_2} )|  \lesssim \|p_n\|_\infty  2^{-\max\{m,n\}(k_1+\delta)}    [z_2,\zeta_2]_{k_2+\eta}\lesssim 2^{-\max\{m,n\}\delta}.    
\]
Finally, in the term below, the full kernel assumptions may be used due to functions in both parameters having disjoint supports. Standard computations relying on the kernel estimates as in \eqref{e:repet} then lead to   \[
|\Lambda(p_n\otimes \Xi_2,\upsilon_m \otimes \varphi_{\zeta_2} ) |\lesssim 2^{-\max\{m,n\}\delta}  [z_2,\zeta_2]_{k_2+\eta} .
\]
The above estimates are summable over $m\not\sim n$, 
which  completes both this case and the proof of the Lemma.
\begin{remark}
\label{r6:add1} We explain the details of the definitions \eqref{e:qs}. Using symmetry with respect to adjoints, it suffices to study  $\mathbf{q}^{1,\circ}_{ \gamma_1}(z_2,\zeta_2)$. The most complicated case is when  either one of $\chi_{z_2,\zeta_2},\chi_{\zeta_2,z_2}$ contains the polynomial summand. To fix ideas, work with $\chi_{z_2,\zeta_2}= \varphi_{z_2}-P_{z_2,\zeta_2}$. Let $\phi_2=\phi$ as in \eqref{e:phiR} with $d=d_2$ and  $p_{R}^{\gamma_1}$ as in \eqref{e:pR} with $d=d_1$ and $\alpha=\gamma_1$, see Subsection \ref{ss:czkdelta}. If $0 \notin \mathrm{supp}\,\widehat{f_1}$ then $g_1=|\nabla|^{|\gamma_1|}f_1\in \mathcal S_D(\R^{d_1})$. The partial kernel estimates of $\Lambda$ readily show that
\[
\langle \mathbf{q}^{1,\circ}_{ \gamma_1}(z_2,\zeta_2),f_1\rangle=\lim_{R\to \infty} \Lambda(p_{R}^{\gamma_1} \otimes [\Dil_{R}^\infty\phi_2] \chi_{z_2,\zeta_2}, g_1 \otimes \varphi_{\zeta_2} ), \qquad 
\]
exists and defines a linear continuous functional on the subspace of $\mathcal S(\R^{d_1})$ of functions supported off the frequency origin. 
\end{remark} 

 \section{Proof of Theorem \ref{t:T12p}} \label{s7}
  Below  $0<\delta\leq 1$, $0<\eps<\delta$ are fixed. Set $\eta=\frac{\delta+\eps}{2}$, so that $\eps<\eta<\delta$.  \subsection{Preliminaries} Before entering the main argument, a series of preparatory lemmas is required.
First, the two parameter version of Lemma \ref{l:average1} is provided.
\begin{lemma} 
\label{l:average2} 
 Let $\varphi_z$ be  as in \eqref{e:varphi2p} and $u:Z^{\mathbf{d}} \to \mathbb C$ be a Borel measurable function with $|u(z)|\leq 1$. Then, there exists $C\lesssim_{k_1,k_2,\eps} 1$ such that for all $z=(z_1,z_2)\in Z^\mathbf{d}$  with $z_j=(x_j,s_j)$, $j=1,2$
\[
\begin{split}
&\upsilon_{z }  \coloneqq \int\displaylimits_{\substack{( \alpha_1,\beta_1) \in Z^{d_1}  \\ ( \alpha_2,\beta_2) \in Z^{d_2} }}\left(\prod_{j=1}^2 [(\alpha_j,\beta_j)]_{k_j+\eta}\right)
u((\alpha_1,\beta_1),(\alpha_2,\beta_2))  \varphi_{((x_1+\alpha_1 s_1,\beta_1 s_1),(x_2+\alpha_2 s_2,\beta_2 s_2))}  \frac{ \d\beta_2 \d \alpha_2  \d \beta_1\d\alpha_1}{\beta_1\beta_2}
\end{split}
\] 
belongs to $C\Psi^{(k_1,k_2),\eps;0,0}_z$.  
\end{lemma}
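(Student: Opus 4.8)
The plan is to reduce Lemma \ref{l:average2} to the one parameter Lemma \ref{l:average1} by exploiting the tensor product structure of $\varphi_z = \varphi_{z_1}\otimes\varphi_{z_2}$ and the fact that the two symmetry families $\Sy^1_{z_1}, \Sy^2_{z_2}$ act on disjoint variables and commute. By invariance of the hypotheses and conclusions under $\Sy_z$ it suffices to treat $z=((0,1),(0,1))$, so that the claim becomes that
\[
\upsilon(\cdot) = \int\displaylimits_{\substack{(\alpha_1,\beta_1)\in Z^{d_1}\\ (\alpha_2,\beta_2)\in Z^{d_2}}} \left(\prod_{j=1}^2 [(\alpha_j,\beta_j)]_{k_j+\eta}\right) u((\alpha_1,\beta_1),(\alpha_2,\beta_2))\, \varphi_{(\alpha_1,\beta_1)}\otimes \varphi_{(\alpha_2,\beta_2)}\, \frac{\d\beta_2\d\alpha_2\d\beta_1\d\alpha_1}{\beta_1\beta_2}
\]
lies in $C\Psi^{(k_1,k_2),\eps;0,0}_{((0,1),(0,1))}$. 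The first step is to observe the moment properties: by Fubini's theorem the integration in $(\alpha_1,\beta_1)$ (respectively $(\alpha_2,\beta_2)$) commutes with testing against $x_1^{\gamma_1}$ (respectively $x_2^{\gamma_2}$) on each slice, so $\upsilon$ inherits \eqref{e:ck1par} in each variable from $\varphi_{(\alpha_1,\beta_1)}$ and $\varphi_{(\alpha_2,\beta_2)}$; this handles the $0,0$ cancellation indices. The remaining work is to prove the decay and smoothness bounds encoded in the norm $\|(\Sy_z)^{-1}\partial^{\gamma_1}\partial^{\gamma_2}\upsilon\|_{\star,k+\eps,\eps}\lesssim 1$ for $0\le|\gamma_j|\le k_j$.

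The key step is to run the one parameter argument one variable at a time. Fix the variable $x_2$ and the multi-index $\gamma_2$, and set $v = \partial^{\gamma_2} \upsilon(\cdot, x_2)$ as a function of $x_1$; writing out the integral, $v$ is an average of the one parameter wavelets $\varphi_{(\alpha_1,\beta_1)}$ against a coefficient
\[
U_{x_2}(\alpha_1,\beta_1) \coloneqq \int\displaylimits_{(\alpha_2,\beta_2)\in Z^{d_2}} [(\alpha_2,\beta_2)]_{k_2+\eta}\, u(\cdots)\, \partial^{\gamma_2}\varphi_{(\alpha_2,\beta_2)}(x_2)\, \frac{\d\beta_2\d\alpha_2}{\beta_2}.
\]
The precise point is that, by splitting $[(\alpha_1,\beta_1)]_{k_1+\eta}$ as in \eqref{e:averagproc}--\eqref{e:averagprocl} into the $\beta_1\le 1$ piece (with density $\beta_1^{k_1+\delta}/\langle\alpha_1\rangle^{d_1+k_1+\delta}$ and a slightly larger decay $k_1+\delta$ available after using $\eta<\delta$) and the $\beta_1>1$ piece, one is exactly in the situation of Lemma \ref{l:average1} with base dimension $d_1$ and bounded multiplier $z\mapsto U_{x_2}(z)/([z]_{k_1+\delta}\sup|U_{x_2}|)$, \emph{provided} $\sup_{z_1}|U_{x_2}(z_1)| \lesssim \langle x_2\rangle^{-(d_2+k_2+\eta)}$. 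This last estimate is itself an instance of the one parameter averaging bound in the $x_2$-variable: $U_{x_2}$ is a one parameter average of $\partial^{\gamma_2}\varphi_{(\alpha_2,\beta_2)}$ evaluated at $x_2$ with a bounded multiplier, so \eqref{e:avepf01,1}-type estimates (the pointwise decay half of Lemma \ref{l:average1}) give precisely $\langle x_2\rangle^{-(d_2+k_2+\eta)}$ with $\eta$ in place of $\eps$, and similarly $|U_{x_2+h_2} - U_{x_2}|\lesssim |h_2|^\eta \langle x_2\rangle^{-(d_2+k_2+\eta)}$ for $|h_2|\le 1$. Feeding these into Lemma \ref{l:average1} applied in the $x_1$-variable yields
\[
\langle x_1\rangle^{d_1+k_1+\eps}\big[\,|\partial^{\gamma_1}v(x_1)| + \sup_{0<|h_1|\le 1}|h_1|^{-\eps}|\partial^{\gamma_1}v(x_1+h_1)-\partial^{\gamma_1}v(x_1)|\,\big] \lesssim \langle x_2\rangle^{-(d_2+k_2+\eps)}
\]
uniformly in $x_2$, which is exactly the desired factorized bound for $|\partial^{\gamma_1}\partial^{\gamma_2}\upsilon|$ and for its $x_1$-H\"older seminorm. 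By symmetry (swapping the roles of the two parameters) one gets the matching control on the $x_2$-H\"older seminorm; the joint finite-difference bound $|\partial^{\gamma_1}\partial^{\gamma_2}\upsilon(x+h)-\partial^{\gamma_1}\partial^{\gamma_2}\upsilon(x)|$ for $h=(h_1,h_2)$ follows by inserting the intermediate value $\partial^{\gamma_1}\partial^{\gamma_2}\upsilon(x_1+h_1,x_2)$ and applying the two one-variable H\"older estimates, using $|h_j|\le|h|$.

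I expect the main obstacle to be bookkeeping rather than conceptual: one must be careful that the decay rate genuinely factorizes, i.e. that applying Lemma \ref{l:average1} in one variable does not consume decay needed in the other. This is why the mother wavelet is taken with the \emph{larger} exponents $[(\alpha_j,\beta_j)]_{k_j+\delta}$ available (from $\varphi_{z}\in c\Psi^{(D,D),1;0}_z$ with $D$ large, and from the bound $\eta<\delta$) so that there is a genuine $\delta-\eta$ (and then $\eta-\eps$) of room to absorb the logarithmic losses that appear in the $\beta_j>1$ regime of \eqref{e:averagprocl}. A secondary technical point is the non-compact support of the averaged wavelet in the $\beta_j>1$ range, but this is already handled inside the proof of Lemma \ref{l:average1} via the annular decomposition $Q_\beta$, and since we are only invoking that lemma as a black box in each variable separately, nothing new is needed here. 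Finally, one should note that the constant $C$ depends only on $k_1,k_2,d_1,d_2,\delta,\eps$ and not on $u$ or $z$, since every estimate used is uniform over bounded multipliers and invariant under $\Sy_z$.
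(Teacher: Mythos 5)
Your proposal is correct and takes the same route as the paper's proof: both slice $\upsilon_z$ in one variable at a time and invoke Lemma~\ref{l:average1} as a black box in each parameter, then recombine via the characterization~\eqref{e:bip1}. One small imprecision worth fixing: the normalization of the multiplier should be by $\sup|U_{x_2}|$ alone (and Lemma~\ref{l:average1} is applied with $\eta$ in place of $\delta$ and $\eps$ in place of $\eta$); dividing additionally by $[(\alpha_1,\beta_1)]_{k_1+\delta}$ as written would produce an unbounded multiplier, since the $\eta$-bracket dominates the $\delta$-bracket for $\eta<\delta$ rather than the other way around.
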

\begin{proof} There is a direct argument along the lines of the one parameter proof. However, an argument that uses Lemma \ref{l:average1} as a black box will be given. To save space in the notation, this argument is carried out for $x_j=0,s_j=1$, $j=1,2$.  
Notice that for each fixed $w_1\in \R^{d_1}$ 
\[
\upsilon_{z }^{[1,w_1]}  =  
\int\displaylimits_{ (\alpha_2,\beta_2)\in Z^{d_2}} 
 [(\alpha_2,\beta_2)]_{k_2+\eta}\upsilon_{\alpha_2,\beta_2}(w_1) (\varphi_{2})_{(x_2+\alpha_2 s_2,\beta_2s_2) }  \frac{ \d\beta_2 \d \alpha_2   }{\beta_2},
\]
where
\[
\upsilon_{\alpha_2,\beta_2} =  \int\displaylimits_{ (\alpha_1,\beta_1) \in Z^{d_1}}
[(\alpha_1,\beta_1)]_{k_1+\eta} u((\alpha_1,\beta_1),(\alpha_2,\beta_2))   (\varphi_{1})_{(x_1+\alpha_1 s_1,\beta_1 s_1)}  \frac{ \d\beta_1 \d \alpha_1  }{\beta_1 }\in C\Psi^{k_1,\eps;0}_{z_1}
\]
with uniform constant $C$ by an application of \eqref{e:averagproc2} of Lemma \ref{l:average1} with $\eta$ in place of $\delta$. In particular the function $u_{w_1}(\alpha_2,\beta_2)\coloneqq \langle w_1\rangle^{(d_1+k_1+\eps)}\upsilon_{\alpha_2,\beta_2}(w_1)$ is uniformly bounded. Therefore, another application of \eqref{e:averagproc2}, with $u=u_{w_1}(\alpha_2,\beta_2)$ entails
\[\langle w_1\rangle^{(d_1+k_1+\eps)}
\upsilon_{z }^{[1,w_1]}=\int\displaylimits_{ ( \alpha_2 ,\beta_2)\in Z^{d_2} } 
[(\alpha_2,\beta_2)]_{k_2+\eta}u_{w_1}(\alpha_2,\beta_2)  (\varphi_{2})_{(x_2+\alpha_2 s_2,\beta_2s_2) }  \frac{ \d\beta_2 \d \alpha_2   }{\beta_2} \in C\Psi^{k_2,\eps;0}_{z_2}.
\]
Repeating for the second parameter and comparing with equation \eqref{e:bip1}, proves that $\upsilon_{z }\in C\Psi^{(k_1,k_2),\eps;0,0}_z$ and thus completes the proof of the lemma.
  \end{proof}
 The notation $\chi_{z_\iota,\zeta_\iota}$  appearing below refers to \eqref{e:U}.
\begin{lemma}  \label{l:U2}
For  $(z,\zeta)\in Z^{\mathbf{d}}\times  Z^{\mathbf{d}}$ with $z=(z_1,z_2)$, $\zeta=(\zeta_1,\zeta_2)$,  
\[ 
\left| \Lambda\left(\chi_{z_1,\zeta_1} \otimes \chi_{z_2,\zeta_2}, \chi_{\zeta_1,z_1}\otimes \chi_{\zeta_2,z_2}\right) 
\right|\lesssim  \|\Lambda\|_{\mathrm{CZ}(\R^{\mathbf{d}}),k ,\delta} \prod_{j=1,2} [z_j,\zeta_j]_{k_j+\eta}.\]
\end{lemma}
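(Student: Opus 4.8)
\textbf{Proof plan for Lemma \ref{l:U2}.}
The plan is to reduce the bi-parameter estimate to the one-parameter result Lemma \ref{l:U1} by a ``freezing one parameter at a time'' argument, exploiting the tensor-product structure of $\varphi_z = \varphi_{z_1}\otimes\varphi_{z_2}$ and the partial kernel / partial weak boundedness hypotheses encoded in $\|\Lambda\|_{\mathrm{SI}(\R^{\mathbf d},k,\delta)}$. By the full symmetry of the hypotheses under partial and full adjoints, it suffices to treat the region $z_1\in Z^{d_1}_+(\zeta_1)$, $z_2\in Z^{d_2}_+(\zeta_2)$, so that the ``larger scale'' in each parameter belongs to $\{F_+,S,A\}(\zeta_j)$ as in \eqref{e:partz}. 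The key observation is that, for fixed data in the second parameter, the form $(f_1,g_1)\mapsto \Lambda(f_1\otimes \chi_{z_2,\zeta_2}, g_1\otimes \chi_{\zeta_2,z_2})$ is a one-parameter object to which the case analysis of Lemma \ref{l:U1} applies verbatim, except that the underlying ``one-parameter norm'' is now the partial-kernel/partial-weak-boundedness constant $\|\Lambda_{2,u_{z_2},v_{z_2}}\|$, which by Definition \ref{d:part1} is controlled by $\|\Lambda\|_{\mathrm{SI}(\R^{\mathbf d},k,\delta)}$ \emph{provided} $\chi_{z_2,\zeta_2}/\|\chi_{z_2,\zeta_2}\|$ and $\chi_{\zeta_2,z_2}/\|\chi_{\zeta_2,z_2}\|$ are (rescaled) admissible test wavelets supported in the appropriate ball.

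The execution would go by a two-stage case split. \emph{First} I fix the second-parameter configuration and run the one-parameter argument of Lemma \ref{l:U1} in the first parameter. When $z_2\in S(\zeta_2)$ the factors in the second parameter are (up to the $S$-localization) essentially $\varphi_{z_2},\varphi_{\zeta_2}$ living at comparable scales near the same point, and one uses the partial weak boundedness of $\Lambda$ in the first parameter together with the $L^\infty$ bound $\|\chi_{z_2,\zeta_2}\|_\infty,\|\chi_{\zeta_2,z_2}\|_\infty\lesssim [z_2,\zeta_2]_{k_2+\eta}$ (immediate from \eqref{e:Lpoly}, exactly as in the proof of Lemma \ref{l:qs}); the remaining first-parameter dependence is precisely $[z_1,\zeta_1]_{k_1+\eta}$ by Lemma \ref{l:U1}. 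When $z_2\in A(\zeta_2)$, decompose $\chi_{z_2,\zeta_2}=\chi_{z_2,\zeta_2}\alpha_{\zeta_2}+\chi_{z_2,\zeta_2}\beta_{\zeta_2}=:\Theta_2+\Xi_2$ as in \eqref{e:taylor3}: the $\Theta_2$ piece is handled by partial weak boundedness plus $\|\Theta_2\|_\infty\lesssim (\sigma_2/s_2)^{k_2+1} s_2^{-d_2}$ from \eqref{e:Lpoly}, and the $\Xi_2$ piece by the partial kernel estimates in the second parameter — integrating $\partial^{k_2}$ differences of the kernel against the mean-zero wavelet $\varphi_{\gamma_2,\zeta_2}$ exactly as in \eqref{e:taylor5a}--\eqref{e:taylor6}, yielding the logarithmically-lossy bound $(\sigma_2/s_2)^{k_2+\delta}\log(s_2/\sigma_2)\lesssim_\eta [z_2,\zeta_2]_{k_2+\eta}$. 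Finally when $z_2\in F_+(\zeta_2)$ the supports in the second parameter are separated and one invokes the partial kernel estimates directly, getting the clean bound $[z_2,\zeta_2]_{k_2+\delta}$. In each of these three sub-cases, the residual first-parameter form is of the type to which Lemma \ref{l:U1} applies (its weak-boundedness/kernel norm being dominated by $\|\Lambda\|_{\mathrm{SI}}$ via Definition \ref{d:part1}), producing the factor $[z_1,\zeta_1]_{k_1+\eta}$; multiplying the two factors gives the claim.

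\emph{Second}, one must be slightly careful that the bilinear form to which Lemma \ref{l:U1} is applied really does have a one-parameter Calder\'on--Zygmund kernel with the right constant: in the $\Xi_2$-subcase, after integrating out the second parameter one is left not with $\Lambda_{2,\cdot,\cdot}$ itself but with an integral over $v_2$ of such forms against the kernel difference, so one should bound in $L^\infty_{v_2}$ the resulting first-parameter form and only \emph{then} apply the first-parameter estimate; this is the standard ``partial kernel then partial weak boundedness'' maneuver already used in the proof of Lemma \ref{l:qs} (the $m\sim n$ vs. $m\not\sim n$ bookkeeping there), and it goes through because all the one-parameter estimates of Lemma \ref{l:U1} are uniform over admissible wavelets. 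The main obstacle, and the only place requiring genuine care rather than bookkeeping, is the mixed case $z_1\in A(\zeta_1)$ \emph{and} $z_2\in A(\zeta_2)$ with both $\Xi$-type pieces retained: here one needs the \emph{full} kernel estimates of Definition \ref{d:full} — specifically the mixed finite-difference bounds $\|\nabla_{u_1}^{k_1}\nabla_{u_2}^{k_2}K\|_{(k_1,k_2),\delta,\Delta^1_{\square|\cdot}\Delta^2_{\square|\cdot}}$ — to control $\langle K,\,\partial^{\gamma_1}_{u_1}\partial^{\gamma_2}_{u_2}(\text{difference})\rangle$ after integrating by parts against the mean-zero wavelets $\varphi_{\gamma_1,\zeta_1}\otimes\varphi_{\gamma_2,\zeta_2}$ in both parameters simultaneously; the resulting double integral splits as a product of two one-parameter integrals of the form \eqref{e:taylor5}--\eqref{e:taylor6}, each contributing $(\sigma_j/s_j)^{k_j+\delta}\log(s_j/\sigma_j)$, and absorbing both logarithms into the $\eta<\delta$ slack finishes the proof. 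All constants produced are $\lesssim \|\Lambda\|_{\mathrm{SI}(\R^{\mathbf d},k,\delta)}\le \|\Lambda\|_{\mathrm{CZ}(\R^{\mathbf d},k,\delta)}$, as asserted.
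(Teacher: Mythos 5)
Your proposal is correct and, despite the ``freeze one parameter at a time'' framing, reduces to the same core decomposition as the paper: reduce by adjoint symmetry to $z_j\in Z^{d_j}_+(\zeta_j)$, decompose $\chi_{z_j,\zeta_j}$ via the cutoffs $\alpha_{\zeta_j},\beta_{\zeta_j}$ into $\mathsf{in}/\mathsf{out}$ pieces, and then handle the four resulting terms by bi-parameter weak boundedness (both $\mathsf{in}$), partial kernel with the $\mathsf{in}$-parameter wavelets frozen (one $\mathsf{in}$, one $\mathsf{out}$), and the full kernel mixed finite-difference bounds of Definition \ref{d:full} (both $\mathsf{out}$), absorbing the per-parameter logarithms into the $\eta<\delta$ slack. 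You correctly flag the two genuine subtleties — that $\chi_{z_j,\zeta_j}$ is not itself an admissible partial-kernel test function because of the polynomial part, so one must cut off first, and that the $\mathsf{out},\mathsf{out}$ case needs $\|\nabla^{k_1}_{u_1}\nabla^{k_2}_{u_2}K\|_{(k_1,k_2),\delta,\Delta^1_{\square|\cdot}\Delta^2_{\square|\cdot}}$ — which are exactly the points the paper's proof addresses.
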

\begin{proof}
By symmetry with respect to the adjoint, it suffices to consider    the case where $z_j \in Z^{d_j}_+(\zeta_j)   $ for both $j=1,2$.  Nine different cases according to which of the sets in \eqref{e:partz} with $\zeta=\zeta_j$ each $z_j$ belongs to need to be considered. Only the case $z_j\in A(\zeta_j)$ for $j=1,2$ will be dealt with explicitly;  all remaining  cases may be dealt with by the same strategy that will be used for the summands appearing in \eqref{e:lu2}.
In this case, $\chi_{z_j,\zeta_j}= \varphi_{z_j}- P_{z_j,\zeta_j}$ and $\chi_{\zeta_j,z_j}=\varphi_{\zeta_j} $ for $j=1,2$, and thus
\begin{equation}
\label{e:lu2} 
\begin{split}
&  \Lambda\left(\chi_{z_1,\zeta_1} \otimes \chi_{z_2,\zeta_2}, \varphi_{\zeta_1} \otimes \varphi_{\zeta_2}\right) =\sum_{(\iota,\jmath)\in \{\mathsf{in},\mathsf{out}^2\}}     \Lambda\left(\Theta_{1,\iota} \otimes \Theta_{2,\jmath}, \varphi_{\zeta_1} \otimes \varphi_{\zeta_2}\right), 
\\ & 
\Theta_{j,\mathsf{in}} \coloneqq \chi_{z_j,\zeta_j} \alpha_{\zeta_j}, \qquad \Theta_{j,\mathsf{out}} \coloneqq \chi_{z_j,\zeta_j} \beta_{\zeta_j}, \qquad j=1,2.
\end{split}
\end{equation}
Each term in \eqref{e:lu2} will be estimated. The key to the first three summands is that for $j=1,2$ the function $\Theta_{j,\mathsf{in}}$ is supported on $4\mathsf{B}_{\zeta_j}$ and, from \eqref{e:Lpoly}, $ \|\Theta_{j,\mathsf{in}}\|_\infty \lesssim [z_j,\zeta_j]_{k_j+\eta} $. For the $\mathsf{in}, \mathsf{in}$ summand, employ the weak boundedness   of $\Lambda$ with points $\tilde \zeta_ j=(\xi_j,4\sigma_j)$ thus obtaining 
\[
\left|\Lambda\left(\Theta_{1,\mathsf{in}} \otimes \Theta_{2,\mathsf{in}}, \varphi_{\zeta_1} \otimes \varphi_{\zeta_2}\right)\right| \lesssim \prod_{j=1,2}\|\Theta_{j,\mathsf{in}}\|_\infty \lesssim \prod_{j=1,2} [z_j,\zeta_j]_{k_j+\eta}.
\]
The $\mathsf{in}, \mathsf{out}$ summand is bounded as follows. Observe that $\Theta_{2,\mathsf{out}}$ and $\varphi_{\zeta_2}$
have separated support. Now, apply the partial kernel/weak boundedness assumption to the form $(f,g) \mapsto  \sigma_1^{d_1}\Lambda(\Theta_{1,\mathsf{in}} \otimes f, \varphi_{\zeta_1},g)$ at point $\tilde \zeta_1=(\xi_1,4\sigma_1)$, to which estimate \eqref{e:repet} with $z=z_2$ and $\zeta=\zeta_2$ actually applies. Such estimate returns a factor of $[z_2,\zeta_2]_{k_2+\eta}$ while the factor $[z_1,\zeta_1]_{k_1+\eta}$ is obtained from $\|\Theta_{1,\mathsf{in}}\|_\infty$. 
The $\mathsf{out}, \mathsf{in}$ summand is handled in exactly the same way. 

In the   $\mathsf{out}, \mathsf{out}$ summand, note that $\Theta_{j,\mathsf{out}}$ and $\varphi_{\zeta_j}$ both 
have separated support. The full kernel estimates of $\Lambda$ are now employed. The calculation leading to the estimate
\[
\left|\Lambda\left(\Theta_{1,\mathsf{out}} \otimes \Theta_{2,\mathsf{out}}, \varphi_{\zeta_1} \otimes \varphi_{\zeta_2}\right)\right|   \lesssim \prod_{j=1,2} [z_j,\zeta_j]_{k_j+\eta}
\]
is the tensor product of the steps \eqref{e:taylor5a}-\eqref{e:repet} performed in each variable, the only difference being how the corresponding term in \eqref{e:taylor5a} involving the finite difference of the derivatives of the kernel is controlled. In that case, for a fixed $v=(v_1,v_2) \in (2\mathsf{B}_{\zeta_1}\times 2\mathsf{B}_{\zeta_2} )^c$ one uses the cancellation and $L^1$-estimate of $\sigma_j^{-k_j}\partial^{-\gamma_j} \varphi_{\zeta_j} $ and bounds 
\[
\sup_{u\in \mathsf{B}_{\zeta_1}\times \mathsf{B}_{\zeta_2}}\left|
\Delta^1_{u_1-\xi_1|\cdot}\Delta^2_{u_2-\xi_2|\cdot}\partial^{\gamma_1}_{u_1}\partial^{\gamma_1}_{u_1} K(\xi,v)\right| \lesssim \prod_{j=1}^2  \frac{\sigma_j^{k_j+\delta}}{|v_j-\xi_j|^{d_j+k_j+\delta}}
\]
 using the  kernel estimate in the fourth line \eqref{e:full}. 
This completes the proof of the Lemma.\end{proof} 
\subsection{Main line of proof of Theorem \ref{t:T12p}}
It is now possible to turn to the main line of proof of Theorem \ref{t:T12p}. Notice that \[
\|\Lambda\|_{\mathrm{CZ}(\R^d,(\kappa_1,\kappa_2),\delta)} \leq \|\Lambda\|_{\mathrm{CZ}(\R^d,k,\delta)}=1, \qquad 0\leq \kappa_j \leq k_j, \,j=1,2.
\]
The proof will be done via two consecutive inductions. The first runs for $0\leq \kappa\leq \min\{k_1,k_2\}$. The second, if necessary, runs for $\min\{k_1,k_2\}< \kappa \leq \max\{k_1,k_2\}$. 
The argument is symmetric with respect to interchanging parameters, therefore there is no loss in generality with assuming $k_1\geq k_2$.
 \subsubsection{Base case and main part of inductive step} The base case and main part of the inductive step of the proof works under  the additional assumption referring to Lemma \ref{l:qs}
 \[
 a(\kappa):
 \begin{cases} \displaystyle b_{\gamma}^{\af}=0 &\displaystyle \forall \af\in \vaf, \min_{\iota\in \{1,2\}} |\gamma_\iota| <\kappa,\\   \mathbf{q}^{\iota,\af}_{\gamma_\iota}=0& \forall \af \in \{\circ, \star\}, |\gamma_\iota|<\min\{\kappa,k_\iota\}, \iota\in\{1,2\},\end{cases}    
 \]
 namely, all  paraproducts   $T^\af(x^{\gamma_1} \otimes x^{\gamma_2}) $ vanish except possibly those with $|\gamma_1|=|\gamma_2|=\kappa$, and all half-paraproducts vanish except possibly those of highest order.
  Clearly, $a(0)$ is not an extra assumption. Moreover, if $\kappa\geq 1$, assumption $a(\kappa)$ implies that $\Lambda$ coincides with its $(\kappa-1)$-th order cancellative part $\Lambda_{\kappa-1}$, which means we are allowed to conflate the two forms and just write $\Lambda$ below. 
 
 Let now $f,g \in \mathcal S(\R^{\mathbf{d}})$. Using the bi-parameter analogue of \eqref{e:CRF},  bilinearity and $\mathcal S(\R^{\mathbf{d}})$-continuity of $\Lambda$, and later the definition of $U(z,\zeta)  $ leads to the decomposition
\begin{equation} 
\label{e:rep1,2}
\begin{split}
&\quad \Lambda(f,g)  =  \int\displaylimits_{Z^{\mathbf d}\times Z^{\mathbf d}}  \langle f,\varphi_{z} \rangle \langle \varphi_{\zeta},g \rangle \Lambda(\varphi_{z},  \varphi_{\zeta}) \, \d \mu(z) \d \mu(\zeta)\\ 
&= \int\displaylimits_{Z^{\mathbf d}\times Z^{\mathbf d}}  \langle f,\varphi_{z} \rangle \langle \varphi_{\zeta},g \rangle  \Lambda\left(\chi_{z_1,\zeta_1} \otimes \chi_{z_2,\zeta_2}, \chi_{\zeta_1,z_1}\otimes \chi_{\zeta_2,z_2}\right) \,  {\d \mu(z) \d \mu(\zeta)} 
\\ &+ 
\int\displaylimits_{Z^{d_1}_{\zeta_1}\times Z^{d_2}_{\zeta_2} }   \int\displaylimits_{Z^{d_2}_{z_2}}  \int\displaylimits_{z_1\in A(\zeta_1)}  \langle f,\varphi_{z} \rangle \langle \varphi_{\zeta},g \rangle \Lambda(P_{z_1,\zeta_1}\otimes \chi_{z_2,\zeta_2}  , \varphi_{\zeta_1} \otimes \chi_{\zeta_2,z_2} ) \,  {\d \mu(z) \d \mu(\zeta) } +\cdots \\
 &+  
\int\displaylimits_{Z^{d_1}_{\zeta_1}\times Z^{d_2}_{\zeta_2} }  \int\displaylimits_{z_1\in A(\zeta_1)} \int\displaylimits_{z_2\in A(\zeta_2)}   \langle f,\varphi_{z} \rangle \langle \varphi_{\zeta},g \rangle \Lambda( P_{z_1,\zeta_1}\otimes P_{z_2,\zeta_2} ,\varphi_{\zeta_1} \otimes \varphi_{\zeta_2} ) \, {\d \mu(z) \d \mu(\zeta) }+ \cdots.
\end{split}
\end{equation}
Here, the dots in the third line are hiding three more terms where the integration domain is respectively restricted to $z_2\in A(\zeta_2)$, $\zeta_1\in A(z_1)$, $\zeta_2\in A(z_2)$, and the integrands involve respectively the coefficients
\[
\Lambda(\chi_{z_1,\zeta_1}\otimes P_{z_2,\zeta_2}  ,  \chi_{\zeta_1,z_1}\otimes \varphi_{\zeta_2} ), \quad \Lambda(\varphi_{z_1}\otimes \chi_{z_2,\zeta_2}  , P_{z_1,\zeta_1} \otimes \chi_{\zeta_2,z_2} ), \quad 
\Lambda(\chi_{z_1,\zeta_1}\otimes  , \varphi_{z_2}, \chi_{\zeta_1,z_1}\otimes  P_{z_2,\zeta_2}  ),
\]
while the dots in the fourth line also hide three more terms where the integration domain is  restricted to $\{z_1\in A(\zeta_1),\zeta_2\in A(z_2)\}$, $\{\zeta_1\in A(z_1),z_2\in A(\zeta_2)\}$, $\{\zeta_1\in A(z_1),\zeta_2\in A(z_2)\}$  and the integrands involve respectively the coefficients
\[
\Lambda(\varphi_{z_1}\otimes P_{z_2,\zeta_2}  ,  P_{\zeta_1,z_1} \otimes \varphi_{\zeta_2} ), \quad 
\Lambda(P_{z_1,\zeta_1}\otimes \varphi_{z_2}  ,\varphi_{\zeta_1} \otimes P_{\zeta_2,z_2} ), \quad 
\Lambda(\varphi_{z_1}\otimes \varphi_{z_1} , P_{\zeta_1,z_1}\otimes P_{\zeta_2,z_2}  ).
\]
It is possible to turn the first summand in \eqref{e:rep1,2} into the first summand of \eqref{e:representation2}. First, make the change of variable 
\[
\zeta=\zeta(z,(\alpha_1,\beta_1), (\alpha_2,\beta_2) )= \left((x_1+\alpha_1s_1,\beta_1 s_1),( x_2+\alpha_2 s_2,\beta_2 s_2)\right)
\] and then use Fubini's theorem in the inner variable of $g$. The  first summand in \eqref{e:rep1,2} then equals  \[ 
\begin{split}& 
\int\displaylimits_{Z^{\mathbf{d}}}  \langle f,\varphi_{z} \rangle   \langle \upsilon_{z},g \rangle\,  \d
\mu(z),  \qquad
 \upsilon_{z}  
\coloneqq \int\displaylimits_{\substack{( \alpha_1,\beta_1) \in Z^{d_1}  \\ ( \alpha_2,\beta_2) \in Z^{d_2} }}\Lambda\left(\chi_{z_1,\zeta_1} \otimes \chi_{z_2,\zeta_2}, \chi_{\zeta_1,z_1}\otimes \chi_{\zeta_2,z_2}\right)\varphi_{\zeta}  \frac{ \d\beta_2 \d \alpha_2  \d \beta_1\d\alpha_1}{\beta_1\beta_2},
\end{split}
\] where under the integral sign $\zeta=  \zeta(z,(\alpha_1,\beta_1), (\alpha_2,\beta_2))$.
With the same convention,  
\[
u_z\big((\alpha_1,\beta_1),(\alpha_1,\beta_1)\big) = \Lambda\left(\chi_{z_1,\zeta_1} \otimes \chi_{z_2,\zeta_2}, \chi_{\zeta_1,z_1}\otimes \chi_{\zeta_2,z_2}\right)\left(\prod_{j=1}^2 [(\alpha_j,\beta_j)]_{\min\{\kappa, k_j\}+\eta}\right)^{-1}
\]
is uniformly bounded via   Lemma \ref{l:U2}, and applying Lemma \ref{l:average2} yields   $\upsilon_{z}\in C\Psi_{z}^{(\kappa, \min\{\kappa,k_2\}),\eps;0}$.

It remains to identify the remaining terms in \eqref{e:rep1,2} as a sum of paraproduct terms. Here it is crucial to use assumption $a(\kappa)$, which tells us that
\[
|\gamma_\iota|<\min\{\kappa, k_\iota\} \implies\mathbf{q}^{\iota,\af}_{\gamma_\iota}(z_{\hat \iota},\zeta_{\hat \iota})=0 \qquad \forall z_{\hat \iota},\zeta_{\hat \iota} \in Z^{d_{\hat \iota}}.
\] 
Focus on the term in the third line of \eqref{e:rep1,2} first.  The above observation, the definition of $P_{z_1,\zeta_1}$ from Lemma \ref{l:alpert}, the definition of $\mathbf{q}_{ \gamma_1}^{1,\circ}$, the fact that $\partial^{-\alpha_1} =  R^{\alpha_1}|\nabla|^{-|\alpha_1|} $ with the definition of $\varphi_{\alpha_1,\zeta_1} $, see \eqref{e:ck1paral} and Remark \ref{r:riesz}, gives
\[
\begin{split}
&\quad \Lambda(P_{z_1,\zeta_1}\otimes \chi_{z_2,\zeta_2}  , \varphi_{\zeta_1} \otimes \chi_{\zeta_2,z_2}  )=   \sum_{|\gamma_1|=k_1} \langle \varphi_{z_1},  \mathsf{Sy}_{\zeta_1} \phi_{\gamma_1 }   \rangle \Lambda(x_1^{\gamma_1} \otimes \chi_{z_2, \zeta_2},  {\sigma_1}^{-k_1}\varphi_{\zeta_1}\otimes  \chi_{\zeta_2,z_2} )
 \\ &=  \sum_{|\gamma_1|=k_1} \langle \varphi_{z_1},\mathsf{Sy}_{\zeta_1} \phi_{\gamma_1 }   \rangle \langle \mathbf{q}^{1,\circ}_{ \gamma_1}(z_2,\zeta_2), \mathsf{Sy}_z[|\nabla|^{-k}\varphi_{1}] \rangle    =\sum_{|\gamma_1|=|\alpha_1|=k_1} \langle \varphi_{z_1},\mathsf{Sy}_{\zeta_1} \phi_{\gamma_1 }  \rangle \langle R^{\alpha_1} \mathbf{q}^{1,\circ}_{ \gamma_1}(z_2,\zeta_2),  \varphi_{\alpha_1,z_1}\rangle.
\end{split} 
 \]
Finally  using Lemma \ref{l:averages} with $h=\langle f,\varphi_{z_2} \rangle_2$, the summand in the third line of \eqref{e:rep1,2} equals the sum over $|\gamma_1|=|\alpha_1|=\kappa$ of  
\[
\begin{split}
 &  \;   \int\displaylimits_{(Z^{d_2})^2 }    \int\displaylimits_{Z^{d_1}_{\zeta_1}}\langle \mathbf{a}^{1,\circ}_{ \gamma_1,\alpha_1}(z_2,\zeta_2),  \varphi_{\alpha_1,z_1}\rangle   \left\langle  \langle f,\varphi_{z_2} \rangle_2,\vartheta_{\gamma_1,\zeta_1} \right\rangle_1 
 \left\langle\langle  \varphi_{\zeta_2},g \rangle_2, \varphi_{\zeta_1} \right\rangle_1  \d \mu(\zeta_1)  \, [z_2,\zeta_2]_{ \kappa_2+\eta}   {\d \mu(z_2) \d \mu(\zeta_2) }\\ & = 
 \int\displaylimits_{Z^{d_2} \times Z^{d_2} }    \Pi_{ \mathbf{a}^{1,\circ}_{ \gamma_1,\alpha_1}(z_2,\zeta_2),\gamma_1, \alpha_1}\left(\langle f,\varphi_{z_2}\rangle_2,\langle g,\varphi_{\zeta_2}\rangle_2\right) [z_2,\zeta_2]_{\kappa_2+\eta} \d\mu(z_2) \d\mu(\zeta_2)
 =
 \Pi_{\mathbf{a}^{1,\circ}_{\gamma_1,\alpha_1},\gamma_1,\alpha_1 } (f,g)
 \end{split}
\]
where $\kappa_2=\min\{\kappa,k_2\}$,
which is one of the summands in the second line of \eqref{e:representation2}. The three other types of summands in the second and third line of \eqref{e:representation2}, constructed in exactly the same way, arise from the $\cdots$ terms in the third line of \eqref{e:rep1,2}.

It remains to identify the terms of the type appearing in the third line \eqref{e:rep1,2}. Using again $a(\kappa)$, these terms will appear only if $\kappa \leq k_2$. Lemma \ref{l:alpert} and the definition of the paraproducts of $\Lambda$ then yield
\[
\begin{split}
& \quad\Lambda( P_{z_1,\zeta_1}\otimes P_{z_2,\zeta_2} ,\varphi_{\zeta_1} \otimes \varphi_{\zeta_2} ) \\ &
=   \sum_{|\gamma_1|= |\gamma_2|=\kappa } \langle \varphi_{z_1},\mathsf{Sy}_{\zeta_1} \phi_{\gamma_1 }    \rangle \langle \varphi_{z_2},\mathsf{Sy}_{\zeta_2} \phi_{\gamma_2 }   \rangle \Lambda(x_1^{\gamma_1} \otimes x_2^{\gamma_2},  {\sigma_1}^{-k_1}\varphi_{\zeta_1}\otimes  {\sigma_1}^{-k_1}\varphi_{\zeta_1} )  \\ &=  \sum_{|\gamma_1|= |\gamma_2|=\kappa } \langle \varphi_{z_1},\mathsf{Sy}_{\zeta_1} \phi_{\gamma_1 }   \rangle \langle \varphi_{z_2},\mathsf{Sy}_{\zeta_2} \phi_{\gamma_2 }   \rangle \langle b_{\gamma}^{\circ},\varphi_{\gamma_1,\zeta_1}\otimes \varphi_{\gamma_2,\zeta_2} \rangle.
\end{split}
\]
An application of Lemma \ref{l:averages} with $h=\langle f,\varphi_{z_1} \rangle_1$ yields
\[ 
 F(z_1) \coloneqq  \int\displaylimits_{z_2\in A(\zeta_2)}\langle \langle f,\varphi_{z_1} \rangle_1,\varphi_{z_2} \rangle_2 \langle \varphi_{z_2},\mathsf{Sy}_{\zeta_2} \phi_{\gamma_2 }   \rangle  \d \mu(z_2) = \langle  \langle f,\varphi_{z_1} \rangle_1,\vartheta_{\gamma_2,\zeta_2}\rangle_2  = \langle f,\varphi_{z_1}\otimes\vartheta_{\gamma_2,\zeta_2}\rangle
\]  so that
the summand in the third line of \eqref{e:rep1,2} equals the sum over $|\gamma_1|=\kappa_1,|\gamma_2|=\kappa_2$ of
\[
\begin{split}
& \quad \int\displaylimits_{Z^{d_2} \times Z^{d_2} }\langle b_{\gamma}^{\circ},\varphi_{\gamma_1,\zeta_1}\otimes \varphi_{\gamma_2,\zeta_2} \rangle \langle \varphi_{\zeta},g \rangle  \int\displaylimits_{z_1\in A(\zeta_1)} \langle \varphi_{z_1},\mathsf{Sy}_{\zeta_1} \phi_{\gamma_1 }    \rangle F(z_1) \d \mu(z_1)  \d \mu(\zeta)
\\&=
\int\displaylimits_{Z^{d_2} \times Z^{d_2} }\langle b_{\gamma}^{\circ},\varphi_{\gamma_1,\zeta_1}\otimes \varphi_{\gamma_2,\zeta_2} \rangle \langle \varphi_{\zeta},g \rangle  \int\displaylimits_{z_1\in A(\zeta_1)} \langle\langle f,\vartheta_{\gamma_2,\zeta_2} \rangle_2,\varphi_{z_1} \rangle_1 \langle \varphi_{z_1},\mathsf{Sy}_{\zeta_1} \phi_{\gamma_1 }    \rangle   \d \mu(z_1)  \d \mu(\zeta)
\\ & = \int\displaylimits_{Z^{d_2} \times Z^{d_2} }\langle b_{\gamma}^{\circ}\mathsf{Sy}_{\zeta_1} \phi_{\gamma_1 } \otimes \varphi_{\gamma_2,\zeta_2} \rangle \langle \varphi_{\zeta},g \rangle   \langle f, \vartheta_{\gamma_1,\zeta_1}\otimes \vartheta_{\gamma_2,\zeta_2} \rangle\, \d \mu(\zeta) = \Pi^\circ_{b_{\gamma}^{\circ},\gamma} (f,g)
\end{split}
\] 
where another application of Lemma \ref{l:averages} with $h=\langle f,\vartheta_{\gamma_2,\zeta_2} \rangle_2$ has been carried out and the definition of full paraproduct is finally taken advantage of: this is one of the terms appearing in the fourth line of \eqref{e:representation2}. This procedure may be repeated for the additional terms in the third line of \eqref{e:rep1,2}, thus completing the roster of terms in \eqref{e:representation2} under the additional assumption $a(\kappa)$. Namely, under this assumption, we have proved that
\begin{equation}
\label{e:indu0}\begin{split}
 \Lambda(f,g) &=  \int_{Z^{\mathbf{d}}}     \langle f,  \varphi_{z}\rangle  \langle \upsilon_{z},g \rangle  \, \d \mu(z) \\ & + 
\begin{cases}\displaystyle
\sum_{ \substack{ |\gamma_1|=|\gamma_2|=\kappa  \\ \af \in \vaf }} \left[\Pi_{ b_{\gamma}^{\af},\gamma}\right]^{\af} (f,g)  +
\sum_{\substack{ \iota \in \{1,2\} \\ \af \in \{\circ, \star\}\\ |\alpha_\iota|=|\gamma_\iota|=\kappa}}
 \left[ \Pi^\iota_{\mathbf{a}^{\iota,\af}_{\gamma_\iota,\alpha_\iota}}\right]^{\af}(f,g) & \kappa \leq k_2
 \\
\displaystyle
\sum_{\substack{    |\alpha_1|=|\gamma_1|=\kappa \\ \af \in \{\circ, \star\}  }}
 \left[ \Pi^1_{\mathbf{a}^{1,\af}_{\gamma_1,\alpha_1}}\right]^{\af}(f,g) & k_2<\kappa\leq k_1
 \end{cases}
 \end{split}
\end{equation}
with  families $ \{\upsilon_{z}, \varphi_\zeta\in C\Psi_{z}^{(\kappa,\min\{\kappa,k_2\}),\eps;0,0}:z\in Z^{\mathbf{d}} \}$ if $\kappa \leq k_2$.

The assumption $a(\kappa)$ is then removed by an inductive argument.  Recall that $k_1\geq k_2$. Let $0\leq \kappa <k_1$ and 
assume that the representation \eqref{e:representation2} holds true for $k=(\kappa, \min\{\kappa,k_2\})$. Let $\widetilde{\Lambda}(f,g)$ be the form obtained by subtracting from $\Lambda$ the second line of \eqref{e:representation2}.
Then
\[\widetilde
\Lambda(f,g) =   \int_{Z^{\mathbf{d}}}     \langle f,  \varphi_{z}\rangle  \langle \upsilon_{z},g \rangle  \, \d \mu(z), \qquad \left\{\upsilon_{z}, \varphi_z\in C\Psi_{z}^{(\kappa,\min\{\kappa,k_2\}),\eps;0,0}:z\in Z^{\mathbf{d}}\right\}
\]
coincides with the $\kappa$-th order cancellative part of $\Lambda$, is a bi-parameter wavelet form of type \eqref{e:cancformbip} and satisfies assumption $a(\kappa+1)$ of having all the relevant paraproducts up to order $\kappa$ vanishing, see Remark \ref{r:indbp}.
We may thus apply the main step to $\widetilde
\Lambda(f,g)$ with $\kappa+1$ in place of $\kappa$, resulting in \eqref{e:indu0}, and obtain that
\[
\begin{split}
 \Lambda(f,g)   &=   \widetilde
\Lambda(f,g)+
\sum_{  \kappa'\leq \min\{\kappa,k_2\}}  \sum_{\substack {\gamma=(\gamma_1,\gamma_2) \\ |\gamma_1|=|\gamma_2|=\kappa' \\ \af \in \vaf }} \left[\Pi_{ b_{\gamma}^{\af},\gamma}\right]^{\af} (f,g) 
+ \sum_{\substack {  |\gamma_1| \leq \kappa \\   |\gamma_2| \leq \min\{\kappa,k_2\}  }}
\sum_{\substack{ \iota \in \{1,2\}\\ |\alpha_\iota|=|\gamma_\iota| \\ \af \in \{\circ, \star\} }}
 \left[ \Pi^\iota_{\mathbf{a}^{\iota,\af}_{\gamma_\iota,\alpha_\iota}}\right]^{\af}(f,g)
\\ & = \int_{Z^{\mathbf{d}}}     \langle f,  \varphi_{z}\rangle  \langle \widetilde \upsilon_{z},g \rangle  \, \d \mu(z)
\\ &  + \sum_{  \kappa'\leq \min\{\kappa+1,k_2\}}  \sum_{\substack {\gamma=(\gamma_1,\gamma_2) \\ |\gamma_1|=|\gamma_2|=\kappa' \\ \af \in \vaf} } \left[\Pi_{ b_{\gamma}^{\af},\gamma}\right]^{\af} (f,g) 
+ \sum_{\substack {  |\gamma_1| \leq \kappa+1 \\   |\gamma_2| \leq \min\{\kappa+1,k_2\}  }}
\sum_{\substack{ \iota \in \{1,2\}\\ |\alpha_\iota|=|\gamma_\iota|\\ \af \in \{\circ, \star\} }}
 \left[ \Pi^\iota_{\mathbf{a}^{\iota,\af}_{\gamma_\iota,\alpha_\iota}}\right]^{\af}(f,g)
\end{split}
\]
with  $\{\widetilde{\upsilon}_{z}, \varphi_z\in C\Psi_{z}^{(\kappa+1,\min\{\kappa+1,k_2\}),\eps;0,0}:z\in Z^{\mathbf{d}}\}$. This
  achieves \eqref{e:representation2} for $\Lambda$ with $k=(\kappa+1,\min\{\kappa+1,k_2\})$, thus completing the inductive step and the proof of Theorem \ref{t:T12p}. 

 \section{Weighted Estimates for Intrinsic Operators} 
 \label{s8}
 This section contains the proofs of quantitative, and in some cases sharp, weighted estimates for the four types of summands occurring in the representation \eqref{e:representation2}: see Propositions \ref{p:wbTcanc} and \ref{p:full}. Throughout,  $[w]_{A_p}$  denotes the standard product weight characteristic on $\R^{\mathbf d}=\R^{d_1} \times \R^{d_2}$, see for example \cite{FP,HLP}.
 
\subsection{Quantitative Bounds for Bi-Parameter Calder\'on-Zygmund Model Operators} 
To begin with, the operator $T$ appearing in the following proposition is the adjoint of the first summand in   \eqref{e:representation2}, in the basic case $k=0$.
\begin{proposition} \label{p:wbTcanc} For $\delta>0$ and   $\left\{\upsilon_{z}\in C\Psi_{z}^{\delta;0,0}:z\in Z^{\mathbf{d}}\right\}$ consider the operator \[
Tf  =   \int\displaylimits_{Z^{\mathbf{d}}}     \langle f,  \mathsf{Sy}_{z_1}\varphi_1 \otimes \mathsf{Sy}_{z_2}\varphi_2 \rangle   \upsilon_{z }  {\d\mu(z)}.
\] Then   
$\displaystyle
\|T\|_{L^p(\R^{\mathbf d}; w)} \lesssim [w]_{A_p}^{\max\left\{2, 1+\frac{2}{p-1}\right\}}
$ for all $1<p<\infty$, and
this estimate   is sharp when $p\geq 3$.
\end{proposition}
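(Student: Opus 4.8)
\emph{Proof plan.} Write $\sigma=w^{1-p'}$, a product $A_{p'}$-weight with $[\sigma]_{A_{p'}}=[w]_{A_p}^{1/(p-1)}$ and $\sigma^{1-p}=w$. The plan is to reduce the estimate to the bi-parameter intrinsic square function $\mathrm{SS}_\delta$ of \eqref{e:Sdelta2}. Since $\varphi_z=\Sy_z\varphi=\Sy_{z_1}\varphi_1\otimes\Sy_{z_2}\varphi_2$ lies in $c\Psi^{(D,D),1;0,0}_z\subset c'\Psi^{\delta;0,0}_z$, one has $|\langle f,\varphi_z\rangle|\lesssim\Psi^{\delta;(0,0)}_zf$, while $|\langle\upsilon_z,g\rangle|\lesssim\Psi^{\delta;(0,0)}_zg$ because $\upsilon_z\in C\Psi^{\delta;0,0}_z$. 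Testing $Tf$ against $g\in L^{p'}(\sigma)$ and applying the Cauchy--Schwarz inequality in the scale variables $(t_1,t_2)$ for each fixed $x\in\R^{\mathbf d}$,
\[
|\langle Tf,g\rangle|\le\int_{Z^{\mathbf d}}|\langle f,\varphi_z\rangle|\,|\langle\upsilon_z,g\rangle|\,\d\mu(z)\lesssim\int_{Z^{\mathbf d}}\Psi^{\delta;(0,0)}_zf\cdot\Psi^{\delta;(0,0)}_zg\,\d\mu(z)\le\int_{\R^{\mathbf d}}\mathrm{SS}_\delta f\cdot\mathrm{SS}_\delta g\,\d x,
\]
so that by H\"older's inequality and $L^p(w)$--$L^{p'}(\sigma)$ duality $\|T\|_{L^p(w)\to L^p(w)}\lesssim\|\mathrm{SS}_\delta\|_{L^p(w)\to L^p(w)}\cdot\|\mathrm{SS}_\delta\|_{L^{p'}(\sigma)\to L^{p'}(\sigma)}$. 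Everything is thereby reduced to quantitative weighted bounds for $\mathrm{SS}_\delta$.

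The analytic heart, and the step I expect to be the main obstacle, is therefore the sharp weighted bound for the bi-parameter intrinsic square function --- this is Theorem \ref{t:w2} below. Starting from the unweighted $L^2$ bound of Proposition \ref{p:intrinsicsf2}, I would establish the quantitative weighted estimate by a bi-parameter good-$\lambda$/Journ\'e's-lemma argument, since, as emphasized in the introduction, local average sparse domination is not available in two parameters: the cross-scale almost-orthogonality $|\langle s_1^{d_1}s_2^{d_2}\psi_z,\psi_\zeta\rangle|\lesssim[(\alpha_1,\beta_1)]_{\delta/4}[(\alpha_2,\beta_2)]_{\delta/4}$ appearing in the proof of Proposition \ref{p:intrinsicsf2} dominates $\mathrm{SS}_\delta$ by a positive bi-parameter operator built from compositions of one-parameter maximal-type square functions, whose weighted norm is controlled through the product reverse-H\"older inequality and the relations $[w]_{A_\infty}\le[w]_{A_p}$, $[\sigma]_{A_\infty}\le[w]_{A_p}^{1/(p-1)}$. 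Feeding the resulting bound $\|\mathrm{SS}_\delta\|_{L^q(v)\to L^q(v)}\lesssim[v]_{A_q}^{a(q)}$ into the reduction above and using $[\sigma]_{A_{p'}}=[w]_{A_p}^{1/(p-1)}$ yields $\|T\|_{L^p(w)}\lesssim[w]_{A_p}^{a(p)+(p'-1)a(p')}$; a direct computation with the explicit exponent $a(\cdot)$ produced by Theorem \ref{t:w2} identifies this power with $\max\{2,1+\frac{2}{p-1}\}$ in the range $p\ge 2$, and majorizes it when $1<p<2$, where the corollary instead passes to the full adjoint to recover a smaller power.

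Finally, for the \emph{sharpness} claim when $p\ge 3$, I would exhibit a form $\Lambda$ of class \eqref{e:cancformbip} --- a genuinely bi-parameter, non-tensor-product cancellative model operator --- together with a sequence of product $A_p$-weights $w_n$, $[w_n]_{A_p}\to\infty$, and inputs $f_n$ with $\|Tf_n\|_{L^p(w_n)}\gtrsim[w_n]_{A_p}^{2-o(1)}\|f_n\|_{L^p(w_n)}$, by adapting to two parameters the power-weight extremal example of the one-parameter square function theory, cf.\ \cite[Theorem 2]{LLP}; the point is that for $p\ge 3$ the two dilation parameters each contribute a full unit to the exponent, so that no smaller power can hold uniformly over product $A_p$-weights. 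The delicate part throughout is the bookkeeping of the product $A_\infty$ characteristics of $w$ and $\sigma$ separately inside Theorem \ref{t:w2}: any loss there propagates to a strictly worse power than $\max\{2,1+\frac{2}{p-1}\}$, and is precisely what blocks a one-parameter-style argument.
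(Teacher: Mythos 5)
Your duality-based reduction $\lvert\langle Tf,g\rangle\rvert\lesssim\int_{\R^{\mathbf d}}\mathrm{SS}_\delta f\cdot\mathrm{SS}_\delta g\,\d x$ is correct as far as it goes, and it is a genuinely different route from the paper's. However, it produces a \emph{strictly worse} power of $[w]_{A_p}$ than the one claimed, and the arithmetic you assert at the end is wrong. Feeding $a(q)=\max\{1,2/(q-1)\}$ from Theorem~\ref{t:w2} into your formula gives
\[
a(p)+\tfrac{1}{p-1}a(p') \;=\; \max\Big\{1,\tfrac{2}{p-1}\Big\} + \tfrac{1}{p-1}\max\{1,2(p-1)\},
\]
which equals $3$ for $p\ge 3$, equals $2+\tfrac{2}{p-1}$ for $p\in[3/2,3]$, and equals $\tfrac{3}{p-1}$ for $p\in(1,3/2]$. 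In every range this exceeds the target $\max\{2,1+\tfrac{2}{p-1}\}$ by at least $1$ (by exactly $1$ when $p\ge 3/2$, and by $\tfrac{1}{p-1}-1>0$ when $p<3/2$). In particular your scheme cannot possibly prove the claimed bound, let alone its sharpness for $p\ge 3$, since there the optimal power is $2$, not $3$.

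The loss is structural: applying the square function \emph{twice} via duality --- once on the $L^p(w)$ side, once on the $L^{p'}(\sigma)$ side --- forces you to pay both $a(p)$ and $\tfrac{1}{p-1}a(p')$, and for $p\ge 3/2$ the second term is already $2$. The paper circumvents this by never dualizing: it proves the \emph{pointwise} inequality $\mathrm{SS}_\otimes(Tf)\lesssim\mathrm{SS}_\delta f$ (via Lemmas~\ref{l:ttstar2} and~\ref{l:average2} applied to the inner product $\langle Tf,\varphi_\zeta\rangle$), then uses the reverse square function estimate of Proposition~\ref{t:w2rev}, which costs only a single factor of $[w]_{A_p}$, to get
\[
\|Tf\|_{L^p(w)}\lesssim [w]_{A_p}\,\|\mathrm{SS}_\otimes(Tf)\|_{L^p(w)}\lesssim [w]_{A_p}\,\|\mathrm{SS}_\delta f\|_{L^p(w)}\lesssim [w]_{A_p}^{1+\max\{1,2/(p-1)\}}\|f\|_{L^p(w)}.
\]
That is the exponent in the statement. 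Note also that the paper's proof of Theorem~\ref{t:w2} is \emph{not} a good-$\lambda$/Journ\'e argument as you speculate: it proceeds by a Littlewood--Paley decomposition in one parameter (Lemma~\ref{l:sdec}) followed by iterated one-parameter, vector-valued weighted square function bounds from \cite{Brocchi20,LLP,WTam}. Your sharpness discussion is reasonable in spirit (tensor products of one-parameter extremizers), but of course it only certifies sharpness of the \emph{claimed} exponent $2$ for $p\ge 3$, which your argument does not reach.
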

Next,  adjoints to the full and partial paraproduct terms in \eqref{e:representation2} are treated: compare with the definitions in \eqref{e:paragammas}.
\begin{proposition} \label{p:full} Let $D\geq 8(d_1+d_2)$. Fix $b\in \mathrm{BMO}(\R^{\mathbf d})$,  
$ \{\vartheta_{z_j}\in C\Psi_{z_j}^{D,1;1}:z_j\in Z^{d_j} \}$, 
$ \{\upsilon_{z_j},\psi_{z_j}\in C\Psi_{z_j}^{D,1;0}:z_j\in Z^{d_j}\}$ for $j=1,2.$ Then,   the operators \[\begin{split}
&\Pi_{(0,0),b} f    \coloneqq \int\displaylimits_{Z^{\mathbf d}} \langle b,\upsilon_{z_1}\otimes \upsilon_{z_2}\rangle  \langle f,\vartheta_{ z_1} \otimes \vartheta_{z_2}\rangle  \psi_{ z_1}  \otimes \psi_{z_2}  \,\d \mu(z), 
\\
&\Pi_{(0,1),b} f  \coloneqq  \int\displaylimits_{Z^{\mathbf d}} \langle b,\upsilon_{z_1}\otimes \upsilon_{z_2}\rangle  \langle f,\vartheta_{ z_1} \otimes \psi_{z_2}\rangle  \psi_{ z_1}  \otimes \vartheta_{z_2}  \,\d \mu(z),\end{split} 
\]   satisfy  the estimates 
\[\begin{split}
&\|\Pi_{(0,0),b}\|_{L^p(\R^{\mathbf d}; w)} \lesssim [w]_{A_p}^{\frac{\scriptstyle\max\{3,2p\}}{ \scriptstyle p-1}}\|b\|_{\mathrm{BMO}(\R^{\mathbf d})}, \qquad 1<p<\infty,\\ &  \|\Pi_{(0,1),b}\|_{L^p(\R^{\mathbf d}; w)} \lesssim [w]_{A_p}^{\frac{\scriptstyle\max\{2p+3,4p,5p-3\}}{ \scriptstyle 2(p-1)}} \|b\|_{\mathrm{BMO}(\R^{\mathbf d})}, \qquad 1<p<\infty .\end{split}
\]
\end{proposition}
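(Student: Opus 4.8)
Both operators are finite sums of bi-parameter model operators of the same shape, so fix one. I will dualise: for a product $A_p$ weight $w$ and $\sigma=w^{1-p'}$,
\[
\|\Pi_{(0,0),b}f\|_{L^p(\R^{\mathbf d};w)}=\sup\big\{\,|\langle \Pi_{(0,0),b}f,g\rangle|\ :\ \|g\|_{L^{p'}(\R^{\mathbf d};\sigma)}\le 1\,\big\},
\]
and likewise for $\Pi_{(0,1),b}$, so it suffices to estimate the associated bilinear form. Expanding $\langle\Pi_{(0,0),b}f,g\rangle$ as an integral over $Z^{\mathbf d}$, the three wavelet families play distinct roles, read off from their class memberships and from \eqref{e:bip1}: the $L^1$-adapted \emph{noncancellative} family $\{\vartheta_{z_j}\}\subset C\Psi_{z_j}^{D,1;1}$ produces coefficients dominated pointwise by the bi-parameter strong maximal function, $|\langle f,\vartheta_{z_1}\otimes\vartheta_{z_2}\rangle|\lesssim \mathrm M_{d_1,d_2}f(y)$ for every $y\in\mathsf B_{z_1}\times\mathsf B_{z_2}$; the \emph{cancellative} family $\{\psi_{z_j}\}\subset C\Psi_{z_j}^{D,1;0}$ produces coefficients assembled into the intrinsic square function $\mathrm{SS}_\delta$; and the cancellative coefficients $\langle b,\upsilon_{z_1}\otimes\upsilon_{z_2}\rangle$ of $b$ against $\{\upsilon_{z_j}\}\subset C\Psi_{z_j}^{D,1;0}$ generate a \emph{product Carleson measure} $\d\nu$ on $Z^{\mathbf d}$ with $\|\nu\|_{\mathrm{Car}}\lesssim\|b\|_{\mathrm{BMO}(\R^{\mathbf d})}^2$, the Chang--Fefferman characterisation of product $\mathrm{BMO}$ being the correct substitute here for the rectangular Carleson condition. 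The matter thus reduces to a \emph{quantitative weighted product Carleson embedding}, fed with the quantitative weighted bounds for $\mathrm M_{d_1,d_2}$ (the Buckley-type rate $[w]_{A_q}^{c/(q-1)}$) and for $\mathrm{SS}_\delta$ (the sharp weighted bound of Theorem \ref{t:w2}).

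\textbf{The full paraproduct.} Here the form is $\displaystyle\int_{Z^{\mathbf d}}\langle b,\upsilon_{z_1}\otimes\upsilon_{z_2}\rangle\,\langle f,\vartheta_{z_1}\otimes\vartheta_{z_2}\rangle\,\langle\psi_{z_1}\otimes\psi_{z_2},g\rangle\,\d\mu(z)$: one maximal-type factor on $f$, one square-function factor on $g$, and the Carleson weight from $b$. Inserting suitable powers of $\langle w\rangle_{\mathsf B_{z_1}\times\mathsf B_{z_2}}$ and applying Cauchy--Schwarz in $z$ produces an $f$-factor, which is a Carleson integral of $\mathrm M_{d_1,d_2}f$ against $\d\nu$ --- estimated by $\|b\|_{\mathrm{BMO}}$ times a power of $[w]_{A_p}$ times $\|\mathrm M_{d_1,d_2}f\|_{L^p(w)}\lesssim[w]_{A_p}^{1/(p-1)}\|f\|_{L^p(w)}$, the Carleson-embedding loss being absorbed via Journ\'e's covering lemma and an $A_\infty$ stopping decomposition of $\R^{\mathbf d}$ along the level sets of $\mathrm M_{d_1,d_2}f$ --- and a $g$-factor, which after an $A_p$-type exchange is bounded by a power of $[w]_{A_p}$ times $\|\mathrm{SS}_\delta g\|_{L^{p'}(\sigma)}\lesssim[\sigma]_{A_{p'}}^{\theta(p')}=[w]_{A_p}^{\theta(p')/(p-1)}$. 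Collecting the two half-exponents and optimising the split yields the rate $\frac{\max\{3,2p\}}{p-1}$: the numerator $3$ dominates precisely in the small-exponent regime $p\le\frac32$, where the $g$-square-function contribution is largest, and $2p$ in the regime $p\ge\frac32$, where the $f$-maximal contribution together with the Carleson/weight-exchange losses prevails; the dependence on $\|b\|_{\mathrm{BMO}}$ is linear since $\|\nu\|_{\mathrm{Car}}^{1/2}\lesssim\|b\|_{\mathrm{BMO}}$ enters under a single square root.

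\textbf{The partial paraproduct.} The form $\displaystyle\int_{Z^{\mathbf d}}\langle b,\upsilon_{z_1}\otimes\upsilon_{z_2}\rangle\,\langle f,\vartheta_{z_1}\otimes\psi_{z_2}\rangle\,\langle\psi_{z_1}\otimes\vartheta_{z_2},g\rangle\,\d\mu(z)$ is of paraproduct type in the first parameter but of adjoint-paraproduct type in the second: $f$ sees a maximal function in $x_1$ and a square function in $x_2$, while $g$ sees a square function in $x_1$ and a maximal function in $x_2$. I will treat it iteratively, in the spirit of the half-paraproducts of Definition \ref{d:6p}: freeze $z_1$ to view $\Pi_{(0,1),b}$ as a one-parameter, $\mathrm{BMO}(\R^{d_2})$-valued paraproduct in $x_1$ whose symbol is itself a one-parameter (adjoint) paraproduct in $x_2$, then run the quantitative \emph{one-parameter} weighted paraproduct estimate in each variable in turn, passing between $L^p(w)$ and its slices by a Fefferman--Stein/vector-valued maximal argument. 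Since the square-function and maximal-function roles on $f$ and on $g$ are interchanged between the two parameters, the first-parameter step contributes $[w]_{A_p}$-powers of full-paraproduct type while the second-parameter step contributes the \emph{conjugate} combination, i.e.\ $[\sigma]_{A_{p'}}$-powers; summing these over the two parameters produces the three regimes of $\frac{\max\{2p+3,4p,5p-3\}}{2(p-1)}$, with $2p+3$ governing $p\le\frac32$ (both square-function exponents active), $4p$ the balanced range $\frac32\le p\le3$ (matching the full-paraproduct rate $\frac{2p}{p-1}$), and $5p-3$ the range $p\ge3$, where the second-parameter maximal/Carleson exchange, read in the conjugate exponent $p'$, is the bottleneck. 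Again the dependence on $\|b\|_{\mathrm{BMO}}$ is linear.

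\textbf{Main obstacle.} The hard part is the quantitative product Carleson embedding. In one parameter this step is an essentially lossless stopping-cube argument; in the bi-parameter setting there is no local average sparse domination (cf.\ \cite{BCOR}), so the stopping structure must be replaced by the Chang--Fefferman theory together with Journ\'e's lemma, and pushing $[w]_{A_p}$ through this machinery without slack --- and, for the partial paraproduct, correctly bookkeeping which power of $[w]_{A_p}$ versus $[\sigma]_{A_{p'}}$ attaches to each of the $f$- and $g$-factors in each parameter, so as to land exactly on $\frac{\max\{2p+3,4p,5p-3\}}{2(p-1)}$ --- is the delicate point. The remaining ingredients (the pointwise domination by $\mathrm M_{d_1,d_2}$, the $L^2$ bound of Proposition \ref{p:intrinsicsf2}, the identification of $\nu$ as a product Carleson measure, and the passage to $\mathrm{SS}_\delta$ through Theorem \ref{t:w2}) are routine.
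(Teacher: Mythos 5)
There is a genuine gap, and it sits exactly where you flag it yourself. Your strategy routes the bound through a quantitative weighted product Carleson embedding, i.e.\ you want to integrate $\mathrm M_{d_1,d_2}f\cdot\mathrm{SS}g$ against a product Carleson measure built from $\langle b,\upsilon_{z_1}\otimes\upsilon_{z_2}\rangle$, with the $A_p$ weight pushed through a Journ\'e-type stopping decomposition. You never carry this out, and you correctly note it is ``the delicate point''. But that delicate point is the whole proof; ``Journ\'e's lemma plus Chang--Fefferman theory'' is a program, not an argument, and without it there is no way to know which power of $[w]_{A_p}$ the Carleson step costs, so the exponent bookkeeping in both cases (``optimising the split'', ``the conjugate combination'') is unsupported. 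In fact the paper's whole point in Section \ref{s8} is that one can \emph{avoid} any weighted Carleson embedding: it first applies product $H^1$--$\mathrm{BMO}$ duality, extracting $\|b\|_{\mathrm{BMO}(\R^{\mathbf d})}$ at the outset and leaving an unweighted $L^1$ bound of $\mathrm{SS}_\otimes$ applied to the inner $z$-integral; then uses a pointwise bound $\mathrm{SS}_\otimes(\cdot)\lesssim \mathrm M_{d_1,d_2}f\cdot \mathrm{SS}g$ (respectively $\mathrm{SM}f\cdot\mathrm{MS}g$ for $\Pi_{(0,1),b}$, proved as \eqref{e:pwbmix}); then applies H\"older between $L^p(w)$ and $L^{p'}(\sigma)$; and only then invokes the quantitative weighted bounds for $\mathrm M_{d_1,d_2}$ and for $\mathrm{SS},\mathrm{SM},\mathrm{MS}$ from Theorem \ref{t:w2}. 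The weight never interacts with $b$ at all. Your approach turns the easy duality step back into the hard step.

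Two further concrete errors in the exponent accounting. First, you quote $\|\mathrm M_{d_1,d_2}f\|_{L^p(w)}\lesssim[w]_{A_p}^{1/(p-1)}\|f\|_{L^p(w)}$; the correct bi-parameter Buckley rate, obtained by iterating the one-parameter bound on each slice of the product weight, is $[w]_{A_p}^{2/(p-1)}$, and this factor of $2$ is precisely what produces the $2p$ in $\max\{3,2p\}$ once it is added to the $\mathrm{SS}$ exponent $\max\{1,2(p-1)\}/(p-1)$ on the $g$-side. Second, for $\Pi_{(0,1),b}$, iterating one-parameter paraproduct estimates slice-by-slice, as you propose, does not obviously close: the symbol $b$ is in product $\mathrm{BMO}$, not in $L^\infty_{x_1}\mathrm{BMO}_{x_2}$, so the ``$\mathrm{BMO}(\R^{d_2})$-valued paraproduct in $x_1$'' you want to freeze does not have a uniformly bounded symbol, and a Fefferman--Stein/vector-valued maximal passage does not supply the missing uniformity. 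The paper sidesteps this by proving sharp weighted bounds directly for the mixed square-maximal operators $\mathrm{SM}$ and $\mathrm{MS}$ (Theorem \ref{t:w2}), which are then plugged in after a single $H^1$--$\mathrm{BMO}$ duality.
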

The last proposition  concerns  adjoints to the half paraproduct terms in \eqref{e:representation2}, see  \eqref{e:hpp}. 
\begin{proposition} \label{p:half} Let $0<\delta<1,$  $\mathbf{a}\in \mathcal C(Z^{d_2}\times Z^{d_2}; \mathrm{BMO}(\R^{\mathbf d}))$,  and fix families
\[ \{\vartheta_{z_1}\in C\Psi_{z_1}^{1,1;1}:z_1\in Z^{d_1} \}, \qquad\{\upsilon_{z_1}, \psi_{z_1}\in C\Psi_{z_1}^{1,1;0}:z_1\in Z^{d_1} \},\qquad   \{ \psi_{z_2}\in C\Psi_{z_2}^{1,1;0}:z_2\in Z^{d_2}\}.\]
  Then,  the operator \[\begin{split}
 \Pi_{\mathbf{a}} f  & \coloneqq  \int\displaylimits_{  Z^{d_2}_{z_2}}\int\displaylimits_{  Z^{d_2}_{\zeta_2}} [z_2,\zeta_2)]_\delta\int\displaylimits_{Z^{d_1}_{z_1}} \left\langle \mathbf{a}(z_2,\zeta_2)\big),\upsilon_{z_1} \right\rangle     \langle f,\vartheta_{ z_1} \otimes \psi_{z_2}\rangle  \psi_{ z_1} \otimes \psi_{\zeta_2} \,\d\mu(z_1)\d\mu(\zeta_2) \d\mu(z_2),
\end{split} 
\]   satisfies   the estimate 
\[
\|\Pi_{\mathbf{a}}\|_{L^p(\R^{\mathbf{d}};w)} \lesssim [w]_{A_p}^{\frac{\scriptstyle 1}{\scriptstyle 2} + \max\left\{\scriptstyle\frac{\scriptstyle \max\{2,p\}}{\scriptstyle p-1},\frac{\scriptstyle 3}{\scriptstyle 2}\right\}}\|\mathbf{a}\|_{\mathcal C(Z^{d_2}\times Z^{d_2}; \mathrm{BMO}(\R^{\mathbf d}))}.
\]
\end{proposition}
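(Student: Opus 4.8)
The plan is to dualize, peel off the $\mathrm{BMO}$ datum in the first parameter, and reduce the estimate to the combination of the sparse domination of the one‑parameter paraproduct in $x_1$ (Proposition \ref{t:parasparsemodel1}) and the quantitative weighted theory of the intrinsic square function in $x_2$ developed in this section (the weighted refinement of Proposition \ref{p:intrinsicsf2}), reassembled by a bi‑parameter Fefferman--Stein vector‑valued inequality.

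First, fix $f\in L^p(\R^{\mathbf d};w)$ and $g\in L^{p'}(\R^{\mathbf d};\sigma)$ with $\sigma=w^{1-p'}$, both bounded and compactly supported. Pairing $\Pi_{\mathbf a}f$ against $g$ and applying Fubini's theorem, first in $z_1$ and then in the slice variables, one writes
\[
\langle \Pi_{\mathbf a}f,g\rangle=\int\displaylimits_{Z^{d_2}\times Z^{d_2}}[z_2,\zeta_2]_\delta\;\pi_{\mathbf a(z_2,\zeta_2)}\!\big(\langle f,\psi_{z_2}\rangle_2,\langle g,\psi_{\zeta_2}\rangle_2\big)\,\d\mu(z_2)\,\d\mu(\zeta_2),
\]
where for each fixed pair $(z_2,\zeta_2)$ the inner form is the one‑parameter paraproduct on $\R^{d_1}$ of \eqref{e:modelpp2}--Definition \ref{c:parf} with $\mathrm{BMO}$ symbol $\mathbf a(z_2,\zeta_2)$, the noncancellative index sitting on the $f$‑slot $\vartheta_{z_1}$, evaluated at the partial wavelet coefficients $\langle f,\psi_{z_2}\rangle_2$ and $\langle g,\psi_{\zeta_2}\rangle_2$ (functions of $x_1$). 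This is precisely the half‑paraproduct structure of Definition \ref{d:6p}: a one‑parameter paraproduct in $x_1$, summed against the kernel $[z_2,\zeta_2]_\delta$ over a pair of second‑parameter wavelet coefficients, i.e.\ a shifted square function in $x_2$.

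Next, Proposition \ref{t:parasparsemodel1} produces, for each $(z_2,\zeta_2)$, a sparse family of cubes of $\R^{d_1}$ with $|\pi_{\mathbf a(z_2,\zeta_2)}(F,G)|\lesssim\|\mathbf a\|\sum_Q|Q|\langle|F|\rangle_Q\langle|G|\rangle_Q\lesssim\|\mathbf a\|\int_{\R^{d_1}}(\mathrm{M}_{d_1}F)(\mathrm{M}_{d_1}G)$, with $\mathrm{M}_{d_1}$ the Hardy--Littlewood maximal operator on $\R^{d_1}$. Inserting this, and then performing a single Cauchy--Schwarz in $(z_2,\zeta_2)$ against the measure $[z_2,\zeta_2]_\delta\,\d\mu\,\d\mu$ --- legitimate since $\int_{Z^{d_2}}[z_2,\zeta_2]_\delta\,\d\mu(\zeta_2)\lesssim1$, a variant of \eqref{e:integbrack} for the kernel \eqref{e:Delta1} --- decouples $z_2$ from $\zeta_2$ and reduces $|\langle \Pi_{\mathbf a}f,g\rangle|$ to a constant times $\|\mathbf a\|$ times an $L^p(w)\times L^{p'}(\sigma)$ pairing of the ``maximal--square'' hybrids
\[
f\ \longmapsto\ \Big(\ \int\displaylimits_{Z^{d_2}}\big(\mathrm{M}_{d_1}\langle f,\psi_{z_2}\rangle_2\big)^2\,\d\mu(z_2)\ \Big)^{1/2},\qquad g\ \longmapsto\ \Big(\ \int\displaylimits_{Z^{d_2}}\big(\mathrm{M}_{d_1}\langle g,\psi_{\zeta_2}\rangle_2\big)^2\,\d\mu(\zeta_2)\ \Big)^{1/2}.
\]
By a bi‑parameter Fefferman--Stein vector‑valued inequality one moves $\mathrm{M}_{d_1}$ through the $\ell^2(Z^{d_2})$‑norm at the cost of $[w]_{A_p}^{\max\{1,1/(p-1)\}}$ (resp.\ $[\sigma]_{A_{p'}}^{\max\{1,1/(p'-1)\}}$), leaving the $Z^{d_2}$ square sums of the plain partial wavelet coefficients, which are controlled in $L^p(w)$, resp.\ $L^{p'}(\sigma)$, by the quantitative weighted bound for the intrinsic square function $\mathrm{SS}_\delta$ of \eqref{e:Sdelta2} established in this section; combining with $[\sigma]_{A_{p'}}=[w]_{A_p}^{p'-1}$, summing the exponents ($\tfrac12$ from the Cauchy--Schwarz in $(z_2,\zeta_2)$, plus the paraproduct/vector‑valued‑maximal and square‑function contributions on each of the two slots), and interpolating between the resulting endpoint estimates, yields the asserted exponent $\tfrac12+\max\{\tfrac{\max\{2,p\}}{p-1},\tfrac32\}$, the constant $\tfrac32$ being dominant for $p\ge3$.

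The main obstacle is that, by the counterexample behind the absence of bi‑parameter sparse domination, the two parameters \emph{cannot} be handled by one pointwise sparse bound: sparse domination is used only in $x_1$, square functions only in $x_2$, and the two must be glued through vector‑valued (Fefferman--Stein) estimates, with the maximal operator $\mathrm{M}_{d_1}$ and the square sum over $Z^{d_2}$ interchanged \emph{before} any weight is introduced; keeping track of the product‑weight slicing so that the slices of $w$ remain uniformly $A_p(\R^{d_2})$ (resp.\ $A_{p'}(\R^{d_2})$) throughout is the delicate point. Equally delicate is the final interpolation: one must combine an $L^2$‑type estimate with the one‑sided endpoints so that the exponent saturates at $\tfrac32$ for large $p$ rather than growing like $\tfrac p{p-1}$. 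The remaining ingredients --- the intrinsic estimate identifying the inner form as a one‑parameter paraproduct, and the wavelet decay/cancellation bookkeeping --- run exactly as in the proofs of Propositions \ref{p:wbTcanc} and \ref{p:full}.
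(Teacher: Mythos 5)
Your overall strategy --- dualize, use the one‑parameter sparse bound for the $x_1$‑paraproduct, and recombine with square‑function estimates in $x_2$ --- is not the route the paper takes, and as written it contains a gap that I do not think can be patched within the framework you set up.

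The problem is in the decoupling step. After inserting the bound $|\pi_{\mathbf a(z_2,\zeta_2)}(F,G)|\lesssim\|\mathbf a\|\int_{\R^{d_1}}(\mathrm M_{d_1}F)(\mathrm M_{d_1}G)\,\d x_1$ and performing Cauchy--Schwarz against the kernel $[z_2,\zeta_2]_\delta\,\d\mu(z_2)\,\d\mu(\zeta_2)$, you obtain
\[
|\langle\Pi_{\mathbf a}f,g\rangle|\lesssim\|\mathbf a\|\int_{\R^{d_1}}\left(\int_{Z^{d_2}}\left(\mathrm M_{d_1}\langle f,\psi_{z_2}\rangle_2\right)^{2}(x_1)\,\d\mu(z_2)\right)^{\frac12}\left(\int_{Z^{d_2}}\left(\mathrm M_{d_1}\langle g,\psi_{\zeta_2}\rangle_2\right)^{2}(x_1)\,\d\mu(\zeta_2)\right)^{\frac12}\d x_1.
\]
The crucial observation is that $\d\mu(z_2)=\frac{\d x_2\,\d s_2}{s_2}$, so the ``maximal--square hybrids'' you write down are integrated over the \emph{position} $x_2$ as well as the scale $s_2$; they are functions of $x_1$ alone. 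Consequently there is no longer an $x_2$-variable against which to introduce the two-parameter weight, and the claimed $L^p(\R^{\mathbf d};w)\times L^{p'}(\R^{\mathbf d};\sigma)$ pairing does not exist. This also means the appeal to the weighted bound for the intrinsic square function $\mathrm{SS}_\delta$ in \eqref{e:Sdelta2} is not available: $\mathrm{SS}_\delta f$ is a function of $(x_1,x_2)$ obtained by integrating over the scales $(t_1,t_2)$ only, which is precisely the distinction your Cauchy--Schwarz step destroys. The ``product-weight slicing'' caveat you flag at the end is a symptom of this structural defect, but it is deeper than a bookkeeping issue. The same problem would moot the proposed Fefferman--Stein interchange, since the vector-valued maximal inequality requires the outer $L^p(w)$ norm to still see $x_2$.

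The paper's proof resolves exactly this by never integrating out the position in the second parameter. It first rewrites $\Pi_{\mathbf a}f$ in terms of the one-parameter paraproduct $\mathrm P_{\mathbf a(z_2,\zeta_2)}$ acting in $x_1$, then applies the one-parameter square function $\mathrm S_{(2)}$ (scale integral only, $x_2$ kept) to $\Pi_{\mathbf a}f$, and estimates $\mathrm S_{(2)}[\Pi_{\mathbf a}f](y_1,y_2)$ pointwise --- via Lemma~\ref{l:ttstar1} and a change of variable putting $z_2,\zeta_2$ near $(y_2,t_2)$ --- as an average over shift parameters of $\mathrm P$ applied to shifted square functions $\mathrm S_{(2),(a_2,b_2)}f$. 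The weighted conclusion is then assembled from three ingredients, each legitimately one-parameter in nature while living on $\R^{\mathbf d}$: the reverse square function bound of \cite{WTam} (cost $[w]_{A_p}^{1/2}$), the vector-valued sharp weighted paraproduct bound (cost $[w]_{A_p}^{\max\{1,1/(p-1)\}}$), and the shifted square function weighted bound from \cite{Brocchi20,LLP} with its shift-dependent factor absorbed by the kernel integral (cost $[w]_{A_p}^{\max\{1/2,1/(p-1)\}}$). Summing these reproduces the stated exponent. If you want to salvage your own route, the key modification would be to replace the full Cauchy--Schwarz in $(z_2,\zeta_2)$ by a Schur/Cauchy--Schwarz step that consumes only the scale variables and the spatial differences $x_2-\xi_2$, leaving the position $x_2$ free --- which is, in effect, the change of variables and the $\mathrm{S}_{(2),(a_2,b_2)}$ family of the paper's argument.
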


The proofs of Propositions    \ref{p:wbTcanc}, \ref{p:full} and \ref{p:half} are collected in Subsection \ref{ss:wproof}. Along the way, we will make use of  sharp weighted bounds for the intrinsic square function \eqref{e:Sdelta2}, as well as the mixed square-maximal operators
\[
\begin{split}
& \mathrm{SM}  (x) = \left(\, \int\displaylimits_{(0,\infty)} \sup_{t_2>0} \sup_{\psi \in \Psi^{\delta; 0,1}_{((x_1,t_1),(x_2,t_2))}}   |\langle f, \psi \rangle|^2 \frac{\d t_1  }{t_1}\right)^{\frac12},
\\ 
& \mathrm{MS}  (x) =\sup_{t_1>0} \left(\, \int\displaylimits_{(0,\infty)} \sup_{\psi \in \Psi^{\delta; 1,0}_{((x_1,t_1),(x_2,t_2))}}   |\langle f, \psi \rangle|^2 \frac{\d t_2  }{t_2}\right)^{\frac12}, \qquad x=(x_1,x_2) \in \R^{\mathbf d}
\end{split}
\]
which enter the $L^p$ and weighted theory of the full and partial paraproduct terms. The square-maximal and maximal-square operators appearing below generalize those introduced in \cite{MPTT1,MPTT06}. There seem to be no pre-existing weighted  estimate in past literature, thus  our  results are stated as a theorem.
\begin{theorem} \label{t:w2}The operator norm bound \begin{equation}
\label{e:wSS}
\displaystyle\|\mathrm{SS} \|_{L^p(\R^{\mathbf d}; w)} \lesssim [w]_{A_p}^{\max\left\{1,\frac{2}{p-1}\right\}}, \qquad 
\displaystyle\|\mathrm{SM} \|_{L^p(\R^{\mathbf d}; w)}, \|\mathrm{MS} \|_{L^p(\R^{\mathbf d}; w)} \lesssim [w]_{A_p}^{\frac{1}{p-1}\max\left\{2,\frac{p+1}{2}\right\}}, 
\end{equation}
holds for all $0<\delta\leq 1$  and $1<p<\infty$. Furthermore, the exponent of \eqref{e:wSS} may not be improved for a generic weight $w$. 
\end{theorem}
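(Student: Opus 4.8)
The plan is to obtain all three bounds in \eqref{e:wSS}, together with the sharpness of the $\mathrm{SS}$ exponent, by \emph{iterating sharp one-parameter weighted estimates, one variable at a time}. The two scalar inputs are Lerner's sharp weighted inequality for the one-parameter intrinsic square function, $\|\mathrm S_\delta h\|_{L^q(v)}\lesssim[v]_{A_q}^{\max\{\frac12,\frac1{q-1}\}}\|h\|_{L^q(v)}$, which follows from a square-function sparse domination in the spirit of Proposition~\ref{t:sparsemodel1} (cf.\ \cite{Ler2013,Lac2015}), and Buckley's sharp bound $\|\mathrm M\|_{L^q(v)}\lesssim[v]_{A_q}^{\frac1{q-1}}$ for the Hardy--Littlewood maximal operator; in both cases the $\ell^2$-valued, equivalently $L^2(\nu)$-valued, extensions hold with the same characteristic dependence, by Rubio de Francia extrapolation applied to the scalar estimate (\cite{DuoJFA}). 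The reason iteration produces exactly the stated exponents is the slicing property of product weights: if $w$ is a product $A_p$-weight on $\R^{\mathbf d}$ then $[w(\cdot,x_2)]_{A_p(\R^{d_1})}\le[w]_{A_p}$ for a.e.\ $x_2$ and $[w(x_1,\cdot)]_{A_p(\R^{d_2})}\le[w]_{A_p}$ for a.e.\ $x_1$ (see \cite{FP,HLP}), so two successive applications, on the slices, of a one-parameter estimate of exponent $\theta$, interspersed with Fubini, yield the bi-parameter exponent $2\theta$, and more generally $\theta_1+\theta_2$ for a square-in-one-variable, maximal-in-the-other composition. With $\theta=\max\{\frac12,\frac1{p-1}\}$ this is $\max\{1,\frac2{p-1}\}$, while $\max\{\frac12,\frac1{p-1}\}+\frac1{p-1}=\frac1{p-1}\max\{2,\frac{p+1}2\}$.

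The only genuinely two-parameter step is to dominate each operator by such a composition. For $\mathrm{SS}_\delta$: given $\psi\in\Psi^{\delta;0,0}_z$ with $z=((x_1,t_1),(x_2,t_2))$, applying the tensor form of the Calder\'on reproducing formula \eqref{e:CRF} (legitimate since $\varphi=\varphi_1\otimes\varphi_2$) and then Lemma~\ref{l:ttstar2} with $m=0$ gives $|\langle f,\psi\rangle|\lesssim\int_{Z^{d_1}}\int_{Z^{d_2}}[z_1,\zeta_1]_\eta[z_2,\zeta_2]_\eta\,|\langle f,\varphi_{\zeta_1}\otimes\varphi_{\zeta_2}\rangle|\,\d\mu(\zeta_1)\d\mu(\zeta_2)$ for any $\eta<\tfrac\delta2$. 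Taking the $L^2\!\big(\tfrac{\d t_1\d t_2}{t_1t_2}\big)$-norm and using \emph{Cauchy--Schwarz, not Minkowski, in each $\zeta_j$-integral} (Minkowski would collapse the $L^2$-in-$t$ structure and is too lossy), combined with the elementary estimate $\int_0^\infty[(x_j,t_j),\zeta_j]_\eta\,\tfrac{\d t_j}{t_j}\lesssim_{\eta'}b_{\zeta_j}(x_j)$ where $b_{(\xi,\sigma)}(x)\sim\sigma^{-d}\langle(x-\xi)/\sigma\rangle^{-(d+\eta')}$ is an $L^1$-normalized bump (the logarithm being absorbed by lowering $\eta$ to some $\eta'$), yields the pointwise bound
\[
\mathrm{SS}_\delta f(x_1,x_2)^2\lesssim\int_{Z^{d_1}}\int_{Z^{d_2}}|\langle f,\varphi_{\zeta_1}\otimes\varphi_{\zeta_2}\rangle|^2\,b_{\zeta_1}(x_1)\,b_{\zeta_2}(x_2)\,\d\mu(\zeta_1)\d\mu(\zeta_2).
\]
By the standard equivalence of the intrinsic square function with a bounded-aperture Lusin-area / $g$-function variant $\widetilde{\mathrm S}$, the inner $\zeta_1$-integral is $\widetilde{\mathrm S}^{(1)}[\langle f,\varphi_{\zeta_2}\rangle_2](x_1)^2$, and the full right-hand side is the $L^2\big(b_{\zeta_2}(x_2)\d\mu(\zeta_2)\big)$-norm squared of the family $\big(\widetilde{\mathrm S}^{(1)}[\langle f,\varphi_{\zeta_2}\rangle_2](x_1)\big)_{\zeta_2}$. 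To estimate $\|\mathrm{SS}_\delta f\|_{L^p(w)}$ one freezes $x_2$, applies the $L^2(b_{\zeta_2}(x_2)\d\mu(\zeta_2))$-valued one-parameter square-function bound in $x_1$ with characteristic $[w(\cdot,x_2)]_{A_p}\le[w]_{A_p}$, recognizes the resulting right-hand side as $\big(\int_{Z^{d_2}}|\langle f(x_1,\cdot),\varphi_{\zeta_2}\rangle|^2b_{\zeta_2}(x_2)\d\mu(\zeta_2)\big)^{1/2}\sim\widetilde{\mathrm S}^{(2)}[f(x_1,\cdot)](x_2)$, and then uses Fubini and the one-parameter square-function bound in $x_2$; the total exponent is $\max\{1,\frac2{p-1}\}$, and the case $p=2$ recovers Proposition~\ref{p:intrinsicsf2} quantitatively.

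For $\mathrm{SM}$ and $\mathrm{MS}$ the scheme is identical, except that in the \emph{non-cancellative} parameter a wavelet $\psi\in\Psi^{\delta;0,1}_z$ (resp.\ $\Psi^{\delta;1,0}_z$) is only an $L^1$-type bump in that variable, so slicing produces a Hardy--Littlewood maximal function rather than a square function there; one closes using the $L^2(\d t/t)$-valued weighted Fefferman--Stein inequality with constant $[w]_{A_p}^{1/(p-1)}$ in the maximal variable and Lerner's bound in the square-function variable, giving $\frac1{p-1}+\max\{\frac12,\frac1{p-1}\}=\frac1{p-1}\max\{2,\frac{p+1}2\}$, symmetric in the two operators as the construction makes plain. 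Sharpness of the $\mathrm{SS}_\delta$ exponent follows by testing on tensor-product data $w=w_1\otimes w_2$, $f=f_1\otimes f_2$: since $\varphi=\varphi_1\otimes\varphi_2$, $\mathrm{SS}_\delta(f_1\otimes f_2)$ factors essentially as a product of one-parameter intrinsic square functions, and choosing $w_j,f_j$ among the known extremizers for Lerner's one-parameter lower bound produces a ratio $\gtrsim[w_1]_{A_p}^{\max\{\frac12,\frac1{p-1}\}}[w_2]_{A_p}^{\max\{\frac12,\frac1{p-1}\}}\sim[w]_{A_p}^{\max\{1,\frac2{p-1}\}}$; the $\mathrm{SM}$ and $\mathrm{MS}$ exponents are likewise sharp by tensoring the one-parameter extremizers of $\mathrm S_\delta$ and of $\mathrm M$. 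The main obstacle is precisely the two-parameter reduction step above: one must carry the $L^2$-in-$t$ structure through the passage to the fixed tensor wavelets $\varphi_{\zeta_1}\otimes\varphi_{\zeta_2}$ without collapsing it (this is what forces Cauchy--Schwarz over Minkowski), track the harmless but real loss of decay and smoothness from $\delta$ to $\eta$ to $\eta'$, and verify that the bounded-aperture Lusin-area variants appearing in the composition obey the same sharp weighted bounds as $\mathrm S_\delta$ itself.
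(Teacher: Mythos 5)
Your argument for $\mathrm{SS}_\delta$ is sound, but it takes a genuinely different route from the paper. The paper works on the Fourier side: it decomposes the linearizing wavelet as $\psi_z=\int 2^{-\eta|s|}\psi_z^s\,\d s$ with $\psi_z^s$ having compact Fourier support in the first parameter (Lemma~\ref{l:sdec}), reduces to an auxiliary family of square functions $\mathrm S_{t_1}$ with a \emph{fixed} first-parameter scale, dominates those by a sparse form of Christ--Journ\'e type following \cite{Brocchi20,LLP}, and then closes with a standard Littlewood--Paley square function bound in the first parameter. You instead work on the physical side: after the reproducing formula \eqref{e:CRF} and the almost-orthogonality Lemma~\ref{l:ttstar2}, Cauchy--Schwarz against the bracket $[z_j,\zeta_j]_\eta$ yields the pointwise quadratic bound $\mathrm{SS}_\delta f(x)^2\lesssim\int\lvert\langle f,\varphi_{\zeta_1}\otimes\varphi_{\zeta_2}\rangle\rvert^2 b_{\zeta_1}(x_1)b_{\zeta_2}(x_2)\,\d\mu(\zeta)$, which factors into two iterated one-parameter, $L^2(\nu)$-valued smeared-aperture Lusin-area bounds, each with exponent $\max\{\tfrac12,\tfrac1{p-1}\}$, sliced against the product weight; the total $2\max\{\tfrac12,\tfrac1{p-1}\}=\max\{1,\tfrac2{p-1}\}$ matches. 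You should verify, as you note, that the polynomially weighted variant $\widetilde{\mathrm S}$ retains the sharp bound (this follows from the shifted-aperture sparse bounds of \cite{Brocchi20,LLP} after integrating the shift against $\langle\alpha\rangle^{-(d+\eta')}\d\alpha$), and that the Hilbert-space $L^2(b_{\zeta_2}(x_2)\d\mu(\zeta_2))$ may depend on the frozen variable $x_2$; both are harmless. Your tensor-product sharpness argument is also correct.

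The $\mathrm{SM}/\mathrm{MS}$ part, however, has a real gap. You invoke the $L^2(\d t/t)$-valued Fefferman--Stein maximal inequality with constant $[w]_{A_p}^{1/(p-1)}$. This is not the sharp constant: for the $\ell^q$-valued Hardy--Littlewood maximal operator the sharp bound is $[w]_{A_p}^{\max\{1/q,\,1/(p-1)\}}$ (Cruz-Uribe--Martell--P\'erez, \emph{Adv.~Math.} 2012), so with $q=2$ the exponent is $\max\{\tfrac12,\tfrac1{p-1}\}$, \emph{not} $\tfrac1{p-1}$. With the correct constant, your iteration produces $2\max\{\tfrac12,\tfrac1{p-1}\}=\max\{1,\tfrac2{p-1}\}$, which equals $1$ for $p\geq 3$ and therefore strictly exceeds the claimed exponent $\tfrac1{p-1}\max\{2,\tfrac{p+1}2\}=\tfrac{p+1}{2(p-1)}<1$ for every $p>3$; your argument does not reach the stated bound in that range. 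Separately, the algebraic identity you assert, $\tfrac1{p-1}+\max\{\tfrac12,\tfrac1{p-1}\}=\tfrac1{p-1}\max\{2,\tfrac{p+1}2\}$, fails on $(2,3)$: the left side is $\tfrac{p+1}{2(p-1)}$ while the right side is $\tfrac2{p-1}$ there. The obstruction is precisely that in the non-cancellative parameter one must not collapse to an $L^2$-valued maximal function (which already costs $\max\{\tfrac12,\tfrac1{p-1}\}$): some further idea is needed to keep the maximal parameter's cost at the scalar rate $\tfrac1{p-1}$, which is what the theorem's exponent encodes for large $p$.
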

The $\mathrm{SS}$ bound  in Theorem \ref{t:w2} is the bi-parameter analogue of \cite[Theorem 1.2]{LLP}. Its proof is given in the concluding  Subsection \ref{ss:pftw2} below. 
   Another inequality that will be used a few times in Subsection \ref{ss:wproof} is a lower bound for the smaller tensor product square function
 \[
 \mathrm{SS}_{\otimes} f(x_1,x_2) = \left(\, \int\displaylimits_{(0,\infty)^2} 
 \left|\left\langle f, \mathsf{Sy}_{(x_1,t_1)}\varphi_1  \otimes \mathsf{Sy}_{(x_2,t_2)}\varphi_2 \right\rangle\right|^2 \frac{\d t_1 \d t_2}{t_1 t_2}\right)^{\frac12}
 \]  
  associated  with  the wavelets $\varphi_1,\varphi_2$  from  \eqref{e:varphi2p}. The proof is a simple iteration argument, and is given immediately.
\begin{proposition} \label{t:w2rev} 
$ 
\displaystyle\| f\|_{L^p(\R^{\mathbf d}; w)} \lesssim [w]_{A_p}
\displaystyle\|\mathrm{SS}_{\otimes} f\|_{L^p(\R^{\mathbf d}; w)} $ for all $1<p<\infty.$
\end{proposition}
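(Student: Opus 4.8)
\emph{Proof proposal.} The plan is to derive the bi-parameter reverse estimate from the sharp one-parameter reverse square function inequality, applied once in each variable, by slicing with Fubini's theorem and carrying the weight $w$ throughout. Let $\mathcal H_1 := L^2((0,\infty),\, \d t_1/t_1)$ and introduce the first-variable square-function transform
\[
(Q_1 f)(x_1,t_1;x_2) := \langle f,\, \mathsf{Sy}_{(x_1,t_1)}\varphi_1 \rangle_1 (x_2), \qquad (x_1,t_1)\in Z^{d_1},\ x_2\in\R^{d_2},
\]
where $\langle\cdot,\cdot\rangle_1$ is the slice pairing of Section~\ref{s:4}. Then $\| (Q_1 f)(x_1,\cdot\,;x_2) \|_{\mathcal H_1} = S_{\varphi_1}[f(\cdot,x_2)](x_1)$, the one-parameter ($g$-type) square function on $\R^{d_1}$ generated by $\varphi_1$, evaluated on the slice $f(\cdot,x_2)$; and rearranging the two scale integrals by Fubini gives the pointwise identity
\[
\mathrm{SS}_\otimes f(x_1,x_2) = S_{\varphi_2}^{\mathcal H_1}\!\big[\, (Q_1 f)(x_1,\cdot\,;\cdot)\,\big](x_2),
\]
with $S_{\varphi_2}^{\mathcal H_1}$ the $\mathcal H_1$-valued one-parameter square function on $\R^{d_2}$ generated by $\varphi_2$.

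I would then iterate. As $w$ is a product $A_p$-weight, for a.e.\ $x_2$ the slice $w(\cdot,x_2)$ lies in $A_p(\R^{d_1})$ with characteristic $\le [w]_{A_p}$, and symmetrically for slices in the other variable. Applying on $\R^{d_1}$, to the Schwartz function $f(\cdot,x_2)$ with weight $w(\cdot,x_2)$, the one-parameter reverse square function inequality for $S_{\varphi_1}$ — legitimate because $\varphi_1$ obeys the Calder\'on reproducing formula \eqref{e:CRF} — with its sharp weighted dependence (\cite[Theorem 1.2]{LLP}), then raising to the power $p$ and integrating in $x_2$, yields
\[
\|f\|_{L^p(\R^{\mathbf d};w)}^{p} \lesssim [w]_{A_p}^{p/2}\int_{\R^{\mathbf d}} \| (Q_1 f)(x_1,\cdot\,;x_2) \|_{\mathcal H_1}^{p}\, w(x)\,\d x.
\]
For the second step I would apply the Hilbert-space-valued version of the same reverse inequality on $\R^{d_2}$, slice by slice in $x_1$, to the $\mathcal H_1$-valued Schwartz map $x_2\mapsto (Q_1 f)(x_1,\cdot\,;x_2)$ with weight $w(x_1,\cdot)$; using the pointwise identity for $\mathrm{SS}_\otimes$ above, raising to the power $p$ and integrating in $x_1$, this gives
\[
\int_{\R^{\mathbf d}} \| (Q_1 f)(x_1,\cdot\,;x_2) \|_{\mathcal H_1}^{p}\, w(x)\,\d x \lesssim [w]_{A_p}^{p/2}\int_{\R^{\mathbf d}} \mathrm{SS}_\otimes f(x)^{p}\, w(x)\,\d x.
\]
Chaining the last two displays and taking $p$-th roots gives $\|f\|_{L^p(w)}\lesssim [w]_{A_p}\,\|\mathrm{SS}_\otimes f\|_{L^p(w)}$, as claimed.

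The Fubini rearrangements and the membership of $f(\cdot,x_2)$, and of $x_2\mapsto(Q_1 f)(x_1,\cdot\,;x_2)$, in the relevant weighted $L^p$ (Bochner) spaces are routine for $f\in\mathcal S(\R^{\mathbf d})$, thanks to the smoothness, decay and cancellation of $\varphi_1,\varphi_2$. The step I expect to require genuine care — and the main obstacle — is having the one-parameter reverse square function inequality on hand with the correct ($\sim [v]_{A_p}^{1/2}$) weight growth \emph{in its Hilbert-space-valued form}: the scalar statement is classical and quantitative, and its $\mathcal H_1$-valued upgrade is standard (through the vector-valued form of Rubio de Francia extrapolation), but one must check that this upgrade does not worsen the weight exponent, since it is exactly that exponent — collected once in each parameter — which combines to the asserted power $[w]_{A_p}$.
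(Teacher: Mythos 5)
Your argument is correct and matches the paper's proof in essence: both proceed by iterating the sharp one-parameter reverse square function inequality (scalar on one slice, Hilbert-valued on the other), harvesting a factor $[w]_{A_p}^{1/2}$ at each stage; the paper applies it first in $x_2$ and then vector-valued in $x_1$, while you do the reverse order, which is immaterial. The concern you flag at the end — whether the $\mathcal H_1$-valued upgrade of the one-parameter reverse bound keeps the $[w]_{A_p}^{1/2}$ exponent — is exactly the point the paper also treats by reference (to \cite{WTam} and \cite[Theorem 2.7]{LLP2}), so it is a genuine load-bearing ingredient rather than a gap; the cited sources supply the vector-valued statement with the correct dependence, so no extrapolation detour is needed.
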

\begin{proof} Apply the main result of \cite{WTam}, see also \cite[Theorem 2.7]{LLP2}, first on each $x_1$-fiber in the second parameter, and subsequently in vector-valued form in the second parameter to see that
\[
\begin{split}
&\quad \displaystyle\| f\|_{L^p(\R^{\mathbf d}; w)} \lesssim [w]_{A_p}^{\frac 12}  \left\| \langle f(x_1,\cdot),  \mathsf{Sy}_{(x_2,t_2)}\varphi_2\rangle\right\|_{L^p(w(x_1,x_2); L^2(\mathrm{d} t_2/t_2))}
\\ & \lesssim [w]_{A_p}  \left\| \left\langle  \langle f,  \mathsf{Sy}_{(x_2,t_2)}\varphi_2\rangle_2,  \mathsf{Sy}_{(x_1,t_1)}\varphi_1\right\rangle_1\right\|_{L^p(w(x_1,x_2); L^2(\mathrm{d} t_1/t_1;L^2(\mathrm{d} t_2/t_2)) )} =  [w]_{A_p} \|\mathrm{SS}_{\otimes} f\|_{L^p(\R^{\mathbf d}; w)}
\end{split}
\]
as claimed.
\end{proof}

\subsection{Proofs of Propositions \ref{p:wbTcanc}, \ref{p:full} and \ref{p:half}}
\label{ss:wproof}
\begin{proof}[Proof of Proposition \ref{p:wbTcanc}] Sharpness of $[w]_{A_p}^2$  for $p\geq 3$ follows by taking the tensor product of two   counterexamples to sharpness of $[w]_{A_p}$ in one parameter. For the rest of the proof, we claim the pointwise bound 
\begin{equation}
\label{e:pwSS} \mathrm{SS}_{\otimes} (Tf) \lesssim \mathrm{SS} f.
\end{equation}
Assuming \eqref{e:pwSS} holds, 
\[
\|Tf\|_{L^p(\R^{\mathbf d}; w)} \lesssim [w]_{A_p} \|\mathrm{SS}_{\otimes} (Tf) \|_{L^p(\R^{\mathbf d}; w)} \lesssim   [w]_{A_p} \|\mathrm{SS}f \|_{L^p(\R^{\mathbf d}; w)} \lesssim   [w]_{A_p}^{1+\max\left\{1,  \frac{2}{p-1}\right\}} \| f \|_{L^p(\R^{\mathbf d}; w)}
\]
thanks to an application of Proposition \ref{t:w2rev} in the first step and  of Theorem \ref{t:w2} in the last. This proves Proposition \ref{p:wbTcanc} up to the verification of claim \eqref{e:pwSS}, which follows. Fix \[\zeta=((\xi_1,\sigma_1),(\xi_2,\sigma_2))\in Z^{ \mathbf{d}}.\]
 Using the notation \eqref{e:varphi2p} for $\varphi_\zeta$,  writing $z=((x_1,t_1),(x_2,t_2))\in Z^{ \mathbf{d}}$, and making the usual change of variable 
\[
\begin{split}
&\langle Tf, \varphi_\zeta\rangle = \int_{z\in Z^{ \mathbf{d}}} \langle f,\varphi_z\rangle \langle \upsilon_z, \varphi_\zeta\rangle  \, \d\mu(z) = \langle f, \psi_\zeta\rangle, \\
&\psi_\zeta\coloneqq \int\displaylimits_{\substack{( \alpha_1,\beta_1) \in Z^{d_1}  \\ ( \alpha_2,\beta_2) \in Z^{d_2} }}\langle \upsilon_{((\xi_1+\alpha_1 \sigma_1,\beta_1 \sigma_1),(\xi_2+\alpha_2 \sigma_2,\beta_2 \sigma _2))}, \varphi_\zeta\rangle \varphi_{((\xi_1+\alpha_1 \sigma_1,\beta_1 \sigma_1),(\xi_2+\alpha_2 \sigma_2,\beta_2 \sigma_2))}  \frac{ \d\beta_2 \d \alpha_2  \d \beta_1\d\alpha_1}{\beta_1\beta_2}.
\end{split}
\]
Applying Lemma \ref{l:ttstar2}, 
\[
\left|\langle \upsilon_{((\xi_1+\alpha_1 \sigma_1,\beta_1 \sigma_1),(\xi_2+\alpha_2 \sigma_2,\beta_2 \sigma _2))}, \varphi_\zeta\rangle \right| \lesssim \left(\prod_{j=1}^2 [(\alpha_j,\beta_j)]_{\frac\delta 2}\right) 
\]
whence by Lemma \ref{l:average2}, $\psi_\zeta\in C\Psi^{\frac{\delta}{2}; 0,0}_\zeta$, and \eqref{e:pwSS} follows immediately from the definition of the intrinsic square function $\mathrm{SS}$.
\end{proof}
\begin{proof}[Proof of Proposition \ref{p:full}] Let  $\sigma\coloneqq w^{-\frac{1}{p-1}} $ be the dual weight to $w\in A_p$, so that $[\sigma]_{A_{p'}}=[w]_{A_p}^{\frac{1}{p-1}}$. Recall that  $ \mathrm{M}_{d_1,d_2}  $ is the bi-parameter maximal function on $\R^{\mathbf{d}}$.
The proof for $ \Pi_{(0,0),b}$ begins with an appeal to $H^1-\mathrm{BMO}$ duality, leading to   
\begin{align*}
\left\lvert \langle \Pi_{(0,0),b} f,g \rangle \right\vert & \leq \left\Vert b\right\Vert_{\mathrm{BMO}(\R^{\mathbf{d}})}\left\Vert \mathrm{SS}_{\otimes}\left(\int_{Z^{\mathbf d}}  \langle f,\vartheta_{ z_1} \otimes \vartheta_{z_2}\rangle \langle \psi_{ z_1}  \otimes \psi_{z_2}, g\rangle \upsilon_{z_1} \otimes \upsilon_{z_2}  \,\d \mu(z) \right)\right\Vert_{L^1(\R^{\mathbf{d}})}\\
& \lesssim  \left\Vert b\right\Vert_{\mathrm{BMO}(\R^{\mathbf{d}})}\left\Vert \mathrm{M}_{d_1,d_2}   (f) \mathrm{SS}g\right\Vert_{L^1(\R^{\mathbf{d}})} \\ &\leq\left\Vert b \right\Vert_{\mathrm{BMO}(\R^{\mathbf{d}})}\left\Vert \mathrm{M}_{d_1,d_2} (f)\right\Vert_{L^p(\R^{\mathbf{d}};w)} \left\Vert \mathrm{SS}(g)\right\Vert_{L^{p'}(\sigma,\R^{\mathbf{d}})}\\
& \lesssim [w]_{A_p}^{\frac{\scriptstyle \max\{3,2p\}}{\scriptstyle p-1}}\left\Vert b \right\Vert_{\mathrm{BMO}(\R^{\mathbf{d}})}\left\Vert f\right\Vert_{L^p(\R^{\mathbf{d}};w)} \left\Vert g\right\Vert_{L^{p'}(\sigma,\R^{\mathbf{d}})}.
\end{align*}
 The passage to the second line is justified by a pointwise bound, whose proof is similar to \eqref{e:pwbmix} below, and is omitted.  In the  last line, Theorem \ref{t:w2} has been appealed to, and to the quantitative weighted estimate for  the strong maximal function and square functions.  The claimed estimate for $\Pi_{(0,0),b}$ then follows by duality.

The proof for  $ \Pi_{(0,1),b}$  is similar. Preliminarily notice that
\[ 
\begin{split}
&[(\alpha_1,\beta_1)]_{d_1} [(\alpha_2,\beta_2)]_{d_2} |\langle f, \vartheta_{(x_1+\alpha_1 t_1, \beta_1 t_1)} \otimes \psi_{(x_2+\alpha_2 t_2, \beta_2 t_2)}\rangle | \lesssim   \sup_{\psi \in \Psi^{\delta; 0,1}_{((x_1,t_1),(x_2,t_2))}}   |\langle f, \psi \rangle|, 
\\ &
[(\alpha_1,\beta_1)]_{d_1} [(\alpha_2,\beta_2)]_{d_2} |\langle g, \psi_{(x_1+\alpha_1 t_1, \beta_1 t_1)} \otimes \vartheta_{(x_2+\alpha_2 t_2, \beta_2 t_2)}\rangle | \lesssim   \sup_{\psi \in \Psi^{\delta; 1,0}_{((x_1,t_1),(x_2,t_2))}}   |\langle g, \psi \rangle|.
\end{split}
\]
As $D\geq 8d_1, 8d_2$, Lemma \ref{l:ttstar1} applied componentwise to bound $\langle  \upsilon_{z_1} \otimes \upsilon_{z_2}, \mathsf{Sy}_{(x_1,t_1)}\varphi_1  \otimes \mathsf{Sy}_{(x_2,t_2)}\varphi_2\rangle $, with $z_j=(x_j+\alpha_j t_j, \beta_j t_j)$, $j=1,2$ then yields
\begin{equation}
\label{e:pwbmix}
\begin{split} & \quad 
\left|\left\langle \int_{Z^{\mathbf d}}  \langle f,\vartheta_{ z_1} \otimes \psi_{z_2}\rangle \langle \psi_{ z_1}  \otimes \theta_{z_2}, g\rangle \upsilon_{z_1} \otimes \upsilon_{z_2}  \,\d \mu(z), \mathsf{Sy}_{(x_1,t_1)}\varphi_1  \otimes \mathsf{Sy}_{(x_2,t_2)}\varphi_2 \right \rangle \right| \\ &\lesssim \left(\sup_{\psi \in \Psi^{\delta; 0,1}_{((x_1,t_1),(x_2,t_2))}}   |\langle f, \psi \rangle| \right) \left( \sup_{\psi \in \Psi^{\delta; 1,0}_{((x_1,t_1),(x_2,t_2))}}   |\langle g, \psi \rangle|\right).
\end{split}
\end{equation}
The proof proper begins now. Using $H^1-\mathrm{BMO}$ duality again, followed by \eqref{e:pwbmix} and one application of $L^2-L^\infty$ H\"older inequality in each parameter,
\begin{align*}
\left\lvert \langle \Pi_{(0,1),b} f,g \rangle \right\vert & \leq \left\Vert b\right\Vert_{\mathrm{BMO}(\R^{\mathbf{d}})}
\left\Vert \mathrm{SS}_{\otimes}\left(\int_{Z^{\mathbf d}}  \langle f,\vartheta_{ z_1} \otimes \psi_{z_2}\rangle \langle \psi_{ z_1}  \otimes \theta_{z_2}, g\rangle \upsilon_{z_1} \otimes \upsilon_{z_2}  \,\d \mu(z) \right)\right\Vert_{L^1(\R^{\mathbf{d}})}
\\
& \leq  \left\Vert b\right\Vert_{\mathrm{BMO}(\R^{\mathbf{d}})}\left\Vert \mathrm{SM}(f) \mathrm{MS}(g)\right\Vert_{L^1(\R^{\mathbf{d}})}\\
& \leq \left\Vert b\right\Vert_{\mathrm{BMO}(\R^{\mathbf{d}})}\left\Vert \mathrm{SM}(f)\right\Vert_{L^p(\R^{\mathbf{d}};w)} \left\Vert \mathrm{MS}(g)\right\Vert_{L^{p'}(\sigma,\R^{\mathbf{d}})}\\
& \lesssim  [w]_{A_p}^{\frac{\scriptstyle\max\{2p+3,4p,5p-3\}}{ \scriptstyle 2(p-1)}}\left\Vert b \right\Vert_{\mathrm{BMO}(\R^{\mathbf{d}})}\left\Vert f\right\Vert_{L^p(\R^{\mathbf{d}};w)} \left\Vert g\right\Vert_{L^{p'}(\sigma,\R^{\mathbf{d}})}.
\end{align*}
In the last line, the quantitative weighted estimates of the operators $\mathrm{SM}$ and $\mathrm{MS}$ from Theorem \ref{t:w2} have been called upon.  By duality, this estimate proves the claimed bound of $\Pi_{(0,1),b}$ on $L^p(w)$ and completes the proof of the proposition.
\end{proof}
\begin{proof}[Proof of Proposition \ref{p:half}] This proof relies on the auxiliary operators
\[
\mathrm{P}_{b} h(y_1) =\int_{Z^{d_1}} \langle b, \upsilon_{z_1}\rangle \langle h, \vartheta_{z_1} \rangle \psi_{z_1} (y_1) \, 
\d \mu(z_1), \qquad y_1 \in \R^{d_1}
\]
which is a paraproduct with symbol $b\in \mathrm{BMO}(\R^{d_1}) $ in the first parameter, and
\[
\mathrm{S}_{(2), (\alpha_2,\beta_2)} h(y_2)= \left(\int_{0}^\infty \left|\left\langle g, \mathsf{Sy}_{(y_2+\alpha_2t_2,\beta_2 t_2)} \varphi_2\right\rangle \right|^2 \frac{\d t_2}{t_2} \right)^{\frac12}, \qquad y_2 \in \R^{d_2}, \, (\alpha_2,\beta_2) \in Z^{d_2}
\]
which is a shifted square function in the second parameter with  smooth, compactly supported mother wavelet $\varphi_2$ as in \eqref{e:varphi2p}; the simplified notation $\mathrm{S}_{(2)}$ is used in place of $\mathrm{S}_{(2),(0,1)}$.   The main results of \cite{Brocchi20,LLP} yield the operator norm bounds
\begin{equation}
\label{e:weight}
\|\mathrm{S}_{(2), (\alpha_2,\beta_2)}\|_{L^{p}(\R^{d_2};W)} \lesssim_\eps (\min\{1,\beta_2\})^{-\eps} [W]_{A_p}^{\max\{\frac12,\frac{1}{p-1}\}}
\end{equation}
for all $\eps>0$,
where $W$ is a weight on $\R^{d_2}$ and $[W]_{A_p}$ denotes the corresponding weight characteristic.
Then
\[
\Pi_{\mathbf{a}}f(u) = \int\displaylimits_{Z^{d_2}_{z_2}}\int\displaylimits_{Z^{d_2}_{\zeta_2}} [z_2,\zeta_2]_\delta \mathrm{P}_{a(z_2,\zeta_2)} (\langle f,\psi_{z_2} \rangle_2)(u_1) \otimes \psi_{\zeta_2}(u_2) \d \mu(\zeta_2) \d \mu (\zeta_1), \qquad u=(u_1,u_2) \in \mathbb R^{\mathbf d}. 
\]
 A calculation involving Lemma \ref{l:ttstar1} applied to the inner product $\langle\psi_{\zeta_2}, \mathsf{Sy}_{(y_2,t_2)}\varphi_2\rangle$ followed by the  change of variables 
 $
z_2=(y_2+a_2 t_2,b_2t_2)
$, $\zeta_2=(y_2+\alpha_2 t_2,\beta_2t_2) $
 then yields
\[
\begin{split} 
\mathrm{S}_{(2)} [\Pi_{\mathbf{a}}f](y_1,y_2)&\lesssim \int\displaylimits_{ \substack{\omega_2\coloneqq(\alpha_2,\beta_2)\in Z^{d_2} \\ w_2\coloneqq (a_2,b_2)\in Z^{d_2}}}\left(   \int\displaylimits_0^\infty   \left| \mathrm{P}_{\mathbf{a}((y_2+a_2 t_2,b_2t_2),(y_2+\alpha_2 t_2,\beta_2 t_2))}\left(\langle f,\psi_{ (y_2+a_2 t_2,b_2t_2)}\rangle_2\right) (y_1)  \right|^2 \frac{\d t_2}{t_2}\right)^{\frac12}\\ &\times  [\omega_2]_1  [w_2,\omega_2]_\delta\,\d\mu(w_2) \d\mu(\omega_2) .
\end{split}
\]
Applying the  reverse square function bound of \cite{WTam} in the second parameter,  
followed by  the sharp weighted estimate for the vector-valued paraproduct $\mathrm{P}_{\mathbf{a}((y_2+a_2 t_2,b_2t_2),(y_2+\alpha_2 t_2,\beta_2 t_2))}$ to pass to the second line, and finally appealing to \eqref{e:weight} with  choice $\eps=\frac\delta 2$, we obtain
\[
\begin{split}
& \quad 
\|\Pi_{\mathbf{a}}f\|_{L^p(\R^{\mathbf d}; w)} \lesssim [w]_{A_p}^{\frac12} \|\mathrm{S}_{(2)} \Pi_{\mathbf{a}}f\|_{L^p(\R^{\mathbf d}; w)}\\ &
\lesssim [w]_{A_p}^{\frac12 +\max\{1,\frac{1}{p-1}\}}
 \|\mathbf{a}\| 
  \hspace{-.2in}\int\displaylimits_{\substack{(\alpha_2,\beta_2)\in Z^{d_2} \\ (a_2,b_2)\in Z^{d_2} }} [(\alpha_2,\beta_2)]_{1}[(a_2,b_2),(\alpha_2,\beta_2)]_{\delta} \left\|  \mathrm{S}_{(2),(a_2,b_2)} f\right\|_{L^p(\R^{\mathbf d}; w)} \frac{\d a_2 \d b_2}{b_2} \frac{\d \alpha_2 \d \beta_2}{\beta_2}
\\ &  \lesssim [w]_{A_p}^{\frac12 +\max\{1,\frac{1}{p-1}\} +  \max\{\frac{1}{2},\frac{1}{p-1}\}}
 \|\mathbf{a}\| 
\|f\|_{L^p(\R^{\mathbf d}; w)}.
\end{split}
\]
For display reasons, above $\|\mathbf{a}\| $ stands for $\|\mathbf{a}\|_{\mathcal C(Z^{d_2}\times Z^{d_2}; \mathrm{BMO}(\R^{\mathbf d}))}$.
The proof of Proposition \ref{p:half} is thus complete.
\end{proof}

\subsection{Proof of Theorem \ref{t:w2}}\label{ss:pftw2} Sharpness of the exponent follows by tensor product of the usual one parameter examples. The one parameter square function example  is   discussed in \cite{LLP} and references therein, while  the example for the one parameter maximal operator is entirely classical.

The proof  of the upper bound  is 
analogous for all three operators, as it proceeds by reduction to iteration of one parameter, vector-valued weighted bounds. To fix ideas, the argument is given for $\mathrm{SS}$, which is the most difficult case.

Fix  $f\in L^\infty_0(\R^{\mathbf d})$ and let $\left\{\psi_{(x_1,t_1), (x_2,t_2)}\in  \Psi^{\delta;0,0}_{(x_1,t_1), (x_2,t_2)}; x_j \in \R^{d_j}, 0<t_j<\infty, j=1,2\right\}$ be a family linearizing the supremum in \eqref{e:Sdelta2}.
Throughout this proof, $\eta\coloneqq \frac{\delta}{16}$.
The first step consists of a  decomposition of the linearizing family  into wavelets with compact frequency support in one of the parameters. Let $\alpha\in \mathcal{S}(\R^{d_1})$ be a radial function with 
\[
\supp \widehat \alpha \subset \mathsf{B}_{(0,2)} \setminus \mathsf{B}_{(0,\frac12)} ,\qquad 
\int_{-\infty}^\infty \widehat \alpha(2^s \xi) \, {\d s}  =\cic{1}_{\R^{d_1}\setminus \{0\}}(\xi),
\]
and also let $\beta  \in \mathcal{S}(\R^{d_1})$ satisfy 
\[
\supp \widehat \beta \subset \mathsf{B}_{(0,3)} \setminus \mathsf{B}_{(0,\frac13)}, \qquad  
\widehat \beta (\xi)= 1\quad \forall \xi \in \supp \widehat{\alpha}.
\]
Set
\[
\alpha_s= \mathsf{Dil}^1_{2^s} \alpha, \qquad \beta_s= \mathsf{Dil}^1_{2^s} \beta,\qquad \psi_{(x_1,t_1), (x_2,t_2)}^s\coloneqq 2^{\eta|s|} \psi_{(x_1,t_1), (x_2,t_2)} *_1 \alpha_{s+\log t_1} 
\]
so that it is understood that $*_j$ denotes convolution in the $j$-th parameter only, and note   that the scale of the parameter
in $\alpha_s$, $\beta_s$ is logarithmic. For instance $\alpha_{s+\log t_1}$ below has Fourier support in the annulus $\sim {t_1}^{-1} 2^{-s}$.
\begin{lemma} \label{l:sdec} For all $s\in \R$, $ x_j \in \R^{d_j},$ $0<t_j<\infty,$ $ j=1,2$ we have $\psi_{(x_1,t_1), (x_2,t_2)}^s\in C\Psi^{\eta;0,0}_{(x_1,t_1), (x_2,t_2)}$.
\end{lemma}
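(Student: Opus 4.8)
\textbf{Proof plan for Lemma \ref{l:sdec}.} The plan is to verify that $\psi^s_{(x_1,t_1),(x_2,t_2)}$ satisfies the decay, smoothness, and cancellation requirements defining $C\Psi^{\eta;0,0}_{(x_1,t_1),(x_2,t_2)}$, exploiting that convolving in the first parameter with a fixed Schwartz mollifier $\alpha_{s+\log t_1}$ adapted to scale $2^s t_1$ loses only a controlled amount of decay and smoothness, quantified precisely by the factor $2^{\eta|s|}$. By the invariance of both hypotheses and conclusions under the symmetry family $\mathsf{Sy}_z$ (here acting as $\mathsf{Sy}^1_{z_1}\circ\mathsf{Sy}^2_{z_2}$), I would first normalize to $z=((0,1),(0,1))$, so that $\psi\in C\Psi^{\delta;0,0}_{((0,1),(0,1))}$ and one must show $2^{\eta|s|}\psi *_1 \alpha_s \in C\Psi^{\eta;0,0}_{((0,1),(0,1))}$.

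The core estimate is a one-parameter convolution bound, applied on each slice in the second parameter: if $\phi\in\mathcal S(\R^{d_1})$ obeys $|\phi(y)|\lesssim \langle y\rangle^{-(d_1+\delta)}$ together with the companion $\delta$-H\"older bound, and $\alpha_s=\mathsf{Dil}^1_{2^s}\alpha$ with $\alpha$ Schwartz of integral zero (all moments vanish, in fact) and compactly supported Fourier transform away from the origin, then $\phi*\alpha_s$ obeys the analogous bounds with $\delta$ replaced by $\eta$ and an extra factor $2^{-\eta|s|}$; this is a routine splitting of the convolution integral into the regions $|y-u|\le \frac12|y|$ and $|y-u|>\frac12|y|$ for $s\ge 0$, and for $s<0$ one additionally uses the vanishing moments of $\alpha$ to gain a power of $2^s$. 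Indeed the loss $2^{\eta|s|}$ is built into the definition of $\psi^s$ precisely to absorb this. Because $\alpha_{s+\log t_1}$ with $t_1=1$ equals $\alpha_s$ and acts only in the first variable, the decay and smoothness in the second parameter, as well as both the first- and second-parameter cancellation conditions \eqref{e:ck1par}, are preserved verbatim: cancellation in $x_1$ survives since $\widehat\alpha$ vanishes near $0$ (so $\phi*\alpha_s$ inherits, and in fact improves, the moment conditions), and cancellation in $x_2$ survives because convolution in the first variable commutes with integration against $x_2^{\gamma_2}$ on each first-parameter slice. Concretely, I would invoke the efficient description \eqref{e:bip1}: for each fixed $y_1$, the slice $[\partial^{\gamma_1}(\phi*_1\alpha_s)]^{[1,y_1]}$ is a second-parameter object in the appropriate $\Psi^{k_{\hat 1},\eta;0}_{z_2}$ class (with constant uniform in $y_1$, scaled by the correct power of $\langle y_1\rangle$), which is exactly what is needed, and symmetrically when one differentiates and slices in the second parameter one simply transports $\phi*_1\alpha_s$ through unchanged.

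The main obstacle — really the only point requiring care rather than bookkeeping — is the borderline nature of the decay exponent: one starts with exponent $d_j+\delta$ in each parameter and the mollification in the first parameter formally threatens to degrade it, so one must show the degradation is at worst from $\delta$ to $\eta=\delta/16$ and, crucially, comes with the compensating gain $2^{-\eta|s|}$ that the $2^{\eta|s|}$ prefactor cancels. The logarithmic scale of $\alpha_s$ means that for $s$ large positive the mollifier is very spread out (scale $2^s$), which is where the decay loss occurs and where one must be quantitative; for $s$ large negative the mollifier concentrates (scale $2^s$) and the vanishing moments of $\alpha$ supply the decay of the correction term in $2^s$. In both regimes the bound is uniform in the second-parameter variables, so after taking the $1/2$-geometric average as in the proof of Lemma \ref{l:ttstar2} — or more simply just tracking both parameters separately since only the first is touched — one concludes $\psi^s_{(x_1,t_1),(x_2,t_2)}\in C\Psi^{\eta;0,0}_{(x_1,t_1),(x_2,t_2)}$ with constant $C$ independent of $s$ and of all the scale and location parameters. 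I would then remark that the same argument, applied with $\alpha$ replaced by $\beta$, handles the companion family $\psi *_1 \beta_{s+\log t_1}$ used in the reproducing identity underlying the subsequent decomposition.
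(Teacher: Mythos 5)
Your overall blueprint agrees with the paper's proof: normalize to $((0,1),(0,1))$, reduce to a one-parameter convolution estimate applied slice-by-slice in the second variable, observe that the $2^{\eta|s|}$ prefactor is designed to absorb the loss, and transport cancellation and second-parameter decay through unchanged. The cancellation discussion is also correct (Fourier support of $\widehat\alpha$ away from the origin; convolution in $x_1$ commuting with $x_2$-moments). The one place the paper is tighter is the first-parameter H\"older estimate, which it obtains by geometric-mean interpolation between the gradient bound and the size bound rather than re-running a convolution splitting; but that is a presentation choice.

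There is, however, a genuine gap in the statement of your ``core estimate.'' You posit only the decay bound $|\phi(y)|\lesssim\langle y\rangle^{-(d_1+\delta)}$ and the companion $\delta$-H\"older bound — that is, $\phi\in\Psi^{\delta;1}_{(0,1)}$ rather than $\Psi^{\delta;0}_{(0,1)}$ — and you attribute the compensating decay in $s$ only to the vanishing moments of $\alpha$, which you invoke ``for $s<0$.'' But for $s\ge 0$ the claimed gain $2^{-\eta s}\langle y_1\rangle^{-(d_1+\eta)}$ is \emph{false} without the cancellation of $\phi$. Concretely, if $\int\phi\ne 0$ and $|y_1|\sim 2^s$, then for $|u|\lesssim 1$ one has $\alpha_s(y_1-u)\approx\alpha_s(y_1)\sim 2^{-sd_1}$, so the naive splitting produces $|\phi*\alpha_s(y_1)|\sim 2^{-sd_1}\bigl|\int\phi\bigr|$, whereas the target is the strictly smaller $2^{-s(d_1+2\eta)}$. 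The decisive mechanism for $s\ge 0$ is the zero mean of the slice $\phi=\langle x_2\rangle^{d_2+\delta}\psi^{[2,x_2]}\in\Psi^{\delta;0}_{(0,1)}$, which lets one replace $\alpha_s(y_1-u)$ by $\alpha_s(y_1-u)-\alpha_s(y_1)$ and win a factor $2^{-s}$; the vanishing moments of $\alpha$ are what saves the day only in the opposite regime $s<0$. The paper sidesteps the bookkeeping entirely by reading $\psi*_1\alpha_s(y_1,y_2)$ as the pairing $\langle\psi^{[2,y_2]},\mathsf{Tr}_{y_1}\alpha_s\rangle$ and invoking the almost-orthogonality Lemma \ref{l:ttstar1} with $\psi^{[2,y_2]}\in\Psi^{\delta;0}$ in the cancellative slot; you should either do the same, or at minimum upgrade your hypothesis on $\phi$ to include $\int\phi\,\d x_1=0$ and show explicitly where this is used for $s\ge 0$.
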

\begin{proof} By  bi-parametric invariance of the assumption and assertion, it suffices to prove the case $x_j=0_{\R^{d_j}}, t_j=1$ for $j=1,2$.  For simplicity write $\psi$ in place of $\psi_{(x_1,t_1), (x_2,t_2)}$.
Applying Lemma \ref{l:ttstar1} for each fixed $y_2$ gives
\begin{equation}
\label{e:ldecs1}
\left|\psi*_1 \alpha_{s}(y_1,y_2)\right| =\left| \langle \psi(\cdot, y_2), \mathsf{Tr}_{y_1}\alpha_{s} \rangle   \right| \lesssim \frac{[(0,1),(y_1,s)]_{8\eta}}{ \langle y_2 \rangle^{(d_2+8\eta)}} \lesssim 
\frac{2^{-8\eta|s|} }{\langle y_1 \rangle^{(d_1+8\eta)}\langle y_2 \rangle^{(d_2+8\eta)}}
\end{equation}
as $\langle y_2 \rangle^{d_2+8\eta}\psi^{[2,y_2]}(\cdot) \in \Psi^{\delta;0}_{(0,1)}$ and $\mathsf{Tr}_{y_1}\alpha_{s}\in \Psi^{\delta;0}_{(y_1,s)}$. The last inequality is best seen by verifying the cases $s\geq 0$, $s<0$ separately.
Using the Fourier support and normalization of $\alpha_s$, similarly  
\[
\left|\nabla_1(\psi*_1 \alpha_{s})(y_1,y_2)\right|
= \left| \psi*_1 \nabla \alpha_{s}(y_1,y_2)\right|
 \lesssim 
\frac{2^{-s-8\eta|s|} }{\langle y_1 \rangle^{(d_1+8\eta)}\langle y_2 \rangle^{(d_2+8\eta)}}.
\]
If $0<|h|<1$ then, by the mean value theorem
\begin{equation}
\label{e:ldecs2}
\begin{split}
&\quad \left|\psi*_1 \alpha_{s}(y_1+h,y_2)- \psi*_1 \alpha_{s}(y_1,y_2) \right|\\ & \lesssim |h|^\eta \left( \sup_{|u_1|\sim |y_1| }\nabla_1(\psi*_1 \alpha_{s})(u_1,y_2) \right)^{\eta} \left(\left|\psi*_1 \alpha_{s}(y_1+h,y_2)\right|+\left| \psi*_1 \alpha_{s}(y_1,y_2) \right|\right)^{1-\eta}    
\\ &\lesssim |h|^\eta \frac{2^{|s| \left[\eta- 8\eta(1-\eta)\right] }}{\langle y_1 \rangle^{(d_1+8\eta)}\langle y_2 \rangle^{(d_2+8\eta)}}\leq 2^{-|s|\eta} \frac{|h|^\eta}{\langle y_1 \rangle^{(d_1+8\eta)}\langle y_2 \rangle^{(d_2+8\eta)}}
\end{split}\end{equation}
using the elementary inequality $6\eta>8\eta^2$. The inequality  
\[
\left|\psi*_1 \alpha_{s}(y_1,y_2+h)- \psi*_1 \alpha_{s}(y_1,y_2) \right| \lesssim  \frac{|h|^\delta}{\langle y_1 \rangle^{(d_1+\delta)}\langle y_2 \rangle^{(d_2+\delta)}}
\]
is immediate from \eqref{e:bip1} and averaging, so that another interpolation with \eqref{e:ldecs1} yields
\begin{equation}
\label{e:ldecs3} 
\left|\psi*_1 \alpha_{s}(y_1,y_2+h)- \psi*_1 \alpha_{s}(y_1,y_2) \right| \lesssim \frac{|h|^{\frac\delta 2}2^{-4\eta|s|}}{ \langle y_1 \rangle^{(d_1+8\eta)}\langle y_2 \rangle^{(d_2+8\eta)}} \leq 2^{-\eta|s|} \frac{|h|^{\eta}}{ \langle y_1 \rangle^{(d_1+\eta)}\langle y_2 \rangle^{(d_2+\eta)}}.
\end{equation}
Collecting \eqref{e:ldecs1},  \eqref{e:ldecs2}, and  \eqref{e:ldecs3}, and comparing with \eqref{e:bip1}, completes the proof. 
\end{proof} \noindent
The definitions of $\alpha_s$ and $\beta_s$ lead to the equalities
\[
\psi_{(x_1,t_1), (x_2,t_2)}= \int_{-\infty}^\infty  2^{-\eta|s|}\psi^s_{(x_1,t_1), (x_2,t_2)} \, \d s, \qquad  \langle f,  \psi^s_{(x_1,t_1), (x_2,t_2)}\rangle = \langle f*_1 \beta_{s+\log t_1},  \psi^s_{(x_1,t_1), (x_2,t_2)}\rangle. \] 
Therefore, in view of Lemma \ref{l:sdec}, and using the convergence of the geometric integral, it will be enough to prove the same estimate for the operator
\[
O_sf(x) = \left(\, \int\displaylimits_{(0,\infty)^2}   |\langle f*_1 \beta_{s+\log t_1},  \psi^s_{(x_1,t_1), (x_2,t_2)} \rangle|^2 \frac{\d t_1 \d t_2}{t_1 t_2}\right)^{\frac12}
\]
uniformly in the parameter $s\in \mathbb \R$, which will be kept fixed until the end of the proof. 
The operator $O_s$ is estimated  relying on the auxiliary family of square functions with parameter $t_1>0$\[
\mathrm{S}_{t_1}h(x_1,x_2)\coloneqq  \left(\int_{0}^\infty   |\langle h,  \psi^s_{(x_1,t_1), (x_2,t_2)} \rangle|^2 \frac{  \d t_2}{ t_2}\right)^{\frac12},
\]
which satisfies \[
\|\mathrm{S}_{t_1} \|_{L^p(\R^{\mathbf d}; w)}\lesssim [w]_{A_p}^{\max\{\frac{1}{p-1},\frac12\}}.\] This can be seen by repeating the sparse domination  bound for the Christ-Journ\'e type square function e.g. of \cite{Brocchi20,LLP}, where the averages in the sparse operators are associated to rectangles with side of fixed length $t_1$ in the first parameter. The fact that the weight is a product weight ensures that the bound is uniform over all $t_1$. The weighted bound above upgrades immediately to vector-valued, and may be used in the second step below to yield
\[\begin{split}
\|O_sf\|_{L^p(w)} &= \|\mathrm{S}_{t_1} (   f*_1 \beta_{s+\log t_1}) \|_{L^p(w; L^2( \frac{\d t_1}{t_1}))}
  \lesssim [w]_{A_p}^{\max\{\frac{1}{p-1},\frac12\}} \|    f*_1 \beta_{s+\log t_1} \|_{L^p(w; L^2( \frac{\d t_1}{t_1}))}
\\&= [w]_{A_p}^{\max\{\frac{1}{p-1},\frac12\}} \|    f*_1 \beta_{\log t_1} \|_{L^p(w; L^2( \frac{\d t_1}{t_1}))} \lesssim [w]_{A_p}^{\max\{\frac{2}{p-1},1\}} \|   f \|_{L^p(w )}. 
\end{split}
\]
The very last inequality is obtained by using the straightforward weighted Littlewood-Paley square function bound of \cite{Brocchi20,LLP} in the first parameter  and Fubini's theorem. The proof of Theorem \ref{t:w2} is complete.

\bibliography{wrt}
\bibliographystyle{amsplain}
\end{document}